\newtoks\prt
\newtheorem{thm}{Theorem}[section]
\newtheorem{lemma}[thm]{Lemma}
\newtheorem{prop}[thm]{Proposition}
\newtheorem{cor}[thm]{Corollary}
\newtheorem{obs}[thm]{Observation}
\newtheorem{fact}[thm]{Fact}
\theoremstyle{definition}
\newtheorem{remark}[thm]{Remark}
\def\eqn#1$$#2$${\begin{equation}\label#1#2\end{equation}}
\def\1{\boldsymbol{1}}
\def\A{\mathcal A}
\def\B{\mathcal B}
\def\C{\mathcal C}
\def\D{\mathcal D}
\def\F{\mathcal F}
\def\ce{\mathbb C}
\def\co{\operatorname{conv}}
\def\ep{\varepsilon}
\def\K{\mathcal K}
\def\en{\mathbb N}
\def\er{\mathbb R}
\def\ef{\mathbb F}
\def\Im{\operatorname{Im}}
\def \f{\boldsymbol{f}}
\def \g{\boldsymbol{g}}
\def \h{\boldsymbol{h}}
\def \uu{\boldsymbol{u}}
\def\ov{\overline}
\def \Ch {\operatorname{Ch}}
\def \ext {\operatorname{ext}}
\def\span{\operatorname{span}}
\def\id{\operatorname{id}}
\def \reg {\partial _{\kern1pt\text{reg}}}
\def\iff{\Longleftrightarrow}
\def\Co{\operatorname{Con}}
\def\di{\,\mbox{\rm d}}
\newcommand{\norm}[1]{\left\|#1\right\|}
\renewcommand{\Re}{\operatorname{Re}}
\newcommand{\abs}[1]{\left|#1\right|}
\newcommand{\setsep}{;\,}
\numberwithin{equation}{section}
\definecolor{green}{rgb}{0,0.5,0}
\title {Transference of measures via disintegration}
\author{Ond\v{r}ej F.K. Kalenda and Ji\v{r}\'{\i} Spurn\'y}
\address{Ond\v{r}ej F.K. Kalenda\\
Charles University\\
Faculty of Mathematics and Physics\\
Department of Mathematical Analysis \\
Sokolovsk\'{a} 83, 186 \ 75\\Praha 8, Czech Republic}
\email{kalenda@karlin.mff.cuni.cz}
\address{Ji\v{r}\'{\i} Spurn\'y\\
Charles University\\
Faculty of Mathematics and Physics\\
Department of Mathematical Analysis \\
Sokolovsk\'{a} 83, 186 \ 75\\Praha 8, Czech Republic}
\email{spurny@karlin.mff.cuni.cz}
\keywords{vector-valued functions and measures; disintegration of measures; ordering of measures; minimal and maximal measures}
\subjclass[2010]{46G10; 46A55;  28A35; 28A50}
\thanks{Our research was partially supported by the Research grant GA\v{C}R 23-04776S}
\begin{document}
\begin{abstract}
    Given a compact space $K$ and a Banach space $E$ we study the structure of positive measures
    on the product space $K\times B_{E^*}$ representing functionals on $C(K,E)$, the space of $E$-valued continuous functions on $K$. Using the technique of disintegration we provide an alternative approach to the procedure of transference of measures introduced by Batty (1990). 
    This enables us to substantially strengthen some of his results, to discover a rich order
    structure on these measures, to identify maximal and minimal elements and to relate them to the classical Choquet order.
\end{abstract}

\maketitle

\section{Introduction}

The classical Riesz representation theorem provides a bijective isometric correspondence between continuous linear functionals on $C(K)$, the space of (real- or complex-valued) continuous functions on a Hausdorff compact space $K$, and $M(K)$, the space of (signed or complex) Radon measures on $K$.
Therefore, given a subspace $H\subset C(K)$, the mentioned Riesz theorem together with the Hahn-Banach extension theorem yield that any continuous linear functional on $H$ may be represented by a Radon measure on $K$ with the same norm. Such a representing measure need not be unique, hence it makes sense to compare the representing measures and to investigate their structure. This is the basic content of the Choquet theory. 

 In the classical case $K$ is a compact convex set and $H=A_c(K)$ is the space of all affine continuous functions on $K$. If $K$ is metrizable, the classical Choquet theorem yields that any continuous linear functional on $A_c(K)$ is represented by a  measure $\mu$ with the same norm that is carried by the set $\ext K$ of extreme points of $K$. For non-metrizable $K$ the question is more subtle, the Choquet ordering naturally arises and one gets a representing measure $\mu$ ``almost carried'' by $\ext K$  in the sense that $\abs{\mu}(K\setminus B)=0$ for each Baire set $B\supset \ext K$. This is summarized in the famous Choquet-Bishop-de-Leeuw theorem, see \cite[Chapter I, \S 4]{alfsen}. The question of uniqueness in this context leads to the theory of Choquet simplices (see, e.g., \cite[Section II.3]{alfsen}).

A generalization of this representation theory is encompassed by the notion of a function space $H$, which is a subspace of $C(K,\er)$ containing constant functions and separating points of $K$ (see e.g. \cite[Chapter 6]{phelps-choquet} or \cite[Chapter 3]{lmns}). In this context, the role of the set of extreme points is played by the so-called Choquet boundary $\Ch_H K$ of $H$ (this is the set of those points $x\in K$ such that the point evaluation at $x$ is an extreme point of the dual unit ball $B_{H^*}$ of $H$). Again we may construct for a functional $s\in H^*$  a representing measure with the same norm as $s$ which is ``almost carried'' by $\Ch_H K$. The question of uniqueness leads to the theory of simplicial function spaces (see, e.g., \cite[Chapter 6]{lmns}).

The next step was a generalization to the complex case which was addressed in \cite{hustad71,hirsberg72,fuhr-phelps,phelps-complex}. It turns out that the representation
theorem holds in the same form, but the question of uniqueness is more subtle than in the real case. 

A further generalization deals with vector-valued function spaces, i.e., subspaces $H\subset C(K,E)$ for some compact $K$ and Banach space $E$.  A satisfactory theory of integral representation in this case was presented by P. Saab and M. Talagrand in a series of papers \cite{saab-aeq,saab-canad,saab-tal}. Their approach was further improved by W.~Roth and C.J.K. Batty in \cite{roth-london,roth-convex,roth-kniha,batty-vector}. In the vector-valued case there are two basic approaches to the representation -- via vector measures on $K$, using a generalization of the Riesz theorem saying that the dual to $C(K,E)$ is canonically isometric to $M(K,E^*)$, or via scalar measures on $K\times B_{E^*}$ using the canonical isometric inclusion $T:C(K,E)\to C(K\times B_{E^*})$ defined by
$$T\f(t,x^*)=x^*(\f(t)), \quad (t,x^*)\in K\times B_{E^*}, \f\in C(K,E).$$
These two approaches are closely related. In fact, the construction of representing vector measures in the quoted papers is done via the scalar measures -- at first a suitable representing measure on $K\times B_{E^*}$ is constructed and then the respective vector measure is obtained by application of the `Hustad mapping' (which is inspired by \cite{hustad71} and can be viewed as an interpretation of the dual operator to the above-defined inclusion $T$). This procedure was used in \cite{hensgen} to provide a simple proof of the representation of the dual to $C(K,E)$ (see Section~\ref{ssec:dualC(K,E)}) and was substantially elaborated by Batty in \cite{batty-vector} using the technique of `transference of measures' which provides a canonical way how to assign to each $\mu\in M(K,E^*)$ a 
positive measure on $K\times B_{E^*}$ (with the same norm and whose image under $T^*$ is $\mu$).

Our aim is to further investigate the vector-valued integral representation theory, in particular how to grasp the notion of uniqueness of representing measures. To this end we investigate in more detail the above-mentioned procedure discovered by Batty. Since the procedure itself does not depend on the choice of $H$, we restrict ourselves to the case $H=C(K,E)$. 

In this case the representation by vector-valued measures trivially reduces to a known theorem saying that each functional on $C(K,E)$ is represented by a unique vector measure (see Section~\ref{ssec:dualC(K,E)}). However, the structure of representing positive measures on $K\times B_{E^*}$ is nontrivial. In accordance with Section~\ref{s:ordering} below let us denote, for a given $\mu\in M(K,E^*)$, by $N(\mu)$ the set of all positive measures on $K\times B_{E^*}$ with norm $\norm{\mu}$ and representing the same functional as $\mu$. The main tool we use to investigate the structure of $N(\mu)$ is the technique of disintegration of measures on product spaces (see Section~\ref{ssec:disint}). With the help of this technique, we obtain (among others) the following results:

For a vector measure $\mu\in M(K,E^*)$, there is a weak$^*$ Radon-Nikod\'ym derivative  $h$ of $\mu$ with respect to $\abs{\mu}$ (see Proposition~\ref{P:hustotaT*nu} and the rest of Section~\ref{s:tovector}). This enables us to give an alternative proof of \cite[Proposition 3.3]{batty-vector} (which yields a canonical selection mapping of the assignment $\mu\mapsto N(\mu)$ denoted by $K$ in \cite{batty-vector} and by $W$ in the present paper) and to provide a formula for the operator $W$ (see Corollary~\ref{c:battytrans}). We point out that such a formula is given in \cite[Proposition 2.2]{batty-vector} but only under the very strong assumption of the existence of the Bocher-Radon-Nikod\'ym derivative of $\mu$ with respect to $\abs{\mu}$, while our approach provides the formula in full generality.

A further application is Theorem~\ref{t:battymain} which shows that $N(\mu)$ is a singleton for each $\mu$ (i.e., the scalar representing measures are unique) if and only if $E^*$ is strictly convex. This is the optimal version of a result from \cite[p. 540]{batty-vector}, where the uniqueness is proved under much stronger assumptions (in particular, $E$ is required to be separable and reflexive there).

 We also analyze in detail a partial order $\prec_{\D}$ on $N(\mu)$ introduced in \cite{batty-vector}. We use the method of disintegration to relate it with the Choquet order on $M_1(B_{E^*})$ (see Theorem~\ref{t:precDbodove}). The set $N(\mu)$ contains the $\prec_{\D}$-largest element
 (it coincides with the value $W\mu$ of the above-mentioned operator $W$, see Section~\ref{ss:ordering-basic}). We further characterize $\prec_{\D}$-minimal elements of $N(\mu)$ using maximal measures on $M_1(B_{E^*})$ (see Theorem~\ref{t:minimalbodove}). Finally, we show that each $N(\mu)$ contains a unique $\prec_{\D}$-minimal measures  if and only if the dual unit ball $B_{E^*}$ is a simplexoid (see Theorem~\ref{t:simplexoidminimal}). 

We are convinced that the results and techniques from the present paper will be useful  to investigate the integral representation in case  $H\subsetneq C(K,E)$, which we plan to continue elsewhere.
This general case will, however, require some additional effort, in particular, because the Choquet boundary and $H$-boundary measures should be considered (in case $H=C(K,E)$, the Choquet boundary is whole $K$ and all measures are $H$-boundary). Thus the representation theorem is more involved (see \cite{saab-tal}) and possible analogues of relations $\prec_{\B}$ and $\prec_{\D}$ are more complicated. 
Moreover, some special properties of $C(K,E)$ (Urysohn lemma, functions of the form $f\cdot x$ etc.) are not availaible for general $H$.
 
\section{Preliminaries}

In this section we collect some notation and auxiliary results (mostly known) which will be used in the sequel.
We divide this section to several subsections.

\subsection{Real and complex Banach spaces} 

The classical Choquet theory deals with real spaces, the complex case requires some additional effort as recalled in the introduction. In \cite{batty-vector}, which is one of our main references, the real spaces are considered as well. However, our results hold for real and complex spaces equally.

We will denote by $\ef$ the respective field ($\er$ or $\ce$). Moreover, we will repeatedly use without commenting it the following standard facts on complex Banach spaces:

If $E$ is a complex Banach space and $E_R$ is its real version (i.e., the same space considered over $\er$), then we have the following identifications:

\begin{itemize}
    \item The assignment $x^*\mapsto \Re x^*$ is a real-linear isometry of $E^*$ onto $(E_R)^*$.
    \item Conversely, if $y^*\in (E_R)^*$, then the formula $x^*(x)=y^*(x)-iy^*(ix)$, $x\in E$, defines an element of $E^*$ with $y^*=\Re x^*$ (and $\norm{x^*}=\norm{y^*}$).
\end{itemize}

\subsection{Classical Choquet theory}\label{ss:classical}

This section recalls classical notions of the Choquet theory of compact convex sets. Assume that $X$ is a compact convex set in a locally convex Hausdorff space. Then for each $\mu\in M_1(X)$ (the set of all Radon probability measures on $X$) there exists a unique point $x=r(\mu)\in X$ (called the \emph{barycenter of $\mu$}) satisfying $\int_X f\di\mu=f(x)$ for each affine continuous function $f\colon X\to \er$.
If $x=r(\mu)$, we say that \emph{$\mu$ represents $x$}.

The \emph{Choquet order $\prec$} on the cone $M_+(X)$ of all Radon positive measures on $X$ is defined as $\mu\prec\nu$ for $\mu,\nu\in M_+(X)$ if and only if $\int k\di\mu\le \int k\di\nu$ for each $k:X\to\er$ convex and continuous. A \emph{maximal} measure then means a measure maximal in the ordering $\prec$.

The maximality of a measure $\mu\in M_+(X)$ can be characterized by means of envelopes. Recall that, given a bounded real-valued function $f$ on $X$, its \emph{upper} and \emph{lower envelopes} are defined as
\begin{equation}
\label{eq:obalky}  
\begin{aligned}
f^*&=\inf\{h\setsep h\ge f, h\in C(X,\er)\text{ affine}\}
\quad\text{and}\\
f_*&=\sup\{h\setsep h\le f, h\in C(X,\er)\text{ affine}\}.
\end{aligned}
\end{equation}
Then $\mu\in M_+(X)$ is maximal if and only if $\int f\di\mu=\int f^*\di\mu$ for each convex continuous function $f:X\to\er$ (this result is due to Mokobodzki, see \cite[Proposition I.4.5.]{alfsen}).

If $X$ is metrizable and $\ext X$ stands for the set of all extreme points of $X$, then  $\ext X$ is a $G_\delta$-subset of $X$ and $\mu\in M_+(X)$ is maximal if and only if $\mu(X\setminus \ext X)=0$,  see \cite[p. 35]{alfsen}. 

It is an easy consequence of Zorn's lemma that for any $\mu\in M_+(X)$ there is a maximal measure $\nu\in M_+(X)$ such that $\mu\prec\nu$. In case $X$ is metrizable, there is a witnessing Borel assignment provided by the following lemma. 

\begin{lemma}\label{L:selekce max}
   Let $X$ be a metrizable compact convex set. Then there exists a Borel measurable mapping
   $\Psi:M_1(X)\to M_1(X)$ such that $\nu\prec\Psi(\nu)$ and $\Psi(\nu)$ is a maximal measure for each $\nu\in M_1(X)$.
\end{lemma}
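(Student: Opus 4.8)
The plan is to reduce maximality to a single scalar condition and then realize $\Psi$ as a measurable selection from an argmax multifunction. First I would use metrizability of $X$ to fix a strictly convex continuous function $f\colon X\to\er$ with $0\le f\le 1$ (embed $X$ affinely into $[0,1]^{\en}$ by a sequence of affine continuous functions separating points and set $f(x)=\sum_n 2^{-n}x_n^2$). For such an $f$ one has $\{x\in X\setsep f^*(x)=f(x)\}=\ext X$: at an extreme point the only representing measure is $\delta_x$, so $f^*(x)=f(x)$, whereas at $x=\tfrac12(y+z)$ with $y\ne z$ strict convexity gives $f^*(x)\ge\tfrac12(f(y)+f(z))>f(x)$. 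Combining this with the metrizable characterization of maximality recalled above, a measure $\nu\in M_1(X)$ is maximal if and only if $\int f\di\nu=\int f^*\di\nu$; thus one convex function detects maximality.

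Next, for $\mu\in M_1(X)$ set $K_\mu=\{\nu\in M_1(X)\setsep \mu\prec\nu\}$, a nonempty (as $\mu\in K_\mu$) weak$^*$-compact convex set, and let $m(\mu)=\max_{\nu\in K_\mu}\int f\di\nu$, the maximum being attained since $\nu\mapsto\int f\di\nu$ is weak$^*$-continuous and $K_\mu$ is compact. I claim every maximizer $\nu_0$ is a maximal measure. Indeed, if not, then $\int(f^*-f)\di\nu_0>0$; using the envelope identity $f^*(x)=\max\{\int f\di\lambda\setsep r(\lambda)=x\}$ and a Borel dilation $x\mapsto\lambda_x$ attaining it, the measure $\nu_1=\int_X\lambda_x\di\nu_0(x)$ satisfies $\nu_0\prec\nu_1\in K_\mu$ and $\int f\di\nu_1=\int f^*\di\nu_0>\int f\di\nu_0$, contradicting maximality of $\int f$ on $K_\mu$. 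Hence the argmax set $M_\mu=\{\nu\in K_\mu\setsep \int f\di\nu=m(\mu)\}$ consists entirely of maximal measures dominating $\mu$, and it only remains to select one of them in a Borel fashion.

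Finally I would produce a Borel selection $\Psi(\mu)\in M_\mu$. The graph $\{(\mu,\nu)\setsep \mu\prec\nu\}$ is weak$^*$-closed, so $\mu\mapsto K_\mu$ is an upper semicontinuous compact-valued multifunction and, by a maximum-theorem argument, $m$ is upper semicontinuous, hence Borel; consequently the graph $\{(\mu,\nu)\setsep \mu\prec\nu,\ \int f\di\nu=m(\mu)\}$ of $\mu\mapsto M_\mu$ is Borel with nonempty compact values. A Borel selection theorem for compact-valued multifunctions with Borel graph (Kuratowski--Ryll-Nardzewski, once weak measurability is verified) then yields a Borel map $\Psi\colon M_1(X)\to M_1(X)$ with $\Psi(\mu)\in M_\mu$, so that $\nu\prec\Psi(\nu)$ and $\Psi(\nu)$ is maximal for each $\nu$. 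The main obstacle I anticipate is exactly this last step: the nonconstructive Zorn argument only gives existence of a dominating maximal measure, and the real work is to check compactness of the values together with Borel measurability of $\mu\mapsto M_\mu$ (through the upper semicontinuity of $m$) carefully enough to guarantee a genuinely Borel selection rather than a merely universally measurable one.
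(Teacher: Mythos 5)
Your reduction of maximality to the single scalar condition $\int f\,d\nu=\int f^*\,d\nu$ for one strictly convex continuous $f$ is correct, and so is the claim that every maximizer of $\int f$ over $K_\mu=\{\nu\setsep\mu\prec\nu\}$ is a maximal measure (for that step you do not even need a Borel dilation: the identity $\sup\{\int f\,d\nu\setsep\nu_0\prec\nu\}=\int f^*\,d\nu_0$, which underlies Mokobodzki's criterion, already produces the contradiction). The genuine gap is exactly where you anticipate it, and the tool you name does not close it. Kuratowski--Ryll-Nardzewski requires weak measurability of $\mu\mapsto M_\mu$, i.e.\ that $\{\mu\setsep M_\mu\cap U\ne\emptyset\}$ be Borel for every open $U$; this set is the projection of $\mathrm{Graph}(M)\cap(M_1(X)\times U)$, hence a priori only analytic, and since $m$ is merely upper semicontinuous the argmax multifunction need not be upper semicontinuous, so no soft closed-graph argument yields Borelness. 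What actually completes your scheme is the Arsenin--Kunugui (Saint Raymond) uniformization theorem: a Borel subset of a product of Polish spaces all of whose vertical sections are $\sigma$-compact has Borel projection and admits a Borel uniformization. Your graph has compact sections, so that theorem applies and gives the Borel selection --- but without invoking it (a substantially harder result than anything else in the argument) the proof is incomplete; Jankov--von Neumann only gives a universally measurable selection, as you correctly fear.

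The paper sidesteps the whole issue by moving the selection problem from the level of measures to the level of points: it quotes \cite[Theorem 11.41]{lmns} for a Borel map $x\mapsto m(x)$ assigning to each $x\in X$ a maximal measure representing $x$, and sets $\Psi(\mu)=\int_X m(x)\,d\mu(x)$. Then $\mu\prec\Psi(\mu)$ is Jensen's inequality, Borel measurability of $\Psi$ is immediate because $\mu\mapsto\int(\int f\,dm(x))\,d\mu(x)$ is a Baire function of $\mu$ for each $f\in C(X)$, and maximality follows from $\Psi(\mu)(X\setminus\ext X)=\int_X m(x)(X\setminus\ext X)\,d\mu(x)=0$. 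If you want to keep your argmax construction you must either cite Arsenin--Kunugui or verify weak measurability by hand; otherwise the pointwise-selection route is the path of least resistance, since the one nontrivial selection it needs is already in the literature.
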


\begin{proof}
    By \cite[Theorem 11.41]{lmns} there is a Borel measurable mapping $m:X\to M_1(X)$ such that 
    $m(x)$ is a maximal measure representing $x$ for each $x\in X$. Fix $\mu\in M_1(X)$. Let us define a functional $\psi_\mu$ on $C(X)$ by
    $$\psi_\mu(f)=\int_X\left(\int_X f(y)\di m(x)(y)\right)\di\mu(x), \quad f\in C(X).$$
    Note that, given $f\in C(X)$, the function $x\mapsto \int f\di m(x)$ is Borel measurable and bounded by $\norm{f}$, so $\psi_\mu$ is a well-defined linear functional of norm at most $\norm{\mu}$. Let $\Psi(\mu)$ be the measure representing $\psi_\mu$.

    To observe that $\Psi$ is a Borel mapping, it is enough to show that $\mu\mapsto \psi_\mu(f)$ is Borel measurable for each $f\in C(X)$. We already know that the function $x\mapsto \int f\di m(x)$ is Borel measurable, so it is a Baire function, thus $\mu\mapsto\psi_\mu$ is also a Baire function.

    If $f\in C(X,\er)$ is convex, then
    $$\int f\di \Psi(\mu)=\psi_\mu(f)=\int_X\left(\int f(y)\di m(x)(y)\right)\di\mu(x)\ge \int_X f(x)\di\mu(x),$$
    where we used that $f$ is convex and $m(x)$ represents $x$ for each $x\in X$. We deduce that $\mu\prec\Psi(\mu)$.

    Finally, let us show $\mu$ is maximal, i.e, it is carried by $\ext X$. By construction we have
    $$\int f\di\Psi(\mu)=\int_X\left(\int f(y)\di m(x)(y)\right)\di\mu(x), \quad f\in C(X).$$
    By the Lebesgue dominated convergence theorem this equality extends to bounded Baire functions on $X$, so to bounded Borel functions on $X$. In particular, applying to the characteristic function of $X\setminus\ext X$ we deduce
    $$\Psi(\mu)(X\setminus\ext X)=\int_X m(x)(X\setminus\ext X)\di\mu(x)=0.$$
\end{proof}

\subsection{Integration with respect to measures with values in a dual Banach space}

Let $(\Omega,\Sigma)$ be a measurable space, let $E$ be a (real or complex) Banach space. If $\mu$ is an $E^*$-valued $\sigma$-additive measure on $(\Omega,\Sigma)$, we denote by $\abs{\mu}$ its (absolute) variation (see \cite[Definition 4 on p. 2]{diesteluhl}) and we set $\norm{\mu}=\abs{\mu}(\Omega)$, the total variation of $\mu$. If $\norm{\mu}<\infty$, $\mu$ is said to have \emph{bounded variation}.
Moreover, $\mu$ is called \emph{regular} if its variation $\abs{\mu}$ is regular.

Assume that $\mu$ is an $E^*$-valued $\sigma$-additive measure on $(\Omega,\Sigma)$ with bounded variation. If $x\in E$, the formula
$$\mu_x(A)=\mu(A)(x),\quad A\in\Sigma,$$
defines a scalar-valued $\sigma$-additive measure on $(\Omega,\Sigma)$ with bounded variation (more precisely, we have $\norm{\mu_x}\le \norm{\mu}\cdot\norm{x}$.

Assume that $\uu=\sum_{j=1}^n \chi_{A_j}\cdot x_j$ is a simple measurable function (where $x_1,\dots, x_n\in E$ and  $A_1,\dots, A_n$ are pairwise disjoint elements of $\Sigma$). Then we define
$$\int \uu\di\mu=\sum_{j=1}^n \mu(A_j)(x_j).$$
It is easy to check that the mapping $\uu\mapsto \int \uu\di\mu$ is linear (from the space of simple measurable functions to $\ef$). Moreover,
$$\abs{\int \uu\di\mu}\le\sum_{j=1}^n \abs{\mu(A_j)(x_j)}\le \sum_{j=1}^n \norm{\mu(A_j)}\norm{x_j}\le \norm{\mu}\cdot\norm{\uu}_\infty,$$
hence the integral may be uniquely extended to those functions $\f:\Omega\to E$ which may be uniformly approximated by simple measurable functions. In particular, if $f:\Omega\to \ef$ is a bounded measurable function and $x\in E$, the function $f\cdot x$ is $\mu$-integrable and we have
$$\int f\cdot x \di\mu =\int f\di\mu_x.$$

Further, if $K$ is a compact space, then any continuous function from $K$ to $E$ may be uniformly approximated  by simple Borel measurable functions, and thus we may define
$\int \f\di\mu$ whenever $\f:K\to E$ is continuous and $\mu$ is an $E^*$-valued Borel measure on $K$ with bounded  variation. In this case we have
$$\abs{\int \f\di\mu}\le \norm{f}_\infty\norm{\mu}.$$

An important special type of vector measures are those of the form $\ep_t\otimes x^*$ where $t\in K$ and $x^*\in E^*$. Such measures act as follows
$$\begin{aligned}
(\ep_t\otimes x^*)(B)&=\begin{cases} x^*, & t\in B,\\ 0, &t\notin B,    
\end{cases} \quad B\subset K\mbox{ Borel}, \\ \int \f\di(\ep_t\otimes x^*)&=x^*(\f(t)),\ \f\in C(K,E).\end{aligned}$$

\subsection{Representation of the dual to $C(K,E)$}\label{ssec:dualC(K,E)}  

The integral from the previous section may be used to provide a representation of the dual to the space of vector-valued continuous functions. Let us fix the relevant notation. Let $K$ be a compact space and let $E$ be a (real or complex) Banach space. By $C(K,E)$ we denote the Banach space of $E$-valued continuous functions on $K$ with the supremum norm. By $M(K,E^*)$ we denote the space of all regular $E^*$-valued Borel measures on $K$ with bounded variation, 
equipped with the total variation norm. Then $M(K,E^*)$ is canonically isometric to the dual of $C(K,E)$. Let us explain it a bit.

It follows from the previous section that any $\mu\in M(K,E^*)$ induces a continuous linear functional on $C(K,E)$ of norm at most $\norm{\mu}$ by
$$\f\mapsto \int \f\di\mu.$$
Conversely, assume that $\varphi\in C(K,E)^*$ is given. For each $x\in E$ define
$$\varphi_x(f)=\varphi(f\cdot x),\quad f\in C(K,\ef).$$
Then $\varphi_x\in C(K,\ef)^*$ and $\norm{\varphi_x}\le \norm{\varphi}\norm{x}$. By the Riesz representation theorem there is a measure $\mu_x\in M(K,\ef)$ with $\norm{\mu_x}\le\norm{\varphi}\norm{x}$ representing $\varphi_x$. Moreover, since the assignment $x\mapsto \varphi_x$ is linear and continuous
(of norm at most $\norm{\varphi}$), the mapping $x\mapsto \mu_x$ is a bounded linear operator
(from $E$ to $M(K,\ef)$). For a Borel set $B\subset K$ define 
$$\mu(B)(x)=\mu_x(B), \quad x\in E.$$
Then $\mu$ is obviously a finitely additive mapping from the Borel $\sigma$-algebra to $E^*$. Moreover, $\mu$ is a regular $\sigma$-additive measure with bounded variation representing $\varphi$ and satisfying $\norm{\mu}\le\norm{\varphi}$. An easy proof of this fact is provided in \cite{hensgen}. Since we will repeatedly use the related procedure, we briefly recall the argument.

Let $T:C(K,E)\to C(K\times B_{E^*})$ be defined by
$$T\f(t,x^*)=x^*(\f(t)), \quad (t,x^*)\in K\times B_{E^*}, \f\in C(K,E).$$
Then $T$ is a linear isometric injection. By the Riesz representation theorem, the space $C(K\times B_{E^*})^*$ is canonically isometric to $M(K\times B_{E^*})$, so the dual mapping $T^*$ may be considered as a mapping
$T^*:M(K\times B_{E^*})\to C(K,E)^*$. So, continuing from the previous paragraph, there is $\nu\in M(K\times B_{E^*})$ such that $\norm{\nu}=\norm{\varphi}$ and $T^*\nu=\varphi$. By the definition of the dual mapping we deduce
$$\varphi(\f)=\int x^*(\f(t))\di\nu(t,x^*),\quad \f\in C(K,E).$$
In particular, for each $x\in E$ and $f\in C(K)$ we have
$$\int_K f\di \mu_x=\varphi(f\cdot x)=\int f(t) x^*(x)\di \nu(t,x^*),$$
so
$$\mu_x(A)= \int_{A\times B_{E^*}} x^*(x)\di \nu(t,x^*),\quad A\subset K\mbox{ Borel}.$$
It follows that
$$\norm{\mu(A)}\le \abs{\nu}(A\times B_{E^*}), \quad A\subset K\mbox{ Borel}.$$
It now easily follows that $\mu$ is $\sigma$-additive, regular and $\norm{\mu}\le \norm{\nu}=\norm{\varphi}$. Moreover,
$$\varphi(f\cdot x)=\int f\cdot x\di\mu, \quad f\in C(K), x\in E.$$
Since these functions are linearly dense in $C(K,E)$, we conclude that $\mu$ represents $\varphi$ in the 
sense of the previous section.

Finally, we may  interpret $T^*$ as a mapping $T^*:M(K\times B_{E^*})\to M(K,E^*)$. By the construction we
have
\begin{equation}\label{eq:hustad}    
T^*\nu(A)(x)=\int_{A\times B_{E^*}} x^*(x)\di \nu(t,x^*),\quad A\subset K\mbox{ Borel}, x\in E.\end{equation}
Hence $T^*$ in this representation coincides with the Hustad mapping used in \cite{batty-vector} and elsewhere.

\subsection{Batty's correspondences}\label{s:dualita}

In this section we briefly recall the canonical correspondences established in \cite[Section 2]{batty-vector} and used then in the procedure of the `transference of measures'. One of the basic tools for these correspondences is the following lemma which is repeatedly implicitly used in the proofs in \cite{batty-vector}.

\begin{lemma}\label{L:HB}
     Let $X$ be a real locally convex space and let $p:X\to\er$ be a lower semicontinuous sublinear functional. Then $$p(x)=\sup\{f(x)\setsep f\in X^*, f\le p\}, \quad x\in X.$$
     The sup cannot be replaced by max.
\end{lemma}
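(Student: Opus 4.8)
The statement to prove is Lemma~\ref{L:HB}: for a lower semicontinuous sublinear functional $p$ on a real locally convex space $X$,
$$p(x)=\sup\{f(x)\setsep f\in X^*,\ f\le p\},\quad x\in X,$$
and the supremum cannot in general be replaced by a maximum.

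The plan is to prove the nontrivial inequality $p(x_0)\le \sup\{f(x_0)\setsep f\in X^*,\ f\le p\}$ for a fixed $x_0\in X$ (the reverse inequality is immediate, since every admissible $f$ satisfies $f(x_0)\le p(x_0)$). The obvious approach is to produce, for each real $\alpha<p(x_0)$, a continuous linear functional $f$ with $f\le p$ and $f(x_0)\ge\alpha$; taking the supremum over such $\alpha$ then finishes the argument.

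First I would apply the algebraic Hahn-Banach theorem on the one-dimensional subspace $Y=\span\{x_0\}$. On $Y$ define the linear functional $g(tx_0)=t\,p(x_0)$. Sublinearity and positive homogeneity of $p$ give $g\le p$ on $Y$ (checking the cases $t\ge 0$ and $t<0$ separately, using $p(tx_0)=tp(x_0)$ for $t\ge0$ and $p(-x_0)\ge -p(x_0)$ which follows from $0=p(0)\le p(x_0)+p(-x_0)$). By Hahn-Banach there is a linear extension $f:X\to\er$ with $f\le p$ everywhere and $f(x_0)=p(x_0)$. The remaining and main point is continuity: I must show such an $f$ can be chosen continuous. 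This is where lower semicontinuity of $p$ enters, and it is the key obstacle, since an algebraic Hahn-Banach extension need not be continuous.

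For continuity I would argue as follows. The set $U=\{x\in X\setsep p(x)<1\}$ is, by lower semicontinuity of $p$, an open, convex, balanced-in-the-positive-direction neighborhood of $0$ (here I use $p(0)=0<1$, convexity from sublinearity, and openness from lower semicontinuity). Since $f\le p$, we have $f<1$ on $U$, so $f$ is bounded above on a neighborhood of $0$; a standard fact then yields that a linear functional bounded above on a neighborhood of the origin is continuous. Rather than extend with exact value $p(x_0)$, for a chosen $\alpha<p(x_0)$ I would instead work with the functional giving value $\alpha$ on $Y$, which is dominated by a continuous sublinear functional (a suitable Minkowski-type gauge derived from $U$) that still majorizes $p$ near the relevant direction; the continuous Hahn-Banach theorem for locally convex spaces then delivers a continuous $f\in X^*$ with $f\le p$ and $f(x_0)\ge\alpha$. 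Letting $\alpha\uparrow p(x_0)$ gives the claimed supremum.

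For the final assertion that the supremum cannot be replaced by a maximum, I would exhibit a concrete example. Take $X=\er^2$ (or any space where this is visible) and let $p$ be a lower semicontinuous sublinear functional whose unit ball $\{p\le1\}$ has a ``corner'' or is not closed in the relevant direction, so that no single continuous linear functional attains the value $p(x_0)$ at a fixed $x_0$ while staying below $p$ everywhere; a standard choice is a $p$ arising as the support functional of a convex set that is open or has an exposed face only in the limit. I would briefly describe such a $p$ and point out that the value $p(x_0)$ is approached by the dominated functionals but never attained, completing the proof.
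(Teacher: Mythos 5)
There are two genuine gaps here, and both stem from underestimating what lower semicontinuity does and does not give you. First, your continuity argument fails: for a merely lower semicontinuous $p$, the set $U=\{x\in X\setsep p(x)<1\}$ need \emph{not} be open, and need not contain any neighbourhood of $0$ (lower semicontinuity makes the sets $\{p\le c\}$ closed, not the sets $\{p<c\}$ open; openness of $U$ would require \emph{upper} semicontinuity at $0$). A concrete instance: $X=\ell^2$ with the weak topology and $p(x)=\norm{x}$ is weakly lower semicontinuous, but $\{p<1\}$ has empty weak interior, so no linear functional is "bounded above on a neighbourhood of $0$" via this set. Consequently the Minkowski-gauge repair you sketch has no continuous gauge to work with, and the phrase "majorizes $p$ near the relevant direction" points in the wrong direction anyway: to conclude $f\le p$ from Hahn--Banach you need a continuous sublinear $q$ with $q\le p$ globally, and producing such a $q$ with $q(x_0)$ close to $p(x_0)$ is essentially the lemma itself. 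Note also that your first plan --- a continuous extension with $f(x_0)=p(x_0)$ exactly --- would, if it worked, prove that the supremum is always attained, contradicting the second assertion of the lemma. The paper avoids all of this by separating the point $(x,c)$, $c<p(x)$, from the \emph{epigraph} $A=\{(y,t)\setsep t\ge p(y)\}$ in $X\times\er$; lower semicontinuity enters only to make $A$ closed, which is exactly what it gives, and positive homogeneity then forces the separating functional to satisfy $-f\le p$ and $-f(x)>c$.

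Second, the counterexample cannot live in $\er^2$ or any finite-dimensional space: a finite-valued sublinear functional on $\er^n$ is automatically continuous, its subdifferential at every point is nonempty and compact, and $p(x)=\max\{f(x)\setsep f\le p\}$ always holds (it is the support function of a compact convex set). Also, $\{p\le 1\}$ is automatically closed when $p$ is lower semicontinuous, so "not closed in the relevant direction" cannot occur. A genuine counterexample must be infinite-dimensional; the paper takes $X=(Y^*,w^*)$ for a nonreflexive Banach space $Y$, $p=\norm{\cdot}$ (which is weak$^*$ lower semicontinuous), and a functional $y_0^*$ at which the supremum over $f\in(Y^*,w^*)^*=Y$ is the norm but is not attained, by James-type non-attainment.
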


The positive part of this lemma is a (rather standard but non-trivial) consequence of the Hahn-Banach separation theorem. Since we have not found any reference for this result (except for a far more general version \cite[Theorem 2.11]{hypolinear} with a complicated proof) we decided to present here a simple elementary proof, essentially following the argument used in \cite[p. 534]{batty-vector} in a special case.

\begin{proof} Let us start by the negative part. A possible counterexample is given in \cite[Example 2.10]{hypolinear} using a non-complete inner product space. Another possibility is to take $X=(Y^*,w^*)$ where $Y$ is any nonreflexive Banach space, $p(y^*)=\norm{y^*}$ for $y^*\in Y^*$ and a functional $y_0^*\in Y^*$ not attaining the norm.

To prove the positive part fix any $x\in X$ and any $c<p(x)$. Let 
$$A=\{(y,t)\in X\times \er\setsep t\ge p(y)\}.$$
Then $A$ is a closed convex set and $(x,c)\notin A$. Applying the Hahn-Banach separation theorem in $X\times \er$, we find $f\in X^*$ and $d\in\er$ such that
$$f(x)+cd<\inf\{ f(y)+dt\setsep (y,t)\in A\}.$$
Necessarily $d\ge0$, otherwise the right-hand side would be $-\infty$.  It follows that
$$f(x)+cd<\inf\{ f(y)+dp(y)\setsep y\in X\}.$$
Since on the right-hand side we may choose $y=x$, we deduce $d>0$. So, without loss of generality $d=1$. I.e., 
$$\begin{aligned}
f(x)+c&<\inf\{ f(y)+p(y)\setsep y\in X\}=\inf\{ f(ty)+p(ty)\setsep y\in X,t\ge0\}\\&= \inf\{ t(f(y)+p(y))\setsep y\in X,t\ge0\}.\end{aligned}$$
It follows that the right-hand side is either $0$ or $-\infty$. But the second possibility cannot take place, so it must be $0$. It follows $f(x)+c<0$, so $-f(x)>c$. Moreover, $f(y)+p(y)\ge0$ for $y\in X$, so $-f\le p$.
 This completes the proof.  
\end{proof}

We continue by an abstract version of some of the correspondences from \cite{batty-vector}.

\begin{lemma}\label{L:abstract-batty}
  Let $X$ be a (real or complex) Banach space.
  \begin{enumerate}[$(a)$]
      \item  If $U\subset X$ is a nonempty closed convex bounded set, we set
$$p_U(x^*)=\inf \{\Re x^*(x)\setsep x\in U\},\quad x^*\in X^*.$$ 
Then $p_U$ is a weak$^*$ upper semicontinous superlinear functional.
\item If $p:X^*\to\er$ is a weak$^*$ upper semicontinuous superlinear functional, we set 
$$U_p=\{x\in X\setsep \Re x^*(x)\ge p(x^*)\mbox{ for }x^*\in X^*\}.$$
Then $U_p$ is a nonempty closed convex bounded set.
\item If $U\subset X$ is a nonempty closed convex bounded set, then $U_{p_U}=U$.
\item If $p:X^*\to\er$ is a weak$^*$ upper semicontinuous superlinear functional, then $p_{U_p}=p$.
  \end{enumerate}
\end{lemma}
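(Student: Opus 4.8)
The plan is to establish the four assertions in sequence, since parts $(c)$ and $(d)$ assert that the two constructions $U\mapsto p_U$ and $p\mapsto U_p$ are mutually inverse bijections between the stated classes of objects, so I would first verify that each construction lands in the correct class (parts $(a)$ and $(b)$), and then verify the two composition identities.

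For part $(a)$, the superlinearity of $p_U$ (i.e.\ positive homogeneity and superadditivity $p_U(x^*+y^*)\ge p_U(x^*)+p_U(y^*)$) is immediate from the definition as an infimum of the real-linear functionals $x^*\mapsto\Re x^*(x)$ over $x\in U$. Weak$^*$ upper semicontinuity follows because $p_U$ is an infimum of weak$^*$ continuous functions (each $x^*\mapsto\Re x^*(x)$ is weak$^*$ continuous), and an infimum of continuous functions is upper semicontinuous. Boundedness of $U$ guarantees $p_U>-\infty$ everywhere, so $p_U$ is real-valued. For part $(b)$, the set $U_p$ is an intersection of the half-spaces $\{x\setsep\Re x^*(x)\ge p(x^*)\}$, hence closed and convex; boundedness of $U_p$ comes from the finiteness of $p$ together with a uniform bound obtained by testing against a norming family of functionals, and nonemptiness is the point where I expect to apply Lemma~\ref{L:HB}.

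The heart of the argument, and the main obstacle, is part $(d)$, which is essentially a Hahn--Banach duality statement: one must show that a weak$^*$ upper semicontinuous superlinear $p$ is recovered as the infimum of $\Re x^*(x)$ over exactly those $x$ lying in every defining half-space. The inequality $p_{U_p}\ge p$ is trivial from the definition of $U_p$. For the reverse inequality $p_{U_p}(x^*)\le p(x^*)$, I would apply Lemma~\ref{L:HB} to the functional $-p$, which is lower semicontinuous and sublinear on the real locally convex space $(X^*,w^*)$; this expresses $-p(x^*)$ as a supremum of weak$^*$-continuous linear functionals dominated by $-p$, and every weak$^*$-continuous linear functional on $X^*$ is evaluation at some $x\in X$ (using that for complex $X$ one passes through the real part via the identifications recalled in the subsection on real and complex Banach spaces). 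Each such $x$ satisfying $-\Re x^*(x)\le -p(x^*)$ for all $x^*$ lies in $U_p$, so the supremum is witnessed inside $U_p$, giving the needed bound.

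Finally, for part $(c)$ the inclusion $U\subset U_{p_U}$ is immediate since every $x\in U$ satisfies $\Re x^*(x)\ge p_U(x^*)$ by definition of $p_U$. The reverse inclusion $U_{p_U}\subset U$ is where convexity and closedness of $U$ are used: if $x_0\notin U$, a Hahn--Banach separation in $X$ produces some $x^*\in X^*$ with $\Re x^*(x_0)<\inf_{x\in U}\Re x^*(x)=p_U(x^*)$, so $x_0\notin U_{p_U}$; here I would work with the real-linear form $\Re x^*$ and use the standard correspondence between real-linear continuous forms and elements of $X^*$ in the complex case. The routine bookkeeping of positive homogeneity, the passage $\Re$ versus full linearity, and the verification that the various infima and suprema are attained or at least finite are all that remains, and none of these presents difficulty beyond the single separation argument already isolated in Lemma~\ref{L:HB}.
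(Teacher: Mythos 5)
Your proposal is correct and follows essentially the same route as the paper: parts $(a)$ and $(c)$ are handled identically, and both the nonemptiness of $U_p$ in $(b)$ and the identity in $(d)$ rest on Lemma~\ref{L:HB} applied to $-p$, with weak$^*$-continuous real-linear functionals on $X^*$ identified with evaluations $\Re x^*(x)$ for $x\in X$. The one point where the paper is more explicit is the boundedness of $U_p$ in $(b)$: the pointwise bounds $p(x^*)\le \Re x^*(x)\le -p(-x^*)$ (together with $\Im x^*(x)=\Re(-ix^*)(x)$ in the complex case) only yield boundedness of $U_p$ after an application of the uniform boundedness principle, a step that your phrase about testing against a norming family of functionals glosses over.
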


\begin{proof}
    Assertion $(a)$ is obvious. Let us continue by proving $(b)$. It is clear that $U_p$ is closed and convex. Further, $U_p\ne\emptyset$ by Lemma~\ref{L:HB} applied to $-p$. To prove it is bounded, observe that for each $x\in U_p$ and $x^*\in X^*$ we have
$$p(x^*)\le \Re x^*(x) = -\Re (-x^*)(x)\le -p(-x^*).$$
In case $\ef=\ce$ we also have  $\Im x^*(x)=\Re (-ix^*)(x)$. So, in any case the set
$\{x^*(x)\setsep x\in U_p\}$ is bounded for each $x^*\in X^*$. By the uniform boundedness principle we deduce that $U_p$ is bounded.

$(c)$:  Obviously $U\subset U_{p_U}$. Conversely, if $x\notin U$, by the separation theorem there is $x^*\in X^*$ such that
$$\Re x^*(x) < \inf \{ \Re x^*(y)\setsep y\in U\}=p_U(x^*),$$
so $x\notin U_{p_U}$.

Assertion $(d)$ follows from Lemma~\ref{L:HB} applied to $-p$.
\end{proof}

Now we pass to the correspondences related to $C(K,E)$. Recall that $M(K,E^*)$ is canonically isometric to the dual of $C(K,E)$, so it is equipped with the related weak$^*$ topology. 
We consider the following four families:
    $$\begin{aligned}
      \A&=\{U\subset C(K,E)\setsep U\mbox{ is nonempty closed bounded and $C(K)$-convex}\},\\
      \B&=\{p:M(K,E^*)\to\er\setsep p\mbox{ is weak$^*$ upper semicontinuous and superlinear,}
      \\&\qquad\qquad\qquad\qquad
      p(\mu_1+\mu_2)=p(\mu_1)+p(\mu_2)\mbox{ whenever }\mu_1\perp\mu_2\},\\
      \C&=\{\psi:K\to 2^E\setsep \psi\mbox{ is lower semicontinuous, bounded }\\&\qquad\qquad\qquad\qquad\mbox{and has nonempty closed convex values}\}, \\
      \D&=\{f:K\times E^*\to\er\setsep f|_{K\times B_{E^*}}\mbox{ is upper semicontinuous and bounded,}\\&\qquad\qquad\qquad\qquad f(t,\cdot)\mbox{ is superlinear for each }t\in K\}.
    \end{aligned}$$
Note that $U\subset C(K,E)$ is $C(K)$-convex if $h\f+(1-h)\g\in U$ whenever $\f,\g\in U$
and $h\in C(K)$ satisfies $0\le h\le 1$.

\begin{prop}
    Let $K$ be a compact space and let $E$ be a Banach space. The above-defined families $\A,\B,\C,\D$ are in compatible bijective correspondences:
    $$ \xymatrix@C+1pc{ \B  \ar@<.5ex>[r]^{p\mapsto U_p} & \A \ar@<.5ex>[l]^{U\mapsto p_U} \ar@<.5ex>[r]^{U\mapsto\psi_U}& \C \ar@<.5ex>[l]^{\psi\mapsto U_\psi} \ar@<.5ex>[r]^{\psi\mapsto f_\psi} & \D \ar@<.5ex>[l]^{f\mapsto \psi_f}}
    $$
   The correspondences between $\A$ and $\B$ are given by the formulas from Lemma~\ref{L:abstract-batty}, the remaining ones are given by the formulas
   $$\begin{aligned}
       \psi_U(t)&=\overline{\{\f(t)\setsep \f\in U\}}=\{\f(t)\setsep \f\in U\}, \quad t\in K, U\in\A,
       \\ U_\psi&=\mbox{all continuous selections from }\psi,\quad \psi\in\C,    \\
       f_\psi(t,x^*)&=\inf \{\Re x^*(x)\setsep x\in \psi(t)\},\quad (t,x^*)\in K\times E^*,\psi\in \C,\\ \psi_f(t)&=\{ x\in E\setsep \Re x^* \ge f(t,\cdot)\},\quad t\in K, f\in\D.
   \end{aligned}$$
\end{prop}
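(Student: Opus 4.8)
The plan is to establish the chain $\B\leftrightarrow\A\leftrightarrow\C\leftrightarrow\D$ one link at a time, in each case verifying that the two displayed maps take values in the asserted families and are mutually inverse; once this is done the compatibility of the diagram is just the assertion that the labelled pairs of arrows are two-sided inverses. The outer links are instances of Lemma~\ref{L:abstract-batty}. For $\A\leftrightarrow\B$ I apply that lemma with $X=C(K,E)$, so $X^*=M(K,E^*)$ and the pairing is $\mu(\f)=\int\f\di\mu$; parts $(c)$ and $(d)$ already give that $U\mapsto p_U$ and $p\mapsto U_p$ are mutually inverse bijections between nonempty closed bounded convex subsets of $C(K,E)$ and weak$^*$ upper semicontinuous superlinear functionals on $M(K,E^*)$. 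The only new content is to match the two additional constraints, i.e.\ to prove that $U$ is $C(K)$-convex precisely when $p_U$ is additive on mutually singular measures. Likewise $\C\leftrightarrow\D$ is Lemma~\ref{L:abstract-batty} applied pointwise, with $X=E$ and $t\in K$ fixed: then $f_\psi(t,\cdot)=p_{\psi(t)}$ and $\psi_f(t)=U_{f(t,\cdot)}$, so the pointwise bijection is immediate and the work is to match the global regularity, namely that lower semicontinuity of $\psi$ corresponds to upper semicontinuity of $f$ on $K\times B_{E^*}$ and that the two boundedness conditions agree.

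For the central link $\A\leftrightarrow\C$ I expect the principal difficulties. Given $U\in\A$ I would first check that $\psi_U(t)=\{\f(t)\setsep\f\in U\}$ is already norm-closed, so the closure in the stated formula is redundant: if $\f_n(t)\to y$ with $\f_n\in U$, one produces a single member of $U$ taking the value $y$ at $t$ by forming $C(K)$-convex combinations of the $\f_n$ localized near $t$ and invoking closedness of $U$. Convexity and boundedness of the values are clear, and lower semicontinuity of $\psi_U$ is immediate from continuity of the members of $U$. The reverse identity $\psi_{U_\psi}=\psi$ asserts that the continuous selections of $\psi$ exhaust each value $\psi(t)$; this is exactly a continuous-selection statement, and here I would invoke Michael's selection theorem (in the form producing a continuous selection through a prescribed point $(t,y)$ with $y\in\psi(t)$), whose hypotheses — a lower semicontinuous carrier with nonempty closed convex values on a compact, hence paracompact, domain, into a Banach space — are met by $\psi\in\C$. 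This is the step I expect to be the main obstacle. The remaining identity $U_{\psi_U}=U$ reduces to showing that every $\g\in C(K,E)$ with $\g(t)\in\psi_U(t)$ for all $t$ belongs to $U$: for each $t$ pick $\f\in U$ with $\f(t)=\g(t)$, whence $\f$ is uniformly close to $\g$ on a neighbourhood of $t$; extract a finite subcover, glue the corresponding functions by a subordinate partition of unity — a finite $C(K)$-convex combination, hence a member of $U$ — and let the mesh shrink, so that $\g$ is a uniform limit of elements of $U$ and closedness finishes it.

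Finally I return to the extra constraint in $\A\leftrightarrow\B$. If $U$ is $C(K)$-convex and $\mu_1\perp\mu_2$ are carried by disjoint Borel sets, superadditivity of $p_U$ gives $p_U(\mu_1+\mu_2)\ge p_U(\mu_1)+p_U(\mu_2)$; for the reverse inequality I choose near-optimal $\f,\g\in U$ for $\mu_1,\mu_2$, use regularity of $\abs{\mu_1},\abs{\mu_2}$ and Urysohn's lemma to find $h\in C(K)$, $0\le h\le1$, which is close to $1$ on the mass of $\mu_1$ and close to $0$ on that of $\mu_2$, and note that $h\f+(1-h)\g\in U$ pairs almost optimally with $\mu_1+\mu_2$ (boundedness of $U$ controlling the error). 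For the converse, writing $\int h\f\di\mu=\int\f\di(h\mu)$ shows that $h\f+(1-h)\g\in U_p$ for $\f,\g\in U_p$ comes down to $p(h\mu)+p((1-h)\mu)\ge p(\mu)$. I obtain this first for $h=\chi_A$, where $h\mu\perp(1-h)\mu$ and orthogonal additivity gives equality, then for simple $[0,1]$-valued $h$ by positive homogeneity and orthogonal additivity, and finally for continuous $h$ by uniform approximation of $h$ by such simple functions (whence $h\mu$ and $(1-h)\mu$ converge in total variation) together with weak$^*$ upper semicontinuity of $p$ applied to the limit superior. The interplay of the merely upper semicontinuous $p$ with these approximation and gluing steps is the technical heart of this half, though I expect it to be milder than the selection-theorem step.
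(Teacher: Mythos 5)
Your plan coincides with the route the paper itself takes: the paper does not reprove this proposition but cites \cite[Theorem 2.1]{batty-vector} and sketches exactly your three links --- the abstract correspondence of Lemma~\ref{L:abstract-batty} for $\A\leftrightarrow\B$ (with the only new content being the match between $C(K)$-convexity and orthogonal additivity), Michael's selection theorem for $\A\leftrightarrow\C$, and a pointwise application of Lemma~\ref{L:abstract-batty} for $\C\leftrightarrow\D$. Your treatment of the first two links is sound and is essentially Batty's argument; in particular the gluing arguments for $U_{\psi_U}=U$ and for the orthogonal additivity of $p_U$, and the approximation-by-simple-functions argument for the converse, all go through.

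The one genuine soft spot is the link $\C\leftrightarrow\D$, which you dispose of as ``matching the global regularity.'' The pointwise identification $\psi_f(t)=U_{f(t,\cdot)}$ gives nonempty, closed, convex, uniformly bounded values, but it does \emph{not} give lower semicontinuity of $t\mapsto\psi_f(t)$, and this is the hard direction. A naive attempt would argue: for $t$ near $t_0$ one has $f(t,\cdot)\le f(t_0,\cdot)+\ep$ on $B_{E^*}$, hence any $x\in\psi_f(t_0)$ satisfies $\Re x^*(x)\ge f(t,x^*)-\ep\norm{x^*}$, and then one perturbs $x$ by at most $C\ep$ to get into $\psi_f(t)$. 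Both steps fail as stated: joint upper semicontinuity of $f$ on $K\times B_{E^*}$ does not yield the uniform-in-$x^*$ estimate $f(t,\cdot)\le f(t_0,\cdot)+\ep$ (usc at $(t_0,x_0^*)$ controls $f(t,x^*)$ by $f(t_0,x_0^*)+\ep$, not by $f(t_0,x^*)+\ep$, and $f(t_0,\cdot)$ is only usc, not continuous); and the perturbation step is precisely the \emph{uniform} (quantitative) version of Lemma~\ref{L:HB} that the paper explicitly says is needed here and that the pointwise lemma does not provide. So this link requires a genuinely new ingredient that your proposal neither supplies nor names; everything else in the proposal is in order.
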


This proposition is proved in \cite[Theorem 2.1]{batty-vector}. Let us briefly comment it. The proof of the correspondence between $\A$ and $\B$ is based on the fact that the abstract correspondence from Lemma~\ref{L:abstract-batty} maps $\A$ into $\B$ and vice versa. The proof of the correspondence between $\A$ and $\C$ uses, among others, Michael's selection theorem. Finally, in the proof of the correspondence between $\C$ and $\D$ a uniform version of Lemma~\ref{L:HB} is used to show that $\D$ is mapped into $\C$ and then Lemma~\ref{L:abstract-batty} is used for any fixed $t\in K$.

The main application of the above correspondences is the resulting correspondence between $\B$ and $\D$ (see \cite[p. 535]{batty-vector}):

\begin{cor}\label{cor:f-p}
    The resulting correspondence between $\B$ and $\D$ is provided by the formulas
    $$\begin{aligned}
       f_p(t,x^*)&=p(\ep_t\otimes x^*), \quad (t,x^*)\in K\times E^*, p\in \B,\\
       p_f(\mu)&=\inf \Big\{ \Re \int\g\di\mu\setsep \g\in C(K,E), \Re x^*(\g(t))\ge f(t,x^*)\\&\qquad\qquad\qquad\qquad\mbox{ for }(t,x^*)\in K\times E^*\Big\}, \quad \mu\in M(K,E^*), f\in\D.
    \end{aligned}$$
\end{cor}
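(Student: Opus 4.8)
The plan is to read off both formulas by composing the three bijections $\B\leftrightarrow\A\leftrightarrow\C\leftrightarrow\D$ supplied by the Proposition and then simplifying; no new analytic input is required, since all the hard work (Michael's selection theorem, the Hahn--Banach arguments of Lemma~\ref{L:abstract-batty} and Lemma~\ref{L:HB}) is already packaged into that Proposition. The single computational ingredient I need is an identity relating the outermost arrow $\C\to\D$ to the innermost arrow $\A\to\B$; everything else is bookkeeping together with parts $(c)$--$(d)$ of Lemma~\ref{L:abstract-batty}.

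The key observation is that for every $U\in\A$ and every $(t,x^*)\in K\times E^*$,
$$f_{\psi_U}(t,x^*)=p_U(\ep_t\otimes x^*).$$
Indeed, unwinding $\psi_U(t)=\{\f(t)\setsep\f\in U\}$ gives $f_{\psi_U}(t,x^*)=\inf\{\Re x^*(\f(t))\setsep\f\in U\}$, while the $C(K,E)$-instance of Lemma~\ref{L:abstract-batty}$(a)$ reads $p_U(\mu)=\inf\{\Re\int\f\di\mu\setsep\f\in U\}$; evaluating at $\mu=\ep_t\otimes x^*$ and using $\int\f\di(\ep_t\otimes x^*)=x^*(\f(t))$ produces the very same infimum. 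Here I use that $C(K)$-convexity forces ordinary convexity (take $h$ constant), so that $p_U$ is legitimately defined on the convex bounded closed set $U$.

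For the first formula I would fix $p\in\B$ and follow the chain $p\mapsto U_p\mapsto\psi_{U_p}\mapsto f_{\psi_{U_p}}$, so that $f_p=f_{\psi_{U_p}}$. Applying the displayed identity with $U=U_p$ and then invoking $p_{U_p}=p$ (Lemma~\ref{L:abstract-batty}$(d)$, applicable because every $p\in\B$ is weak$^*$ upper semicontinuous and superlinear) yields $f_p(t,x^*)=p_{U_p}(\ep_t\otimes x^*)=p(\ep_t\otimes x^*)$, as desired.

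For the second formula I would start from $f\in\D$ and trace $f\mapsto\psi_f\mapsto U_{\psi_f}\mapsto p_{U_{\psi_f}}$, giving $p_f=p_{U_{\psi_f}}$. By definition $U_{\psi_f}$ is the set of continuous selections of $\psi_f$, i.e.\ those $\g\in C(K,E)$ with $\g(t)\in\psi_f(t)$ for all $t$, and the membership $\g(t)\in\psi_f(t)=\{x\in E\setsep \Re x^*\ge f(t,\cdot)\}$ unwinds precisely to $\Re x^*(\g(t))\ge f(t,x^*)$ for all $(t,x^*)\in K\times E^*$. Substituting this description into $p_U(\mu)=\inf\{\Re\int\g\di\mu\setsep\g\in U\}$ reproduces exactly the claimed infimum formula. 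I expect the one point deserving a word of care to be the genuineness of these infima, i.e.\ that $U_p$ and $U_{\psi_f}$ are nonempty; this is already guaranteed by the Proposition, which asserts the composites land in $\A$. It is worth stressing why routing through the chain is the efficient strategy: a direct proof of $f_p(t,x^*)=p(\ep_t\otimes x^*)$ would give one inequality cheaply but would require manufacturing near-optimal continuous selections in $U_p$ for the reverse inequality, which is precisely the difficulty that the selection-theoretic correspondence $\A\leftrightarrow\C$ has already absorbed.
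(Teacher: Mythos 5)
Your proposal is correct and is essentially the argument the paper intends: the corollary is presented precisely as ``the resulting correspondence'' obtained by composing the three bijections of the Proposition (with a citation to Batty in place of a written proof), and your unwinding of the chain $\B\to\A\to\C\to\D$ via the identity $f_{\psi_U}(t,x^*)=p_U(\ep_t\otimes x^*)$ together with Lemma~\ref{L:abstract-batty}$(d)$ and the description of $U_{\psi_f}$ as the continuous selections is exactly that composition made explicit. No gaps.
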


Functions from $\D$ are determined by their restrictions to $K\times B_{E^*}$, therefore we will often identify $f\in \D$ with its restriction.
Note that both $\B$ and $\D$ are convex cones. Let us look at $\B\cap(-\B)$ and $\D\cap(-\D)$. We have the following:

\begin{prop}\label{P:BD-BD} \
    \begin{enumerate}[$(a)$]
        \item $\B\cap(-\B)=\{\mu\mapsto \Re\int\f\di\mu\setsep \f\in C(K,E)\}$, so $\B\cap(-\B)$ is in a canonical real-linear bijective correspondence with $C(K,E)$;
        \item $\D\cap(-\D)=\{\Re T\f\setsep \f\in C(K,E)\}$;
        \item the correspondence $p\mapsto f_p$ restricted to $\B\cap(-\B)$ coincides with the operator $\f\mapsto \Re T\f$.
    \end{enumerate}
\end{prop}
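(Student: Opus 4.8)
The families $\B$ and $\D$ are convex cones (as noted before the statement), so $\B\cap(-\B)$ and $\D\cap(-\D)$ are the largest real-linear subspaces they contain, consisting of those $p$ for which both $p$ and $-p$ belong to the cone. The guiding principle for all three parts is that this two-sided membership collapses the one-sided defining conditions into genuine linearity and continuity: if $p$ and $-p$ are both superadditive then $p$ is additive, and together with positive homogeneity this makes $p$ real-linear; likewise, if $p$ and $-p$ are both upper semicontinuous then $p$ is continuous. Thus the proposition reduces to identifying the weak$^*$-continuous real-linear objects, which is exactly what the duality $C(K,E)^*=M(K,E^*)$ and the weak$^*$-duality for $E^*$ supply.

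For part $(a)$ I would take $p\in\B\cap(-\B)$; by the principle above $p$ is a weak$^*$-continuous real-linear functional on $M(K,E^*)=C(K,E)^*$. In the real case these are precisely the evaluations $\mu\mapsto\int\f\di\mu$ at elements $\f\in C(K,E)$. In the complex case I pass to the complex-linear functional $\phi(\mu)=p(\mu)-ip(i\mu)$, which is again weak$^*$-continuous and hence equals $\mu\mapsto\int\f\di\mu$ for a unique $\f\in C(K,E)$; then $p=\Re\phi$ is $\mu\mapsto\Re\int\f\di\mu$ (the standard real-versus-complex identification recalled in the Preliminaries). Conversely each $\mu\mapsto\Re\int\f\di\mu$ lies in $\B\cap(-\B)$, being weak$^*$-continuous and real-linear. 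The assignment $\f\mapsto(\mu\mapsto\Re\int\f\di\mu)$ is evidently real-linear, and it is injective since testing against $\mu=\ep_t\otimes x^*$ returns $\Re x^*(\f(t))$ for all $(t,x^*)$, which determines $\f$; this yields the stated bijection.

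Part $(b)$ is the technical heart. Let $f\in\D\cap(-\D)$. For each fixed $t\in K$ the slice $f(t,\cdot)$ is real-linear on $E^*$ (superadditive both ways plus homogeneity) and, being upper and lower semicontinuous on $B_{E^*}$, is weak$^*$-continuous there. The key step is to upgrade this to weak$^*$-continuity on all of $E^*$: by the classical fact (a form of the Krein--\v{S}mulyan / Banach--Dieudonn\'e theorem) that a linear functional on a dual Banach space is weak$^*$-continuous whenever its restriction to $B_{E^*}$ is, we obtain a unique $\g(t)\in E$ with $f(t,x^*)=\Re x^*(\g(t))$. It remains to show $\g\in C(K,E)$, and this is where I expect the only real work. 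Boundedness of $f$ on $K\times B_{E^*}$ gives $\norm{\g(t)}=\sup_{x^*\in B_{E^*}}\Re x^*(\g(t))<\infty$, and for $s,t\in K$,
$$\norm{\g(s)-\g(t)}=\sup_{x^*\in B_{E^*}}\Re x^*(\g(s)-\g(t))\le\sup_{x^*\in B_{E^*}}\abs{f(s,x^*)-f(t,x^*)}.$$
Using the canonical isometry $C(K\times B_{E^*})\cong C(K,C(B_{E^*}))$, the continuous function $f|_{K\times B_{E^*}}$ corresponds to a norm-continuous map $t\mapsto f(t,\cdot)\in C(B_{E^*})$, so the right-hand side tends to $0$ as $s\to t$; hence $\g$ is norm-continuous and $f=\Re T\g$. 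The reverse inclusion is immediate: for $\f\in C(K,E)$ the function $\Re T\f$ restricts to a continuous bounded function on $K\times B_{E^*}$ and is real-linear in the second variable, so $\Re T\f\in\D\cap(-\D)$.

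Finally, part $(c)$ is a one-line verification once $(a)$ is in place: if $p\in\B\cap(-\B)$ corresponds to $\f$, i.e. $p(\mu)=\Re\int\f\di\mu$, then by the definition of $f_p$ and the formula $\int\f\di(\ep_t\otimes x^*)=x^*(\f(t))$ from the Preliminaries we obtain $f_p(t,x^*)=p(\ep_t\otimes x^*)=\Re x^*(\f(t))=\Re T\f(t,x^*)$, which is exactly the asserted identity.
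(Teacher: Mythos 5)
Your proposal is correct and follows essentially the same route as the paper: the paper's proof merely states that $\B\cap(-\B)$ consists of the real-linear weak$^*$ continuous functionals and that elements of $\D\cap(-\D)$ are continuous and real-linear in the second variable, and then gives exactly your computation for $(c)$. You have simply filled in the details the paper leaves implicit (the real/complex identification, the Krein--\v{S}mulyan upgrade from $B_{E^*}$ to $E^*$, and the norm-continuity of the selection $\g$), all correctly.
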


\begin{proof}
    $(a)$: By definition, $\B\cap(-\B)$ consists of real-valued real-linear weak$^*$ continuous functionals on $M(K,E^*)$. Thus the assertion follows.

    $(b)$: Elements of $\D\cap(-\D)$ are continuous on $K\times B_{E^*}$ and real-linear in the second variable. Thus the assertion follows.

    $(c)$: This follows from $(a)$ and Corollary~\ref{cor:f-p}. Indeed, assume $p(\mu)=\Re\int\f\di\mu$ for some $\f\in C(K,E)$. Then
    $$f_p(t,x^*)=p(\ep_t\otimes x^*)=\Re x^*(\f(t))=\Re T\f(t,x^*).$$
\end{proof}

\subsection{Disintegration of complex measures on compact spaces}\label{ssec:disint} 

In this section we include basic results on disintegration of measures on products of compact spaces. 
Our basic source is \cite[Chapter 452]{fremlin4}. Usually the disintegration is applied to positive measures. We start by a lemma showing that this method may be easily adapted to complex measures.

\begin{lemma}\label{L:dezintegrace}
    Let $K$ and $L$ be two compact Hausdorff spaces and let $\nu$ be a complex Radon measure on $K\times L$. Denote by $\sigma$ the projection of the absolute variation $\abs{\nu}$ to $K$. Then there is an indexed family $(\nu_t)_{t\in K}$ of complex Radon measures on $L$ such that the following conditions are satisfied.
    \begin{enumerate}[$(i)$]
        \item $\norm{\nu_t}=1$ for each $t\in K$.
        \item For each continuous function $f:K\times L\to \ce$  we have
        $$\int_{K\times L} f\di\nu=\int_K\left(\int_L f(t,z)\di\nu_t(z)\right)\di\sigma(t).$$
        In particular, for any such $f$ the function
        $$t\mapsto \int_L f(t,z)\di\nu_t(z)$$
        is $\sigma$-measurable.
        \item If $A\subset K$ and $B\subset L$ are Borel sets, then
        $$\nu(A\times B)=\int_A \nu_t(B) \di\sigma(t).$$
        In particular, the mapping $t\mapsto \nu_t(B)$ is $\sigma$-measurable whenever $B\subset L$ is Borel.
    \end{enumerate}
    Moreover, if $\nu$ is positive, then all $\nu_t$ may be chosen to be probability measures.
   If $\nu$ is real-valued, all $\nu_t$ may be chosen to be real-valued.
\end{lemma}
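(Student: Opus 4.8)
The plan is to reduce everything to the standard disintegration theorem for \emph{positive} measures, which applies to $\abs{\nu}$, and then to twist the resulting probability kernel by the unimodular Radon--Nikod\'ym density of $\nu$ with respect to $\abs{\nu}$.

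First I would apply the positive disintegration theorem of \cite[Chapter 452]{fremlin4} to the positive Radon measure $\abs{\nu}$ on $K\times L$, whose projection to $K$ is exactly $\sigma$. This yields a family $(\lambda_t)_{t\in K}$ of Radon probability measures on $L$ such that $t\mapsto \int_L F(t,z)\di\lambda_t(z)$ is $\sigma$-measurable and
$$\int_{K\times L} F\di\abs{\nu}=\int_K\left(\int_L F(t,z)\di\lambda_t(z)\right)\di\sigma(t)$$
for every continuous $F$, and hence (by a routine monotone-class / dominated-convergence argument) for every bounded Borel $F$ on $K\times L$. On the $\sigma$-null set of exceptional $t$ for which no kernel is provided I simply fix $\lambda_t$ to be an arbitrary Radon probability measure on $L$; this affects none of the integrals above. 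Next I invoke the polar decomposition of the complex measure $\nu$: since $\nu$ is absolutely continuous with respect to $\abs{\nu}$, there is a Borel function $g:K\times L\to\ce$ with $\nu(S)=\int_S g\di\abs{\nu}$ for every Borel $S$ and with $\abs{g}=1$ holding $\abs{\nu}$-almost everywhere. Redefining $g$ to equal $1$ on the $\abs{\nu}$-null set $N=\{\abs{g}\neq1\}$, I may assume $\abs{g}\equiv1$ everywhere, which changes no integral taken against $\nu$.

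With these ingredients I define $\di\nu_t=g(t,\cdot)\di\lambda_t$. Because each $\lambda_t$ is a probability measure and $\abs{g}\equiv1$, we get $\norm{\nu_t}=\int_L\abs{g(t,z)}\di\lambda_t(z)=\lambda_t(L)=1$ for \emph{every} $t\in K$, which is $(i)$. The two identities then follow by inserting the correct integrand into the positive disintegration formula. For $(ii)$ and a continuous $f$, applying the bounded-Borel version of the formula to $F=fg$ gives
$$\int_{K\times L} f\di\nu=\int_{K\times L} fg\di\abs{\nu}=\int_K\left(\int_L f(t,z)g(t,z)\di\lambda_t(z)\right)\di\sigma(t)=\int_K\left(\int_L f(t,z)\di\nu_t(z)\right)\di\sigma(t),$$
and the $\sigma$-measurability of the inner integral is inherited from that of $t\mapsto\int_L(fg)(t,\cdot)\di\lambda_t$. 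For $(iii)$ I apply the same formula to $F=\chi_{A\times B}\,g$ (and to $\chi_{K\times B}\,g$ for the measurability of $t\mapsto\nu_t(B)$). The positive case is immediate, since then $\abs{\nu}=\nu$, so $g\equiv1$ and $\nu_t=\lambda_t$; and in the real-valued case the polar density $g$ may be taken with values in $\{-1,1\}$, so the $\nu_t$ are real.

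The step I expect to require the most care is not the computation but securing $\norm{\nu_t}=1$ for \emph{every} $t$, since the polar density $g$ is only unimodular $\abs{\nu}$-almost everywhere and I must check that forcing $\abs{g}\equiv1$ is harmless. The key observation here is that applying the disintegration identity to $\chi_N$ yields $0=\abs{\nu}(N)=\int_K\lambda_t(N_t)\di\sigma(t)$, where $N_t$ is the section of $N$; hence $\lambda_t(N_t)=0$ for $\sigma$-almost every $t$, so on a set of full $\sigma$-measure the modification does not even change $\nu_t$. Meanwhile the clean everywhere-identity $\abs{g}\equiv1$ together with $\lambda_t(L)=1$ guarantees the pointwise normalization $\norm{\nu_t}=1$ for all $t$, including the exceptional ones. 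This careful handling of the null sets is the only genuine obstacle; the rest is bookkeeping around the cited positive disintegration.
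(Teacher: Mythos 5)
Your overall strategy is exactly the paper's: disintegrate the positive measure $\abs{\nu}$ into a probability kernel $(\lambda_t)$ and then twist by a unimodular Radon--Nikod\'ym density of $\nu$ with respect to $\abs{\nu}$. However, there is a genuine gap in the measurability bookkeeping, and you have located the ``only genuine obstacle'' in the wrong place. The disintegration theorem of \cite[Chapter 452]{fremlin4} yields the identity
$\int F\di\abs{\nu}=\int_K\bigl(\int_L F(t,z)\di\lambda_t(z)\bigr)\di\sigma(t)$
only for bounded functions measurable with respect to the \emph{product} $\sigma$-algebra $\Sigma\otimes\Upsilon$ (where $\Sigma$, $\Upsilon$ are the algebras of $\sigma$- and $\lambda$-measurable sets). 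Your ``routine monotone-class argument'' extends this from rectangles only within that product $\sigma$-algebra; when $K$ and $L$ are non-metrizable, the Borel $\sigma$-algebra of $K\times L$ is in general strictly larger, and for a general Borel set $S$ the function $t\mapsto\lambda_t(S_t)$ need not even be $\sigma$-measurable. So the claim that the identity holds ``for every bounded Borel $F$ on $K\times L$'' is unjustified, and this is precisely what you invoke at the two load-bearing points: applying the formula to $F=fg$ (and to $\chi_N$), where $g$ is a polar density that is a priori only Borel (indeed only defined $\abs{\nu}$-a.e.) and need not be $\Sigma\otimes\Upsilon$-measurable.

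The fix is small but essential, and it is what the paper does: instead of taking an arbitrary Borel polar density of $\nu$ with respect to $\abs{\nu}$, restrict both $\nu$ and $\abs{\nu}$ to $\Sigma\otimes\Upsilon$ and take $h$ to be the Radon--Nikod\'ym density of $\nu|_{\Sigma\otimes\Upsilon}$ with respect to $\abs{\nu}|_{\Sigma\otimes\Upsilon}$, normalized so that $\abs{h}=1$ everywhere. Then $h$ is $\Sigma\otimes\Upsilon$-measurable by construction, $fh$ is $\Sigma\otimes\Upsilon$-measurable for every continuous $f$ (continuous functions on $K\times L$ are product-measurable by Stone--Weierstrass, since functions of the form $f(t)g(z)$ are linearly dense in $C(K\times L)$), and the disintegration identity applies legitimately. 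Note also that condition $(iii)$ of the lemma is deliberately stated only for Borel rectangles $A\times B$, which lie in the product $\sigma$-algebra -- the statement itself is calibrated to avoid exactly the overreach your argument makes. With this replacement of $g$ by a product-measurable $h$, the rest of your computation (including the everywhere-normalization $\norm{\nu_t}=1$ and the real/positive cases) goes through as you wrote it.
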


\begin{proof}
    Denote by $\lambda$ the projection of $\abs{\nu}$ to $L$. Then both $\sigma$ and $\lambda$ are Radon measures. Let $\Sigma$ and $\Upsilon$ denote the $\sigma$-algebras of the $\sigma$-measurable and $\lambda$-measurable sets, respectively. Let $\Sigma\otimes\Upsilon$ denote the product $\sigma$-algebra and let 
    $\nu^\prime$ denote the restriction of $\abs{\nu}$ to $\Sigma\otimes\Upsilon$. We apply \cite[Theorem 452M]{fremlin4} to $\nu^\prime$ and get an indexed family $(\nu^0_t)_{t\in K}$ of Radon probabilities on $L$ such that
    \begin{equation}\label{eq:dis}\begin{aligned}
    \int_{K\times L} f\di\abs{\nu}=&\int_K\left(\int_L f(t,z)\di\nu^0_t(z)\right)\di\sigma(t) \\&\mbox{for each }f:K\times L\to \ce\mbox{ bounded $\Sigma\otimes\Upsilon$-measurable}.     \end{aligned}
    \end{equation}

 Let $h$ be the Radon-Nikod\'ym density of $\nu|_{\Sigma\otimes\Upsilon}$ with respect to $\nu^\prime$. Then $h$ is $\Sigma\otimes\Upsilon$-measurable and without loss of generality $\abs{h}=1$ everywhere on $K\times L$. Given $t\in K$, let $\nu_t$ be the measure defined by
    $$\di\nu_t= h(t,\cdot)\di\nu^0_t.$$
Then $\norm{\nu_t}=1$. If $\nu\ge0$, then we may take $h=1$, so $\nu_t=\nu^0_t$ is a probability measure. If $\nu$ is real-valued, $h$ may attain only real values ($1$ and $-1$), hence $\nu_t$ is also real-valued.
 Moreover, if $f:K\times L\to \ce$ is bounded  and $\Sigma\otimes\Upsilon$-measurable, then
    $$\begin{aligned}
   \int f\di\nu&=\int fh\di\abs{\nu}=\int_K\left(\int_L f(t,z)h(t,z)\di\nu^0_t(z)\right)\di\sigma(t)\\&=
    \int_K\left(\int_L f(t,z)\di\nu_t(z)\right)\di\sigma(t), \end{aligned}$$
    where we applied \eqref{eq:dis} to $fh$.

    Then $(iii)$ clearly holds. To prove $(ii)$ it remains to observe that continuous functions are $\Sigma\otimes\Upsilon$-measurable. This is clear for functions of the form
    $$(t,z)\mapsto f(t)g(z)\mbox{ where }f\in C(K), g\in C(L).$$
    By the Stone-Weierstrass theorem such functions are linearly dense in $C(K\times L)$, so we indeed deduce that all continuous functions are $\Sigma\otimes\Upsilon$ measurable. This completes the proof.  
\end{proof}

The indexed family $(\nu_t)_{t\in K}$ provided by the previous lemma will be called a \emph{disintegration kernel of $\nu$}. In case $L$ is metrizable, the disintegration kernel is essentially unique and has some additional properties, collected in the following lemma.

\begin{lemma}\label{L:dezintegrace metriz}
  Let $K,L,\nu,\sigma$ be as in Lemma~\ref{L:dezintegrace}. Assume moreover that $L$ is metrizable.
  Then the following assertions are valid.
  \begin{enumerate}[$(a)$]
      \item Assume that $(\nu_t)_{t\in K}$ is an indexed family of complex Radon measures on $L$ satisfying conditions $(i)$ and $(ii)$ from Lemma~\ref{L:dezintegrace}. Then it is a disintegration kernel of $\nu$.
      \item If $(\nu_t)_{t\in K}$ is a disintegration kernel of $\nu$, then the mapping $t\mapsto \nu_t$ is $\sigma$-measurable as a mapping from $K$ to $(B_{M(K,\ce)},w^*)$.
      \item The disintegration kernel of $\nu$ is uniquely determined up to a set of $\sigma$-measure zero.
  \end{enumerate}
\end{lemma}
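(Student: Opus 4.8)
The plan is to reduce all three assertions to a single structural fact that metrizability of $L$ buys us, and then read off $(a)$, $(b)$, $(c)$ from it. The backbone I would set up first is this: since $L$ is metrizable, $C(L)$ is separable, so fixing a countable dense set $\{g_n\}\subset C(L)$, the space $(B_{M(L,\ce)},w^*)$ is compact \emph{metrizable}, and its Borel $\sigma$-algebra is generated by the countably many weak$^*$-continuous evaluations $\mu\mapsto\int_L g_n\di\mu$ (these separate points of $M(L,\ce)$, and on the bounded ball the weak$^*$ topology is the initial topology they induce). Everything below will be phrased in terms of these countably many scalar functionals.

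With this in hand I would first prove a \emph{comparison lemma}, which is the real engine behind both $(a)$ and $(c)$: if $(\nu_t)$ and $(\nu_t')$ are two families of complex Radon measures on $L$, each satisfying conditions $(i)$ and $(ii)$ of Lemma~\ref{L:dezintegrace}, then $\nu_t=\nu_t'$ for $\sigma$-a.e.\ $t$. To see this I would fix $g\in C(L)$ and apply $(ii)$ to the continuous function $(t,z)\mapsto\phi(t)g(z)$ for an arbitrary $\phi\in C(K)$, obtaining two bounded $\sigma$-measurable functions (bounded by $\norm{g}$, using $\norm{\nu_t}=1$ from $(i)$) linked by the key identity
\[\int_K\phi(t)\Big(\int_L g\di\nu_t\Big)\di\sigma(t)=\int_{K\times L}\phi(t)g(z)\di\nu(t,z)=\int_K\phi(t)\Big(\int_L g\di\nu_t'\Big)\di\sigma(t).\]
Since $\sigma$ is a finite Radon measure on the compact space $K$, a bounded $\sigma$-measurable function integrating to zero against every $\phi\in C(K)$ must vanish $\sigma$-a.e.\ (the associated complex measure annihilates $C(K)$, hence is zero by Riesz uniqueness); thus $\int_L g\di\nu_t=\int_L g\di\nu_t'$ off a $\sigma$-null set $N_g$. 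Collecting $N=\bigcup_n N_{g_n}$, still $\sigma$-null, and invoking density of $\{g_n\}$, for $t\notin N$ the two measures agree on all of $C(L)$, i.e.\ $\nu_t=\nu_t'$.

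Assertion $(c)$ is then immediate, since any two disintegration kernels satisfy $(i)$ and $(ii)$. For $(a)$ I would take an $(i)$–$(ii)$ family $(\nu_t)$ and a genuine disintegration kernel $(\tilde\nu_t)$ furnished by Lemma~\ref{L:dezintegrace} (which also satisfies $(i)$ and $(ii)$); the comparison lemma gives $\nu_t=\tilde\nu_t$ $\sigma$-a.e., and because properties $(ii)$ and $(iii)$ are expressed solely through $\sigma$-integrals of $t\mapsto\nu_t$, they are insensitive to modification on a $\sigma$-null set, so $(\nu_t)$ inherits $(i)$–$(iii)$ and is itself a disintegration kernel. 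For $(b)$ I would note that applying $(ii)$ to $(t,z)\mapsto g(z)$ shows $t\mapsto\int_L g\di\nu_t$ is $\sigma$-measurable for each $g$, in particular for each $g_n$; by the backbone fact this forces $t\mapsto\nu_t$ to be $\sigma$-measurable into $(B_{M(L,\ce)},w^*)$, the membership in the ball being guaranteed by $\norm{\nu_t}=1$.

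The step I expect to be the genuine obstacle is the passage in $(b)$ from \emph{scalar} $\sigma$-measurability of the countably many maps $t\mapsto\int g_n\di\nu_t$ to \emph{vector-valued} Borel measurability of $t\mapsto\nu_t$; this is precisely where metrizability of $L$ must be used, and it rests on the (standard but not entirely free) claim that the Borel $\sigma$-algebra of the metrizable weak$^*$ ball coincides with the one generated by the countable family of evaluations. Once that is pinned down, the remaining arguments are routine finite-measure theory and the uniqueness part of the Riesz representation theorem.
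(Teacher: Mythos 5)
Your proposal is correct and follows essentially the same route as the paper: both hinge on testing $\int_L g\,\di\nu_t$ against arbitrary $\varphi\in C(K)$ via condition $(ii)$ and invoking Riesz uniqueness to get a.e.\ agreement for each fixed $g$, then using separability of $C(L)$ to pass to a single null set, with $(b)$ obtained from second countability (equivalently, your countable generating family of evaluations) of the metrizable weak$^*$ ball. Your explicit remark that condition $(iii)$ survives modification on a $\sigma$-null set is a point the paper leaves implicit, so nothing is missing.
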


\begin{proof}
    Assume that $(\nu_t)_{t\in K}$ is an indexed family of complex Radon measures on $L$ satisfying conditions $(i)$ and $(ii)$ from Lemma~\ref{L:dezintegrace}. Given $f\in C(L)$, the function $(t,z)\mapsto f(z)$ is continuous on $K\times L$, so condition $(ii)$ shows that the function
    $$t\mapsto \int f\di\nu_t$$
    is $\sigma$-measurable. Since $L$ is metrizable, the space $C(L)$ is separable and hence $(B_{C(L)^*},w^*)$ is metrizable, hence second countable. It easily follows that the mapping $t\mapsto \nu_t$ is $\sigma$-measurable. Hence, assertion $(b)$ follows.

    Further, let $(\nu^\prime_t)_{t\in K}$ be another  indexed family of complex Radon measures on $L$ satisfying conditions $(i)$ and $(ii)$ from Lemma~\ref{L:dezintegrace}. Fix $f\in C(L)$.
    As in the previous paragraph we get that the functions
    $$h(t)=\int f\di\nu_t\mbox{ and }h^\prime(t)=\int f\di\nu^\prime_t,\quad t\in K,$$
    are $\sigma$-measurable. Moreover, for any $g\in C(K)$ the function $(t,z)\mapsto g(t)f(z)$ is continuous on $K\times L$ and hence we get (using condition $(ii)$)
    $$\int gh\di\sigma =\int g(t)f(z)\di\nu(t,z)=\int gh^\prime\di\sigma.$$
    Since this holds for any $g\in C(K)$, we deduce that $h=h^\prime$ $\sigma$-almost everywhere.
    I.e., 
    \[
     \int f\di\nu_t= \int f\di\nu^\prime_t\mbox{ for $\sigma$-almost all $t\in K$ whenever }f\in C(L).
    \]
    Since $C(L)$ is separable, we easily deduce that $\nu_t=\nu^\prime_t$ for $\sigma$-almost all $t\in K$. Assertion $(c)$ now easily follows. 
    
    Assertion $(a)$ follows as well. Indeed, it is enough to apply the above reasoning to a family $(\nu_t)_{t\in K}$ satisfying conditions $(i)$ and $(ii)$ and a disintegration kernel $(\nu^\prime_t)_{t\in K}$ which exists due to  Lemma~\ref{L:dezintegrace}.
\end{proof}

We continue by the following lemma which will be used to combine disintegration with separable reduction methods.

\begin{lemma}\label{L:dezintegrace kvocient}
    Let $K,L,\nu,\sigma$ be as in Lemma~\ref{L:dezintegrace}. Assume that $\nu\ge0$. Let $L^\prime$ be a metrizable compact space and let $\varphi:L\to L^\prime$ be a continuous surjection. Let $\nu^\prime=(\id\times \varphi)(\nu)$ be the image of $\nu$ under the mapping $\id\times\varphi$. If $(\nu_t)_{t\in K}$ is a disintegration kernel of $\nu$, then $(\varphi(\nu_t))_{t\in K}$ is the disintegration kernel of $\nu^\prime$.
\end{lemma}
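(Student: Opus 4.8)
The plan is to verify that the family $(\varphi(\nu_t))_{t\in K}$ satisfies conditions $(i)$ and $(ii)$ of Lemma~\ref{L:dezintegrace} with respect to $\nu^\prime$, and then to invoke the metrizable characterization in Lemma~\ref{L:dezintegrace metriz}$(a)$ to conclude it is the disintegration kernel of $\nu^\prime$. Before that I would check that the relevant base measure is unchanged: since the coordinate projection $K\times L^\prime\to K$ precomposed with $\id\times\varphi$ equals the coordinate projection $K\times L\to K$, the projection of $\nu^\prime=(\id\times\varphi)(\nu)$ to $K$ coincides with the projection of $\nu$ to $K$, namely $\sigma$ (recall $\nu\ge0$, so $\abs{\nu}=\nu$). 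Hence the disintegration of $\nu^\prime$ is taken with respect to the same $\sigma$, and Lemma~\ref{L:dezintegrace metriz} applies to $\nu^\prime$ on the metrizable factor $L^\prime$.

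Condition $(i)$ is immediate: as $\nu\ge0$, the measures $\nu_t$ are probabilities by the ``moreover'' clause of Lemma~\ref{L:dezintegrace}, and the image $\varphi(\nu_t)$ of a probability measure under the continuous surjection $\varphi$ is again a probability measure on $L^\prime$, whence $\norm{\varphi(\nu_t)}=1$.

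For condition $(ii)$, I would fix a continuous function $g:K\times L^\prime\to\ce$. Then $g\circ(\id\times\varphi)$ is continuous on $K\times L$, so applying condition $(ii)$ for the disintegration kernel $(\nu_t)$ of $\nu$ to this function and then using the change-of-variables formula for the image measures $\varphi(\nu_t)$ gives
\begin{align*}
\int_{K\times L^\prime} g\di\nu^\prime &= \int_{K\times L} g(t,\varphi(z))\di\nu(t,z) = \int_K\left(\int_L g(t,\varphi(z))\di\nu_t(z)\right)\di\sigma(t)\\
&= \int_K\left(\int_{L^\prime} g(t,w)\di(\varphi(\nu_t))(w)\right)\di\sigma(t).
\end{align*}
The first equality is the definition of the image measure $\nu^\prime$, and the last is the change of variables $\int_L\bigl(g(t,\cdot)\circ\varphi\bigr)\di\nu_t=\int_{L^\prime} g(t,\cdot)\di(\varphi(\nu_t))$. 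The $\sigma$-measurability of $t\mapsto\int_{L^\prime} g(t,w)\di(\varphi(\nu_t))(w)$ demanded in $(ii)$ is automatic, since this function coincides with $t\mapsto\int_L g(t,\varphi(z))\di\nu_t(z)$, which is $\sigma$-measurable by condition $(ii)$ for $(\nu_t)$.

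Having verified $(i)$ and $(ii)$, Lemma~\ref{L:dezintegrace metriz}$(a)$ (applicable because $L^\prime$ is metrizable) shows that $(\varphi(\nu_t))_{t\in K}$ is a disintegration kernel of $\nu^\prime$; by part $(c)$ of the same lemma it is uniquely determined up to a $\sigma$-null set, hence it is \emph{the} disintegration kernel. I expect no genuine obstacle here: the only points requiring care are that the base measure remains $\sigma$ and that the measurability built into condition $(ii)$ transfers, both of which follow at once from the continuity of $g\circ(\id\times\varphi)$.
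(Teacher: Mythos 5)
Your proposal is correct and follows essentially the same route as the paper: check that the projection to $K$ is still $\sigma$, note that each $\varphi(\nu_t)$ is a probability, verify condition $(ii)$ via the image-measure change of variables, and conclude with Lemma~\ref{L:dezintegrace metriz}$(a)$. The only difference is that you spell out the $\sigma$-measurability transfer explicitly, which the paper leaves implicit.
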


\begin{proof}
    Since $\nu\ge0$, $\sigma$ is the projection of $\abs{\nu}=\nu$ and simultaneously the projection of $\abs{\nu^\prime}=\nu^\prime$. Moreover, each $\varphi(\nu_t)$ is a probability. To prove that $(\varphi(\nu_t))_{t\in K}$ is a disintegration kernel of $\nu^\prime$ it is enough, due to Lemma~\ref{L:dezintegrace metriz}$(a)$, to verify condition $(ii)$ from Lemma~\ref{L:dezintegrace}. So, fix $f\in C(K\times L^\prime)$. We have
    $$\begin{aligned}
        \int_{K\times L^\prime} f\di\nu^\prime&=\int_{K\times L} f\circ (\operatorname{id}\times \varphi)\di\nu=\int_K\left(\int_L f(t,\varphi(z))\di\nu_t(z)\right)\di\sigma(t)
        \\&=\int_K\left(\int_{L^\prime} f(t,y)\di\varphi(\nu_t)(y)\right)\di\sigma(t),
    \end{aligned}$$
    where we used Lemma~\ref{L:dezintegrace} and the rules of integration with respect to the image of a measure. This completes the proof.
\end{proof}

In case $L$ is not metrizable, the question of uniqueness is more delicate. In particular, it is not hard to construct counterexamples showing that assertion $(a)$ of Lemma~\ref{L:dezintegrace metriz} may fail for nonmetrizable $L$ (for example if $L=[0,1]^{[0,1]}$ or if $L$ is the ordinal interval $[0,\omega_1]$). However, there is a substitute for uniqueness in the general case which is contained in the following proposition.

\begin{prop}\label{P:vsude}
    Let $K$ and $L$ be two compact Hausdorff spaces and let $\sigma$ be a positive Radon measure on $K$. Let
    $$M=\{\nu\in M_+(K\times L)\setsep \pi_1(\nu)=\sigma\}.$$
    Then there is an assignment of disintegration kernels
    $$\nu\in M\mapsto (\nu_t)_{t\in K}$$
   such that for any two measures $\nu_1,\nu_2\in M$ and any two bounded Borel functions $g_1,g_2:L\to\er$ we have
   \begin{multline*}       
   \int g_1\di\nu_{1,t}\le \int g_2\di\nu_{2,t} \quad \sigma\mbox{-almost everywhere} \\
   \implies \int g_1\di\nu_{1,t}\le \int g_2\di\nu_{2,t} \quad \mbox{for each }t\in K.\end{multline*}
\end{prop}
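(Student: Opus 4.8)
The plan is to reduce the problem to a single, canonical, everywhere-defined assignment anchored on one lifting of the base measure $\sigma$, and then to analyze carefully how far such an assignment propagates $\sigma$-almost-everywhere inequalities to genuinely pointwise ones.

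First I would fix, once and for all, a lifting $\theta$ of the completion of $\sigma$ (its existence is guaranteed by the von Neumann--Maharam lifting theorem). For a given $\nu\in M$ I would start from an arbitrary disintegration kernel $(\nu^0_t)_{t\in K}$ supplied by Lemma~\ref{L:dezintegrace} (here $\abs{\nu}=\nu$ and its projection is exactly $\sigma$). For $g\in C(L)$ the function $t\mapsto\int_L g\di\nu^0_t$ lies in $L^\infty(\sigma)$, and I would define the canonical kernel by prescribing $\int_L g\di\nu_t=\theta\big(t\mapsto\int_L g\di\nu^0_t\big)(t)$ for every $g\in C(L)$. Since $\theta$ is linear, positive and unital, the assignment $g\mapsto\int_L g\di\nu_t$ is a positive unital functional on $C(L)$, hence $\nu_t$ is a genuine Radon probability on $L$, defined for every single $t$; testing against products $h(t)g(z)$ and invoking Stone--Weierstrass (exactly as in Lemma~\ref{L:dezintegrace}$(ii)$) shows that $(\nu_t)_{t\in K}$ is indeed a disintegration kernel. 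The crucial feature is that the recipe depends only on the fixed $\theta$, so it produces the \emph{same} assignment for all $\nu\in M$.

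For the implication I would first settle the case of continuous $g_1,g_2$. Here the functions $t\mapsto\int g_i\di\nu_{i,t}$ are, by construction, fixed points of $\theta$; if they satisfy $\int g_1\di\nu_{1,t}\le\int g_2\di\nu_{2,t}$ $\sigma$-almost everywhere, then their difference is a fixed point of $\theta$ which is nonnegative a.e., and positivity of the lifting forces it to be nonnegative everywhere. This yields the conclusion for all $g_i\in C(L)$, and, by the reduction in Lemma~\ref{L:dezintegrace kvocient}, transports verbatim to test functions factoring through any metrizable quotient $\varphi\colon L\to L'$.

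The main obstacle is the passage to bounded \emph{Borel} $g_i$, and I expect this to be the entire difficulty of the statement. The trouble is that a lifting commutes only with \emph{finite} Boolean operations, whereas Borel functions are built from continuous ones by \emph{countable} operations; consequently the honest integral $\int g\di\nu_t$ against the canonical measure need not coincide with the lifted representative of $t\mapsto\int g\di\nu^0_t$ (already for the indicator of a single point one sees that the honest value can strictly exceed the lifted one), so the clean fixed-point argument above breaks down. To overcome this I would first reduce, via Lemma~\ref{L:dezintegrace kvocient} together with the essential uniqueness of Lemma~\ref{L:dezintegrace metriz}, to a metrizable $L$ --- where Borel $=$ Baire and $M_1(L)$ is Polish --- and then try to upgrade the continuous case to Baire $g_i$ by a monotone-class argument. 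The delicate step, where the real work lies, is to preserve the everywhere inequality across the countable pointwise limits defining Baire functions; since liftings fail precisely here, I would instead route the construction through conditional expectations with respect to a refining net of finite $\sigma$-measurable partitions of $K$: these are honest averages, their values are genuine integrals against genuine conditional measures, and at each finite level they send $\sigma$-a.e.\ inequalities to everywhere inequalities for \emph{arbitrary} bounded Borel $g$. One then has to show that a suitable limit of these conditional kernels still disintegrates $\nu$ and remains coherent across all $\nu\in M$ and all quotients simultaneously. Securing this limit --- equivalently, producing a measure-valued selection compatible with countable, and not merely finite, operations --- is the crux I expect to consume most of the proof.
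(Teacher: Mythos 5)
Your first half is on the right track and parallels the paper: both arguments fix a lifting of $\sigma$ once and for all and use its positivity to turn $\sigma$-a.e.\ inequalities between fixed points of the lifting into everywhere inequalities. But the step you correctly flag as the crux --- extending from continuous to bounded Borel test functions --- is a genuine gap in your write-up, and the detour you sketch (conditional expectations along a refining net of finite partitions, followed by a limit) is not carried out; the limit of those conditional kernels is exactly where the difficulty reappears, since you would again need a selection compatible with countable operations to make the limiting kernels disintegrate $\nu$ and remain coherent across all $\nu\in M$.

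The paper resolves this by anchoring the kernel differently. Instead of defining $\nu_t$ through its action on $C(L)$ via Riesz representation (which ties the everywhere identity only to continuous $g$), it sets $\nu_t(A)=\Phi(h_A)(t)$ directly for every Borel $A\subset L$, where $h_A$ is the Radon--Nikod\'ym density of $B\mapsto\nu(B\times A)$ with respect to $\sigma$ and $\Phi$ is the linear lifting; that this yields a disintegration kernel is read off from the proof of Fremlin's Theorem~452M. With this choice the identity $\int g\di\nu_t=\Phi(h_g)(t)$ holds \emph{for every} $t$ and every bounded Borel $g$: it holds for indicators by definition, hence for simple functions by linearity, and then for all bounded Borel $g$ because such $g$ are \emph{uniform} limits of simple functions and both sides are nonexpansive in $\norm{g}_\infty$ (the map $g\mapsto h_g$ satisfies $\norm{h_g}_\infty\le\norm{g}_\infty$ and $\norm{\Phi}\le1$). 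No countable or monotone limit is ever taken through the lifting --- this is precisely the obstruction your construction runs into and the paper's construction avoids. The conclusion is then immediate: the a.e.\ hypothesis gives $\nu_{1,g_1}\le\nu_{2,g_2}$ as measures on $K$, hence $h_{1,g_1}\le h_{2,g_2}$ in $L^\infty(\sigma)$, and order preservation of $\Phi$ gives the inequality at every $t$. If you replace your Riesz-based definition of $\nu_t$ by the set-indexed one, your argument closes without the partition machinery.
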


\begin{proof}
    Let $\Sigma$ denote the $\sigma$-algebra of $\sigma$-measurable subsets of $K$. Let $\Phi_0:\Sigma\to\Sigma$ be a lifting (in the sense of \cite[Definition 341A]{fremlin3}) provided by \cite[Theorem 341K]{fremlin3}. By \cite[Theorem 363F and Exercise 363Xe]{fremlin3} this mapping induces a linear lifting $\Phi:L^\infty(\sigma)\to L^\infty(\Sigma)$ (where $L^\infty(\Sigma)$ is the space of all bounded $\Sigma$-measurable functions on $K$ equipped with the supremum norm) which is also an order isomorphism and satisfies $\norm{\Phi}\le1$.

    Given $\nu\in M$ and $A\subset L$ Borel, the assignment
    $$\nu_A(B)=\nu(B\times A),\quad B\in\Sigma,$$
    is a measure on $(K,\Sigma)$ satisfying $\nu_A\le\sigma$. Let $h_A$ denote the Radon-Nikod\'ym derivative of $\nu_A$ with respect to $\sigma$. It follows from the proof of \cite[Theorem 452M]{fremlin4} that the formula
      $$\nu_t(A)=\Phi(h_A)(t),\quad A\subset L\mbox{ Borel}, t\in K,$$
     provides a disintegration kernel of $\nu$. We will show that this is a correct choice. 

     Given a bounded Borel function $g:L\to\er$, the formula
     $$\nu_g(B)=\int_{B\times L} g(z)\di\nu(t,z),\quad B\in\Sigma,$$
     defines a signed measure on $(K,\Sigma)$. Moreover, for each $B\in\Sigma$ we have 
     $$\abs{\nu_g(B)}\le \norm{g}_\infty\cdot \nu(B\times L)=\norm{g}_\infty\cdot\sigma(B).$$
     By the definition of absolute variation we easily get $\abs{\nu_g}\le \norm{g}_\infty\cdot\sigma$. In particular, $\nu_g$ is absolutely continuous with respect to $\sigma$ and its Radon-Nikod\'ym density $h_g$ satisfies $\norm{h_g}_\infty\le\norm{g}_\infty$. It follows that the assignment $g\mapsto h_g$ is a nonexpansive linear operator from the space of bounded Borel functions on $L$ into $L^\infty(\sigma)$.

     We claim that for each bounded Borel function $g$ on $L$ we have
     $$\int g\di\nu_t=\Phi(h_g)(t), \quad t\in K.$$
     Fix $t\in K$. By the choice of $\nu_t$, the equality holds if $g$ is a characteristic function of a Borel set. By linearity and continuity we deduce that it holds for each bounded Borel function.

     Now assume that $\nu_1,\nu_2\in M$ and $g_1,g_2:L\to \er$ are two bounded Borel functions satisfing
     $$\int g_1\di\nu_{1,t} \le \int g_2\di\nu_{2,t}\qquad \sigma\mbox{-almost everywhere}.$$
     Thus
     $$\nu_{1,g_1}(B)=\int_B\left(\int_L g_1\di\nu_{1,t}\right)\di\sigma(t)\le \int_B\left(\int_L g_2\di\nu_{2,t}\right)\di\sigma(t)=\nu_{2,g_2}(B)$$
     for each $B\in\A$, i.e., $\nu_{1,g_1}\le \nu_{2,g_2}$. It follows that $h_{1,g_1}\le h_{2,g_2}$ in $L^\infty(\sigma)$ and hence also $\Phi(h_{1,g_1})\le\Phi(h_{2,g_2})$ in $L^\infty(\Sigma)$. 
     This completes the proof.
\end{proof}

\section{The Hustad mapping via disintegration}\label{s:tovector}

In this section we analyze in more detail the operator $T^*$ interpreted as a mapping from $M(K\times B_{E^*})$ to $M(K,E^*)$. Recall that this operator is defined by  formula \eqref{eq:hustad}. We start by a slight strengthening of \cite[Lemma 3.1]{batty-vector}.

\begin{lemma}\label{L:nesena sferou}
    If $\nu\in M(K\times B_{E^*})$ satisfies $\norm{T^*\nu}=\norm{\nu}$, then $\nu$ is carried by $K\times S_{E^*}$ (here $S_{E^*}$ denotes the dual unit sphere).
\end{lemma}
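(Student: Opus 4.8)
The plan is to introduce the single auxiliary quantity
$$I=\int_{K\times B_{E^*}}\norm{x^*}\di\abs{\nu}(t,x^*),$$
which is well defined because the norm is weak$^*$ lower semicontinuous, hence Borel, on $B_{E^*}$. Since $\norm{x^*}\le 1$ on $B_{E^*}$, we trivially have $I\le\abs{\nu}(K\times B_{E^*})=\norm{\nu}$, and the crux will be to prove the matching bound $\norm{T^*\nu}\le I$. Granting this, the hypothesis yields
$$\norm{\nu}=\norm{T^*\nu}\le I\le\norm{\nu},$$
so all inequalities are equalities; in particular $\int(1-\norm{x^*})\di\abs{\nu}=0$. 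As the integrand is nonnegative, $\norm{x^*}=1$ for $\abs{\nu}$-almost every $(t,x^*)$, that is $\abs{\nu}(K\times(B_{E^*}\setminus S_{E^*}))=0$, which is exactly the assertion that $\nu$ is carried by $K\times S_{E^*}$.

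The heart of the argument is the inequality $\norm{T^*\nu}\le I$. Here I would use the variational description of the total variation of the $E^*$-valued measure $T^*\nu$,
$$\norm{T^*\nu}=\abs{T^*\nu}(K)=\sup\sum_{i=1}^n\Re\, T^*\nu(A_i)(\x_i),$$
the supremum being taken over all finite Borel partitions $\{A_i\}$ of $K$ and all $\x_i\in E$ with $\norm{\x_i}\le1$; the passage from the moduli $\norm{T^*\nu(A_i)}$ to real parts is legitimate, in the complex case after replacing each $\x_i$ by a modulus-one multiple. For a fixed partition and fixed $\x_i$, formula~\eqref{eq:hustad} collapses the sum into a single integral against the bounded Borel function $\Phi(t,x^*)=\sum_i\chi_{A_i}(t)\,x^*(\x_i)$, namely
$$\sum_{i=1}^n\Re\, T^*\nu(A_i)(\x_i)=\Re\int_{K\times B_{E^*}}\Phi\di\nu.$$
Since on $A_i\times B_{E^*}$ one has $\abs{\Phi(t,x^*)}=\abs{x^*(\x_i)}\le\norm{x^*}\norm{\x_i}\le\norm{x^*}$, the right-hand side is at most $\int\abs{\Phi}\di\abs{\nu}\le I$. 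Taking the supremum over partitions and over the $\x_i$ gives $\norm{T^*\nu}\le I$.

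I expect the genuinely delicate points to be organizational rather than conceptual: first, justifying the variational formula for $\abs{T^*\nu}(K)$ together with the reduction to real parts, which is what makes the estimate uniform in the $\x_i$ and lets the sum be rewritten as one integral; and second, the measurability of the norm on $B_{E^*}$ that makes $I$ meaningful. The equality-case analysis is then immediate. I remark that, in the spirit of this section, one could instead route the first inequality through a disintegration $(\nu_t)_{t\in K}$ of $\nu$ and estimate the barycenters $\x\mapsto\int x^*(\x)\di\nu_t(x^*)$ pointwise in $t$; the direct computation above, however, sidesteps the measurability questions attached to $t\mapsto\int\norm{x^*}\di\abs{\nu_t}$ and seems cleaner.
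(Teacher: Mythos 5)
Your proof is correct and follows essentially the same sandwich argument as the paper: both establish $\norm{T^*\nu}\le\int\norm{x^*}\di\abs{\nu}\le\norm{\nu}$ and conclude from the equality case that $\norm{x^*}=1$ holds $\abs{\nu}$-almost everywhere. The only (harmless) difference is that you bound $\norm{T^*\nu}$ via the partition description of the total variation together with formula~\eqref{eq:hustad}, whereas the paper tests directly against functions $\f\in B_{C(K,E)}$ using $\int\f\di T^*\nu=\int x^*(\f(t))\di\nu(t,x^*)$; the two are interchangeable by the isometry of Section~\ref{ssec:dualC(K,E)}.
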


\begin{proof}
    Assume that $\norm{T^*\nu}=\norm{\nu}$. Then
    $$\begin{aligned}
      \norm{\nu}&=\norm{T^*\nu}=\sup \left\{\abs{\int\f\di T^*\nu}\setsep \f\in C(K,E), \norm{\f}\le 1\right\}
      \\&=\sup \left\{\abs{\int x^*(\f(t))\di\nu(x^*,t)}\setsep \f\in C(K,E), \norm{\f}\le 1\right\}
     \\ &\le \sup \left\{\int \abs{x^*(\f(t))}\di\abs{\nu}(x^*,t)\setsep \f\in C(K,E), \norm{\f}\le 1\right\}
     \\&\le \int \norm{x^*} \di\abs{\nu}(x^*,t) \le \int 1 \di\abs{\nu}(x^*,t) =\norm{\nu},
    \end{aligned}$$
    so the equalities hold. In particular, $\norm{x^*}=1$ $\abs{\nu}$-a.e., which completes the proof.
\end{proof}

We will further strengthen this lemma using disintegration. To this end we will need the following simple fact.

\begin{lemma}\label{L:teziste}
    Let $\nu$ be an $\ef$-valued Radon measure on $B_{E^*}$. Then there is a unique point $r(\nu)\in E^*$ such that
    $$\int x^*(x)\di\nu(x^*)= r(\nu)(x)\mbox{ for each }x\in E.$$
    Moreover, $\norm{r(\nu)}\le\norm{\nu}$. If $\nu$ is a probability measure, then $r(\nu)$ is the barycenter of $\nu$.
\end{lemma}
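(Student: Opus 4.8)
The plan is to produce $r(\nu)$ directly as the $\ef$-linear functional on $E$ obtained by integrating the evaluation maps against $\nu$, and then to verify boundedness, uniqueness, and—in the probability case—the identification with the barycenter.

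First I would fix $x\in E$ and consider the evaluation $e_x\colon x^*\mapsto x^*(x)$. Regarding $B_{E^*}$, as throughout the paper, as a compact space in the weak$^*$ topology, $e_x$ is weak$^*$-continuous and satisfies $\abs{e_x}\le\norm{x}$ on $B_{E^*}$, so $\int e_x\di\nu$ is a well-defined scalar; I set $r(\nu)(x)=\int x^*(x)\di\nu(x^*)$. The map $x\mapsto r(\nu)(x)$ is $\ef$-linear because $x\mapsto e_x$ is $\ef$-linear and the integral is linear. For the norm estimate I would write
$$\abs{r(\nu)(x)}=\abs{\int x^*(x)\di\nu(x^*)}\le\int\abs{x^*(x)}\di\abs{\nu}(x^*)\le\norm{x}\int\norm{x^*}\di\abs{\nu}(x^*)\le\norm{x}\cdot\norm{\nu},$$
using $\norm{x^*}\le1$ on $B_{E^*}$. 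Thus $r(\nu)\in E^*$ with $\norm{r(\nu)}\le\norm{\nu}$, which also settles existence. Uniqueness is immediate, since any element of $E^*$ satisfying the defining identity is prescribed on every $x\in E$.

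It remains to identify $r(\nu)$ with the barycenter when $\nu\in M_1(B_{E^*})$. Here $B_{E^*}$ is a weak$^*$-compact convex subset of the locally convex space $(E^*,w^*)$, so the barycenter $b=r(\nu)$ of the probability measure $\nu$ exists and is characterized by $\int f\di\nu=f(b)$ for every real-valued weak$^*$-continuous affine function $f$ on $B_{E^*}$. In the real case the functions $e_x$ ($x\in E$) are exactly the weak$^*$-continuous linear functionals on $E^*$, so applying the characterization to $f=e_x$ yields $b(x)=\int x^*(x)\di\nu(x^*)$, i.e.\ $b=r(\nu)$. The only point requiring some care is the complex case, where the barycenter is defined through real-valued affine functions: there I would apply the characterization separately to $x^*\mapsto\Re x^*(x)$ and to $x^*\mapsto\Im x^*(x)=-\Re x^*(ix)$ for each $x\in E$, and recombine the real and imaginary parts to recover $b(x)=\int x^*(x)\di\nu(x^*)$. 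This bookkeeping between the real-valued barycenter and the complex-valued functional is the only mildly delicate step; everything else is routine.
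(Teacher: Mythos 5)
Your proof is correct and follows essentially the same route as the paper, which simply declares the linearity, the norm bound, and the barycenter identification to be obvious; you have merely filled in the routine details, including the appropriate real/imaginary-part bookkeeping in the complex case.
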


\begin{proof}
    It is obvious that the mapping $x\mapsto \int x^*(x)\di\nu(x^*)$ is a linear functional on $E$ of norm at most $\norm{\nu}$. Moreover, if $\nu$ is a probability, then the equality is clearly satisfied for the barycenter.
\end{proof}

 We continue by providing a formula for $T^*\nu$ using a kind of `density function'.

\begin{prop}\label{P:hustotaT*nu}
    Let $\nu\in M(K\times B_{E^*})$ be arbitrary. Then there is a function $\h:K\to B_{E^*}$ such that 
    $$\int_K \f\di T^*\nu=\int_K \h(t)(\f(t))\di\pi_1(\abs{\nu})(t)\mbox{ for }\f\in C(K,E).$$
    Then we also have
      $$T^*\nu(A)(x)=\int_A \h(t)(x)\di\pi_1(\abs{\nu})(t)\mbox{ for }A\subset K\mbox{ Borel and }x\in E.$$ 
     A possible choice for $\h$ is $\h(t)=r(\nu_t)$ for $t\in K$, where $(\nu_t)_{t\in K}$ is a disintegration kernel for $\nu$. 
\end{prop}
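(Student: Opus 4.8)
The plan is to read the two formulas off the disintegration of $\nu$ together with the barycenter map of Lemma~\ref{L:teziste}. Apply Lemma~\ref{L:dezintegrace} with $L=B_{E^*}$ and $\sigma=\pi_1(\abs{\nu})$ to obtain a disintegration kernel $(\nu_t)_{t\in K}$, where each $\nu_t$ is an $\ef$-valued Radon measure on $B_{E^*}$ with $\norm{\nu_t}=1$. Set $\h(t)=r(\nu_t)$. Then $\norm{\h(t)}=\norm{r(\nu_t)}\le\norm{\nu_t}=1$ by Lemma~\ref{L:teziste}, so $\h$ indeed maps $K$ into $B_{E^*}$, as required.

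For the first displayed formula I would fix $\f\in C(K,E)$ and note that $T\f(t,x^*)=x^*(\f(t))$ is continuous on $K\times B_{E^*}$ (the second coordinate carrying the weak$^*$ topology). By the definition of the dual mapping $T^*$ (exactly the computation recalled in Section~\ref{ssec:dualC(K,E)}) we have $\int_K\f\di T^*\nu=\int_{K\times B_{E^*}}x^*(\f(t))\di\nu(t,x^*)$. Applying condition $(ii)$ of Lemma~\ref{L:dezintegrace} to the continuous function $T\f$ rewrites the right-hand side as $\int_K\bigl(\int_{B_{E^*}}x^*(\f(t))\di\nu_t(x^*)\bigr)\di\sigma(t)$, and for each fixed $t$ the inner integral equals $r(\nu_t)(\f(t))=\h(t)(\f(t))$ by the defining property of the barycenter in Lemma~\ref{L:teziste} applied to the vector $\f(t)\in E$. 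This gives the first formula.

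To obtain the second formula I would specialize the first one to functions of the form $\f=f\cdot x$ with $f\in C(K)$ and $x\in E$. Writing $(T^*\nu)_x(A)=T^*\nu(A)(x)$ and using the relation $\int f\cdot x\di\mu=\int f\di\mu_x$ from the preliminaries on integration against $E^*$-valued measures (with $\mu=T^*\nu$), the first formula yields $\int_K f\di(T^*\nu)_x=\int_K f(t)\,\h(t)(x)\di\sigma(t)$ for every $f\in C(K)$. Here the integrand $t\mapsto\h(t)(x)=\int_{B_{E^*}}x^*(x)\di\nu_t(x^*)$ is $\sigma$-measurable and bounded by $\norm{x}$, once more by Lemma~\ref{L:dezintegrace}$(ii)$ applied to the continuous function $(t,x^*)\mapsto x^*(x)$. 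Since the two sides agree against all $f\in C(K)$, the uniqueness clause of the Riesz representation theorem forces the regular scalar measure $(T^*\nu)_x$ to coincide with the measure of density $\h(\cdot)(x)$ with respect to $\sigma$, i.e.\ $T^*\nu(A)(x)=\int_A\h(t)(x)\di\sigma(t)$ for every Borel $A\subset K$.

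This is essentially a bookkeeping argument assembling tools already available, so I do not expect a deep obstacle. The only points needing care are the measurability of the integrands $t\mapsto\h(t)(\f(t))$ and $t\mapsto\h(t)(x)$, which is supplied by Lemma~\ref{L:dezintegrace}$(ii)$ because $(t,x^*)\mapsto x^*(\f(t))$ and $(t,x^*)\mapsto x^*(x)$ are continuous, and the passage from equality of integrals against all $f\in C(K)$ to the set-wise identity of the two scalar measures, which is exactly the uniqueness part of the Riesz theorem.
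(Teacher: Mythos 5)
Your proof is correct and follows essentially the same route as the paper: disintegrate $\nu$, set $\h(t)=r(\nu_t)$, apply Lemma~\ref{L:dezintegrace}$(ii)$ to $T\f$ for the first formula, and specialize to $\f=f\cdot x$ for the second. The only (immaterial) difference is in the last step, where the paper extends the equality to bounded Baire functions by dominated convergence and then passes to Borel sets by regularity, while you invoke the uniqueness part of the Riesz theorem for the two regular scalar measures; both are standard and valid.
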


\begin{proof} To simplify the notation we set $\sigma=\pi_1(\abs{\nu})$.  Let $(\nu_t)_{t\in K}$ be a disintegration kernel of $\nu$. For each $t\in K$ let $\h(t)=r(\nu_t)\in B_{E^*}$ be the functional provided by Lemma~\ref{L:teziste}. Let us now prove that $\h$ satisfies the first assertion:

 By the definition of $T^*$ and Lemma~\ref{L:dezintegrace} we have
$$\begin{aligned}
    \int_K \f\di T^*\nu& = \int_{K\times B_{E^*}} x^*(\f(t))\di\nu(t,x^*)
    =\int_K\left(\int_{B_{E^*}} x^*(\f(t))\di\nu_t(x^*)\right)\di\sigma(t)
    \\&=\int_K r(\nu_t)(\f(t))\di\sigma(t)=\int_K \h(t)(\f(t))\di\sigma(t),
\end{aligned}$$
where in the third equality we used the choice of $r(\nu_t)$.

We proceed by deducing the second assertion from the first one. Fix $x\in E$. For each $f\in C(K)$ we have
$$\int_K f\di (T^*\nu)_x=\int_K f\cdot x \di T^*\nu=\int_K f(t)\cdot \h(t)(x)\di\sigma(t).$$
By the Lebesgue dominated convergence we may extend this equality to bounded Baire functions on $K$.
Therefore the second assertion holds for any Baire set $A\subset K$. By regularity of the measures in question this may be extended to Borel sets.
\end{proof}

The function $\h$ from the previous proposition is a kind of weak$^*$ Radon-Nikod\'ym density of $T^*\nu$ with respect to $\pi_1(\abs{\nu})$. Note that it need not be measurable, but it is weak$^*$ measurable, i.e., $t\mapsto \h(t)(x)$ is $\pi_1(\abs{\nu})$-measurable for each $x\in E$. We will see in Proposition~\ref{P:vlastnostih} below that stronger properties are satisfied if $E$ is separable. 

In general, the function $\h$ is not uniquely determined -- it is not hard to 
find a nonseparable $E$ and $\nu$ such that there are two everywhere different functions $\h_1$ and $\h_2$ with the required properties. However, we have the following partial uniqueness result.

\begin{lemma}\label{L:hustota jednoznacnost}
    Let $\nu\in M(K\times B_{E^*})$ be arbitrary. Let $\h_1$ and $\h_2$ be two functions satisfying conditions from Proposition~\ref{P:hustotaT*nu}.
    \begin{enumerate}[$(a)$]
        \item Let $F\subset E$ be a separable subspace. Then $\h_1(t)|_F=\h_2(t)|_F$ $\pi_1(\abs{\nu})$-almost everywhere.
        \item If $E$ is separable, then  $\h_1(t)=\h_2(t)$ $\pi_1(\abs{\nu})$-almost everywhere. 
        In particular, if $(\nu_t)_{t\in K}$ is a disintegration kernel of $\nu$, then $\h_1(t)=r(\nu_t)$  $\pi_1(\abs{\nu})$-almost everywhere.
    \end{enumerate}
\end{lemma}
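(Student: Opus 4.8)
The plan is to reduce everything to the pointwise-in-$x$ uniqueness coming from the second formula of Proposition~\ref{P:hustotaT*nu}, and then to upgrade it to uniqueness on a separable subspace by a standard countable-union-of-null-sets argument. Throughout I set $\sigma=\pi_1(\abs{\nu})$.

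First I would fix $x\in E$ and compare the two representations. By the second assertion of Proposition~\ref{P:hustotaT*nu}, applied to both $\h_1$ and $\h_2$, we have
$$\int_A \h_1(t)(x)\di\sigma(t)=T^*\nu(A)(x)=\int_A \h_2(t)(x)\di\sigma(t)\quad\text{for each Borel }A\subset K.$$
Both integrands are bounded by $\norm{x}$ (since $\h_i(t)\in B_{E^*}$) and $\sigma$-measurable (this is exactly the weak$^*$ measurability noted just after Proposition~\ref{P:hustotaT*nu}). Since two bounded $\sigma$-measurable functions having the same integral over every Borel set must coincide $\sigma$-almost everywhere, there is a $\sigma$-null set $N_x\subset K$ with $\h_1(t)(x)=\h_2(t)(x)$ for all $t\notin N_x$.

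To prove $(a)$ I would fix a countable dense subset $D=\{x_n\setsep n\in\en\}$ of the separable subspace $F$ and set $N=\bigcup_{n}N_{x_n}$, which is again $\sigma$-null. For $t\notin N$ the functionals $\h_1(t)$ and $\h_2(t)$ agree on all of $D$; since both lie in $B_{E^*}$ they are continuous, and as $D$ is dense in $F$ they must agree on all of $F$. Hence $\h_1(t)|_F=\h_2(t)|_F$ for every $t\notin N$, which is exactly $(a)$.

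Assertion $(b)$ is then immediate: when $E$ is separable I apply $(a)$ with $F=E$ to get $\h_1(t)=\h_2(t)$ $\sigma$-almost everywhere. For the final clause, Proposition~\ref{P:hustotaT*nu} guarantees that $t\mapsto r(\nu_t)$ is itself a function satisfying its conditions, so taking $\h_2(t)=r(\nu_t)$ yields $\h_1(t)=r(\nu_t)$ $\sigma$-almost everywhere. The only mildly delicate point is that the null set $N_x$ depends on $x$, and controlling this is precisely the role of the separability of $F$ (via the countable dense set $D$); everything else is routine measure theory.
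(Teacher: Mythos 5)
Your proposal is correct and follows essentially the same route as the paper: fix $x\in E$, use the second formula of Proposition~\ref{P:hustotaT*nu} to get $\h_1(t)(x)=\h_2(t)(x)$ off an $x$-dependent null set, then pass to a countable dense subset of $F$ to obtain a single null set. The paper leaves this last step as ``both assertions easily follow,'' and you have simply spelled it out, including the correct observation that $r(\nu_t)$ is itself an admissible choice of $\h$ for the final clause.
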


\begin{proof}
    Fix $x\in E$. By Proposition~\ref{P:hustotaT*nu} we have
    $$\int_A \h_1(t)(x)\di\pi_1(\abs{\nu})(t)=\int_A \h_2(t)(x)\di\pi_1(\abs{\nu})(t)\mbox{ for }A\subset K\mbox{ Borel}.$$
    Hence $\h_1(t)(x)=\h_2(t)(x)$ for $\pi_1(\abs{\nu})$-almost all $t\in K$. Now both assertions easily follow. 
\end{proof}

We continue by more detailed analysis of the `density function'. The following proposition provides, among others, the promised strengthening of Lemma~\ref{L:nesena sferou}.

\begin{prop}\label{P:vlastnostih}
     Let $\nu\in M(K\times B_{E^*})$ be arbitrary and let $\h:K\to B_{E^*}$ be the function provided by Proposition~\ref{P:hustotaT*nu}. Then the following assertions are valid:
     \begin{enumerate}[$(a)$]
         \item $\abs{T^*\nu}\le \pi_1(\abs{\nu})$. If $\norm{T^*\nu}=\norm{\nu}$, then $\abs{T^*\nu}= \pi_1(\abs{\nu})$.
         \item If $\norm{T^*\nu}=\norm{\nu}$, then $\norm{\h(t)}=1$  $\pi_1(\abs{\nu})$-almost everywhere. If, in addition, $(\nu_t)_{t\in K}$ is a disintegration kernel of $\nu$, then $r(\nu_t)\in S_{E^*}$           $\pi_1(\abs{\nu})$-almost everywhere.
         \item If $E$ is separable, then $\h$ is a $\pi_1(\abs{\nu})$-measurable function from $K$ to $(B_{E^*},w^*)$ and $$\norm{T^*\nu}=\int_K\norm{\h(t)}\di \pi_1(\abs{\nu})(t).$$
         \item If $F$ is a separable subspace of $E$, then $t\mapsto \h(t)|_F$ is a $\pi_1(\abs{\nu})$-measurable function from $K$ to $(B_{F^*},w^*)$. Moreover,
         $$\norm{T^*\nu}=\max \left\{\int_K\norm{\h(t)|_F}\di \pi_1(\abs{\nu})(t)\setsep F\subset E\mbox{ separable}\right\}.$$
     \end{enumerate}
\end{prop}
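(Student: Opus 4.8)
Throughout I write $\sigma=\pi_1(\abs{\nu})$ and use freely from Proposition~\ref{P:hustotaT*nu} that $T^*\nu(A)(x)=\int_A\h(t)(x)\di\sigma(t)$ for Borel $A\subset K$ and $x\in E$, together with the fact (noted after that proposition) that $t\mapsto\h(t)(x)$ is $\sigma$-measurable for each $x$. Assertion $(a)$ is the quickest: since $\h(t)\in B_{E^*}$, for every Borel $A$ and $x\in B_E$ we get $\abs{T^*\nu(A)(x)}\le\int_A\abs{\h(t)(x)}\di\sigma\le\sigma(A)$, hence $\norm{T^*\nu(A)}\le\sigma(A)$, and taking the supremum of $\sum_j\norm{T^*\nu(A_j)}$ over Borel partitions yields $\abs{T^*\nu}\le\sigma$. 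Because $\norm{T^*\nu}=\abs{T^*\nu}(K)$ and $\norm{\nu}=\abs{\nu}(K\times B_{E^*})=\sigma(K)$, the hypothesis $\norm{T^*\nu}=\norm{\nu}$ equates the total masses of the two positive measures $\abs{T^*\nu}\le\sigma$, forcing $\abs{T^*\nu}=\sigma$. The measurability statements in $(c)$ and $(d)$ are equally routine: for a separable $F\subset E$ the ball $(B_{F^*},w^*)$ is compact metrizable and second countable, so its Borel $\sigma$-algebra is generated by the evaluations $y^*\mapsto y^*(y)$, $y\in F$; since each $t\mapsto\h(t)(y)$ is $\sigma$-measurable, $t\mapsto\h(t)|_F$ is $\sigma$-measurable into $(B_{F^*},w^*)$, the case $F=E$ giving $(c)$. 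Consequently $\norm{\h(t)|_F}=\sup_n\Re\h(t)(y_n)$ is $\sigma$-measurable, where $(y_n)$ is a fixed sequence dense in $B_F$ (note that $B_F$ is rotation invariant, so the real parts already recover the norm).

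The core of the statement is the norm formula, which I would obtain from two elementary inequalities valid for every separable $F\subset E$. For the upper bound fix $\ep>0$, and using $\norm{\h(t)|_F}=\sup_n\Re\h(t)(y_n)$ build a countable Borel partition $(B_m)$ of $K$ such that on $B_m$ some fixed $y_{n(m)}$ satisfies $\Re\h(t)(y_{n(m)})>\norm{\h(t)|_F}-\ep$. Then
\[
\int_{B_m}\norm{\h(t)|_F}\di\sigma\le\Re\,T^*\nu(B_m)(y_{n(m)})+\ep\,\sigma(B_m)\le\norm{T^*\nu(B_m)}+\ep\,\sigma(B_m),
\]
and summing over $m$ and letting $\ep\to0$ gives $\int_K\norm{\h(t)|_F}\di\sigma\le\abs{T^*\nu}(K)=\norm{T^*\nu}$; taking real parts throughout is what removes any phase bookkeeping. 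For the lower bound choose a finite Borel partition $(A_j)_{j=1}^N$ with $\sum_j\norm{T^*\nu(A_j)}>\norm{T^*\nu}-\ep$ and pick $x_j\in B_E$ with $\Re\,T^*\nu(A_j)(x_j)>\norm{T^*\nu(A_j)}-\ep/N$; setting $F_\ep=\span\{x_1,\dots,x_N\}$ one has $\norm{\h(t)|_{F_\ep}}\ge\Re\h(t)(x_j)$ on $A_j$, whence $\int_K\norm{\h(t)|_{F_\ep}}\di\sigma\ge\sum_j\Re\,T^*\nu(A_j)(x_j)>\norm{T^*\nu}-2\ep$. Together these show $\sup_F\int_K\norm{\h(t)|_F}\di\sigma=\norm{T^*\nu}$, and for separable $E$ the admissible choice $F=E$, where $\norm{\h|_E}=\norm{\h}$, yields the formula in $(c)$.

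To promote this supremum to the maximum asserted in $(d)$, I would choose separable subspaces $F_n$ with $\int_K\norm{\h|_{F_n}}\di\sigma\to\sup$, replace $F_n$ by $F_1+\cdots+F_n$ to make the sequence increasing, and set $F_0=\overline{\bigcup_n F_n}$. As $\bigcup_n B_{F_n}$ is dense in $B_{F_0}$ and $y\mapsto\h(t)(y)$ is norm continuous, $\norm{\h(t)|_{F_n}}\nearrow\norm{\h(t)|_{F_0}}$ pointwise, so the monotone convergence theorem gives $\int_K\norm{\h|_{F_0}}\di\sigma=\sup$, i.e.\ the maximum is attained at $F_0$. Finally $(b)$ follows: under $\norm{T^*\nu}=\norm{\nu}$ we have $\int_K\norm{\h(t)|_{F_0}}\di\sigma=\norm{T^*\nu}=\sigma(K)$ while $\norm{\h(t)|_{F_0}}\le\norm{\h(t)}\le1$, so $\norm{\h(t)|_{F_0}}=1$ and hence $\norm{\h(t)}=1$ for $\sigma$-a.e.\ $t$; specializing to $\h(t)=r(\nu_t)$ (the choice from Proposition~\ref{P:hustotaT*nu}) gives $r(\nu_t)\in S_{E^*}$ $\sigma$-a.e.

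I expect the only genuine difficulty to be the lower bound of the second paragraph, that is, the assertion that finitely many well-chosen test vectors in a separable subspace already recover the full variation $\abs{T^*\nu}(K)$; this is where the possible nonseparability of $E$ is confronted, and it is precisely this inequality (rather than measurability, the upper bound, or the monotone-limit packaging of the maximum) that makes the weak$^*$ density $\h$ behave like a genuine Radon--Nikod\'ym derivative with $\abs{T^*\nu}=\int\norm{\h}\di\sigma$.
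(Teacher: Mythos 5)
Your proof is correct, and for the heart of the matter --- the inequality $\norm{T^*\nu}\le\sup_F\int_K\norm{\h(t)|_F}\di\pi_1(\abs{\nu})(t)$ --- it takes a genuinely different route from the paper. The paper works inside $C(K,E)$: it uses Michael's selection theorem to produce a weak$^*$-to-norm continuous map $\g:S_{E^*}\to B_E$ with $\Re x^*(\g(x^*))\ge1-\ep$, composes it with the normalized density to get a measurable selection $\f_0$, and then invokes Luzin's theorem plus a second application of Michael's theorem to replace $\f_0$ by a genuine element of $B_{C(K,E)}$ that nearly norms $T^*\nu$. You instead exploit the isometric identification $\norm{T^*\nu}=\abs{T^*\nu}(K)$ from Section~\ref{ssec:dualC(K,E)} and test with simple functions: a finite Borel partition $(A_j)$ nearly attaining the variation together with vectors $x_j\in B_E$ nearly norming each $T^*\nu(A_j)$, which simultaneously produces the separable subspace $F_\ep=\span\{x_1,\dots,x_N\}$ and the lower bound. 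This is more elementary (no selection theorems, no Luzin) at the cost of routing everything through the total-variation description of the dual norm rather than through continuous test functions; it correctly isolates, as you say, the one place where nonseparability must be confronted. Your other deviations --- deriving $(b)$ from $(d)$ instead of the paper's direct contraposition, and obtaining the maximum in $(d)$ via a monotone limit $F_0=\overline{\bigcup_n F_n}$ and monotone convergence instead of the paper's norming sequence $(\f_n)$ with $F\supset\bigcup_n\f_n(K)$ --- are equally valid. One small repair: the sets $B_m$ in your upper bound are only $\pi_1(\abs{\nu})$-measurable (since $t\mapsto\h(t)(x)$ is merely $\pi_1(\abs{\nu})$-measurable), so to compare $\sum_m\norm{T^*\nu(B_m)}$ with $\abs{T^*\nu}(K)$ you should replace each $B_m$ by a Borel set differing from it by a $\pi_1(\abs{\nu})$-null set; this is harmless because $\norm{T^*\nu(N)}\le\pi_1(\abs{\nu})(N)$ by your argument for $(a)$.
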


\begin{proof} To simplify the notation we again set $\sigma=\pi_1(\abs{\nu})$. 

$(a)$: Given $A\subset K$ Borel and $x\in E$, Proposition~\ref{P:hustotaT*nu} yields
$$\begin{aligned}
  \abs{T^*\nu(A)(x)}&=\abs{\int_A \h(t)(x)\di\sigma(t)}\le
  \int_A \abs{\h(t)(x)}\di\sigma(t)\le \norm{x}\sigma(A),
\end{aligned}$$
hence $\norm{T^*\nu(A)}\le\sigma(A)$. Now it easily follows that $\abs{T^*\nu}\le\sigma$.
If $\norm{T^*\nu}=\norm{\nu}$, then $\norm{T^*\nu}=\norm{\sigma}$ (as clearly $\norm{\nu}=\norm{\sigma}$) and hence $\abs{T^*\nu}=\sigma$.

$(b)$:  We proceed by contraposition. Assume that the set $\{t\in K\setsep\norm{\h(t)}<1\}$ is not of $\sigma$-measure zero. It follows that there is some $c<1$ such that the set $$A=\{t\in K\setsep \norm{\h(t)}\le c\}$$
has positive outer measure. Set $\delta=\sigma^*(A)$ (note that $\sigma^*$ denotes the outer measure induced by $\sigma$). Fix any $\f\in B_{C(K,E)}$. Then the set
$$C=\{t\in K\setsep \abs{\h(t)(\f(t))}\le c\}$$ 
is $\sigma$-measurable and contains $A$ (if $t\in A$, then $ \abs{\h(t)(\f(t))}\le\norm{\h(t)}\norm{\f(t)}\le c$). Hence
$$\begin{aligned}
   \abs{\int_K \f\di T^*\nu} & =\abs{\int_K \h(t)(\f(t))\di\sigma(t)}\le \int_K \abs{\h(t)(\f(t))}\di\sigma(t) \\&\le c\sigma(C)+\sigma(K\setminus C)
   = \norm{\sigma} + (c-1)\sigma(C)\\&\le \norm{\nu}+(c-1)\delta.
\end{aligned}$$
Hence
$$\norm{T^*\nu}\le \norm{\nu}+(c-1)\delta<\norm{\nu},$$
which completes the argument. The additional statement follows from Proposition~\ref{P:hustotaT*nu}.

$(c)$: Assume $E$ is separable. Then $(B_{E^*},w^*)$ is a compact metrizable space, hence it is second countable. By the assumption we know that $t\mapsto\h(t)(x)$ is $\sigma$-measurable for each $x\in E$. It follows that $\h^{-1}(U)$ is $\sigma$-measurable whenever $U$ belongs to the canonical
base of the weak$^*$ topology of $B_{E^*}$. By second countability this may be extended to any weak$^*$ open set, so $\h$ is $\sigma$-measurable.

Hence also $t\mapsto\norm{\h(t)}$ is $\sigma$-measurable. Moreover, if $\f\in B_{C(K,E)}$, then
$$\begin{aligned}
\abs{\int_K \f\di T^*\nu}&=\abs{\int_K \h(t)(\f(t))\di\sigma(t)}
\le \int_K \abs{\h(t)(\f(t))}\di\sigma(t) \\&\le \int_K \norm{\h(t)}\norm{\f(t)}\di\sigma(t)
\le  \int_K \norm{\h(t)}\di\sigma(t),
\end{aligned}$$
so $$\norm{T^*\nu}\le  \int_K \norm{\h(t)}\di\sigma(t).$$

To prove the converse inequality fix $\ep>0$. For $x^*\in S_{E^*}$ set
$$\psi(x^*)=\{x\in B_E\setsep \Re x^*(x)>1-\ep\}.$$
Then $\psi(x^*)$ is a nonempty convex set. Moreover, the set-valued mapping $\psi$ is clearly lower semicontinuous from the weak$^*$ topology to the norm. Since $(S_{E^*},w^*)$ is a separable completely metrizable space and the mapping  $x^*\mapsto\overline{\psi(x^*)}$ is also lower semicontinuous (cf. \cite[Proposition 2.3]{michael}), Michael's selection theorem \cite[Theorem 3.2$^{\prime\prime}$]{michael} provides a continuous selection of this mapping. Hence, we have a (weak$^*$-to-norm) continuous map $\g:S_{E^*}\to B_E$ such that $\Re x^*(\g(x^*))\ge 1-\ep$ for each $x^*\in S_{E^*}$. Define a mapping $\f_0:K\to B_E$ by
$$\f_0(t)=\begin{cases}
    \g(\frac{\h(t)}{\norm{\h(t)}}),& \h(t)\ne0,\\ 0, & \h(t)=0.
\end{cases}$$
Then $\f_0$ is $\sigma$-measurable and $\Re\h(t)(\f_0(t))\ge(1-\ep)\norm{\h(t)}$ for $t\in K$.

By Luzin's theorem (see \cite[Theorem 418J and Definition 411M]{fremlin4}) there is a closed subset $B\subset K$ such that $\sigma(K\setminus B)<\ep$ and $\f_0|_B$ is continuous. By a consequence of Michael's selection theorem (cf. \cite[Corollary 1.5]{michael} or \cite[Theorem 3.1]{michael}) there is a continuous function $\f:K\to B_{E}$  extending $\f_0$. Then
$$\begin{aligned}
    \norm{T^*\nu}&\ge\abs{\int_K \f\di T^*\nu}=\abs{\int_K \h(t)(\f(t))\di\sigma(t)}
    \ge \abs{\int_B \h(t)(\f(t))\di\sigma(t)} -\ep \\
    &=\abs{\int_B \h(t)(\f_0(t))\di\sigma(t)} -\ep \ge \int_B \Re \h(t)(\f_0(t))\di\sigma(t)-\ep
    \\& \ge \int_B (1-\ep)\norm{\h(t)}\di\sigma(t) -\ep \ge (1-\ep)\left(\int_K\norm{\h(t)}\di\sigma(t)-\ep\right)-\ep.
\end{aligned}$$
Since $\ep>0$ is arbitrary, the remaining inequality follows.

$(d)$: Fix a separable subspace $F\subset E$. For each $x\in F$ we have
$$\h(t)|_F(x)=\h(t)(x), \quad t\in K,$$
so the function $t\mapsto \h(t)|_F$ is $\sigma$-measurable by the argument used in the proof of $(c)$.
We also get
$$\norm{T^*\nu}\ge\norm{T^*\nu|_{C(K,F)}}=\int_K \norm{\h(t)|_F}\di\sigma(t),$$
again by the same arguments as in $(c)$.

Conversely, there is a sequence $(\f_n)$ in $B_{C(K,E)}$ such that
$$\norm{T^*\nu}=\sup\left\{\abs{\int_K \f_n\di T^*\nu}\setsep n\in\en\right\}.$$
Since $\f_n(K)$ is a compact (hence separable) subset of $E$ for each $n\in\en$, there is a separable
subspace $F\subset E$ such that $\f_n(K)\subset F$ for $n\in\en$. Then
$$\norm{T^*\nu}=\norm{T^*\nu|_{C(K,F)}}=\int_K \norm{\h(t)|_F}\di\sigma(t),$$
since $\h(t)|_F$ is a density of $(T^*\nu)|_{C(K,F)}$.
This completes the proof.
\end{proof}

The next lemma provides a more precise version of assertion $(a)$ from the previous proposition
by describing the Radon-Nikod\'ym density $\abs{T^*\nu}$ with respect to $\pi_1(\abs{\nu})$.

\begin{lemma}\label{L:hustotavariace}
    Let $\nu\in M(K\times B_{E^*})$ be arbitrary, let $\h:K\to B_{E^*}$ be the function provided by Proposition~\ref{P:hustotaT*nu} and let $F_0\subset E$ be a separable subspace at which the maximum from Proposition~\ref{P:vlastnostih}$(d)$ is attained. Then the following assertions are valid:
    \begin{enumerate}[$(a)$]
        \item If $F\subset E$ is a separable subspace containing $F_0$, then $\norm{\h(t)|_F}=\norm{\h(t)|_{F_0}}$ $\pi_1(\abs{\nu})$-a.e.
        \item $\di \abs{T^*\nu}= \norm{\h|_{F_0}}\di\pi_1(\abs{\nu})$, i.e., 
        $$\abs{T^*\nu}(A)=\int_A \norm{\h(t)|_{F_0}}\di\pi_1(\abs{\nu})\mbox{ for }A\subset K\mbox{ Borel}.$$
    \end{enumerate}
\end{lemma}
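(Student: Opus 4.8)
Write $\sigma=\pi_1(\abs{\nu})$ throughout, and recall from Proposition~\ref{P:vlastnostih}$(d)$ that for every separable $F\subset E$ one has $\int_K\norm{\h(t)|_F}\di\sigma\le\norm{T^*\nu}$, the maximum being attained at $F_0$, so that $\int_K\norm{\h(t)|_{F_0}}\di\sigma=\norm{T^*\nu}$.

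For $(a)$ the plan is a one-line monotonicity argument. If $F\supset F_0$ is separable, then $\norm{\h(t)|_{F_0}}\le\norm{\h(t)|_F}$ for every $t\in K$, hence $\int_K\norm{\h(t)|_{F_0}}\di\sigma\le\int_K\norm{\h(t)|_F}\di\sigma$. On the other hand Proposition~\ref{P:vlastnostih}$(d)$ gives $\int_K\norm{\h(t)|_F}\di\sigma\le\norm{T^*\nu}=\int_K\norm{\h(t)|_{F_0}}\di\sigma$. The two integrals therefore coincide, and since the integrands are pointwise ordered this forces $\norm{\h(t)|_F}=\norm{\h(t)|_{F_0}}$ $\sigma$-a.e.

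For $(b)$ the main idea is to \emph{localize} the global norm formula to the set $A$. Fix a Borel set $A\subset K$ and let $\nu_A$ be the restriction of $\nu$ to $A\times B_{E^*}$. Then $\pi_1(\abs{\nu_A})=\sigma|_A$, and from $T^*\nu_A(B)(x)=\int_B\h(t)(x)\di(\sigma|_A)$ one checks directly that the same $\h$ serves as a density function for $\nu_A$ in the sense of Proposition~\ref{P:hustotaT*nu}. Moreover $T^*\nu_A$ and $T^*\nu$ agree on Borel subsets of $A$ while $T^*\nu_A$ is carried by $A$, so I would conclude $\abs{T^*\nu}(A)=\abs{T^*\nu_A}(A)=\abs{T^*\nu_A}(K)=\norm{T^*\nu_A}$.

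It then remains to evaluate $\norm{T^*\nu_A}$, and here part $(a)$ does the work. Applying Proposition~\ref{P:vlastnostih}$(d)$ to $\nu_A$ gives $\norm{T^*\nu_A}=\max\{\int_A\norm{\h(t)|_F}\di\sigma\setsep F\subset E\text{ separable}\}$. For an arbitrary separable $F$, passing to $G=\ov{\span}(F\cup F_0)$ and using part $(a)$ (applied to $G\supset F_0$) yields $\int_A\norm{\h(t)|_F}\di\sigma\le\int_A\norm{\h(t)|_G}\di\sigma=\int_A\norm{\h(t)|_{F_0}}\di\sigma$, so the maximum is attained at $F_0$ and equals $\int_A\norm{\h(t)|_{F_0}}\di\sigma$. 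Combining gives $\abs{T^*\nu}(A)=\int_A\norm{\h(t)|_{F_0}}\di\sigma$, which is $(b)$. The one genuinely nonroutine step is the localization: recognizing that restricting $\nu$ to $A\times B_{E^*}$ converts the variation $\abs{T^*\nu}(A)$ into a total norm to which Proposition~\ref{P:vlastnostih}$(d)$ applies, and that part $(a)$ is precisely what guarantees that the global maximizer $F_0$ remains a maximizer for the restricted base measure $\sigma|_A$. Everything else is bookkeeping with variations and restrictions.
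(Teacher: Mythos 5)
Your part $(a)$ is the same monotonicity-of-integrands argument as the paper's, so there is nothing to add there. For part $(b)$ you take a genuinely different route. The paper fixes $A$ and $x\in E$, applies the density formula with $F=\span(F_0\cup\{x\})$ together with $(a)$ to get the one-sided bound $\abs{T^*\nu}(A)\le\int_A\norm{\h(t)|_{F_0}}\di\pi_1(\abs{\nu})$ for every Borel $A$, and then upgrades this domination of measures to equality by observing that both sides have the same total mass (precisely the choice of $F_0$ via Proposition~\ref{P:vlastnostih}$(d)$). You instead localize: you restrict $\nu$ to $A\times B_{E^*}$, check that $\h$ remains a density for $\nu_A$ over $\pi_1(\abs{\nu})|_A$ and that $\abs{T^*\nu}(A)=\norm{T^*\nu_A}$, and then re-apply the global norm formula of Proposition~\ref{P:vlastnostih}$(d)$ to $\nu_A$, using $(a)$ to see that $F_0$ still maximizes for the restricted base measure. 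Both arguments are correct. The paper's version is shorter because the ``dominated measure with equal total mass'' trick absorbs all the bookkeeping into one line; yours is more self-contained in the sense that it computes $\abs{T^*\nu}(A)$ directly rather than by a global comparison, but it obliges you to verify the restriction compatibilities ($\abs{\nu_A}=\abs{\nu}|_{A\times B_{E^*}}$, $T^*\nu_A(B)=T^*\nu(B\cap A)$, hence $\abs{T^*\nu_A}(K)=\abs{T^*\nu}(A)$, and the passage from the Borel-set form of the density identity back to the continuous-function form via uniform approximation by simple functions). These verifications are all routine and your sketch of them is adequate, so the proof stands.
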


\begin{proof} We again set $\sigma=\pi_1(\abs{\nu})$.

$(a)$: Since $F\supset F_0$, $\norm{\h(t)|_F}\ge\norm{\h(t)_{F_0}}$ everywhere. On the other hand, by Proposition~\ref{P:vlastnostih}$(d)$ these two functions are $\sigma$-measurable and have the same integral with respect to $\sigma$. Thus they are equal $\sigma$-almost everywhere.

$(b)$: Fix $A\subset K$ Borel and $x\in E$. Let $F=\span(F_0\cup\{x\})$. By Proposition~\ref{P:hustotaT*nu} we get
$$\begin{aligned}
  \abs{T^*\nu(A)(x)}&=\abs{\int_A \h(t)(x) \di\sigma(t)}\le\int_A \abs{\h(t)(x)} \di\sigma(t)
  \le \int_A \norm{\h(t)|_F}\norm{x} \di\sigma(t) \\&=\norm{x} \int_A \norm{\h(t)|_{F_0}}\di\sigma(t),
\end{aligned}$$
where the last equality follows from $(a)$. Therefore
$$\norm{T^*\nu(A)}\le \int_A \norm{\h(t)|_{F_0}}\di\sigma(t).$$
By definition of the absolute variation we deduce
$$\abs{T^*\nu}(A)\le \int_A \norm{\h(t)|_{F_0}}\di\sigma(t).$$
Finally, using the choice of $F_0$ we deduce that the equality holds.
\end{proof}

The final result of this section provides a construction replacing any $\nu\in M(K\times B_{E^*})$ by a positive measure in a canonical way. This will serve as starting point for the next section
devoted to an alternative view to Batty's procedure of transference of measures. 
Before coming to the final result, we give a simple consequence of the Stone-Weierstrass theorem.

\begin{lemma}\label{L:SW}
    The closed self-adjoint subalgebra of $C(K\times B_{E^*})$ generated by $T(C(K,E))$ is
    $$C^0(K\times B_{E^*})=\{f\in C(K\times B_{E^*})\setsep f|_{K\times\{0\}}=0\}.$$
\end{lemma}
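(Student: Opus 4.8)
The plan is to apply the Stone–Weierstrass theorem for self-adjoint subalgebras of $C(X)$, identifying exactly which points the algebra $T(C(K,E))$ separates and where all its members vanish. First I would observe that every $T\f$ vanishes on $K\times\{0\}$, since $T\f(t,0)=0(\f(t))=0$; hence the whole generated algebra $\mathcal{B}$ sits inside $C^0(K\times B_{E^*})$, giving the easy inclusion $\mathcal{B}\subset C^0(K\times B_{E^*})$.

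For the reverse inclusion I would use the locally-compact / non-unital form of Stone–Weierstrass (equivalently, work on the one-point-compactification or use the version for algebras vanishing at a point). The space $X=K\times B_{E^*}$ is compact, $C^0(K\times B_{E^*})$ is a closed self-adjoint subalgebra, and the generated algebra $\mathcal{B}$ is contained in it. It suffices to check two things: that $\mathcal{B}$ separates the points of $X$ that are \emph{not} collapsed to the vanishing set, and that the only common zero of $\mathcal{B}$ is $K\times\{0\}$. Concretely, I would show that for any two distinct points $(t_1,x_1^*),(t_2,x_2^*)\in K\times B_{E^*}$ with $(t_1,x_1^*)\neq(t_2,x_2^*)$ and not both in the zero set, some $T\f$ takes different values on them, and that for any $(t,x^*)$ with $x^*\neq0$ some $T\f$ is nonzero there. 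Both facts follow from elementary choices of $\f=f\cdot x$ with $f\in C(K)$ and $x\in E$: since $(T(f\cdot x))(t,x^*)=f(t)\,x^*(x)$, one uses continuous functions on $K$ to separate the $K$-coordinates and elements $x\in E$ together with the fact that $x^*\neq0$ forces $x^*(x)\neq0$ for some $x$ to separate the $B_{E^*}$-coordinates and to witness non-vanishing off $K\times\{0\}$.

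The one structural point to handle carefully is that the zero set of $\mathcal{B}$ is \emph{exactly} $K\times\{0\}$ rather than something larger: a point $(t,x^*)$ with $x^*\neq0$ admits $x\in E$ with $x^*(x)\neq0$, so $T(1\cdot x)(t,x^*)=x^*(x)\neq0$, whereas every element vanishes when $x^*=0$. Having pinned down the common zero set and the separation property, the appropriate version of Stone–Weierstrass yields that the closed self-adjoint algebra generated by $T(C(K,E))$ is precisely $\{f\in C(K\times B_{E^*})\setsep f|_{K\times\{0\}}=0\}$.

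\textbf{The main obstacle} I anticipate is not any hard estimate but the bookkeeping of which Stone–Weierstrass variant applies: the standard statement is for unital subalgebras separating points, while here the algebra is non-unital and has a nontrivial common zero set $K\times\{0\}$. The cleanest route is to invoke the form of Stone–Weierstrass characterizing closed self-adjoint subalgebras of $C(X)$ (for $X$ compact) as algebras of functions vanishing on a fixed closed set and separating points off that set; alternatively one adjoins constants, applies the unital theorem to conclude the generated algebra-with-constants is all of $C(K\times B_{E^*})$, and then subtracts the constant part by evaluating on $K\times\{0\}$. Verifying that $K\times\{0\}$ is genuinely closed (immediate, as $\{0\}$ is closed in the weak$^*$-compact $B_{E^*}$) and that the generated algebra separates all remaining points completes the argument.
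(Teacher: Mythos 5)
Your main argument is correct and is essentially the paper's proof: both establish the easy inclusion, separate points using functions $T(f\cdot x)(t,x^*)=f(t)x^*(x)$ (a constant $x$ when the functionals differ, and an $f$ separating $t$ from $s$ together with $x^*(x)\neq0$ when they coincide and are nonzero), and then invoke the Stone--Weierstrass theorem with common zero set exactly $K\times\{0\}$. One caveat on your parenthetical alternative: adjoining constants does \emph{not} make the unital algebra all of $C(K\times B_{E^*})$ when $K$ has more than one point, since constants still fail to separate $(t,0)$ from $(s,0)$; the correct conclusion (the one the paper draws) is that $\span(Z\cup\{1\})$ is the set of functions whose restriction to $K\times\{0\}$ is constant, from which $Z=C^0(K\times B_{E^*})$ follows by evaluating there.
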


\begin{proof}
    Inclusion `$\subset$' is obvious. To prove the converse we use the Stone-Weierstrass theorem.
    Assume that $(t,x^*),(s,y^*)\in K\times B_{E^*}$. Then:
    \begin{itemize}
        \item Assume $y^*\ne x^*$. Fix $x\in E$ with $y^*(x)\ne x^*(x)$. Let $\f\in C(K,E)$ be the constant function equal to $x$. Then $T\f(t,x^*)=x^*(x)\ne y^*(x)=T\f(s,y^*)$.
        \item Assume $y^*=x^*\ne0$ and $s\ne t$. Fix $x\in E$ with $x^*(x)\ne0$ and $f\in C(K)$ with $f(s)=0$ and $f(t)=1$. Then
        $$T(f\cdot x)(t,x^*)=x^*(x)\ne0=T(f\cdot x)(s,y^*).$$
    \end{itemize}
    Let $Z$ denote the algebra from the statement. By the Stone-Weierstrass theorem we deduce
     $$\span(Z\cup\{1\})=\{f\in C(K\times B_{E^*})\setsep f|_{K\times\{0\}}\mbox{ is constant}\},$$
    hence, $Z=C^0(K\times B_{E^*})$.
\end{proof}

\begin{prop}\label{P:KT*nu konstrukce}
    Let $\nu,\h,F_0$ be as in Lemma~\ref{L:hustotavariace}. Define
    $$\g(t)=\begin{cases}
        \frac{\h(t)}{\norm{\h(t)|_{F_0}}},& \h(t)|_{F_0}\ne0,\\ 0 &\mbox{otherwise}.
    \end{cases}$$
    Then the following assertions hold:
    \begin{enumerate}[$(i)$]
        \item There is a unique measure $\widetilde{\nu}\in M(K\times B_{E^*})$ carried by
        $K\times (B_{E^*}\setminus\{0\})$ such that
         $$\int_{K\times B_{E^*}} f\di\widetilde{\nu} = \int_K f(t,\g(t))\di \abs{T^*\nu}(t)\mbox{ for }f\in C^0(K\times B_{E^*}).$$
         \item The measure $\widetilde{\nu}$ is positive, $T^*\widetilde{\nu}=T^*\nu$ and $\norm{\widetilde{\nu}}=\norm{T^*\nu}$. In particular, $\widetilde{\nu}$ is carried by $K\times S_{E^*}$.
         \item $\pi_1(\widetilde{\nu})=\abs{T^*\widetilde{\nu}}=\abs{T^*\nu}$.
         \item $\norm{\g(t)}=1$ (hence $\norm{\h(t)}=\norm{\h(t)|_{F_0}}$) $\pi_1(\widetilde{\nu})$-a.e. 
    \end{enumerate}
\end{prop}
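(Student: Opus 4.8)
The plan is to realize $\widetilde{\nu}$ as a bounded linear functional on $C^0(K\times B_{E^*})$ and invoke the Riesz theorem, exploiting Lemma~\ref{L:SW}, by which the (self-adjoint) subalgebra $Z$ generated by $T(C(K,E))$ is $\norm{\cdot}_\infty$-dense in $C^0(K\times B_{E^*})$. Writing $\sigma=\pi_1(\abs{\nu})$, I first define $L$ on $Z$ by $L(f)=\int_K f(t,\g(t))\di\abs{T^*\nu}(t)$. The delicate point here, which I expect to be the main obstacle, is that $\g(t)$ need not lie in $B_{E^*}$ (indeed $\norm{\g(t)}=\norm{\h(t)}/\norm{\h(t)|_{F_0}}\ge1$), so $f(t,\g(t))$ is a priori meaningless. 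To circumvent this, observe that any $f\in Z$ is a constant-free polynomial in finitely many functions $T\f_1,\dots,T\f_n$ and their conjugates, whose ranges lie in a common separable subspace $F\supseteq F_0$; hence $f(t,\cdot)$ factors through the restriction map $x^*\mapsto x^*|_F$. By Lemma~\ref{L:hustotavariace}$(a)$ we have $\norm{\h(t)|_F}=\norm{\h(t)|_{F_0}}$ $\sigma$-a.e., so $\norm{\g(t)|_F}=1$ for $\abs{T^*\nu}$-a.e.\ $t$; choosing by Hahn-Banach some $\widehat{x}^*\in B_{E^*}$ extending $\g(t)|_F$ gives $f(t,\g(t))=f(t,\widehat{x}^*)$, a value that is well defined a.e.\ (independent of the extension and, via a common larger $F$, of the representation of $f$) and satisfies $\abs{f(t,\g(t))}\le\norm{f}_\infty$. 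Thus $\abs{L(f)}\le\norm{f}_\infty\,\abs{T^*\nu}(K)$, so $L$ is $\norm{\cdot}_\infty$-continuous on $Z$ and extends to $C^0(K\times B_{E^*})$. Viewing $C^0(K\times B_{E^*})$ as $C_0$ of the locally compact space $K\times B_{E^*}\setminus(K\times\{0\})$, the Riesz theorem produces a unique Radon measure $\widetilde{\nu}$ carried by that set and representing $L$; this gives $(i)$.

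For $(ii)$, positivity follows by writing a nonnegative $f\in C^0(K\times B_{E^*})$ as $\sqrt{f}\cdot\overline{\sqrt{f}}$ with $\sqrt{f}\in C^0(K\times B_{E^*})$ and noting $L(\overline{g}g)=\int_K\abs{g(t,\g(t))}^2\di\abs{T^*\nu}\ge0$ for $g\in Z$, which passes to the closure by continuity. The identity $T^*\widetilde{\nu}=T^*\nu$ is the degree-one instance of the construction: for $\f\in C(K,E)$,
\begin{align*}
\int\f\di T^*\widetilde{\nu}=\int T\f\di\widetilde{\nu}=L(T\f)&=\int_K\g(t)(\f(t))\di\abs{T^*\nu}(t)\\
&=\int_K\h(t)(\f(t))\di\sigma(t)=\int\f\di T^*\nu,
\end{align*}
where I use $\di\abs{T^*\nu}=\norm{\h|_{F_0}}\di\sigma$ (Lemma~\ref{L:hustotavariace}$(b)$), the definition $\g=\h/\norm{\h|_{F_0}}$, and Proposition~\ref{P:hustotaT*nu}. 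For the norm, since $T$ is an isometry we get $\norm{\widetilde{\nu}}\ge\norm{T^*\widetilde{\nu}}=\norm{T^*\nu}$, while positivity and the functional bound give $\norm{\widetilde{\nu}}=\widetilde{\nu}(K\times B_{E^*})\le\abs{T^*\nu}(K)=\norm{T^*\nu}$; hence equality. Lemma~\ref{L:nesena sferou} then shows $\widetilde{\nu}$ is carried by $K\times S_{E^*}$.

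Assertion $(iii)$ is now immediate: $\abs{T^*\widetilde{\nu}}=\abs{T^*\nu}$ because $T^*\widetilde{\nu}=T^*\nu$, and since $\norm{T^*\widetilde{\nu}}=\norm{\widetilde{\nu}}$, Proposition~\ref{P:vlastnostih}$(a)$ yields $\pi_1(\widetilde{\nu})=\abs{T^*\widetilde{\nu}}$.

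Finally, for $(iv)$ I would note that the displayed computation shows $\g$ is itself a density function for $\widetilde{\nu}$ in the sense of Proposition~\ref{P:hustotaT*nu}, with base measure $\pi_1(\widetilde{\nu})=\abs{T^*\nu}$. If $(\widetilde{\nu}_t)_{t\in K}$ is a disintegration kernel of $\widetilde{\nu}$, then $r(\widetilde{\nu}_t)$ is another such density, genuinely $B_{E^*}$-valued, and since $\norm{T^*\widetilde{\nu}}=\norm{\widetilde{\nu}}$, Proposition~\ref{P:vlastnostih}$(b)$ gives $\norm{r(\widetilde{\nu}_t)}=1$ $\pi_1(\widetilde{\nu})$-a.e. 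When $E$ is separable, Lemma~\ref{L:hustota jednoznacnost}$(b)$ forces $\g(t)=r(\widetilde{\nu}_t)$ a.e., so $\norm{\g(t)}=1$ a.e.\ and we are done. The residual difficulty is again the nonseparable case, where $\norm{\g(t)}$ is a supremum over uncountably many directions and need not even be measurable; here I would pass to separable quotients, combining Lemma~\ref{L:hustota jednoznacnost}$(a)$ with Lemma~\ref{L:dezintegrace kvocient} to obtain $\g(t)|_F=r(\widetilde{\nu}_t)|_F$ $\pi_1(\widetilde{\nu})$-a.e.\ for every separable $F\supseteq F_0$, together with $\norm{\g(t)|_F}=1$ a.e., and conclude that the set $\{t\setsep\norm{\h(t)}>\norm{\h(t)|_{F_0}}\}$ is $\pi_1(\widetilde{\nu})$-negligible, i.e.\ $\norm{\g(t)}=1$ a.e.
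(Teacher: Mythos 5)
For assertions $(i)$--$(iii)$ your route is essentially the paper's: define the functional $f\mapsto\int_K f(t,\g(t))\di\abs{T^*\nu}(t)$ on the subalgebra generated by $T(C(K,E))$ (dense in $C^0(K\times B_{E^*})$ by Lemma~\ref{L:SW}), invoke the Riesz theorem for $C_0(K\times B_{E^*}\setminus(K\times\{0\}))$, and then run the degree-one computation $\int T\f\di\widetilde{\nu}=\int\g(t)(\f(t))\di\abs{T^*\nu}=\int\h(t)(\f(t))\di\pi_1(\abs{\nu})=\int\f\di T^*\nu$, the norm estimate, Lemma~\ref{L:nesena sferou} and Proposition~\ref{P:vlastnostih}$(a)$. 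Your explicit handling of the fact that $\g(t)$ need not lie in $B_{E^*}$ --- so that $f(t,\g(t))$ is a priori meaningless and must be read through a separable factorization $F\supseteq F_0$, Lemma~\ref{L:hustotavariace}$(a)$, and a Hahn--Banach extension of $\g(t)|_F$ --- is actually more careful than the paper, which defines the functional on all of $C^0(K\times B_{E^*})$ at once and only records measurability of $t\mapsto f(t,\g(t))$; the bound $\abs{f(t,\g(t))}\le\norm{f}_\infty$ genuinely requires your argument. One small omission: in the fourth equality of your chain for $T^*\widetilde{\nu}=T^*\nu$ you should also invoke Lemma~\ref{L:hustotavariace}$(a)$ to dispose of the set where $\h(t)|_{F_0}=0$ but $\h(t)(\f(t))\ne0$, as the paper does.

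Assertion $(iv)$ is where your argument has a genuine gap, which you name but do not close. In the nonseparable case you obtain, for each separable $F\supseteq F_0$, that $\norm{\g(t)|_F}=1$ for $\pi_1(\widetilde{\nu})$-almost all $t$, with the exceptional null set depending on $F$; from this you ``conclude'' that $\norm{\g(t)}=1$ almost everywhere. That step does not follow: $\norm{\g(t)}$ is the supremum of $\norm{\g(t)|_F}$ over uncountably many subspaces $F$, and the union of the corresponding null sets need not be null --- this is exactly the crux of $(iv)$. (For what it is worth, the paper's own one-line justification, an appeal to Proposition~\ref{P:vlastnostih}$(b)$ applied to $\g$ viewed as a density of $\widetilde{\nu}$, only delivers the trivial inequality $\norm{\g(t)}\ge1$ a.e., since the proof of that proposition uses $\h(t)\in B_{E^*}$ for the upper bound.) One way to close the gap is to use the specific lifting-based construction of the disintegration kernel (as in the proof of Proposition~\ref{P:vsude}): writing $\h(t)(x)=\Phi(h_{\widehat{x}})(t)$ for a lifting $\Phi$, the a.e.\ inequalities $\abs{\h(t)(x)}\le\norm{x}\cdot\norm{\h(t)|_{F_0}}$ for $x\in E$ (each an instance of Lemma~\ref{L:hustotavariace}$(a)$) become simultaneous in $x$ after applying $\Phi$, giving $\norm{\h(t)}\le\Phi\bigl(\norm{\h|_{F_0}}\bigr)(t)$ everywhere and hence $\norm{\h(t)}=\norm{\h(t)|_{F_0}}$ a.e. Some device of this kind is indispensable; the separable-quotient reduction alone cannot deliver $(iv)$.
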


\begin{proof}
  Let $\f\in C(K,E)$. Let $F\subset E$ be a separable subspace containing $F_0$ and $\f(K)$. Then we have
 $$\begin{aligned}
     \int_K \f\di T^*\nu&=\int_K \h(t)(\f(t))\di \pi_1(\abs{\nu})(t)=\int_K \g(t)(\f(t)) \norm{\h(t)|_{F_0}}\di\pi_1(\abs{\nu})(t)\\&= \int_K T\f(t,\g(t))\di\abs{T^*\nu}(t).
 \end{aligned}$$
 Here the first equality follows from Proposition~\ref{P:hustotaT*nu}. To prove the second equality we compare the integrated functions. By the definition of $\g$ they are equal if $\h(t)|_{F_0}\ne0$ or $\h(t)(\f(t)) =0$. In the remaining points we have $\h(t)|_{F_0}=0$ and $\h(t)|_F\ne 0$ (recall that $\f(t)\in F$). But such points form a set of $\pi_1(\abs{\nu})$-measure zero by Lemma~\ref{L:hustotavariace}$(a)$. Hence the second equality follows. The third equality follows from the definition of $T$ and Lemma~\ref{L:hustotavariace}$(b)$.

 In particular, we deduce that the function $t\mapsto f(t,\g(t))$ is $\abs{T^*\nu}$-measurable for each $f\in T(C(K,E))$. Since measurability is preserved by products, linear combinations, complex conjugation and limits of sequences, by Lemma~\ref{L:SW} we get that the function $t\mapsto f(t,\g(t))$ is $\abs{T^*\nu}$-measurable for each $f\in C^0(K\times B_{E^*})$. Therefore, the mapping
 $$f\mapsto \int_K f(t,\g(t))\di\abs{T^*\nu}(t)$$
 defines a linear functional on $C^0(K\times B_{E^*})$. It is clear that this functional is bounded, with norm at most $\norm{T^*\nu}$. So, the existence and uniqueness of $\widetilde{\nu}$ follows from the Riesz representation theorem applied to the space $C_0(K\times (B_{E^*}\setminus\{0\}))$. This completes the proof of $(i)$.
 
 By the quoted Riesz theorem the norm of $\widetilde{\nu}$ equals the norm of the represented functional. In particular, $\norm{\widetilde{\nu}}\le\norm{T^*\nu}$. Conversely,
$$\begin{aligned}
 \norm{\widetilde{\nu}}&=\sup\left\{\abs{\int_K f(t,\g(t))\di\abs{T^*\nu}(t)}\setsep f\in C^0(K\times B_{E^*}), \norm{f}_\infty\le 1\right\} \\&
 \ge \sup\left\{\abs{\int_K T\f(t,\g(t))\di\abs{T^*\nu}(t)}\setsep \f\in C(K,E), \norm{\f}_\infty\le 1\right\} \\&
 =\sup\left\{\abs{\int_K \f\di T^*\nu}\setsep \f\in C(K,E), \norm{\f}_\infty\le 1\right\} =\norm{T^*\nu},
\end{aligned}$$
where the first equality on the third line follows from the computation at the beginning of the proof.
Hence $\norm{\widetilde{\nu}}=\norm{T^*\nu}$. Further, $\widetilde{\nu}\ge0$ as the represented functional is clearly positive. Finally, if $\f\in C(K,E)$, then
$$\int \f\di T^*\widetilde{\nu}=\int T\f\di\widetilde{\nu}=\int_K T\f(t,\g(t))\di\abs{T^*\nu}(t)=\int_K \f\di T^*\nu,$$
where the last equality again  follows from the computation at the beginning of the proof. We conclude that $T^*\widetilde{\nu}=T^*\nu$. This completes the proof of $(ii)$ as the last statement follows from Lemma~\ref{L:nesena sferou}. 

Assertion $(iii)$ follows from Proposition~\ref{P:vlastnostih}$(a)$ using $(ii)$. 

$(iv)$: Note that, by the construction, the function $\g$ satisfies the first equality from Proposition~\ref{P:hustotaT*nu} for $\widetilde{\nu}$. Thus we conclude using $(ii)$ and Proposition~\ref{P:vlastnostih}$(b)$.
\end{proof}

\section{Transference of measures revisited}

In this section we show that Proposition~\ref{P:KT*nu konstrukce} provides an alternative approach 
to the procedure named `transference of measures' in \cite[Section 3]{batty-vector}. This procedure provides, given $\mu\in M(K,E^*)$, a canonical positive measure $W\mu\in M(K\times B_{E^*})$ (denoted by $K\mu$ in \cite{batty-vector}) such that $T^*W\mu=\mu$ and $\norm{W\mu}=\norm{\mu}$. The construction in \cite[Section 3]{batty-vector}
uses the correspondence between cones $\B$ and $\D$ recalled in Section~\ref{s:dualita} above. Using our approach we get stronger results than \cite{batty-vector} (as promised in the introduction). We start by the following lemma which may be seen as an ultimate
    generalization of \cite[Proposition 2.2]{batty-vector}.

\begin{lemma}\label{L:KT*nu dukaz}
    Let $\nu\in M(K\times B_{E^*})$ be arbitrary. Let $\widetilde{\nu}$ be the measure provided by Proposition~\ref{P:KT*nu konstrukce}. Then
    $$p_f(T^*\nu)=\int f\di\widetilde{\nu}\mbox{ for each }f\in\D.$$
\end{lemma}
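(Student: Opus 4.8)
The plan is to prove the two inequalities between $p_f(T^*\nu)$ and $\int f\di\widetilde{\nu}$ separately. Throughout write $\mu=T^*\nu$ and $\sigma=\pi_1(\widetilde{\nu})$; by Proposition~\ref{P:KT*nu konstrukce}$(ii)$,$(iii)$ we have $\mu=T^*\widetilde{\nu}$, $\widetilde{\nu}\ge0$ is carried by $K\times S_{E^*}$, $\sigma=\abs{T^*\nu}$, and the normalized density $\g$ satisfies $\norm{\g(t)}=1$ $\sigma$-a.e.\ together with $\int\f\di\mu=\int_K\g(t)(\f(t))\di\sigma$ for $\f\in C(K,E)$ (Proposition~\ref{P:hustotaT*nu} applied to $\widetilde{\nu}$, via Proposition~\ref{P:KT*nu konstrukce}$(iv)$). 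Recall from Corollary~\ref{cor:f-p} that $p_f(\mu)=\inf\{\Re\int\f\di\mu\setsep\f\in C(K,E),\ \Re x^*(\f(t))\ge f(t,x^*)\text{ for all }(t,x^*)\}$, and that such an admissible $\f$ is precisely a continuous selection of the multifunction $\psi_f\in\C$.

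The easy inequality is $p_f(\mu)\ge\int f\di\widetilde{\nu}$. For an admissible $\f$ the function $\Re T\f$ lies in $C^0(K\times B_{E^*})$ and satisfies $\Re T\f\ge f$ on $K\times B_{E^*}$; since $\widetilde{\nu}\ge0$ and $T^*\widetilde{\nu}=\mu$,
$$\Re\int\f\di\mu=\Re\int T\f\di\widetilde{\nu}=\int\Re T\f\di\widetilde{\nu}\ge\int f\di\widetilde{\nu}.$$
Taking the infimum over admissible $\f$ gives the claim; this direction uses only positivity of $\widetilde{\nu}$ and $T^*\widetilde{\nu}=\mu$.

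For the reverse inequality I would first record that $\int f\di\widetilde{\nu}\ge\int^*_K f(t,\g(t))\di\sigma$ (outer integral). Indeed, by regularity $\int f\di\widetilde{\nu}=\inf\{\int h\di\widetilde{\nu}\setsep h\ge f,\ h\in C(K\times B_{E^*})\}$; each such $h$ gives $\int h\di\widetilde{\nu}=\int_K h(t,\g(t))\di\sigma$, because $\widetilde{\nu}$ lives on $K\times S_{E^*}$ where $C^0(K\times B_{E^*})$ restricts onto $C(K\times S_{E^*})$, $\g(t)\in S_{E^*}$ a.e., and $t\mapsto h(t,\g(t))$ is $\sigma$-measurable (subtract $h(\cdot,0)$ to reduce to the $C^0$ case treated in Proposition~\ref{P:KT*nu konstrukce}); and $h(t,\g(t))\ge f(t,\g(t))$ pointwise. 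Next, Lemma~\ref{L:abstract-batty} applied to the superlinear $f(t,\cdot)$ yields $f(t,\g(t))=\inf\{\Re\g(t)(x)\setsep x\in\psi_f(t)\}$, so near-minimizers exist in each fibre. Given $\ep>0$, Luzin's theorem provides a closed $B\subset K$ with $\sigma(K\setminus B)<\ep$ on which $\g$ is weak$^*$-continuous; then $t\mapsto f(t,\g(t))$ is upper semicontinuous on $B$, hence admits a continuous majorant $c$ on $B$ with $\int_B c\di\sigma\le\int_B f(t,\g(t))\di\sigma+\ep$. The multifunction $t\mapsto\{x\in\psi_f(t)\setsep\Re\g(t)(x)<c(t)+\ep\}$ is then lower semicontinuous with nonempty convex values on $B$, so Michael's selection theorem gives a continuous selection $\f_0$ of $\psi_f$ on $B$ with $\Re\g(t)(\f_0(t))\le c(t)+\ep$; extending $\f_0$ to a global continuous selection $\f$ of $\psi_f$ keeps $\f$ admissible. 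Then
$$\Re\int\f\di\mu=\int_K\Re\g(t)(\f(t))\di\sigma\le\int_B f(t,\g(t))\di\sigma+C\ep\le\int f\di\widetilde{\nu}+C\ep,$$
where $C$ absorbs $\norm{f}_\infty$ and the norm bound of $\psi_f$; letting $\ep\to0$ gives $p_f(\mu)\le\int f\di\widetilde{\nu}$.

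The main obstacle is this selection step, which must work for possibly nonseparable $E$, where $\g$ is only weak$^*$-measurable. The delicate points are: making the constraint multifunction genuinely lower semicontinuous (resolved by replacing the upper semicontinuous threshold $f(t,\g(t))$ by the continuous majorant $c$ on the Luzin set $B$); extending the partial continuous selection $\f_0$ to a global continuous selection of $\psi_f$ without losing admissibility; and justifying the measurability and the final estimates for $t\mapsto f(t,\g(t))$ when $\g$ is merely measurable. The auxiliary inequality $\int f\di\widetilde{\nu}\ge\int^*_K f(t,\g(t))\di\sigma$, together with the two inequalities, incidentally forces $\int f\di\widetilde{\nu}=\int_K f(t,\g(t))\di\sigma$.
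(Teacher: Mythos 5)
Your easy inequality $p_f(T^*\nu)\ge\int f\di\widetilde{\nu}$ is correct and is essentially the paper's Step~2. The problem is the reverse inequality, where your plan diverges from the paper and runs into exactly the obstacle you yourself name without resolving it. The entire construction hinges on Luzin's theorem producing a closed set $B$ with $\sigma(K\setminus B)<\ep$ on which $\g$ is weak$^*$ continuous. For nonseparable $E$ the function $\g$ is only scalarly measurable (i.e., $t\mapsto\g(t)(x)$ is measurable for each $x\in E$); as the paper points out after Proposition~\ref{P:hustotaT*nu}, it need not be measurable as a map into the nonmetrizable compact $(B_{E^*},w^*)$, and Luzin's theorem is not available for such maps. (The paper invokes Luzin for $\h$ only in the proof of Proposition~\ref{P:vlastnostih}$(c)$, under the hypothesis that $E$ is separable, where $(B_{E^*},w^*)$ is metrizable and $\h$ is genuinely measurable.) Without the set $B$, neither the upper semicontinuity of $t\mapsto f(t,\g(t))$ nor the lower semicontinuity of your constraint multifunction is available, so the Michael selection step cannot even be launched; even the measurability of $t\mapsto f(t,\g(t))$ for a general $f\in\D$ is unclear. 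Your proposal is therefore a plan whose central step is missing in the generality claimed.

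The paper's proof of the hard direction is organized precisely to avoid composing $\g$ with a general $f\in\D$. Step~1 writes $f=\inf\{\Re T\f\setsep \Re T\f\ge f\}$; Step~5 reduces, via the monotone convergence theorem for downward-directed nets, to the case $f=\Re T\f_1\wedge\dots\wedge\Re T\f_n$; and for such $f$ the composition $t\mapsto f(t,\g(t))=\min_j\Re\g(t)(\f_j(t))$ is automatically measurable from scalar measurability of $\g$ alone. Step~4 then partitions $K$ into Borel sets $A_j$ on which $f(\cdot,\g(\cdot))=f_j(\cdot,\g(\cdot))$ almost everywhere and exploits the additivity of $p_f$ on mutually orthogonal measures (built into the definition of the cone $\B$) together with the linear case of Step~3. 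If you wish to keep a selection-based argument, you would first have to perform this reduction to finite minima (or restrict to separable $E$), at which point the selection machinery becomes unnecessary.
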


\begin{proof} We adopt the notation from Proposition~\ref{P:KT*nu konstrukce}. The proof will be done in several steps.

\smallskip

{\tt Step 1:} If $f\in\D$, then $f=\inf\{\Re T\f\setsep \f\in C(K,E), \Re T\f\ge f\}$.

\smallskip

By Corollary~\ref{cor:f-p} we have (for $(t,x^*)\in K\times E^*$)
$$\begin{aligned}  
f(t,x^*)&=p_f(\ep_t\otimes x^*)\\&=\inf \Big\{ \Re \int \f\di(\ep_t\otimes x^*)\setsep \f\in C(K,E), \\& \qquad\qquad \Re y^*(\f(s))\ge f(s,y^*)\mbox{ for }(s,y^*)\in K\times E^*\Big\}
\\&=\inf\{ \Re T\f(t,x^*)\setsep \f\in C(K,E), \Re T\f\ge f\},
\end{aligned}$$
which completes the argument.

\smallskip

{\tt Step 2:} $p_f(T^*\nu)\ge \int f\di\widetilde{\nu}$ for each $f\in\D$:

\smallskip

Let $\f\in C(K,E)$ be arbirary. Then
$$\begin{aligned}
 \int T\f\di\widetilde{\nu}&=\int T\f(t,\g(t)) \di \abs{T^*\nu}(t)=\int \g(t)(\f(t)) \di \abs{T^*\nu}(t)\\&=\int \g(t)(\f(t))  \norm{\h(t)|_{F_0}} \di \pi_1(\abs{\nu})(t)=
\int \h(t)(\f(t)) \di  \pi_1(\abs{\nu})(t) \\& =\int \f\di T^*\nu.\end{aligned}$$
Indeed, the first equality follows from Proposition~\ref{P:KT*nu konstrukce} as $T\f\in C^0(K\times B_{E^*})$. The second equality follows from the definition of $T$, the third one from Lemma~\ref{L:hustotavariace}$(b)$ ($F_0$ has the same meaning as in Lemma~\ref{L:hustotavariace}). Let us explain the fourth equality. Let $F\subset E$ be a separable subspace containing $F_0\cup \f(K)$. Then $\h(t)(\f(t))=\g(t)(\f(t))\norm{\h(t)|_{F_0}}$ unless $\h(t)(\f(t))\ne 0$ and $\h(t)|_{F_0}=0$. But such points form a set of $\pi_1(\abs{\nu})$-measure zero by Lemma~\ref{L:hustotavariace}$(a)$. The last equality follows from Proposition~\ref{P:hustotaT*nu}.

Let $f\in \D$ be arbitrary. Using Step 1 and the preceding computation we get
$$\begin{aligned}
\int f\di\widetilde{\nu}&=\int \inf \{\Re T\f \setsep \f\in C(K,E), \Re T\f\ge f\}\di\widetilde{\nu}\\&\le  \inf \left\{\Re \int T\f \di\widetilde{\nu} \setsep \f\in C(K,E), \Re T\f\ge f\right\}
\\&=\inf \left\{\Re \int \f \di T^*\nu \setsep \f\in C(K,E), \Re T\f\ge f\right\}
=p_f(T^*\nu),\end{aligned}$$
where the last equality follows from Corollary~\ref{cor:f-p}.

\smallskip

{\tt Step 3:} If $f=\Re T\f$ for some $\f\in C(K,E)$, then $p_f(T^*\nu)=\int f\di\widetilde{\nu}$.

\smallskip

Assume $f=\Re T\f$. It follows from Proposition~\ref{P:BD-BD} and from the first computation in Step 2 that
$$p_{f}(T^*\nu)=\Re \int \f\di T^*\nu =\int \Re T\f\di\widetilde{\nu}=\int f\di\widetilde{\nu}.$$

{\smallskip}

{\tt Step 4.} Assume $\f_1,\dots,\f_n\in C(K,E)$, $f_j=\Re T\f_j$ and $f=f_1\wedge\dots\wedge f_n$.
Then $p_f(T^*\nu)=\int f\di\widetilde{\nu}$.

\smallskip


Assume first that $f(t,\g(t))=f_1(t,\g(t))$ $\abs{T^*\nu}$-almost everywhere. Then
$$\begin{aligned}
    p_f(T^*\nu) &\le p_{f_1}(T^*\nu) =\int f_1\di\widetilde{\nu}=\int f\di\widetilde{\nu} \le p_f(T^*\nu).
\end{aligned}$$
The first inequality follows from the fact that $f\le f_1$. The equalities  follow from Step 3 and from the definition of $\widetilde{\nu}$. The last inequality follows from Step 2.
Hence the equalities hold.

In general there is a partition of $K$ into Borel sets $A_1,\dots,A_n$ such that $f(t,\g(t))=f_j(t,\g(t))$ for $\abs{T^*\nu}$-almost all $t\in A_j$. Then
$$\begin{aligned}
    p_f(T^*\nu)&=\sum_{j=1}^n p_f(T^*\nu|_{A_j})=
    \sum_{j=1}^n p_f(T^*(\nu|_{A_j\times B_{E^*}}))= 
           \sum_{j=1}^n \int_K f\di (\widetilde{\nu|_{A_j\times B_{E^*}}})\\&= \sum_{j=1}^n \int_K f\di \widetilde{\nu}|_{A_j\times B_{E^*}}=\int_K f\di\widetilde{\nu}.
\end{aligned}$$
Here the first equality follows from the fact that $p_f$ is additive on pairs of mutually orthogonal measures. The second one follows easily from formula \eqref{eq:hustad}. The fourth equality follow from 
the construction of $\widetilde{\nu}$ -- it is clear that this measure constructed for $\nu|_{A_j\times B_{E^*}}$ coincides with $\widetilde{\nu}$ restricted to $A_j\times B_{E^*}$. In view of this the third equality follows from the special case addressed in the previous paragraph. The last equality is obvious.

\smallskip

{\tt Step 5.} The general case:

Let now $f\in\D$ be general. Then
 $$\begin{aligned}
\int f\di\widetilde{\nu}&=\int \inf \{h_1\wedge\dots\wedge h_n\setsep h_j\in \D\cap(-\D), h_j\ge f \mbox{ for }1\le j\le n\}\di\widetilde{\nu}\\&=\inf \left\{\int (h_1\wedge\dots\wedge h_n)\di\widetilde{\nu}\setsep h_j\in \D\cap(-\D), h_j\ge f\mbox{ for }1\le j\le n\right\}\\&=
\inf\{p_{h_1\wedge\dots\wedge h_n}(T^*\nu)\setsep h_j\in \D\cap(-\D), h_j\ge f\mbox{ for }1\le j\le n\}=p_f(T^*\nu).
\end{aligned}$$
Here the first equality follows from Step 1 and the description of $\D\cap(-\D)$ in Proposition~\ref{P:BD-BD}. The second one follows from the monotone convergence theorem for nets. The third equality follows from Step 4. The last equality follows easily from Corollary~\ref{cor:f-p}.
\end{proof}

Now we easily get the promised relationship to the Batty's operator:

\begin{cor}\label{c:battytrans}
     Let $\nu\in M(K\times B_{E^*})$ be arbitrary. Let $\widetilde{\nu}$ be the measure provided by Proposition~\ref{P:KT*nu konstrukce}. Then $\widetilde{\nu}=WT^*\nu$, where $W$ is the operator from \cite[Proposition 3.3]{batty-vector} (denoted by $K$ in the quoted paper).
\end{cor}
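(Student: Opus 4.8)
The plan is to identify $\widetilde{\nu}$ with $KT^*\nu$ by matching a single defining identity, most of the real work already being contained in Lemma~\ref{L:KT*nu dukaz}. First I would recall how Batty's operator $K$ is built in \cite[Section 3]{batty-vector}: via the correspondence between the cones $\B$ and $\D$ recorded in Corollary~\ref{cor:f-p}, the positive measure $K\mu$ attached to $\mu\in M(K,E^*)$ is characterized by
$$\int f\di K\mu=p_f(\mu)\quad\text{for every }f\in\D.$$
Thus proving the corollary reduces to checking that $\widetilde{\nu}$ obeys this same identity with $\mu=T^*\nu$.

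This check is precisely the content of Lemma~\ref{L:KT*nu dukaz}, which gives $p_f(T^*\nu)=\int f\di\widetilde{\nu}$ for each $f\in\D$. Combining the two displays with $\mu=T^*\nu$ yields
$$\int f\di\widetilde{\nu}=p_f(T^*\nu)=\int f\di KT^*\nu\quad\text{for every }f\in\D,$$
so $\widetilde{\nu}$ and $KT^*\nu$ assign the same integral to every member of $\D$. The uniqueness clause of \cite[Proposition 3.3]{batty-vector} then forces $\widetilde{\nu}=KT^*\nu$, which is the assertion.

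The only genuinely delicate point—and the step I expect to be the main obstacle—is the passage from agreement on the cone $\D$ to equality of the two measures. This is \emph{not} an algebra-generation argument in the style of Lemma~\ref{L:SW}, since $\D\cap(-\D)=\{\Re T\f\setsep \f\in C(K,E)\}$ is merely a linear space, and two positive measures agreeing on it alone need not coincide. Instead one uses that the real-linear span of $\D|_{K\times B_{E^*}}$ (obtained by taking differences of the superlinear generators) is uniformly dense in $C^0(K\times B_{E^*})$, together with the fact that both $\widetilde{\nu}$ and $KT^*\nu$ are positive and carried away from $K\times\{0\}$: for $\widetilde{\nu}$ by Proposition~\ref{P:KT*nu konstrukce}$(ii)$, and for $KT^*\nu$ by Lemma~\ref{L:nesena sferou}, since $\norm{KT^*\nu}=\norm{T^*\nu}=\norm{T^*(KT^*\nu)}$. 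Since this determination is already secured inside \cite[Proposition 3.3]{batty-vector}, in the write-up I would simply invoke the uniqueness established there, keeping the density remark as a self-contained alternative.
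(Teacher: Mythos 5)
Your main argument is correct and is essentially the paper's own proof: by Proposition~\ref{P:KT*nu konstrukce} the measure $\widetilde{\nu}$ is positive, by Lemma~\ref{L:KT*nu dukaz} it satisfies $\int f\di\widetilde{\nu}=p_f(T^*\nu)$ for every $f\in\D$, and these are exactly the properties that determine $KT^*\nu$ uniquely by \cite[Proposition 3.3]{batty-vector}.

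The ``self-contained alternative'' sketched in your last paragraph is, however, false as stated. Every $f\in\D$ is superlinear, hence positively homogeneous, in the second variable, so the same is true of every element of the real-linear span of $\D|_{K\times B_{E^*}}$; that span is therefore \emph{not} uniformly dense in $C^0(K\times B_{E^*})$, and two positive measures may agree on all of $\D$ without being equal. The paper records the standard counterexample in Section~\ref{ss:ordering-basic}: for $t\in K$ and $x^*\in S_{E^*}$ the measures $\ep_{(t,x^*)}$ and $2\ep_{(t,\frac12 x^*)}$ integrate every $f\in\D$ identically. What actually rescues the uniqueness is not a density argument but the fact that both $\widetilde{\nu}$ and $KT^*\nu$ are carried by $K\times S_{E^*}$, and on such measures agreement against $\D$ does force equality --- this is the weak antisymmetry of \cite[Lemma 3.2]{batty-vector}, quoted in the proof of Proposition~\ref{P:precD na sfere}$(b)$, and it is what underlies the uniqueness clause of \cite[Proposition 3.3]{batty-vector}. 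Since your write-up only invokes that uniqueness clause, the proof as you would actually write it stands; but the density remark should be deleted or replaced by a reference to the antisymmetry on $K\times S_{E^*}$.
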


\begin{proof}
    By Proposition~\ref{P:KT*nu konstrukce} and Lemma~\ref{L:KT*nu dukaz}, the measure $\widetilde{\nu}$ has the properties uniquely determining $WT^*\nu$ by \cite[Proposition 3.3]{batty-vector}.
\end{proof}

\begin{remark}
    Our approach provides an alternative construction of the operator $W$ from \cite{batty-vector}.
    The original construction in \cite{batty-vector} uses the assignment $f\mapsto p_f$ from Corollary~\ref{cor:f-p}, which may be extended to a real-linear mapping $\D-\D\to\B-\B$ and then the Hahn-Banach and Riesz representation theorem are invoked. Our construction is different -- we start from $\mu\in M(K,E^*)$, find an arbitrary $\nu\in M(K\times B_{E^*})$ with $T^*\nu=\mu$ and then apply Proposition~\ref{P:KT*nu konstrukce}. If we choose $\nu$ such that $\norm{\nu}=\norm{\mu}$ (which is possible by the Hahn-Banach theorem), the construction is a bit simpler.

    We note that our construction uses more advanced tools (disintegration of measures), but provides stronger results (see Theorem~\ref{t:battymain} below) and, moreover, provides a weak$^*$ Radon-Nikod\'ym derivative of $\mu$ with respect to $\abs{\mu}$. Hence Lemma~\ref{L:KT*nu dukaz} may be viewed as an ultimate
    generalization of \cite[Proposition 2.2]{batty-vector}.
\end{remark}

We continue by a further result promised in the introduction which is the optimal version of the uniqueness statement from \cite[p. 540]{batty-vector}.

\begin{thm}\label{t:battymain}
    The following assertions are equivalent:
    \begin{enumerate}[$(1)$]
        \item $E^*$ is strictly convex.
        \item If $\nu_1,\nu_2\in M_+(K\times B_{E^*})$ are such that
        $T^*\nu_1=T^*\nu_2$ and $\norm{\nu_1}=\norm{\nu_2}=\norm{T^*\nu_1}$, then $\nu_1=\nu_2$.
        \item If $\nu\in M_+(K\times B_{E^*})$ is such that $\norm{T^*\nu}=\norm{\nu}$, then $WT^*\nu=\nu$.
    \end{enumerate}
\end{thm}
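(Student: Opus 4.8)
The plan is to run the cycle by first disposing of the cheap equivalence $(2)\Leftrightarrow(3)$, and then proving $(1)\Rightarrow(2)$ and $(2)\Rightarrow(1)$ separately; the genuine work lies in $(1)\Rightarrow(2)$. The equivalence $(2)\Leftrightarrow(3)$ is immediate from Proposition~\ref{P:KT*nu konstrukce} and Corollary~\ref{c:battytrans}: for a positive $\nu$ with $\norm{T^*\nu}=\norm{\nu}$ the measure $\widetilde\nu=KT^*\nu$ is positive and satisfies $T^*\widetilde\nu=T^*\nu$ and $\norm{\widetilde\nu}=\norm{T^*\nu}=\norm{\nu}$, so $\nu$ and $\widetilde\nu$ form a pair of the kind considered in $(2)$. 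Thus $(2)$ forces $\nu=\widetilde\nu=KT^*\nu$, i.e.\ $(3)$; conversely, if $(3)$ holds and $\nu_1,\nu_2$ are as in $(2)$, then $\nu_i=KT^*\nu_i$ and $T^*\nu_1=T^*\nu_2$ give $\nu_1=\nu_2$.

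For $(2)\Rightarrow(1)$ I argue by contraposition. If $E^*$ is not strictly convex there are distinct $x^*,y^*\in S_{E^*}$ whose midpoint $z^*=\tfrac12(x^*+y^*)$ still lies in $S_{E^*}$. Fixing any $t_0\in K$, set $\nu_1=\ep_{(t_0,z^*)}$ and $\nu_2=\tfrac12\ep_{(t_0,x^*)}+\tfrac12\ep_{(t_0,y^*)}$, the relevant scalar Dirac masses on $K\times B_{E^*}$. Both are positive of norm $1$, and from the defining formula \eqref{eq:hustad} together with $z^*=\tfrac12(x^*+y^*)$ one checks $T^*\nu_1=T^*\nu_2=\ep_{t_0}\otimes z^*$, so that $\norm{T^*\nu_i}=\norm{z^*}=1=\norm{\nu_i}$; yet $\nu_1\ne\nu_2$, so $(2)$ fails.

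The substantive direction is $(1)\Rightarrow(2)$. Assume $E^*$ is strictly convex and take $\nu_1,\nu_2\in M_+(K\times B_{E^*})$ with $T^*\nu_1=T^*\nu_2=:\mu$ and $\norm{\nu_i}=\norm{\mu}$. By Lemma~\ref{L:nesena sferou} each $\nu_i$ is carried by $K\times S_{E^*}$, and by Proposition~\ref{P:vlastnostih}$(a)$ we have $\sigma:=\pi_1(\nu_1)=\abs{\mu}=\pi_1(\nu_2)$. Choosing disintegration kernels $(\nu_{i,t})_{t}$, which are probabilities on $B_{E^*}$, Proposition~\ref{P:hustotaT*nu} and Proposition~\ref{P:vlastnostih}$(b)$ give that their barycenters $\h_i(t)=r(\nu_{i,t})$ lie in $S_{E^*}$ for $\sigma$-a.e.\ $t$. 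Here enters the geometric input: strict convexity of $E^*$ means $S_{E^*}=\ext B_{E^*}$, and a probability measure on a compact convex set whose (Choquet) barycenter is an extreme point is the Dirac mass there — the barycenter being $r(\nu_{i,t})$ by Lemma~\ref{L:teziste}. Hence $\nu_{i,t}=\ep_{\h_i(t)}$ for $\sigma$-a.e.\ $t$. To identify $\h_1$ with $\h_2$ I reduce to the separable case: both are weak$^*$ densities of $\mu$ with respect to $\sigma$, so exactly as in Lemma~\ref{L:hustota jednoznacnost}$(a)$ one has $\h_1(t)|_F=\h_2(t)|_F$ $\sigma$-a.e.\ for each separable $F\subset E$. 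Fixing a finite-dimensional $F$ and the weak$^*$ continuous restriction $q_F:B_{E^*}\to B_{F^*}$ onto the metrizable ball $B_{F^*}$, Lemma~\ref{L:dezintegrace kvocient} shows that $(\id\times q_F)(\nu_i)$ has kernel $q_F(\nu_{i,t})=\ep_{\h_i(t)|_F}$; as these agree $\sigma$-a.e.\ and the projections coincide, Lemma~\ref{L:dezintegrace}$(iii)$ yields $(\id\times q_F)(\nu_1)=(\id\times q_F)(\nu_2)$. Finally, the functions $g\circ(\id\times q_F)$ with $F$ finite-dimensional and $g\in C(K\times B_{F^*})$ form a self-adjoint subalgebra of $C(K\times B_{E^*})$ containing the constants and separating points (the coordinate functionals $x^*\mapsto x^*(x)$ separate $B_{E^*}$), hence dense by Stone--Weierstrass; since $\nu_1$ and $\nu_2$ agree on all of them, $\nu_1=\nu_2$.

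The step I expect to be the \textbf{main obstacle} is precisely this last, non-separable, part. The fibrewise conclusion $\nu_{i,t}=\ep_{\h_i(t)}$ only determines $\h_i$ coordinatewise almost everywhere, and the statement ``$\h_1(t)(x)=\h_2(t)(x)$ $\sigma$-a.e.\ for each fixed $x$'' does \emph{not} by itself give $\h_1=\h_2$ $\sigma$-a.e.\ when $E$ is non-separable; it is the detour through the metrizable marginals $q_F$ and the Stone--Weierstrass density argument that bridges this gap. A secondary point needing care is the verification that the Choquet barycenter on $(B_{E^*},w^*)$ coincides with $r(\cdot)$ of Lemma~\ref{L:teziste}, and the clean invocation of the ``barycenter extreme $\Rightarrow$ Dirac'' principle, which is the exact place where strict convexity is used.
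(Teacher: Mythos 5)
Your proof is correct. The equivalence $(2)\iff(3)$ and the counterexample for $(2)\implies(1)$ coincide with the paper's argument, but your treatment of the substantive direction genuinely differs. The paper proves $(1)\implies(3)$: after showing $\nu_t=\ep_{r(\nu_t)}$ a.e.\ exactly as you do, it verifies that $\nu$ itself satisfies the defining identity $\int f\di\nu=\int_K f(t,\g(t))\di\abs{T^*\nu}(t)$ for $f\in C^0(K\times B_{E^*})$ from Proposition~\ref{P:KT*nu konstrukce}$(i)$, and the uniqueness clause there (which rests on the Riesz theorem for $C_0(K\times B_{E^*}\setminus(K\times\{0\}))$) immediately forces $\nu=\widetilde{\nu}=KT^*\nu$; that uniqueness silently absorbs all the non-separable bookkeeping. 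You instead prove $(1)\implies(2)$ head-on: two measures with the same Hustad image have fibrewise Dirac kernels $\ep_{\h_1(t)}$, $\ep_{\h_2(t)}$, the densities agree coordinatewise a.e.\ by (the evident two-measure variant of) Lemma~\ref{L:hustota jednoznacnost}$(a)$, and you bridge the gap from coordinatewise to global equality by pushing forward along $\id\times q_F$ for finite-dimensional $F$ (Lemma~\ref{L:dezintegrace kvocient}) and invoking Stone--Weierstrass on the algebra of functions factoring through such quotients. Both routes are sound; the paper's is shorter because it reuses machinery already built ($F_0$, $\g$, Lemma~\ref{L:KT*nu dukaz}), while yours is more self-contained for this direction, isolates exactly where strict convexity enters, and correctly identifies (and resolves) the one real pitfall, namely that a.e.\ coordinatewise equality of the densities does not by itself give a.e.\ equality when $E$ is non-separable. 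Your two flagged points of care -- that $r(\cdot)$ of Lemma~\ref{L:teziste} is the Choquet barycenter on $(B_{E^*},w^*)$, and the Bauer-type ``barycenter extreme $\implies$ Dirac'' principle -- are both standard and used implicitly by the paper as well.
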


\begin{proof}
    $(2)\implies(3)$: This is obvious as, given $\nu$ as in $(3)$, the measures $\nu$ and $WT^*\nu$ satisfy the assumptions of $(2)$.

    $(3)\implies(2)$: This is also obvious, as given $\nu_1$ and $\nu_2$ as in $(2)$, $(3)$ yields
    $\nu_1=WT^*\nu_1=WT^*\nu_2=\nu_2$.

    $(2)\implies (1)$: This is proved in \cite[p. 540]{batty-vector}. Let us recall the easy argument. Assume $E^*$ is not strictly convex. Then there are three distinct points $x^*,x_1^*,x_2^*\in S_{E^*}$ such that $x^*=\frac12(x_1^*+x_2^*)$. Fix $t\in K$ and set
    $$\nu_1=\ep_{(t,x^*)}\mbox{ and }\nu_2=\tfrac{1}{2}(\ep_{(t,x_1^*)}+\ep_{(t,x_2^*)}).$$
    Then $\nu_1,\nu_2$ are positive measures, $\nu_1\ne\nu_2$, $T^*\nu_1=\ep_t\otimes x^*$,
    $$T^*\nu_2(\f)=\int T\f\di\nu_2=\tfrac12(x_1^*(\f(t))+x_2^*(\f(t)))=x^*(\f(t)),$$
    so $T^*\nu_2=\ep_t\otimes x^*$.
    Since $\norm{\ep_t\otimes x^*}=1$, the argument is complete.

    $(1)\implies (3)$: Assume $E^*$ is strictly convex. Let $\nu$ be as in $(3)$. By Proposition~\ref{P:vlastnostih}$(a)$ we deduce $\pi_1(\nu)=\abs{T^*\nu}$. Denote this measure by $\sigma$. Let $(\nu_{t})_{t\in K}$ be a disintegration kernel for $\nu$. By Proposition~\ref{P:vlastnostih}$(b)$ we get $r(\nu_t)\in S_{E^*}$ $\sigma$-almost everywhere. Since $\nu_t$ are probability measures (recall that $\nu\ge0$) and $E^*$ is strictly convex, we deduce that $\nu_t=\ep_{r(\nu_t)}$ $\sigma$-almost everywhere.

   Let $\h(t)=r(\nu_{t})$ for $t\in K$ and let $F_0$, $\g$ and $\widetilde{\nu}$ be as in Proposition~\ref{P:KT*nu konstrukce}. 
   By the choice of $F_0$ we have $\norm{\h(t)|_{F_0}}=1$ $\sigma$-almost everywhere (cf. Proposition~\ref{P:vlastnostih}$(d)$) and hence $\g(t)=\h(t)$ $\sigma$-almost everywhere. It follows from Lemma~\ref{L:dezintegrace} that $\nu$
    satisfies the equality from Proposition~\ref{P:KT*nu konstrukce}$(i)$. By uniqueness of $\widetilde{\nu}$ we conclude that $\nu=\widetilde{\nu}$. Using  Lemma~\ref{L:KT*nu dukaz} we deduce $\nu=WT^*\nu$ and the proof is complete.
\end{proof}

The key new result in the previous theorem is implication $(1)\implies(3)$. Indeed, equivalence $(2)\iff(3)$ is easy, implication $(2)\implies(1)$ follows from the example in \cite[p. 540]{batty-vector}, but implication $(1)\implies(3)$ is new. In \cite[p. 540]{batty-vector} a much weaker version is proved -- it is assumed there that $E$ is separable, reflexive and both $E$ and $E^*$ are strictly convex. Using the technique of disintegration we show that strict convexity of $E^*$ is enough, obtaining thus the optimal result.

\section{Orderings of measures}\label{s:ordering}

In this section we analyze some orderings of measures defined using the cones $\B$ and $\D$. It is inspired by \cite[Section 4]{batty-vector}. Since we focus on the whole cones $\B$ and $\D$ and not their subcones, the situation is in fact different. As we will see below, the ordering defined by $\B$ is trivial -- it is not interesting in itself, but just as the trivial case of possible future considerations. On the other hand, the ordering defined by $\D$ enjoys several interesting 
and perhaps surprising features which we try to understand. 

This section is divided to five subsections. In the first one we collect definitions and easy properties of the orderings, in particular, maximal measures with respect to the cone $\D$ are identified. The second subsection has auxiliary nature and its results are applied in the third subsection where we relate the ordering using the cone $\D$ with the classical Choquet ordering (using the method od disintegration). In the fourth subsection we focus on minimal measures with respect to the cone $\D$ and relate them to the classical maximal measures. In the final subsection we address the question of uniqueness of these minimal measures. 

\subsection{Orderings by the cones $\B$ and $\D$ -- basic facts}\label{ss:ordering-basic}
We start by the trivial case. For $\mu_1,\mu_2\in M(K,E^*)$ it is natural to define
$$\mu_1\prec_{\B}\mu_2  \equiv^{\mathrm{df}} \forall p\in\B\colon p(\mu_1)\le p(\mu_2).$$
However, as the following observation says, this is not very interesting.

\begin{obs}
    Let $\mu_1,\mu_2\in M(K,E^*)$. Then $\mu_1\prec_{\B}\mu_2$ if and only if $\mu_1=\mu_2$.
\end{obs}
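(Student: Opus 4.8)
The plan is to prove the equivalence by showing that the only nontrivial direction is the forward implication, and that this amounts to proving that the cone $\B$ is large enough to separate measures. The reverse implication ($\mu_1=\mu_2\implies\mu_1\prec_{\B}\mu_2$) is trivial since $p(\mu_1)=p(\mu_2)$ for every functional. For the forward direction, suppose $\mu_1\prec_\B\mu_2$. The key observation is that $\B$ is a convex cone that contains $\B\cap(-\B)$, and by Proposition~\ref{P:BD-BD}$(a)$ this subspace consists exactly of the real-linear weak$^*$-continuous functionals $\mu\mapsto\Re\int\f\di\mu$ for $\f\in C(K,E)$.

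First I would exploit that the defining condition $p(\mu_1)\le p(\mu_2)$ must hold for \emph{all} $p\in\B$, in particular for every $p\in\B\cap(-\B)$. For such a $p$ we may apply the inequality both to $p$ and to $-p$ (which also lies in $\B$), yielding $p(\mu_1)\le p(\mu_2)$ and $p(\mu_1)\ge p(\mu_2)$, hence $p(\mu_1)=p(\mu_2)$. Thus $\Re\int\f\di\mu_1=\Re\int\f\di\mu_2$ for every $\f\in C(K,E)$.

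The remaining step is to conclude $\mu_1=\mu_2$ from the equality $\Re\int\f\di\mu_1=\Re\int\f\di\mu_2$ for all $\f\in C(K,E)$. In the complex case I would additionally replace $\f$ by $i\f$ to recover the imaginary part, obtaining $\int\f\di\mu_1=\int\f\di\mu_2$ for every $\f$. Since $M(K,E^*)$ is canonically isometric to the dual $C(K,E)^*$ via exactly the pairing $\mu\mapsto(\f\mapsto\int\f\di\mu)$ (as recalled in Section~\ref{ssec:dualC(K,E)}), two measures inducing the same functional must coincide. This completes the argument.

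I do not expect any genuine obstacle here; the statement is labelled an observation precisely because it records the triviality of $\prec_\B$. The only point requiring the slightest care is the passage from the real-part equality to full equality of the (possibly complex) measures, which is handled by the standard trick of testing against both $\f$ and $i\f$ together with the identification of $M(K,E^*)$ with $C(K,E)^*$.
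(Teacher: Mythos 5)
Your proof is correct and follows essentially the same route as the paper: reduce to $p\in\B\cap(-\B)$, identify these with the functionals $\mu\mapsto\Re\int\f\di\mu$ via Proposition~\ref{P:BD-BD}$(a)$, and conclude from the identification of $M(K,E^*)$ with $C(K,E)^*$. The paper merely leaves the passage from real parts to full equality implicit, which you spell out with the standard $i\f$ substitution.
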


\begin{proof}
    The `if part' is obvious. To prove the `only if' part assume $\mu_1\prec_{\B}\mu_2$. Then $p(\mu_1)=p(\mu_2)$ for each $p\in \B\cap(-\B)$. Hence, for each $\f\in C(K,E)$ we have
    $$\Re \int \f\di\mu_1=\Re \int \f\di\mu_2.$$
    It follows that $\mu_1$ and $\mu_2$ define the same linear functional on $C(K,E)$, thus $\mu_1=\mu_2$.
\end{proof}

The previous observation witnesses that the ordering $\prec_{\B}$ is trivial as it reduces to the equality. This is related to the fact that we deal only with the whole space $C(K,E)$ and not with a proper function space $H\subsetneq C(K,E)$. This also corresponds to the triviality of the Choquet ordering for function spaces $H=C(K,\er)$ in the classical setting. However, if we look at possible orderings induced by $\D$, the situation is much more complicated and interesting even in this `trivial' case.
So, let us continue by defining the first possible notion of ordering induced by $\D$.

If $\nu_1,\nu_2\in M_+(K\times B_{E^*})$, we define 
$$\nu_1\prec_{\D}\nu_2  \equiv^{\mathrm{df}} \forall f\in\D\colon \int f\di\nu_1\le\int f\di\nu_2.$$
Basic properties are collected in the following lemma.

\begin{lemma}\label{L:precD} Let $\nu_1,\nu_2\in M_+(K\times B_{E^*})$. Then the following assertions hold.
    \begin{enumerate}[$(a)$]
        \item $\nu_1\prec_{\D}\nu_2$ if and only if
        $$\int (\Re T\f_1\wedge\dots\wedge\Re T\f_n)\di\nu_1\le \int (\Re T\f_1\wedge\dots\wedge\Re T\f_n)\di\nu_2$$
        whenever $\f_1,\dots,\f_n\in C(K,E)$.
        \item If $\nu_1\prec_{\D}\nu_2$, then $T^*\nu_1=T^*\nu_2$.
        \item $\nu\prec_{\D} WT^*\nu$ for any $\nu\in M_+(K\times B_{E^*})$.
    \end{enumerate}
\end{lemma}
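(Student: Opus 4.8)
The three assertions are largely independent, and each reduces to machinery already assembled, so the plan is to treat them in turn.

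For $(a)$, the forward implication is immediate once one observes that $\D$ is closed under finite infima: if $\f_1,\dots,\f_n\in C(K,E)$, then each $\Re T\f_j$ lies in $\D\cap(-\D)$ by Proposition~\ref{P:BD-BD}$(b)$, and $\Re T\f_1\wedge\dots\wedge\Re T\f_n$ is continuous (hence upper semicontinuous) and, being a finite infimum of functions real-linear in the second variable, is superlinear in that variable; thus it belongs to $\D$ and the defining inequality applies directly. For the converse I would fix $f\in\D$ and invoke Step~1 of the proof of Lemma~\ref{L:KT*nu dukaz}, which gives $f=\inf\{\Re T\f\setsep \f\in C(K,E),\ \Re T\f\ge f\}$. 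The family
$$\G=\{\Re T\f_1\wedge\dots\wedge\Re T\f_n\setsep n\in\en,\ \f_j\in C(K,E),\ \Re T\f_j\ge f\}$$
is then downward directed (the infimum of two of its members is again a finite infimum of functions $\Re T\f_j\ge f$) and, by Step~1, its pointwise infimum is exactly $f$. Applying the monotone convergence theorem for nets (as in Step~5 of the proof of Lemma~\ref{L:KT*nu dukaz}) to the finite positive measures $\nu_1,\nu_2$ yields $\int f\di\nu_i=\inf_{g\in\G}\int g\di\nu_i$ for $i=1,2$; since by hypothesis $\int g\di\nu_1\le\int g\di\nu_2$ for every $g\in\G$, passing to the infimum gives $\int f\di\nu_1\le\int f\di\nu_2$.

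For $(b)$, I would use that $\D\cap(-\D)=\{\Re T\f\setsep\f\in C(K,E)\}$ by Proposition~\ref{P:BD-BD}$(b)$. If $\nu_1\prec_{\D}\nu_2$, then applying the ordering to both $\Re T\f$ and $-\Re T\f$ gives $\Re\int T\f\di\nu_1=\Re\int T\f\di\nu_2$ for every $\f\in C(K,E)$. In the real case $T\f$ is real-valued and this already says $T^*\nu_1(\f)=T^*\nu_2(\f)$. In the complex case I would additionally substitute $i\f$ for $\f$ (note $T(i\f)=iT\f$), which recovers equality of the imaginary parts and hence $\int T\f\di\nu_1=\int T\f\di\nu_2$; so $T^*\nu_1=T^*\nu_2$ in either case.

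For $(c)$, by Corollary~\ref{c:battytrans} we have $KT^*\nu=\widetilde{\nu}$, and by Lemma~\ref{L:KT*nu dukaz}, $\int f\di\widetilde{\nu}=p_f(T^*\nu)$ for every $f\in\D$. Using the formula for $p_f$ from Corollary~\ref{cor:f-p}, it suffices to note that whenever $\g\in C(K,E)$ satisfies $\Re T\g\ge f$, the positivity of $\nu$ gives
$$\int f\di\nu\le\int\Re T\g\di\nu=\Re\int\g\di T^*\nu,$$
where the last equality uses $\int T\g\di\nu=\int\g\di T^*\nu$. Taking the infimum over all admissible $\g$ yields $\int f\di\nu\le p_f(T^*\nu)=\int f\di KT^*\nu$, which is precisely $\nu\prec_{\D}KT^*\nu$. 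The main obstacle is the converse direction of $(a)$: one must verify that $\G$ is genuinely directed downward with pointwise infimum $f$ and that the monotone convergence theorem for nets applies to interchange the integral with this infimum; the remaining steps are formal consequences of the representation results already established.
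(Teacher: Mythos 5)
Your proofs of $(a)$ and $(b)$ are correct and follow the same route as the paper: $(a)$ rests on the identity $f=\inf\{\Re T\f\setsep \f\in C(K,E),\ \Re T\f\ge f\}$ from Step~1 of the proof of Lemma~\ref{L:KT*nu dukaz} together with the monotone convergence theorem for nets applied to the downward directed family of finite minima, and $(b)$ on the description of $\D\cap(-\D)$ from Proposition~\ref{P:BD-BD} (your extra remark about substituting $i\f$ in the complex case is a welcome detail the paper leaves implicit). For $(c)$ you take a genuinely different route. You deduce $\nu\prec_{\D}KT^*\nu$ from the identity $\int f\di KT^*\nu=p_f(T^*\nu)$ (Lemma~\ref{L:KT*nu dukaz} combined with Corollary~\ref{c:battytrans}) and the elementary observation that positivity of $\nu$ gives $\int f\di\nu\le\int\Re T\g\di\nu=\Re\int\g\di T^*\nu$ for every $\g$ admissible in the infimum from Corollary~\ref{cor:f-p}, so that $\int f\di\nu\le p_f(T^*\nu)$. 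This is in substance the argument of Batty's Lemma~4.1, which the paper explicitly acknowledges and deliberately replaces: the paper instead takes a disintegration kernel $(\nu_t)$ of $\nu$, expresses $\int f\di KT^*\nu$ as $\int_K f(t,r(\nu_t))\di\pi_1(\nu)(t)$ via Proposition~\ref{P:KT*nu konstrukce} and Lemma~\ref{L:hustotavariace}, and reduces the inequality to the Jensen-type estimate $f(t,r(\nu_t))\ge\int f(t,\cdot)\di\nu_t$ for the concave fibre functions. Your argument is shorter and avoids disintegration entirely; the paper's version buys the explicit link between $\prec_{\D}$ and the classical Choquet order on the fibres, which is the mechanism driving the rest of Section~\ref{s:ordering} (Theorems~\ref{T:precDnaN(mu)} and~\ref{t:precDbodove}). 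Both arguments are complete and rely only on material established before the lemma, so there is no circularity in either.
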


\begin{proof} $(a)$: The `only if' part is obvious. The `if part' follows from the formula in Step 1 of the proof of Lemma~\ref{L:KT*nu dukaz} using the monotone convergence theorem for nets.

$(b)$: Assume $\nu_1\prec_{\D}\nu_2$. Then
$\int f\di\nu_1=\int f\di\nu_2$ for each $f\in \D\cap(-\D)$. It means that $\int \Re T\f\di\nu_1=\int \Re T\f\di\nu_2$ for each $\f\in C(K,E)$. It easily follows that $T^*\nu_1=T^*\nu_2$.

$(c)$: This is proved in \cite[Lemma 4.1]{batty-vector} using the formulas from Corollary~\ref{cor:f-p}. We are going to present an alternative proof which shows a relationship to the classical Choquet ordering. By $(a)$ we may restrict to functions of the form
$f=f_1\wedge\dots\wedge f_n$, where $f_j=\Re T\f_j$ for some $\f_j\in C(K,E)$.

Let $(\nu_t)_{t\in K}$ be a disintegration kernel for $\nu$. Since $\nu\ge0$, all measures $\nu_t$ are probabilities. Let $\h(t)=r(\nu_t)$ for $t\in K$. Let $F_0,\g,\widetilde{\nu}$ be as in Proposition~\ref{P:KT*nu konstrukce}. By Corollary~\ref{c:battytrans} we know that $\widetilde{\nu}=WT^*\nu$.
Moreover, let $F\subset E$ be a separable subspace containing $F_0\cup \bigcup_{j=1}^n\f_j(K)$. Then
$$\begin{aligned}
    \int_{K\times B_{E^*}} f\di WT^*\nu&=\int_{K\times B_{E^*}} f\di\widetilde{\nu}= \int_K f(t,\g(t))\di \abs{T^*\nu}(t)\\&=  \int_K f(t,\g(t)) \norm{\h(t)|_{F_0}}\di \pi_1(\nu)(t) \\
    &=  \int_K f(t,\norm{\h(t)|_{F_0}}\g(t))\di \pi_1(\nu)(t) 
    \\&=  \int_K f(t,\h(t))\di \pi_1(\nu)(t) =  \int_K f(t,r(\nu_t))\di \pi_1(\nu)(t) 
    \\&\ge \int_K\left(\int_{B_{H^*}} f(t,x^*)\di\nu_t(x^*)\right)\di\pi_1(\nu)(t)=\int_{K\times B_{E^*}} f\di\nu.
\end{aligned}$$
The first equality follows from Corollary~\ref{c:battytrans}, the second one from 
Proposition~\ref{P:KT*nu konstrukce}$(i)$
(note that $f\in C^0(K\times B_{E^*})$) and  the third one follows from Lemma~\ref{L:hustotavariace}$(b)$. In the fourth equality we use that $f(t,\cdot)$ is superlinear. 

Let us explain the fifth equality. Note that under our assumption we have (due to the choice of $F$)
$$f(t,x^*)=\min_{1\le j\le n} \Re x^*(\f_j(t))= \min_{1\le j\le n} \Re x^*|_F(\f_j(t))$$
for $(t,x^*)\in K\times B_{E^*}$. So, if $\h(t)|_F=0$, then
$$f(t,\h(t))=0=f(t,\norm{\h(t)|_{F_0}}\g(t)).$$
Further, by Proposition~\ref{P:KT*nu konstrukce} we know that $\norm{\h(t)|_{F_0}}\g(t)=\h(t)$ if $\h(t)|_{F_0}\ne0$. Hence,
$$f(t,\h(t))=f(t,\norm{\h(t)|_{F_0}}\g(t))\mbox{ unless }\h(t)|_{F_0}=0 \ \&\ \h(t)|_F\ne0.$$
But this set has $\pi_1(\nu)$-measure zero by Lemma~\ref{L:hustotavariace}$(a)$. 
This completes the proof of the fifth equality.

The sixth equality follows from the choice of $\h$. The inequality follows from the fact that $r(\nu_t)$ is the barycenter of $\nu_t$ and $f(t,\cdot)$ is a continuous concave function on $B_{E^*}$. The last equality follows from Lemma~\ref{L:dezintegrace}.

This completes the proof.
\end{proof}

The relation $\prec_{\D}$ is obviously reflexive and transitive, so it is a pre-order. However, it is not a partial order as the weak antisymmetry fails. Indeed, if $t\in K$ and $x^*\in S_{E^*}$ are arbitrary, then the measures
$\ep_{(t,x^*)}$ and $2\ep_{(t,\frac12x^*)}$ coincide on all functions from $\D$ (recall that such functions are positively homogeneous in the second variable). Therefore, we consider (inspired by \cite{batty-vector}) a finer relation $\prec_{\D,c}$ defined by
$$\nu_1\prec_{\D,c}\nu_2 \equiv^{\mathrm{df}}\nu_1\prec\nu_2\mbox{ and }\norm{\nu_2}\le\norm{\nu_1}.$$
This is again a pre-order, but not a partial order as witnessed by measures
$$\ep_{(t,x^*)}+2\ep_{(s,\frac12x^*)}, \ep_{(s,x^*)}+2\ep_{(t,\frac12x^*)},$$
where $s,t\in K$ are two distinct points and $x^*\in S_{E^*}$. The following proposition summarizes relationship of pre-orders $\prec_{\D}$ and $\prec_{\D,c}$ and identifies $\prec_{\D,c}$-maximal measures with Batty's measures.

\begin{prop}\label{P:precD na sfere} \ 
    \begin{enumerate}[$(a)$]
        \item A measure $\nu\in M_+(K\times B_{E^*})$ is $\prec_{\D,c}$-maximal if and only if $\nu=WT^*\nu$.
        \item Relations $\prec_{\D}$ and $\prec_{\D,c}$ restricted to measures carried by $K\times S_{E^*}$ coincide and are partial orders.
    \end{enumerate}
\end{prop}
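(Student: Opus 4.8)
The plan is to prove part $(b)$ first, because the nontrivial direction of part $(a)$ will rely on the antisymmetry proved there. The starting point for $(b)$ is the simple but crucial observation that the function $f(t,x^*)=-\norm{x^*}$ belongs to $\D$: it is bounded, weak$^*$ upper semicontinuous on $K\times B_{E^*}$ (since $\norm{\cdot}$ is weak$^*$ lower semicontinuous), and $-\norm{\cdot}$ is superlinear. Hence, if $\nu_1,\nu_2$ are carried by $K\times S_{E^*}$ and $\nu_1\prec_{\D}\nu_2$, testing against this $f$ gives $-\norm{\nu_1}=\int f\di\nu_1\le\int f\di\nu_2=-\norm{\nu_2}$, i.e. $\norm{\nu_2}\le\norm{\nu_1}$, which is exactly the extra condition in $\prec_{\D,c}$. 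Thus $\prec_{\D}$ and $\prec_{\D,c}$ coincide on sphere-carried measures, and since reflexivity and transitivity are evident, only antisymmetry requires work.

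For antisymmetry, assume $\nu_1,\nu_2$ are carried by $K\times S_{E^*}$ with $\nu_1\prec_{\D}\nu_2$ and $\nu_2\prec_{\D}\nu_1$; I must deduce $\nu_1=\nu_2$. Testing against $(t,x^*)\mapsto g(t)(-\norm{x^*})$ for $g\in C(K)$, $g\ge0$ (these lie in $\D$) first yields $\pi_1(\nu_1)=\pi_1(\nu_2)=:\sigma$. I then fix the disintegration assignment $\nu\mapsto(\nu_t)_{t\in K}$ relative to $\sigma$ furnished by Proposition~\ref{P:vsude}, so that $(\nu_{1,t})$ and $(\nu_{2,t})$ are disintegration kernels consisting of probabilities carried by $S_{E^*}$ for $\sigma$-a.e. $t$. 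For each \emph{superlinear} test function $k(x^*)=\min_{1\le j\le n}\Re x^*(x_j)$ with $x_j\in E$ and each $g\in C(K)$, $g\ge0$, the product $g\cdot k$ lies in $\D$; comparing its integrals against $\nu_1$ and $\nu_2$ in both directions forces $\int k\di\nu_{1,t}=\int k\di\nu_{2,t}$ $\sigma$-a.e. The decisive step is now to apply Proposition~\ref{P:vsude} to the pair $(k,k)$, which upgrades this identity to equality for \emph{every} $t\in K$; by linearity it then holds, for every $t$, for all $k$ in the linear span $\G$ of such minima.

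The heart of the matter is the resulting \emph{single-fibre determinacy}: two probability measures $p_1,p_2$ on $B_{E^*}$ carried by $S_{E^*}$ and agreeing on $\G$ must be equal. The set $\G$ is a linear sublattice of $C(B_{E^*},\er)$ (minima of the maps $x^*\mapsto\Re x^*(x)$ form a $\wedge$-stable cone, whose difference space is a lattice), and it interpolates arbitrary values at any two distinct points of the sphere: when $\Re x^*$ and $\Re y^*$ are real-linearly independent a single functional $x^*\mapsto\Re x^*(x)$ suffices, and the only remaining alternative $y^*=-x^*$ is covered by combining such a functional with a two-term minimum. Given $h\in C(B_{E^*})$ with $0\le h\le1$ and $\ep>0$, inner regularity provides a compact $C\subseteq S_{E^*}$ with $p_i(B_{E^*}\setminus C)<\ep$, and the Kakutani--Stone theorem gives $\ell\in\G$ with $\ell\approx h$ on $C$. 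To control $\ell$ away from $C$, I take a finite family $x_1,\dots,x_m\in B_E$ norming the compact set $C$ (so $w:=\max_j\Re x^*(x_j)>1-\ep$ on $C$ while $w\le\norm{x^*}\le1$ throughout $B_{E^*}$) and replace $\ell$ by $g:=(\ell\vee 0)\wedge\bigl((1+2\ep)w\bigr)\in\G$, which matches $h$ on $C$ up to $O(\ep)$ and is globally bounded by $1+2\ep$. Since $\int g\di p_1=\int g\di p_2$, letting $\ep\to0$ yields $\int h\di p_1=\int h\di p_2$ for all such $h$, so $p_1=p_2$. Applying this fibrewise gives $\nu_{1,t}=\nu_{2,t}$ $\sigma$-a.e., whence $\nu_1=\nu_2$ by Lemma~\ref{L:dezintegrace}, completing $(b)$.

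For part $(a)$, suppose first $\nu=KT^*\nu$; by Proposition~\ref{P:KT*nu konstrukce}$(ii)$ we have $\norm{\nu}=\norm{T^*\nu}$ and $\nu$ is carried by the sphere. If $\nu\prec_{\D,c}\mu$, then $T^*\mu=T^*\nu$ (Lemma~\ref{L:precD}$(b)$) and $\norm{\mu}\le\norm{\nu}=\norm{T^*\nu}=\norm{T^*\mu}\le\norm{\mu}$, so $\norm{\mu}=\norm{T^*\mu}$ and $\mu$ too is sphere-carried (Lemma~\ref{L:nesena sferou}); Lemma~\ref{L:precD}$(c)$ then gives $\mu\prec_{\D}KT^*\mu=KT^*\nu=\nu$, hence $\mu\prec_{\D,c}\nu$, proving maximality. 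Conversely, if $\nu$ is $\prec_{\D,c}$-maximal, then $\nu\prec_{\D,c}KT^*\nu$ (by Lemma~\ref{L:precD}$(c)$ and $\norm{KT^*\nu}=\norm{T^*\nu}\le\norm{\nu}$), so maximality forces $KT^*\nu\prec_{\D,c}\nu$; comparing norms gives $\norm{\nu}=\norm{T^*\nu}$, so $\nu$ lies on the sphere, and now $\nu\prec_{\D}KT^*\nu$ together with $KT^*\nu\prec_{\D}\nu$ for two sphere-carried measures yields $\nu=KT^*\nu$ by the antisymmetry from $(b)$. I expect the single-fibre determinacy in the nonseparable case to be the main obstacle: both the almost-everywhere-to-everywhere upgrade (for which Proposition~\ref{P:vsude} is exactly designed) and the uniform approximation, where the absence of constants in $\G$ makes the global bounding of the lattice approximant $g$ the delicate technical point.
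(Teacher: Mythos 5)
Your proposal is correct, and part $(a)$ together with the coincidence of $\prec_{\D}$ and $\prec_{\D,c}$ on sphere-carried measures is argued exactly as in the paper (same use of $f(t,x^*)=-\norm{x^*}\in\D$, same norm chase via Lemma~\ref{L:nesena sferou}, Lemma~\ref{L:precD} and Proposition~\ref{P:KT*nu konstrukce}). The genuine divergence is the antisymmetry in $(b)$: the paper disposes of it in one line by citing Lemma~3.2 of Batty's paper, whereas you reprove it from scratch. Your route --- equalize the first marginals, pass to the disintegration kernels of Proposition~\ref{P:vsude} so that the a.e.\ identities $\int k\di\nu_{1,t}=\int k\di\nu_{2,t}$ for the uncountable family of test functions $k=\min_j\Re\,\cdot\,(x_j)$ can be upgraded to hold at every $t$, and then prove single-fibre determinacy for sphere-carried probabilities via Kakutani--Stone on a compact $C\subset S_{E^*}$ --- is sound. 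You correctly identify and resolve the two delicate points: the two-point interpolation at antipodal pairs $y^*=-x^*$ (where a single linear functional fails and a two-term minimum is needed), and the absence of constants in the lattice $\G$, which you compensate by truncating the approximant with $(1+2\ep)w$ for a finite norming family of $C$, so that the error off $C$ is controlled by the $\ep$ of inner regularity. What the paper's citation buys is brevity; what your argument buys is self-containedness and, more interestingly, an early illustration of the fibre-wise philosophy that the paper only develops later (your single-fibre determinacy is in effect a baby case of the mechanism behind Theorem~\ref{T:precDnaN(mu)} and Theorem~\ref{t:precDbodove}, and your use of Proposition~\ref{P:vsude} to pass from a.e.\ to everywhere anticipates its role in Theorem~\ref{t:precDbodove}). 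The only cosmetic caveat is that strictly speaking the everywhere-upgrade is not needed for antisymmetry --- a.e.\ equality of the kernels already yields $\nu_1=\nu_2$ via Lemma~\ref{L:dezintegrace} --- but since the exceptional null set depends on $k$ and the family of $k$'s is uncountable, some device (either Proposition~\ref{P:vsude}, as you choose, or a separable reduction) is indeed required to get a single full-measure set of fibres on which determinacy applies, so your invocation is apt.
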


\begin{proof} $(b)$: Assume that $\nu_1,\nu_2$ are carried by $K\times S_{E^*}$ and $\nu_1\prec_{\D}\nu_2$. Since the function $f(t,x^*)=-\norm{x^*}$ belongs to $\D$ (this is obvious, as noticed in \cite[Example 2.3(1)]{batty-vector}) and $f=-1$ on $K\times S_{E^*}$, we deduce that $-\norm{\nu_1}\le-\norm{\nu_2}$. Thus $\nu_1\prec_{\D,c}\nu_2$. This proves the coincidence of the two relations.

To prove they are partial orders, it is enough to establish the weak antisymmetry. But this is proved in \cite[Lemma 3.2]{batty-vector}.

$(a)$: Assume that $WT^*\nu_1=\nu_1$ and $\nu_1\prec_{\D,c}\nu_2$. Then $T^*\nu_2=T^*\nu_1$ (by Lemma~\ref{L:precD}$(b)$) and hence
$$\norm{\nu_2}\le\norm{\nu_1}=\norm{T^*\nu_1}=\norm{T^*\nu_2}\le\norm{\nu_2},$$
so $\norm{\nu_2}=\norm{\nu_1}$. Further, by Lemma~\ref{L:precD}$(c)$ we get
$$\nu_2\prec_{\D} WT^*\nu_2=WT^*\nu_1=\nu_1.$$
We conclude $\nu_2\prec_{\D,c}\nu_1$. Thus $\nu_1$ is $\prec_{\D,c}$-maximal.
In fact, as both $\nu_1$ and $\nu_2$ are carried by $K\times S_{E^*}$ (by Lemma~\ref{L:nesena sferou}), by the already proven assertion $(b)$ we get $\nu_2=\nu_1$.

Next assume that $\nu$ is $\prec_{\D,c}$-maximal. Since $\nu\prec_{\D} WT^*\nu$ (by Lemma~\ref{L:precD}) and $\norm{WT^*\nu}\le\norm{\nu}$, we deduce $\nu\prec_{\D,c} WT^*\nu$. By the maximality of $\nu$ we get $WT^*\nu\prec_{\D,c}\nu$. Thus $\norm{\nu}=\norm{T^*\nu}$ and so
$\nu$ is carried by $K\times S_{E^*}$ (by Lemma~\ref{L:nesena sferou}). Therefore, using $(b)$ we deduce $\nu=WT^*\nu$.
    \end{proof}

\subsection{On the Choquet ordering of measures on $B_{E^*}$} 

In this auxiliary subsection we present a result on the  Choquet ordering on probabilities on $B_{E^*}$ for a Banach space $E$. This seems to be interesting in itself, but our main motivation is to apply it to a more detailed analysis of the pre-orders $\prec_{\D}$ and $\prec_{\D,c}$ in Section~\ref{ss:more} below.
The promised result reads as follows. 

\begin{thm}\label{t:ordering-sfera}  Let $\mu,\nu$ be two probability measures on $B_{E^*}$ with the same barycenter. Assume the common barycenter lies on the sphere. If $\int p\di\mu\le \int p\di\nu$ for each weak$^*$ continuous sublinear function $p$ on $E^*$, then $\mu\prec \nu$ in the Choquet ordering.
\end{thm}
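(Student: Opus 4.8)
The plan is to test the Choquet order on a generating family of convex functions, namely finite maxima of affine continuous functions, and then to use the hypothesis that the common barycenter lies on the sphere in order to replace the affine summands by genuinely linear functions, whose finite maxima are exactly the weak$^*$ continuous sublinear functions to which the hypothesis applies.

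First I would recall that a weak$^*$ continuous convex function $k$ on $B_{E^*}$ satisfies $k=\sup\{a\setsep a\in A_c(B_{E^*}),\ a\le k\}$, that this supremum is realized along the upward directed net of finite maxima $\max(a_1,\dots,a_n)$ of affine minorants of $k$, and that by Dini's theorem this monotone net of continuous functions converges to $k$ uniformly. Hence it suffices to prove
\[
\int \max_{1\le i\le n} a_i\di\mu\le \int \max_{1\le i\le n}a_i\di\nu
\]
for arbitrary $a_1,\dots,a_n\in A_c(B_{E^*})$. Moreover, every real-valued weak$^*$ continuous affine function on $B_{E^*}$ has the form $x^*\mapsto \Re x^*(x)+c$ for some $x\in E$ and $c\in\er$: its homogeneous part is odd and positively homogeneous on the balanced set $B_{E^*}$, hence a real-linear weak$^*$ continuous functional, hence given by an element of the predual $E$. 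Thus I may assume $a_i(x^*)=\Re x^*(x_i)+c_i$ with $x_i\in E$, $c_i\in\er$.

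The key step exploits the common barycenter $b=r(\mu)=r(\nu)$ with $\norm{b}=1$. Since $\norm{b}=\sup\{\Re b(x)\setsep x\in B_E\}$, for each $\ep>0$ I can pick $x_0\in B_E$ with $\Re b(x_0)>1-\ep$; then, because $\Re x^*(x_0)\le 1$ on $B_{E^*}$ and $\int \Re x^*(x_0)\di\mu=\Re b(x_0)$, the nonnegative function $1-\Re x^*(x_0)$ has both $\mu$- and $\nu$-integral strictly less than $\ep$. Informally, on the ``face'' where $\Re x^*(x_0)=1$ the constant $c_i$ equals $c_i\,\Re x^*(x_0)$, which motivates replacing $\max_i(\Re x^*(x_i)+c_i)$ by
\[
p_\ep(x^*)=\max_{1\le i\le n}\Re x^*(x_i+c_ix_0),\quad x^*\in E^*.
\]
This $p_\ep$ is a finite maximum of weak$^*$ continuous real-linear functionals, hence a weak$^*$ continuous sublinear function on $E^*$, so the hypothesis gives $\int p_\ep\di\mu\le\int p_\ep\di\nu$.

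Finally I would estimate the error introduced by this replacement. With $C=\max_i\abs{c_i}$, the pointwise bound
\[
\Betr{\max_i(\Re x^*(x_i)+c_i)-p_\ep(x^*)}\le C\,\abs{1-\Re x^*(x_0)}=C\,(1-\Re x^*(x_0))
\]
together with the integral estimates above yields $\betr{\int \max_i a_i\di\mu-\int p_\ep\di\mu}\le C\ep$ and likewise for $\nu$. Combining this with $\int p_\ep\di\mu\le\int p_\ep\di\nu$ gives $\int\max_i a_i\di\mu\le\int\max_i a_i\di\nu+2C\ep$, and letting $\ep\to 0$ completes the argument, whence $\mu\prec\nu$. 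The main obstacle is exactly that $b$ need not attain its norm on $B_E$, so there is in general no exact supporting functional $x_0$ confining $\mu$ and $\nu$ to a genuine face; this is bypassed by the $L^1$-approximation above. The essential content of the hypothesis ``barycenter on the sphere'' is precisely that it forces $\Re x^*(x_0)$ to be close to $1$ in $L^1(\mu)$ and $L^1(\nu)$, which is what converts affine summands into linear ones up to an arbitrarily small error.
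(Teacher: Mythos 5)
Your proof is correct, and it takes a genuinely different and noticeably more economical route than the paper. The paper argues by contraposition: it extracts a Lipschitz convex witness $f$ with $f(0)=0$ and then proves two auxiliary lemmas --- first, that a probability with barycenter on the sphere puts mass $>1-\ep$ on some weak$^*$ compact convex subset $K$ of $S_{E^*}$ (via $L^1$-convergence of almost-norming affine functions and a diagonal extraction), and second, a Hahn--Banach separation lemma showing that on such a $K$ the function $f$ is the supremum of $6L$-Lipschitz weak$^*$ continuous sublinear functions lying below $f$ on $K$ --- before concluding with a monotone-convergence and error-estimate argument. You instead reduce directly to finite maxima $\max_i\bigl(\Re x^*(x_i)+c_i\bigr)$ of affine continuous functions (the standard test family for the Choquet order, which the paper also invokes at the start of its proof) and then \emph{homogenize} the constants: choosing $x_0\in B_E$ with $\Re b(x_0)>1-\ep$ for the common barycenter $b\in S_{E^*}$, you replace $c_i$ by $c_i\,\Re x^*(x_0)$, obtaining the weak$^*$ continuous sublinear test function $\max_i\Re x^*(x_i+c_ix_0)$ at an $L^1(\mu)$- and $L^1(\nu)$-cost of at most $C\ep$, because $\int(1-\Re x^*(x_0))\di\mu=1-\Re b(x_0)<\ep$ and likewise for $\nu$. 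All the steps check out: the pointwise bound $\abs{\max_i u_i-\max_i v_i}\le\max_i\abs{u_i-v_i}$, the nonnegativity of $1-\Re x^*(x_0)$ on $B_{E^*}$, the identification of weak$^*$ continuous affine functions with $x^*\mapsto\Re x^*(x)+c$, and the final passage $\ep\to0$ are all valid, and the argument covers the complex case as written. Your approach dispenses with both auxiliary lemmas and with all compactness and Lipschitz bookkeeping; what the paper's route buys is the two structural lemmas themselves (localization of such measures on weak$^*$ compact convex subsets of the sphere, and the quantitative sublinear-minorant representation of Lipschitz convex functions on such sets), which have some independent interest but are not needed for the theorem. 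Both proofs exploit the hypothesis in the same spirit --- the sphere condition forces $\Re x^*(x_0)$ to be close to $1$ in $L^1(\mu)$ and $L^1(\nu)$ --- but yours converts this directly into an approximation of affine summands by linear ones.
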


Note that a probability on the ball with barycenter on the sphere is necessarily carried by the sphere. We further note that it is enough to prove this theorem for a real Banach space $E$, since the complex case may be deduced by considering the real version of the space. Therefore, in this section we assume that $E$ is a real Banach space. To prove the theorem we need two lemmata.

\begin{lemma}
 \label{l:kompakt}   Let $\mu\in M_1(B_{E^*})$ be a measure with the barycenter on the sphere. 
Then for each $\ep>0$ there exists a weak$^*$ compact convex set $K\subset S_{E^*}$ with $\mu(K)>1-\ep$.
\end{lemma}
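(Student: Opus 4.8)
The plan is to exploit the barycenter $b:=r(\mu)$, which by hypothesis satisfies $\norm{b}=1$. Since in the real case considered here $\norm{b}=\sup_{x\in B_E}b(x)$, I would first fix a sequence $(x_n)$ in $B_E$ with $b(x_n)\to1$ and pass to the weak$^*$ continuous affine functions $u_n(x^*)=x^*(x_n)$ on $B_{E^*}$. On the ball one has $u_n\le1$, while the defining property of the barycenter gives $\int_{B_{E^*}}u_n\di\mu=b(x_n)\to1$. Hence $1-u_n\ge0$ and $\int(1-u_n)\di\mu\to0$, so $u_n\to1$ in $L^1(\mu)$; passing to a subsequence (still denoted $(x_n)$) I may assume $x^*(x_n)\to1$ for $\mu$-almost every $x^*$. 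In particular $\norm{x^*}\ge\lim_n x^*(x_n)=1$ $\mu$-almost everywhere, which incidentally re-proves that $\mu$ is carried by $S_{E^*}$.

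Next I would upgrade this almost-everywhere convergence to genuine uniform convergence on a large set. By Egorov's theorem there is a $\mu$-measurable set $A$ with $\mu(A)>1-\ep/2$ on which $x^*(x_n)\to1$ uniformly, and by inner regularity of the Radon measure $\mu$ there is a weak$^*$ compact set $L\subset A$ with $\mu(L)>1-\ep$; the uniform convergence of course persists on $L$. The candidate is then $K:=\wscl{\co L}$, which is convex and, being a weak$^*$ closed subset of the weak$^*$ compact ball $B_{E^*}$, is itself weak$^*$ compact; moreover $\mu(K)\ge\mu(L)>1-\ep$.

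The heart of the argument, and the step I expect to be the main obstacle, is to verify that $K\subset S_{E^*}$, since in general convex combinations of sphere points fall strictly inside the ball, so a compact convex subset of the sphere cannot be produced by naive convexification. Here the uniformity won from Egorov's theorem is exactly what rescues the construction. Writing $\eta_n:=\sup_{x^*\in L}(1-x^*(x_n))$, we have $\eta_n\to0$, and every convex combination $c=\sum_i\lambda_i x_i^*$ of points of $L$ satisfies $c(x_n)=\sum_i\lambda_i x_i^*(x_n)\ge1-\eta_n$. Because each $x_n$ lies in $E$, the functional $x^*\mapsto x^*(x_n)$ is weak$^*$ continuous, so this lower bound passes to weak$^*$ limits: every $y^*\in K$ obeys $y^*(x_n)\ge1-\eta_n$. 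Consequently $\norm{y^*}\ge y^*(x_n)\to1$, whence $\norm{y^*}=1$, so $K\subset S_{E^*}$ and the proof is complete. I note that this route needs neither the norming functional from the bidual nor a separable reduction; the only delicate points are the legitimacy of Egorov's theorem and inner regularity for the Radon measure $\mu$ on the (possibly nonmetrizable) compact space $(B_{E^*},w^*)$, both of which are standard.
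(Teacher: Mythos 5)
Your argument is correct. The first half coincides with the paper's proof verbatim: both fix a norming sequence $(x_n)\subset B_E$ for the barycenter, observe that the affine functions $y^*\mapsto y^*(x_n)$ tend to $1$ in $L^1(\mu)$, and pass to a subsequence converging to $1$ $\mu$-almost everywhere. Where you diverge is in how the large compact \emph{convex} subset of the sphere is extracted. The paper writes the full-measure set $F=\{y^*\setsep y^*(x_n)\to 1\}$ as $\bigcap_k\bigcup_n F_{k,n}$ with $F_{k,n}=\bigcap_{m\ge n}\{y^*\in B_{E^*}\setsep y^*(x_m)\ge 1-\frac1k\}$; these sets are weak$^*$ compact and convex \emph{by construction} (intersections of weak$^*$ closed half-spaces with the ball), so choosing $n_k$ with $\mu(F_{k,n_k})>1-\ep/2^k$ and intersecting over $k$ finishes the proof with no further work -- in effect a hand-rolled Egorov argument that produces convexity for free. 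You instead invoke Egorov's theorem and inner regularity to get a weak$^*$ compact $L$ on which the convergence is uniform, and then must convexify, taking $K=\wscl{\co L}$ and verifying that $K$ still lies in $S_{E^*}$. That verification is the one genuinely new step of your route, and you handle it correctly: the uniform lower bounds $y^*(x_n)\ge 1-\eta_n$ define weak$^*$ closed convex sets containing $L$, hence containing $K$, which forces $\norm{y^*}=1$ on $K$. Both proofs are valid and of comparable length; the paper's avoids citing Egorov and sidesteps the convex-hull issue entirely, while yours makes the "uniformize on a large set" mechanism explicit at the cost of the extra containment check.
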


\begin{proof} Let $x^*=r(\mu)$. Let $(x_n)$ be a sequence in $B_E$ with $x^*(x_n)\to 1$.
Define $f_n(y^*)=y^*(x_n)$ for $y^*\in B_{E^*}$ and $n\in\en$. Given $n\in\en$, $f_n$ is a continuous affine function on $B_{E^*}$ satisfying $-1\le f_n\le 1$ on $B_{E^*}$. Moreover,
$$\int f_n\di\mu=f_n(x^*)=x^*(x_n)\to1.$$
It follows that $f_n\to 1$ in $L^1(\mu)$, hence, up to passing to a subsequence, we may assume that $f_n\to1$ $\mu$-almost everywhere. Let $F=\{y^*\in B_{E^*}\setsep y^*(x_n)\to 1\}$. Then $F\subset S_{E^*}$, it is a convex set of full measure and it may be expressed as
\[
F=\bigcap_{k\in\en}\bigcup_{n\in\en}F_{k,n},
\]
where
\[
F_{k,n}=\bigcap_{m\ge n}\{y^*\in B_{E^*}\setsep y^*(x_m)\ge 1-\tfrac1k\},\quad k,n\in\en.
\]
Observe that $F_{k,n}$ are weak$^*$ compact convex sets with $F_{k,n}\subset F_{k,n+1}$ for $k,n\in\en$.
Since 
$$1=\mu(F)=\mu\left(\bigcap_{k\in\en}\bigcup_{n\in\en} F_{k,n}\right),$$
we get
$$\mu\left(\bigcup_{n\in\en} F_{k,n}\right)=1\mbox{ for each }k\in\en.$$  Let $n_k\in\en$ be such that $\mu(F_{k,n_k})>1-\frac{\ep}{2^k}$.
Then $K=\bigcap_{k\in\en} F_{n,k_n}$ is a weak$^*$ compact convex set with $\mu(K)>1-\ep$. Further, clearly $K\subset F\subset S_{E^*}$.
\end{proof}

\begin{lemma} \label{l:lipschitz}
Let $E$ be a real Banach space.
Let $f\colon E^*\to \er$ be a weak$^*$ lower semicontinuous $L$-Lipschitz convex function (where $L>0$) with $f(0)=0$. Let $K\subset S_{E^*}$ be a weak$^*$ compact convex set. For each $x^*\in K$ we have
$$f(x^*)=\sup\{ x^*(x)\setsep x\in E, \norm{x}\le 6L, y^*(x)\le f(y^*)\mbox{ for each }y^*\in K\}.$$
\end{lemma}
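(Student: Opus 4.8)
The inequality $\sup\le f(x^*)$ is immediate: any competitor $x$ satisfies $y^*(x)\le f(y^*)$ for every $y^*\in K$, and since $x^*\in K$ this gives $x^*(x)\le f(x^*)$. So the whole content is to produce, for each $c<f(x^*)$, a vector $x\in E$ with $\norm{x}\le 6L$, with $y^*(x)\le f(y^*)$ for all $y^*\in K$, and with $x^*(x)>c$.

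My plan is to separate in $E^*\times\er$, whose dual is $E\times\er$, so that separating functionals automatically have their $E^*$-part in $E$, i.e. $w^*$-continuous. Let $\Gamma_1$ be the convex cone generated by the epigraph $\{(y^*,s)\setsep y^*\in K,\ s\ge f(y^*)\}$, let $\Gamma_2=\{(w^*,s)\setsep s\ge 6L\norm{w^*}\}$ be the epigraph of $6L\norm{\cdot}$, and put $\Gamma=\Gamma_1+\Gamma_2$, a convex cone. Granting $(x^*,c)\notin\overline{\Gamma}$, the Hahn--Banach theorem strictly separates the point from the closed convex cone $\overline{\Gamma}$ by some $(x,d)\in E\times\er$ which is $\le 0$ on $\Gamma$ and $>0$ at $(x^*,c)$. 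The conic structure forces $d\le 0$ (let $s\to+\infty$ along $\{0\}\times[0,\infty)\subset\Gamma_2$); testing the two summands separately and normalising $d=-1$ yields exactly $y^*(x)\le f(y^*)$ on $K$ (from $\Gamma_1$), $w^*(x)\le 6L\norm{w^*}$ for all $w^*$, i.e. $\norm{x}\le 6L$ (from $\Gamma_2$), and $x^*(x)>c$. The degenerate case $d=0$ is excluded since it would force $y^*(x)\le0$ on $K$, hence $x^*(x)\le0<x^*(x)$. This delivers the desired competitor and hence $\sup\ge f(x^*)$.

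The quantitative heart is the estimate
$$t f(y^*)+6L\norm{x^*-ty^*}\ge f(x^*)\qquad(t\ge0,\ y^*\in K),$$
which says that the infimal convolution $\widetilde f\,\square\,6L\norm{\cdot}$, where $\widetilde f$ is the positively homogeneous extension of $f|_K$, has value $f(x^*)$ at $x^*$ (the value $\le f(x^*)$ coming from $t=1$, $y^*=x^*$); this is precisely the lower envelope of $\Gamma$ over $x^*$. Using the Lipschitz bound $f(y^*)\ge f(x^*)-L\norm{x^*-y^*}$, the bound $\abs{f(x^*)}\le L$, and the triangle inequalities $\norm{x^*-ty^*}\ge\max\bigl(\abs{t-1},\,t\norm{x^*-y^*}-\abs{t-1}\bigr)$ together with $\norm{x^*-y^*}\le 2$, this reduces to the elementary inequality $6\norm{x^*-ty^*}\ge\abs{t-1}+t\norm{x^*-y^*}$, which is where the constant $6$ is consumed (any constant $\ge 5$ would suffice).

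The main obstacle is the closedness statement $(x^*,c)\notin\overline{\Gamma}$. I would argue by contradiction from a net with $t_\alpha y_\alpha^*+w_\alpha^*\to x^*$ in $w^*$ and $t_\alpha f(y_\alpha^*)+6L\norm{w_\alpha^*}\to c$. If $t_\alpha+\norm{w_\alpha^*}$ stays bounded, pass to $w^*$-convergent subnets $t_\alpha\to t_0$, $y_\alpha^*\to y_0^*\in K$, $w_\alpha^*\to w_0^*$; weak$^*$ lower semicontinuity of the norm and of $f$ gives $c\ge t_0f(y_0^*)+6L\norm{w_0^*}\ge(\widetilde f\,\square\,6L\norm{\cdot})(x^*)=f(x^*)$, contradicting $c<f(x^*)$. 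If $t_\alpha+\norm{w_\alpha^*}\to\infty$, rescale by $\lambda_\alpha=t_\alpha+\norm{w_\alpha^*}$ and pass to subnets: one gets a relation $\bar t_0 y_0^*+\bar w_0^*=0$ with $\norm{\bar w_0^*}=\bar t_0$, whence $\bar t_0\bigl(f(y_0^*)+6L\bigr)\le0$ forces $\bar t_0=0$; but then the rescaled masses satisfy $\norm{\bar w_\alpha^*}\to0$, contradicting $\bar t_\alpha+\norm{\bar w_\alpha^*}=1$. Here the hypotheses $K\subset S_{E^*}$, $K$ weak$^*$-compact with $0\notin K$, and $f$ weak$^*$ lower semicontinuous are exactly what force the limit back into $\Gamma$ and prevent mass from escaping to infinity.
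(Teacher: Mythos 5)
Your proof is correct, but it is organized differently from the paper's. Both arguments ultimately rest on a Hahn--Banach separation in $(E^*,w^*)\times\er$ (so that the separating functional lives in $E\times\er$) together with the same Lipschitz bookkeeping, but you and the paper package the two constraints --- ``$y^*(x)\le f(y^*)$ on $K$'' and ``$\norm{x}\le 6L$'' --- in different ways. The paper separates the weak$^*$ closed epigraph-cone $A=\{(y^*,t)\in C\times\er\setsep t\ge g(y^*)\}$ (your $\overline{\Gamma_1}$; closedness via Krein--\v{S}mulyan, which is where $K\subset S_{E^*}$ enters) from the \emph{compact} convex blocker $B=\co(\{(x^*,t_0)\}\cup (x^*+B_{E^*})\times\{-5L\})$; compactness of $B$ makes the separation automatic once disjointness is checked (a three-case Lipschitz computation), and the slab at height $-5L$ is what produces the bound $\norm{x}\le 6L$ a posteriori. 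You instead build the norm constraint into the cone from the start, forming the Minkowski sum $\Gamma=\Gamma_1+\operatorname{epi}(6L\norm{\cdot})$ (an infimal convolution), prove the pointwise lower bound $tf(y^*)+6L\norm{x^*-ty^*}\ge f(x^*)$ directly --- this is essentially the same computation as the paper's disjointness cases, and in fact your triangle-inequality estimate shows the constant $3$ already suffices there, so $6$ is safe --- and then pay for the sum not being obviously closed with a separate net/compactness argument showing $(x^*,c)\notin\overline{\Gamma}$. That closure argument is the delicate point of your route, and it is sound: in the unbounded case the key step is that $\bar t_0=0$ forces $\norm{\bar w^*_\alpha}\to 0$ \emph{in norm} via the cost bound $6L\norm{\bar w^*_\alpha}\le s_\alpha/\lambda_\alpha+L\bar t_\alpha$ (not via weak$^*$ convergence, which would not suffice), contradicting $\bar t_\alpha+\norm{\bar w^*_\alpha}=1$; I would spell that out, since as written it could be misread as an illegitimate use of weak$^*$ lower semicontinuity. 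On balance, the paper's compact blocker buys a shorter separation step at the cost of an ad hoc auxiliary set, while your version makes the inf-convolution structure explicit at the cost of the closure analysis.
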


\begin{proof} The proof is divided to several steps.

\smallskip

{\tt Step 1.} Set $C=\bigcup_{\alpha\ge 0} \alpha K$. Then $C$ is a weak$^*$ closed convex cone.

\smallskip

Clearly, $C$ is a convex cone. Moreover, since $K\subset S_{E^*}$, for each $r>0$ we have
$$C\cap rB_{E^*}=\bigcup_{0\le\alpha\le r} \alpha K,$$
which is weak$^*$ compact, being the image of the compact set $[0,r]\times K$ by the continuous map $(\alpha,x^*)\mapsto \alpha x^*$. We conclude by the Krein-\v{S}mulyan theorem.

\smallskip

{\tt Step 2.} Set $g(\alpha y^*)=\alpha f(y^*)$, $\alpha\in[0,\infty)$ and $y^*\in K$. Then $g$ is a weak$^*$ lower semicontinuous sublinear function on $C$.  

\smallskip

It is clear that $g$ is well defined and positive homogeneous. To prove it is subadditive observe that, given $y^*,z^*\in K$, $\alpha,\beta\ge0$ with $\alpha+\beta>0$, we have
$$\begin{aligned}    
g(\alpha y^*+\beta z^*)&=g((\alpha+\beta)\tfrac{\alpha y^*+\beta z^*}{\alpha+\beta})=(\alpha+\beta)f(\tfrac{\alpha y^*+\beta z^*}{\alpha+\beta})\\&\le(\alpha+\beta)\cdot\tfrac{\alpha f(y^*)+\beta f(z^*)}{\alpha+\beta}=\alpha f(y^*)+\beta f(z^*)=g(\alpha y^*)+g(\beta z^*).
\end{aligned}$$
To prove it is weak$^*$ lower semicontinuous, it is enough to prove that $[g\le d]$ is a weak$^*$ closed subset of $C$ for each $d\in\er$. Since the set $[g\le d]$ is convex, by the Krein-\v{S}mulyan theorem it is enough to prove that $[g\le d]\cap r B_{E^*}$ is weak$^*$ closed for each $r>0$. So, fix $d\in\er$ and $r>0$. Then
$$[g\le d]\cap rB_{E^*}=\{\alpha x^*\setsep x^*\in K, \alpha\in [0,r], \alpha f(x^*)\le d\}$$
is weak$^*$ compact, being the image of the compact set
$$\{(\alpha,x^*)\in[0,r]\times K\setsep \alpha f(x^*)\le d\}$$
under the continuous map $(\alpha,x^*)\mapsto \alpha x^*$.

\smallskip

{\tt Step 3.} Fix $x^*\in K$ and $t_0<f(x^*)$. Since $f(x^*)\ge-L$, we may assume $t_0>-2L$. Set 
\[
A=\{(y^*,t)\in C\times \er\setsep t\ge g(y^*)\}
\]
and
\[
B=\co(\{(x^*,t_0)\}\cup (x^*+B_{E^*})\times \{-5L\}).
\]
Then $A,B$ are disjoint nonempty convex sets in $Z=(E^*,w^*)\times \er$. Moreover, $A$ is closed and $B$ is compact.

\smallskip

Obviously $A$ and $B$ are nonempty and convex and $B$ is compact.  The set $A$ is closed by Steps 1 and 2. It remains to prove they are disjoint. Assume that $(\alpha y^*,t)\in A\cap B$ for some $y^*\in K$, $\alpha\in[0,\infty)$ and $t\in \er$. Then $g(\alpha y^*)=\alpha f(y^*)\le t$ and there is $c\in [0,1]$ and $u^*\in B_E^*$ such that
\begin{equation}\label{eq:prunik}
(\alpha y^*,t)=(x^*+(1-c)u^*,ct_0-5L(1-c)).\end{equation}
We distinguish several cases:

\emph{Case 1.} $c=1$: Then $t=t_0$ and $\alpha y^*=x^*$, hence $\alpha=1$ and $y^*=x^*$. So,
\[
t_0=t\ge f(y^*)=f(x^*)>t_0,
\]
a contradiction.

\emph{Case 2.} $c<1$ and $\alpha\le 1$: Then
we have
\[
f(\alpha y^*)\le \alpha f(y^*)\le t=ct_0-5L(1-c)\le cf(x^*)-5L(1-c).
\]
Further,
\[
f(\alpha y^*)=f(x^*+(1-c)u^*)-f(x^*)+f(x^*)\ge f(x^*)-L\norm{(1-c)u^*}\ge f(x^*)-L(1-c).
\]
Putting together we obtain
\[
f(x^*)-L(1-c)\le cf(x^*)-5L(1-c).
\]
This gives $f(x^*)\le -4L$, a contradiction (recall that $f(x^*)\ge -L$).

\emph{Case 3.} $c<1$ and $\alpha>1$: Then we have
\[
\begin{aligned}
f(\alpha y^*)-\alpha f(y^*)&=f(\alpha y^*)-f(y^*)+f(y^*)-\alpha f(y^*)\\
&\le L\norm{(\alpha-1)y^*}+\abs{f(y^*)}(\alpha-1)\le 2L(\alpha-1).\\
\end{aligned}
\]
Hence 
\[
 f(\alpha y^*)\le\alpha f(y^*)+2L(\alpha-1).
\]
On the other hand,
$$f(\alpha y^*)=f(x^*+(1-c)u^*)\ge f(x^*)-L(1-c).$$
Putting together we obtain
$$\begin{aligned}
    f(x^*)-L(1-c)&\le \alpha f(y^*)+2L(\alpha-1)\le t+2L(\alpha-1) \\&= ct_0-5L(1-c)+2L(\alpha-1)
\\&\le cf(x^*)-5L(1-c)+2L(\alpha-1)\\&\le cf(x^*)-5L(1-c)+2L(1-c),\end{aligned}$$
where the last inequality follows from comparison of the first coordinates in \eqref{eq:prunik} and the triangle inequality. We deduce
$f(x^*)\le-2L$, a contradiction.

\smallskip

{\tt Step 4.} Construction of $x$:

\smallskip

Using the Hahn-Banach separation theorem we find $x\in E$ and $\omega\in\er$   such that
\[
\sup\{y^*(x)+\omega s\setsep (y^*,s)\in B\}< \inf\{z^*(x)+\omega t\setsep (z^*,t)\in A\}.
\]
By the definition of $A$ we see that necessarily $\omega\ge0$. By setting $y^*=z^*=x^*$, $s=t_0$ and $t=f(x^*)$ we deduce that $\omega >0$. Hence, up to scaling  we may assume $\omega=1$, i.e.,
\[
\sup\{y^*(x)+s\setsep (y^*,s)\in B\}< \inf\{z^*(x)+t\setsep (z^*,t)\in A\}.
\]
We note that
$$\begin{aligned}  
\inf\{z^*(x)+t\setsep (z^*,t)\in A\}&=\inf\{z^*(x)+g(z^*)\setsep z^*\in C\}\\&=\inf\{t(z^*(x)+f(z^*))\setsep z^*\in K, t\ge0\},\end{aligned}$$
which is obviously either $0$ or $-\infty$. But the second possibility cannot take place, so the infimum is $0$. In particular,
$$z^*(-x)\le f(z^*) \mbox{ for }z^*\in K.$$
Further, since $(x^*,t_0)\in B$, we deduce that $x^*(x)+t_0<0$, i.e., $x^*(-x)>t_0$. Finally,
$$0>\sup\{ x^*(x)+y^*(x)-5L\setsep y^*\in B_{E^*}\}=x^*(x)+\norm{x}-5L,$$
so
$$\norm{-x}=\norm{x}<5L-x^*(x)\le 5L+f(x^*)\le 6L.$$
This completes the proof.
\end{proof}

Now were are ready to prove the theorem:

\begin{proof}[Proof of Theorem~\ref{t:ordering-sfera}]
We proceed by contraposition. Assume that $\mu\not\prec\nu$ in the Choquet ordering. By \cite[Proposition 3.56]{lmns} there are weak$^*$ continuous affine functions $f_1,\dots,f_n$ on $B_{E^*}$ such that
$$\int \max\{f_1,\dots,f_n\}\di\nu<\int \max\{f_1,\dots,f_n\}\di\mu.$$
Since any weak$^*$ continuous affine function on $B_{E^*}$ is Lipschitz (it is a function of the form $x^*\mapsto x^*(x)+c$ for some $x\in E$ and $c\in\er$), we have a Lipschitz weak$^*$ continuous convex function $f:E^*\to\er$ with $\int f\di \nu<\int f\di\mu$. Since both $\mu$ and $\nu$ are probabilities, up to replacing $f$ by $f-f(0)$ we may assume $f(0)=0$. Let $L$ denote the Lipschitz constant of $f$. Clearly $L>0$.

Fix $\ep>0$. Since $\mu$ and $\nu$ have the same barycenter, the measure $\frac12(\mu+\nu)$ has the barycenter on the sphere. Therefore we may apply Lemma~\ref{l:kompakt} to find a weak$^*$ compact convex set $K\subset S_{E^*}$ such that $(\mu+\nu)(B_{E^*}\setminus K)<\ep$. By Lemma~\ref{l:lipschitz} we have for $x^*\in K$ 
$$
\begin{aligned}
f(x^*)&=\sup\{g(x^*)\setsep g:E^*\to\er \mbox{ weak$^*$ continuous, linear,} \\&\qquad\qquad 6L\mbox{-Lipschitz}, g\le f\mbox{ on }K\} 
\\&=\sup\{g(x^*)\setsep g:E^*\to\er \mbox{ weak$^*$ continuous, sublinear,} \\&\qquad\qquad6L\mbox{-Lipschitz}, g\le f\mbox{ on }K\}. 
\end{aligned}
$$ 
Indeed, the first equality follows directly from Lemma~\ref{l:lipschitz} and the second one is a trivial consequence. Since the family of functions from the last expression is upwards directed, the monotone convergence theorem for nets 
provides such $g$ with $\int_K g\di\mu>\int_K f\di\mu-\ep$.
Then
$$\begin{aligned}
\int g\di\mu&\ge \int_K g\di\mu-6L\ep >\int_K f\di\mu-\ep-6L\ep\ge \int f\di\mu-\ep-7L\ep,
\end{aligned}$$
where we used the choice of $K$, the choice of $g$, equalities $f(0)=g(0)=0$ and the assumptions that $f$ is $L$-Lipschitz and $g$ is $6L$-Lipschitz.
On the other hand, similarly we get.
$$\int g\di\nu\le \int_K g\di\nu+6L\ep \le\int_K f\di\nu+6L\ep\le \int f\di\nu+7L\ep.$$
It is now clear that, choosing $\ep>0$ small enough we may achieve $\int g\di\nu<\int g\di\mu$.
This completes the proof.   
\end{proof}

\subsection{More on orderings defined by the cone $\D$}\label{ss:more}

We now analyze in more detail the pre-orders $\prec_{\D}$ and $\prec_{\D,c}$ and their relationship to the classical Choquet order. To this end we will use the result from previous subsection and the technique of disintegration of measures. We restrict ourselves to positive measures of the minimal norm, i.e., to the set
$$N=\{\nu\in M_+(K\times B_{E^*})\setsep \norm{T^*\nu}=\norm{\nu}\}.$$
Further, given $\mu\in M(K,E^*)$, we set
$$N(\mu)=\{\nu\in N\setsep T^*\nu=\mu\}.$$
We start by collecting a few basic facts on the set $N$:

\begin{obs}\label{obs:N} \ 
\begin{enumerate}[$(a)$]
    \item Pre-orders $\prec_{\D}$ and $\prec_{\D,c}$ coincide on $N$.
    \item The relation $\prec_{\D}$ restricted to $N$ is a partial order.
    \item If $\nu_1,\nu_2\in N$ are such that $\nu_1\prec_{\D}\nu_2$, then these two measures belong to the same $N(\mu)$.
\end{enumerate}  
\end{obs}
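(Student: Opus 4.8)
The plan is to observe that all three assertions reduce quickly to results already established, the crucial point being that every member of $N$ is automatically carried by the sphere. First I would note that if $\nu\in N$, then by the very definition of $N$ we have $\norm{T^*\nu}=\norm{\nu}$, so Lemma~\ref{L:nesena sferou} guarantees that $\nu$ is carried by $K\times S_{E^*}$. Hence $N$ is contained in the class of positive measures carried by $K\times S_{E^*}$, which is precisely the class to which Proposition~\ref{P:precD na sfere}$(b)$ applies.

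With this in hand, assertions $(a)$ and $(b)$ are immediate. Proposition~\ref{P:precD na sfere}$(b)$ asserts that on measures carried by $K\times S_{E^*}$ the pre-orders $\prec_{\D}$ and $\prec_{\D,c}$ coincide and are partial orders; restricting these two statements to the subclass $N$ yields exactly $(a)$ and $(b)$, since the coincidence of two relations and the defining properties of a partial order (reflexivity, transitivity, antisymmetry) are all inherited by any subset.

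For assertion $(c)$ I would invoke Lemma~\ref{L:precD}$(b)$, which tells us that $\nu_1\prec_{\D}\nu_2$ forces $T^*\nu_1=T^*\nu_2$. Writing $\mu$ for this common value, both $\nu_1$ and $\nu_2$ then satisfy $T^*\nu_i=\mu$ while lying in $N$, so by the definition of $N(\mu)$ they both belong to $N(\mu)$.

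Since each step is a direct appeal to an already-proved result, there is no genuine obstacle here; the only point requiring a moment of attention is the initial verification that the defining condition $\norm{T^*\nu}=\norm{\nu}$ of $N$ is exactly the hypothesis of Lemma~\ref{L:nesena sferou}, after which everything follows by quotation.
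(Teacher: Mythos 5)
Your proof is correct and follows exactly the paper's argument: Lemma~\ref{L:nesena sferou} shows measures in $N$ are carried by $K\times S_{E^*}$, so $(a)$ and $(b)$ follow from Proposition~\ref{P:precD na sfere}$(b)$, and $(c)$ follows from Lemma~\ref{L:precD}$(b)$. Nothing is missing.
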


\begin{proof}
    By Lemma~\ref{L:nesena sferou} the measures from $N$ are carried by $K\times S_{E^*}$. Assertions $(a)$ and $(b)$ thus follow from Proposition~\ref{P:precD na sfere}$(b)$. Assertion $(c)$ follows from Lemma~\ref{L:precD}$(b)$.
\end{proof}

In order to address the case of possibly nonseparable $E$ we will use restriction maps to separable spaces. More specifically, if $F\subset E$ is a (separable) subspace, let $R_F:E^*\to F^*$ be the canonical restriction map. Then $R_F$ restricted to $B_{E^*}$ is a continuous surjection of $B_{E^*}$ onto $B_{F^*}$.

\begin{lemma}\label{L:rovnost tezist}
    Let $\mu\in M(K,E^*)\setminus\{0\}$ be given. For $\nu\in N(\mu)$ let $(\nu_t)_{t\in K}$ be a disintegration kernel of $\nu$. Then the following hold:
    \begin{enumerate}[$(a)$]
        \item If $\nu\in N(\mu)$, then $r(\nu_t)\in S_{E^*}$ for $\abs{\mu}$-almost all $t\in K$.
        \item Let $\nu\in N(\mu)$. If $F\subset E$ is a sufficently large separable subspace of $E$, then $\norm{R_F\circ\mu}=\norm{\mu}$ and $(\id\times R_F)(\nu)\in N(R_F\circ\mu)$.
        \item Let $\nu_1,\nu_2\in N(\mu)$. If $F\subset E$ is a sufficently large separable subspace of $E$, then $r(R_F(\nu_{1,t}))=r(R_F(\nu_{2,t}))\in S_{F^*}$  for $\abs{\mu}$-almost all $t\in K$.
    \end{enumerate}
\end{lemma}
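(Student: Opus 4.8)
Throughout, the key preliminary observation is that for any $\nu\in N(\mu)$ one has $\pi_1(\abs{\nu})=\abs{\mu}$: since $\norm{T^*\nu}=\norm{\nu}$ and $T^*\nu=\mu$, Proposition~\ref{P:vlastnostih}$(a)$ gives $\abs{T^*\nu}=\pi_1(\abs{\nu})$, i.e.\ $\abs{\mu}=\pi_1(\abs{\nu})$. Assertion $(a)$ is then immediate, because Proposition~\ref{P:vlastnostih}$(b)$ already asserts $r(\nu_t)\in S_{E^*}$ for $\pi_1(\abs{\nu})$-almost all $t$.

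For $(b)$ I would set $\nu'=(\id\times R_F)(\nu)$ and let $T_F\colon C(K,F)\to C(K\times B_{F^*})$ be the inclusion associated to $F$. First, testing against $T_F\f$ for $\f\in C(K,F)$ and using $R_F(x^*)(\f(t))=x^*(\f(t))$ shows $\int\f\di T_F^*\nu'=\int\f\di T^*\nu=\int\f\di(R_F\circ\mu)$, so $T_F^*\nu'=R_F\circ\mu$. Next, since $\nu,\nu'\ge0$ we have $\pi_1(\abs{\nu'})=\pi_1(\nu)=\abs{\mu}$, and by Lemma~\ref{L:dezintegrace kvocient} the family $(R_F(\nu_t))_{t\in K}$ is a disintegration kernel of $\nu'$. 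A one-line computation gives that the barycenter commutes with restriction, $r(R_F(\nu_t))=R_F(r(\nu_t))=\h(t)|_F$, so the density furnished by Proposition~\ref{P:hustotaT*nu} for $\nu'$ is $t\mapsto\h(t)|_F$. As $F$ is separable, Proposition~\ref{P:vlastnostih}$(c)$ applies to $\nu'$ and yields $\norm{R_F\circ\mu}=\norm{T_F^*\nu'}=\int_K\norm{\h(t)|_F}\di\abs{\mu}$. To finish, I would take $F$ to contain the separable subspace $F_0$ from Lemma~\ref{L:hustotavariace}; then Proposition~\ref{P:vlastnostih}$(d)$ combined with Lemma~\ref{L:hustotavariace}$(a)$ gives $\int_K\norm{\h(t)|_F}\di\abs{\mu}=\norm{T^*\nu}=\norm{\mu}$. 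Since also $\norm{\nu'}=\norm{\nu}=\norm{\mu}$, this establishes simultaneously $\norm{R_F\circ\mu}=\norm{\mu}$ and $\nu'\in N(R_F\circ\mu)$.

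Assertion $(c)$ combines the preceding identifications. Writing $\h_i(t)=r(\nu_{i,t})$, the computation from $(b)$ gives $r(R_F(\nu_{i,t}))=\h_i(t)|_F$ for $i=1,2$. Because $T^*\nu_1=T^*\nu_2=\mu$ and $\pi_1(\abs{\nu_1})=\pi_1(\abs{\nu_2})=\abs{\mu}$, both $\h_1$ and $\h_2$ are weak$^*$ Radon-Nikod\'ym densities of $\mu$ with respect to $\abs{\mu}$; hence for each fixed $x\in E$, $\int_A\h_1(t)(x)\di\abs{\mu}=\mu(A)(x)=\int_A\h_2(t)(x)\di\abs{\mu}$ forces $\h_1(t)(x)=\h_2(t)(x)$ $\abs{\mu}$-a.e. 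Running this over a countable dense subset of $F$ (as in the proof of Lemma~\ref{L:hustota jednoznacnost}$(a)$) gives $\h_1(t)|_F=\h_2(t)|_F$ $\abs{\mu}$-a.e., that is $r(R_F(\nu_{1,t}))=r(R_F(\nu_{2,t}))$ $\abs{\mu}$-a.e. Finally, taking $F$ large enough to contain the subspaces $F_0$ attached to both $\nu_1$ and $\nu_2$, the equality $\int_K\norm{\h_i(t)|_F}\di\abs{\mu}=\norm{\mu}=\abs{\mu}(K)$ from $(b)$ together with $\norm{\h_i(t)|_F}\le\norm{\h_i(t)}=1$ (Proposition~\ref{P:vlastnostih}$(b)$) forces $\norm{\h_i(t)|_F}=1$ $\abs{\mu}$-a.e., so the common barycenter lies in $S_{F^*}$.

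The routine ingredients are the commutation of barycenter with restriction and the measure-theoretic a.e.\ estimates. The real content, and the step demanding the most care, is the equality $\norm{R_F\circ\mu}=\norm{\mu}$ for sufficiently large separable $F$ in $(b)$: it rests on transporting the density $\h$ to the quotient via Lemma~\ref{L:dezintegrace kvocient} and, crucially, on the fact that the supremum in Proposition~\ref{P:vlastnostih}$(d)$ is attained at a separable subspace $F_0$, so that enlarging $F$ past $F_0$ leaves the relevant integral unchanged.
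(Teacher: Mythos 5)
Your proof is correct and follows essentially the same route as the paper: part $(a)$ via Proposition~\ref{P:vlastnostih}$(b)$, part $(b)$ via the computation $T_F^*((\id\times R_F)(\nu))=R_F\circ\mu$, and part $(c)$ via Lemma~\ref{L:dezintegrace kvocient} together with the uniqueness argument of Lemma~\ref{L:hustota jednoznacnost}. The only (harmless) divergence is in $(b)$, where the paper obtains $\norm{R_F\circ\mu}=\norm{\mu}$ directly by taking $F$ to contain the ranges of a norming sequence $(\f_n)$ for $\mu$, whereas you route the same attainment fact through the density $\h|_F$, Proposition~\ref{P:vlastnostih}$(c)$--$(d)$ and Lemma~\ref{L:hustotavariace}$(a)$.
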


\begin{proof}
    $(a)$: This follows from Proposition~\ref{P:vlastnostih}$(b)$.

    $(b)$: Let $F\subset E$ be an arbitrary separable subspace. Let $T_F:C(K,F)\to C(K\times B_{F^*})$ be the respective variant of the operator $T$. If $A\subset K$ is Borel and $x\in F$, then we get by \eqref{eq:hustad} 
    $$\begin{aligned}
       T_F^*((\id\times R_F)(\nu))(A)(x)&=\int_{A\times B_{F^*}} y^*(x)\di (\id\times R_F)(\nu)(t,y^*)
       \\&=\int_{A\times B_{E^*}} (R_Fx^*)(x)\di\nu(t,x^*)=\int_{A\times B_{E^*}} x^*(x)\di\nu(t,x^*)
       \\&=\mu(A)(x)=(R_F\circ\mu)(A)(x).
    \end{aligned}$$
    We deduce that $T_F^*((\id\times R_F)(\nu))=R_F\circ\mu$. Since clearly $\norm{(\id\times R_F)(\nu)}=\norm{\nu}$ (as $\nu\ge0$), it is enough to take $F$ so large that $\norm{R_F\circ \mu}=\norm{\mu}$. This may be achieved easily -- similarly as in the proof of Proposition~\ref{P:vlastnostih}$(d)$ we find a sequence $(\f_n)$ in $C(K,E)$ such that
    $$\norm{\mu}=\sup_{n\in\en} \abs{\int \f_n\di\mu}$$
    and let $F$ be the closed linear span of $\bigcup_n\f_n(K)$. (Any larger $F$ works as well.)

    $(c)$: Let $F$ be a subspace provided by $(b)$ which works simultaneously for $\nu_1$ and $\nu_2$. Let $\h_1$ and $\h_2$ be the functions provided by Proposition~\ref{P:hustotaT*nu} for $\nu_1$ and $\nu_2$.
    For $j=1,2$ set $\widetilde{\nu_j}=(\id\times R_F)(\nu_j)$. Then for each $\f\in C(K,F)$ we have
    $$\begin{aligned}
         \int_K \f\di T_F^*(\widetilde{\nu_j})&=\int_{K\times B_{F^*}} T_F\f \di\widetilde{\nu_j}
    =\int_{K\times B_{E^*}} (T_F\f)\circ (\id\times R_F) \di\nu_j 
    \\&= \int_{K\times B_{E^*}} R_F(x^*)(\f(t))\di\nu_j(t,x^*)= \int_{K\times B_{E^*}}  x^*(\f(t))\di\nu_j(t,x^*)
    \\&= \int_K \h_j(t)(\f(t))\di\abs{\mu}(t)= \int_K \h_j(t)|_F(\f(t))\di\abs{\mu}(t).
       \end{aligned}$$
   The first equality follow follows from the definition of $T_F^*$ and the second one follows from the rules of integration with respect to the image of a measure. The third one follows from the definition of $T_F$. The fourth one is obvious (as $\f(t)\in F$
   for each $t\in K$). The fifth one follows from the choice of $\h_j$ and the last one is again obvious. Hence the function $R_F\circ \h_j$ is a possible choice of $\h$ associated to $\widetilde{\nu_j}$ by Proposition~\ref{P:hustotaT*nu}. Using the choice of $F$ and 
   combining Lemma~\ref{L:dezintegrace kvocient} and Lemma~\ref{L:hustota jednoznacnost} we deduce that
   $$r(\widetilde{\nu_{1,t}})=\h_1(t)|_F=\h_2(t)|_F=r(\widetilde{\nu_{2,t}})\quad\abs{\mu}\mbox{-almost everywhere}.$$
    This completes the proof. 
\end{proof}

We continue by collecting several results on separable factorization which will be useful in the sequel.

\begin{lemma}
\label{l:separ-faktor}
\begin{enumerate}[$(a)$]
    \item Let $f\in C(B_{E^*})$ be given. Then there is a separable subspace $F\subset E$ and  $g\in C(B_{F^*})$ such that $f=g\circ R_F$.
\item  Let $S\subset C(B_{E^*})$ be a norm-separable set. Then there exists a separable subspace $F\subset E$ such that for each $f\in S$ there exists $g\in C(B_{F^*})$ with $f=g\circ R_F$.
    \item Let $f\in C(K\times B_{E^*})$ be given. Then there exists a separable space $F\subset E$ and  $g\in C(K\times B_{F^*})$ such that $f=g\circ (\operatorname{id}\times R_F)$.
\end{enumerate}
\end{lemma}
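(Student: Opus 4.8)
The plan is to isolate a single topological principle and then apply it three times. The principle is this: if $q\colon X\to Y$ is a continuous surjection from a compact space $X$ onto a Hausdorff space $Y$ and $f\colon X\to\ef$ is continuous and \emph{constant on the fibres of $q$} (that is, $q(a)=q(b)$ implies $f(a)=f(b)$), then there is a unique continuous $g\colon Y\to\ef$ with $f=g\circ q$. Existence and uniqueness of $g$ as a function are immediate from surjectivity and the fibre condition; its continuity follows because $q$, being a closed map (it carries compact sets to compact, hence closed, subsets of the Hausdorff space $Y$), is a quotient map, so $g$ is continuous whenever $g\circ q=f$ is. I will apply this with $q=R_F$ (for $(a)$ and $(b)$) and with $q=\id\times R_F$ (for $(c)$); in each case $X$ is a product of the compact spaces involved and $Y$ is compact metrizable, so the whole problem reduces to producing a separable $F$ for which the relevant functions are constant on the fibres of $R_F$.

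For $(a)$ I would first note that the weak$^*$ topology on $B_{E^*}$ is exactly the one induced by the coordinate functions $x^*\mapsto x^*(x)$, $x\in E$. The algebra generated by these functions, the constants, and (in the complex case) their conjugates, separates the points of $B_{E^*}$, contains constants and is self-adjoint, hence is dense in $C(B_{E^*})$ by the Stone--Weierstrass theorem. Thus $f$ is a uniform limit of functions $f_n$, each of which is a polynomial in finitely many coordinate functions $x^*\mapsto x^*(x)$ (and their conjugates), say with the $x$'s ranging over a finite set $S_n\subset E$. Setting $F$ to be the closed linear span of $\bigcup_n S_n$ gives a separable subspace, and each $f_n$ is constant on the fibres of $R_F$ since $R_F(x^*)=R_F(y^*)$ forces $x^*(x)=y^*(x)$ for all $x\in F\supset S_n$. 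Passing to the uniform limit, $f$ itself is constant on those fibres, and the principle above furnishes the desired $g\in C(B_{F^*})$.

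For $(b)$ I would take a countable norm-dense subset $\{f_k\}$ of $S$, apply $(a)$ to each $f_k$ to get a separable $F_k$, and let $F$ be the closed linear span of $\bigcup_k F_k$, which is again separable. Each $f_k$ is then constant on the fibres of $R_F$; and since an arbitrary $f\in S$ is a uniform limit of a subsequence of $(f_k)$, it too is constant on those fibres, so the concluding step of $(a)$ applies to every $f\in S$ with the same $F$.

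The main work in $(c)$ is a preliminary continuity statement: the map $t\mapsto f(t,\cdot)$ is continuous from $K$ into $(C(B_{E^*}),\norm{\cdot}_\infty)$. I would prove this by a standard compactness argument — fixing $t_0$ and $\ep$, covering $B_{E^*}$ by finitely many weak$^*$ neighbourhoods on which $f$ varies little and intersecting the corresponding neighbourhoods of $t_0$. This is the point where compactness of $B_{E^*}$ is essential, and the step I expect to require the most care to write cleanly, since $K$ need not be metrizable and one must argue with neighbourhoods rather than with a modulus of continuity. Granting this, $S=\{f(t,\cdot)\setsep t\in K\}$ is the continuous image of the compact space $K$, hence norm-compact and in particular norm-separable, so $(b)$ yields a separable $F$ through which every $f(t,\cdot)$ factors. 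This says precisely that $f$ is constant on the fibres of $\id\times R_F\colon K\times B_{E^*}\to K\times B_{F^*}$, which is a continuous surjection onto a compact Hausdorff space, so the topological principle produces the required $g\in C(K\times B_{F^*})$.
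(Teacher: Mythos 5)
Your proof is correct. For parts $(a)$ and $(b)$ you follow essentially the paper's route: the paper declares the set $Y$ of factorizable functions to be a closed self-adjoint subalgebra containing the constants and the coordinate functions and invokes Stone--Weierstrass, whereas you apply the density statement directly and then pass to the uniform limit by hand; your explicit quotient-map argument (a continuous surjection of a compact space onto a Hausdorff space is closed, hence a quotient map, so a continuous function constant on fibres factors continuously) is exactly the content hidden in the paper's unproved claim that $Y$ is norm-closed (and stable under products of functions factoring through different subspaces), so making it explicit is a genuine service. Part $(c)$ is where you genuinely diverge: the paper simply reruns the Stone--Weierstrass subalgebra argument on $C(K\times B_{E^*})$, using $(a)$ only to see that the functions $(t,x^*)\mapsto f(x^*)$ lie in the algebra, while you reduce $(c)$ to $(b)$ via the norm-compactness, hence norm-separability, of the slice family $\{f(t,\cdot)\setsep t\in K\}$, which rests on the standard fact that $t\mapsto f(t,\cdot)$ is continuous into $(C(B_{E^*}),\norm{\cdot}_\infty)$. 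Both routes work; the paper's is shorter to state, while yours isolates a reusable continuity fact that the paper needs anyway later (it is invoked in the proofs of Theorem~\ref{T:precDnaN(mu)}, Lemma~\ref{l:meritelnost} and Theorem~\ref{t:sepreduct}). One cosmetic slip: in setting up your principle you say the target is ``compact metrizable'' in each case, but for $(c)$ the target $K\times B_{F^*}$ need not be metrizable since $K$ is an arbitrary compact space; this is harmless, as your principle only requires the target to be Hausdorff, and surjectivity of $R_F$ on dual balls (via Hahn--Banach) is the only other ingredient you tacitly use.
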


\begin{proof}
$(a)$: Let $Y$ denote the set of all $f\in C(B_{E^*})$ admitting the required factorization. It is clear that $Y$ is a closed subalgebra containing constants and stable to complex conjugation. Moreover, for any $x\in E$ the function $x^*\mapsto x^*(x)$ belongs to $Y$ (it is enough to take $F=\span\{x\}$). Hence $Y$ separates points of $B_{E^*}$, so we conclude by the Stone-Weierstrass theorem.

$(b)$: Let $D\subset S$ be a countable dense subset of $S$. For each $d\in D$ we select using $(a)$ a separable space $F_d\subset E$ such that there exists $g_d\in C(B_{{F_d}^*})$ satisfying $f=g_d\circ R_{F_d}$. Then the space $F=\overline{\span \bigcup_{d\in D} F_d}$ is separable and any function from $D$ can be factorized as a function from $C(B_{F^*})$. Now it is easy to see that any function $f$ from $S\subset \overline{D}^{\norm{\cdot}}$ can be factorized as $f=g_f\circ R_F$ for some $g_f\in C(B_{F^*})$.

$(c)$: We proceed similarly as in $(a)$. Let $Z$ denote the set of all $f\in C(K\times B_{E^*})$ admitting the required factorization. It is clear that $Z$ is a closed subalgebra containing constants and stable to complex conjugation. Moreover, by $(a)$ $Z$ contains all functions of the form
$$(t,x^*)\mapsto f(x^*)\mbox{ where }f\in C(B_{E^*}).$$
Obviously it contains also all functions of the form
$$(t,x^*)\mapsto f(t)\mbox{ where }f\in C(K).$$
It follows that $Z$ separates points of $B_{E^*}$, so we conclude by the Stone-Weierstrass theorem.  
\end{proof}

Using the previous lemma we may easily show that the relation $\prec_{\D}$ is separably determined.
It is the content of the following proposition.

\begin{prop}\label{P:precD-sepred}
   Let $\nu_1,\nu_2\in M_+(K\times B_{E^*})$ Then the following assertions are equivalent:
         \begin{enumerate}[$(1)$]
            \item $\nu_1\prec_{\D}\nu_2$;
            \item $(\id\times R_F)(\nu_1)\prec_{\D}(\id\times R_F)(\nu_2)$ for each separable subspace $F\subset E$;
            \item $(\id\times R_F)(\nu_1)\prec_{\D}(\id\times R_F)(\nu_2)$ for each $F$ from a cofinal family of separable subspaces of $E$.
        \end{enumerate}   
\end{prop}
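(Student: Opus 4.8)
The plan is to establish the cycle $(1)\implies(2)\implies(3)\implies(1)$. A preliminary remark on notation: the relation $\prec_{\D}$ appearing in $(2)$ and $(3)$ is the order induced by the cone $\D$ built from the space $F$ rather than $E$, acting on the image measures $(\id\times R_F)(\nu_i)\in M_+(K\times B_{F^*})$ (recall $R_F(B_{E^*})=B_{F^*}$). The implication $(2)\implies(3)$ is immediate: $(3)$ only asks for the relation along a cofinal family, while $(2)$ supplies it for every separable $F$. The two substantive directions, $(1)\implies(2)$ and $(3)\implies(1)$, both rest on a single compatibility fact: precomposition with $\id\times R_F$ carries the $F$-version of $\D$ into the $E$-version, which combines with the change-of-variables formula $\int f\di(\id\times R_F)(\nu_i)=\int f\circ(\id\times R_F)\di\nu_i$ for image measures.

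For $(1)\implies(2)$ I would fix a separable subspace $F\subset E$ and an arbitrary $f$ in the $F$-version of $\D$, that is, a bounded function on $K\times B_{F^*}$ that is weak$^*$ upper semicontinuous and superlinear in the second variable. The key step is to verify that $f\circ(\id\times R_F)$ belongs to the $E$-version of $\D$: boundedness is clear, upper semicontinuity is preserved because $R_F$ is weak$^*$-to-weak$^*$ continuous on $B_{E^*}$, and superlinearity in the second variable is preserved because $R_F$ is linear (positive homogeneity and superadditivity both pass through a linear map). Applying $(1)$ to $f\circ(\id\times R_F)\in\D$ and transporting through the image-measure formula yields $\int f\di(\id\times R_F)(\nu_1)\le\int f\di(\id\times R_F)(\nu_2)$, and since $f$ was arbitrary we conclude $(\id\times R_F)(\nu_1)\prec_{\D}(\id\times R_F)(\nu_2)$.

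For $(3)\implies(1)$ I would invoke Lemma~\ref{L:precD}$(a)$, which reduces $\nu_1\prec_{\D}\nu_2$ to checking
$$\int(\Re T\f_1\wedge\dots\wedge\Re T\f_n)\di\nu_1\le\int(\Re T\f_1\wedge\dots\wedge\Re T\f_n)\di\nu_2$$
for arbitrary $\f_1,\dots,\f_n\in C(K,E)$. Given such functions, each $\f_j(K)$ is compact, hence separable, so $\span\bigcup_{j=1}^n\f_j(K)$ is a separable subspace; by cofinality I may choose a member $F$ of the given family containing it. Since each $\f_j$ then takes values in $F$, one has $T\f_j=T_F\f_j\circ(\id\times R_F)$, and because pointwise minima commute with composition, $\Re T\f_1\wedge\dots\wedge\Re T\f_n=(\Re T_F\f_1\wedge\dots\wedge\Re T_F\f_n)\circ(\id\times R_F)$. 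Applying Lemma~\ref{L:precD}$(a)$ in the space $F$ to the hypothesis $(\id\times R_F)(\nu_1)\prec_{\D}(\id\times R_F)(\nu_2)$ gives the corresponding inequality against the image measures, and the change-of-variables formula transports it back to the displayed inequality for $\nu_1,\nu_2$. As the $\f_j$ were arbitrary, Lemma~\ref{L:precD}$(a)$ gives $\nu_1\prec_{\D}\nu_2$.

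The main obstacle is the compatibility verification underlying $(1)\implies(2)$: checking that precomposition with $\id\times R_F$ sends the $F$-cone $\D$ into the $E$-cone $\D$, in particular the preservation of superlinearity and upper semicontinuity, while keeping the two meanings of $\prec_{\D}$ notationally disentangled. Once this is in place, the rest is routine bookkeeping with image measures and the reduction to minima of functions $\Re T\f_j$ furnished by Lemma~\ref{L:precD}$(a)$.
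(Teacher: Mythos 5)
Your proof is correct and follows essentially the same route as the paper: $(1)\implies(2)$ by pulling the cone $\D$ back along $\id\times R_F$ and using the image-measure formula, $(2)\implies(3)$ trivially, and $(3)\implies(1)$ by reducing via Lemma~\ref{L:precD}$(a)$ to test functions that factor through a separable subspace chosen from the cofinal family. The only (harmless) difference is in $(3)\implies(1)$: the paper factors a general continuous $f\in\D$ through a separable $F$ by invoking Lemma~\ref{l:separ-faktor}$(c)$, whereas you factor the explicit minima $\Re T\f_1\wedge\dots\wedge\Re T\f_n$ directly through a subspace containing the ranges $\f_j(K)$, which bypasses that lemma.
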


\begin{proof} To clarify the meaning of $(3)$ let us recall that a family of separable subspaces of $E$ is \emph{cofinal} if any separable subspace of $E$ is contained in a member of the family.

       We now proceed with the proof itself.
        Implication $(1)\implies(2)$ is easy: If $f$ belongs to the cone $\D$ on $K\times F^*$, then $f\circ(\id\times R_F)$ belongs to the cone $\D$ on $K\times E^*$ and we may use the rules of integration with respect to the image of a measure.

      Implication $(2)\implies (3)$ is trivial.

    $(3)\implies(1)$: Assume assertion $(3)$ holds. To prove that $\nu_1\prec_{\D}\nu_2$ it is enough to show that $\int f\di\nu_1\le \int f\di\nu_2$ for any continuous $f$ from $\D$ (by Lemma~\ref{L:precD}$(a)$). So, fix  such $f$. By Lemma~\ref{l:separ-faktor}$(c)$ there is a separable subspace $F\subset E$ and $g\in C(K\times B_{F^*})$ such that $f=g\circ(\id\times R_F)$.
    By $(3)$ we may assume, up to enlarging $F$, that $(\id\times R_F)(\nu_1)\prec_{\D}(\id\times R_F)(\nu_2)$.  Since $g$ clearly belongs to the cone $\D$ on $K\times B_{F^*}$, we get  $\int g\di (\id\times R_F)(\nu_1)\le \int g\di(\id\times R_F)(\nu_2)$. Then we conclude using the rules of integration with respect to the image of a measure.
\end{proof}

Now we are going to characterize the relation $\prec_{\D}$ on $N$ and to relate it to the classical Choquet ordering and to the Choquet theory of cones. To this end we set
$$\K=\{f\in C(K\times B_{E^*})\setsep f(t,\cdot)\mbox{ is concave for each }t\in K\}.$$
And, naturally, for $\nu_1,\nu_2\in N$ we set
$$\nu_1\prec_{\K} \nu_2 \equiv^{\mbox{df}} \forall f\in \K \colon \int f\di\nu_1\le \int f\di\nu_2.$$
The promised characterization is the content of the following theorem.

\begin{thm}\label{T:precDnaN(mu)}
    Let $\nu_1,\nu_2\in N(\mu)$ be given. Let $(\nu_{1,t})_{t\in K}$ and $(\nu_{2,t})_{t\in K}$ be their disintegration kernels. Then the following assertions are equivalent.
    \begin{enumerate}[$(1)$]
        \item $\nu_1\prec_{\D}\nu_2$.
        \item If $p:B_{E^*}\to \er$ is weak$^*$ continuous and sublinear, then
        $$\int p\di\nu_{2,t}\le\int p\di\nu_{1,t}\mbox{ for $\abs{\mu}$-almost all }t\in K.$$
        \item If $g:B_{E^*}\to \er$ is weak$^*$ continuous and convex, then
        $$\int g\di\nu_{2,t}\le\int g\di\nu_{1,t}\mbox{ for $\abs{\mu}$-almost all }t\in K.$$
        \item For each $f\in\K$ we have
        $$\int f(t,x^*)\di\nu_{1,t}(x^*)\le\int f(t,x^*)\di\nu_{2,t}(x^*)\mbox{ for $\abs{\mu}$-almost all }t\in K.$$
        \item $\nu_1\prec_{\K}\nu_2$.
    \end{enumerate}
\end{thm}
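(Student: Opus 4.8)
The plan is to organize the five equivalences around a single intermediate statement: that the disintegrated probabilities satisfy the classical Choquet inequality $\nu_{2,t}\prec\nu_{1,t}$ for $\abs{\mu}$-almost every $t$. I would prove the cycle $(4)\Rightarrow(5)\Rightarrow(1)\Rightarrow(2)\Rightarrow(3)\Rightarrow(4)$, pushing the two substantial steps through this Choquet-order statement. First I would record the structural facts that make disintegration usable: since $\nu_1,\nu_2\in N(\mu)$, Proposition~\ref{P:vlastnostih}$(a)$ gives $\pi_1(\abs{\nu_1})=\pi_1(\abs{\nu_2})=\abs{T^*\nu_j}=\abs{\mu}$, so both disintegrations are taken against the common measure $\sigma:=\abs{\mu}$ on $K$; moreover Proposition~\ref{P:vlastnostih}$(b)$ (with Lemma~\ref{L:hustota jednoznacnost}$(b)$ in the separable case and Lemma~\ref{L:rovnost tezist}$(a),(c)$ in general) shows the barycenters $r(\nu_{1,t})=r(\nu_{2,t})$ lie on $S_{E^*}$ for $\sigma$-almost all $t$. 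The two trivial ends are then immediate: $(4)\Rightarrow(5)$ by integrating the almost-everywhere inequality against $\sigma$ and invoking Lemma~\ref{L:dezintegrace}$(ii)$, and $(5)\Rightarrow(1)$ because the functions $\Re T\f_1\wedge\dots\wedge\Re T\f_n$ of Lemma~\ref{L:precD}$(a)$ are concave in the second variable and continuous, hence lie in $\K$.

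For $(1)\Rightarrow(2)$ I would feed into the definition of $\prec_{\D}$ the test functions $(t,x^*)\mapsto -h(t)p(x^*)$, where $p$ is the given weak$^*$ continuous sublinear function and $h\in C(K)$ with $h\ge0$; each is superlinear in $x^*$ and continuous, hence lies in $\D$. Disintegrating via Lemma~\ref{L:dezintegrace} turns $\nu_1\prec_{\D}\nu_2$ into $\int_K h(t)\bigl(\int p\di\nu_{2,t}-\int p\di\nu_{1,t}\bigr)\di\sigma(t)\le 0$ for every such $h$, and since $t\mapsto\int p\di\nu_{j,t}$ is $\sigma$-measurable this forces $\int p\di\nu_{2,t}\le\int p\di\nu_{1,t}$ $\sigma$-almost everywhere, which is $(2)$.

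The heart is $(2)\Rightarrow(3)$. I would first treat separable $E$, where $B_{E^*}$ is metrizable and the weak$^*$ continuous sublinear functions form a norm-separable subset of $C(B_{E^*})$. Choosing a countable dense family $\{p_n\}$ and discarding the countably many null sets from $(2)$, I obtain a single $\sigma$-null set off which $\int p\di\nu_{2,t}\le\int p\di\nu_{1,t}$ holds for \emph{every} sublinear $p$ and the common barycenter lies on the sphere. Theorem~\ref{t:ordering-sfera} then applies at each such $t$ and yields $\nu_{2,t}\prec\nu_{1,t}$ in the Choquet order; this single statement delivers $(3)$ at once, and $(4)$ as well since $-f(t,\cdot)$ is convex for $f\in\K$, with an exceptional set independent of the function tested. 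For general $E$ I would descend to a sufficiently large separable $F\subset E$ through $R_F$: by Lemma~\ref{l:separ-faktor} a convex $g$ factors as $g=\tilde g\circ R_F$ with $\tilde g$ convex on $B_{F^*}$, by Lemma~\ref{L:dezintegrace kvocient} the disintegration of $(\id\times R_F)(\nu_j)$ is $(R_F(\nu_{j,t}))_{t}$, by Lemma~\ref{L:rovnost tezist}$(b),(c)$ these pushforwards lie in $N(R_F\circ\mu)$ with common barycenter on $S_{F^*}$, and $(2)$ transfers to them because $q\circ R_F$ is sublinear on $B_{E^*}$ whenever $q$ is sublinear on $B_{F^*}$; applying the separable case and pulling back gives $(3)$. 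The step $(3)\Rightarrow(4)$ runs identically: factor $f\in\K$ as $\tilde f\circ(\id\times R_F)$ by Lemma~\ref{l:separ-faktor}$(c)$ (with $\tilde f(t,\cdot)$ concave, since $R_F$ is affine and surjective), transfer $(3)$ to the pushforwards, and apply the separable Choquet-order statement to the convex function $-\tilde f(t,\cdot)$.

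The step I expect to be genuinely delicate is the passage from the pointwise sublinear inequality $(2)$ to the pointwise convex inequality $(3)$. For a single pair of measures this is precisely Theorem~\ref{t:ordering-sfera}, but embedding it into the almost-everywhere framework requires interchanging the quantifiers ``for every test function, for almost every $t$'' and ``for almost every $t$, for every test function.'' The resolution is that a countable dense family of sublinear functions suffices and that Theorem~\ref{t:ordering-sfera} produces the \emph{full} Choquet preorder $\nu_{2,t}\prec\nu_{1,t}$, so the single null set obtained controls all convex functions simultaneously; the nonseparable case is then formal, handled by the separable-reduction package (Proposition~\ref{P:precD-sepred} together with Lemmas~\ref{L:dezintegrace kvocient}, \ref{l:separ-faktor}, \ref{L:rovnost tezist}). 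A secondary point needing care throughout is the $\sigma$-measurability of $t\mapsto\int p\di\nu_{j,t}$ and of the barycenter map, which is supplied by Lemma~\ref{L:dezintegrace} and the weak$^*$ measurability in Proposition~\ref{P:vlastnostih}.
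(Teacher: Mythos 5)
Your proposal is correct and, for the decisive implication $(2)\Rightarrow(3)$, coincides with the paper's proof: both reduce to a separable subspace $F$ via $R_F$ (Lemmas~\ref{l:separ-faktor}, \ref{L:dezintegrace kvocient} and \ref{L:rovnost tezist}), use separability of $C(B_{F^*})$ to obtain a single null set off which the sublinear inequality holds for all test functions simultaneously, and then invoke Theorem~\ref{t:ordering-sfera} pointwise. You diverge in two harmless places. For $(1)\Rightarrow(2)$ the paper argues by contraposition with an explicit Urysohn-function estimate, while you run the same computation forwards and conclude from $\int h\varphi\di\sigma\le 0$ for all nonnegative $h\in C(K)$ that $\varphi\le0$ almost everywhere; this requires the routine regularity argument that a signed measure with bounded density against $\abs{\mu}$ which is nonpositive against nonnegative continuous functions is nonpositive. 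For $(3)\Rightarrow(4)$ the paper uses a contrapositive compactness argument (a finite $\delta$-net for the continuous map $t\mapsto f(t,\cdot)$), whereas you factor $f$ through a separable quotient and exploit that the cone of weak$^*$ continuous convex functions on the metrizable ball $B_{F^*}$ is sup-norm separable, so that $(3)$ upgrades to the full Choquet relation $R_F(\nu_{2,t})\prec R_F(\nu_{1,t})$ off a single null set; this is equally valid and arguably cleaner, since it reuses the machinery already set up for $(2)\Rightarrow(3)$ (and, as you note, does not even need Theorem~\ref{t:ordering-sfera} for this step). One small imprecision in your preliminaries: the claim that $r(\nu_{1,t})=r(\nu_{2,t})\in S_{E^*}$ for almost all $t$ is literally true only for separable $E$; in general the two weak$^*$ densities of $\mu$ need only agree after restriction to a fixed separable subspace, with the null set depending on that subspace (this is exactly Lemma~\ref{L:rovnost tezist}$(c)$, and it is what you actually use in the separable-reduction step), so the proof itself is unaffected.
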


\begin{proof}
    $(1)\implies(2)$: We proceed by contraposition. Assume $(2)$ fails and fix some $p$ witnessing it. Then $p$ is bounded on $B_{E^*}$, say, $\abs{p}\le C$ on $B_{E^*}$. Further,
$$\left\{t\in K\setsep \int p\di\nu_{2,t}> \int p\di\nu_{1,t}\right\}$$
is not a $\abs{\mu}$-measure zero set. By $\sigma$-additivity there is some $\delta>0$ such that
the set
$$\left\{t\in K\setsep \int p\di\nu_{2,t}>\delta+ \int p\di\nu_{1,t}\right\}$$
is not a $\abs{\mu}$-measure zero set. But this set is measurable, so there is a compact set $F\subset K$ with $\abs{\mu}(F)>0$ such that 
$$\int p\di\nu_{2,t}>\delta+ \int p\di\nu_{1,t}\quad\mbox{for }t\in F.$$
Let $\ep>0$ be arbitrary. By the regularity of $\abs{\mu}$ we find an open subset $U\subset K$ containing $F$ such that $\abs{\mu}(U\setminus F)<\ep$. Fix a continuous function $g:K\to [0,1]$ such that $g=1$ on $F$ and $g=0$ on $K\setminus U$. Then
$$f(t,x^*)=-g(t)p(x^*),\qquad (t,x^*)\in K\times B_{E^*},$$
is a continuous function from $\D$. Moreover,
$$\begin{aligned}
    \int_{K\times B_{E^*}} f\di\nu_2& = \int_K\left(\int_{B_{E^*}} -g(t)p(x^*)\di\nu_{2,t}(x^*)\right)\di\abs{\mu}(t)
    \\&=-\int_F\left(\int_{B_{E^*}} p(x^*)\di\nu_{2,t}(x^*)\right)\di\abs{\mu}(t)\\&\qquad\qquad-\int_{U\setminus F}\left(\int_{B_{E^*}}g(t)p(x^*)\di\nu_{2,t}(x^*)\right)\di\abs{\mu}(t)
    \\&\le -\int_F\left(\delta+\int_{B_{E^*}} p(x^*)\di\nu_{1,t}(x^*)\right)\di\abs{\mu}(t)+C\ep
    \\&= C\ep-\delta\abs{\mu(F)} -\int_K\left(\int_{B_{E^*}} g(t)p(x^*)\di\nu_{1,t}(x^*)\right)\di\abs{\mu}(t)\\
    &\qquad\qquad+\int_{U\setminus F}\left(\int_{B_{E^*}}g(t)p(x^*)\di\nu_{1,t}(x^*)\right)\di\abs{\mu}(t)
    \\&\le 2C\ep-\delta\abs{\mu(F)}+\int_{K\times B_{E^*}} f\di\nu_1.
\end{aligned}$$
Let us explain this computation: The first equality follows from the definition of $f$ and from the properties of disintegration kernels.
The second one follows from the choice of $g$ -- note that $g=1$ on $F$ and $g=0$ on $K\setminus U$. The following inequality follows from the choice of $F$ and $U$ using the estimate $\abs{gp}\le C$. The next equality follows by algebraic manipulation using the fact that $g=1$ on $F$. The final inequality follows from the estimates $\abs{gp}\le C$ and $\abs{\mu}(U\setminus F)<\ep$ together with the definition of $f$ and the properties of disintegration kernels.

If $\ep>0$ is sufficiently small, the above computation yields $\int_{K\times B_{E^*}}f\di\nu_2<\int_{K\times B_{E^*}}f\di\nu_1$, so $\nu_1\not\prec_{\D}\nu_2$, hence $(1)$ fails.

    $(2)\implies(3)$: Let $g:B_{E^*}\to\er$ be weak$^*$ convex and continuous. Let $F\subset E$ be a separable subspace such that $g=g_F\circ R_F$ for some $g_F\in C(B_{E^*})$ (it exists by Lemma~\ref{l:separ-faktor}$(a)$). Up to enlarging $F$ we may assume that the condition from Lemma~\ref{L:rovnost tezist}$(b)$ is fulfilled. Given $p:B_{F^*}\to \er$ weak$^*$ continuous sublinear, we have 
    $$\int p\di R_F(\nu_{2,t})=\int p\circ R_F\di\nu_{2,t} \le \int p\circ R_F\di\nu_{1,t} = \int p\di R_F(\nu_{1,t})$$
    for $\abs{\mu}$-almost all $t\in K$, where we used assumption $(2)$ applied to $p\circ R_F$.
    Since $F$ is separable, the dual ball $(B_{F^*},w^*)$ is metrizable and hence $C(B_{F^*})$ is separable. It now easily follows that for $\abs{\mu}$-almost all $t\in K$ we have
    $$\int p\di R_F(\nu_{2,t}) \le \int p\di R_F(\nu_{1,t})\mbox{ for each $p:B_{E^*}\to\er$ weak$^*$ continuous sublinear}.$$
    Using the validity of the condition from Lemma~\ref{L:rovnost tezist}$(b)$ we deduce from Theorem~\ref{t:ordering-sfera}  that for $\abs{\mu}$-almost all $t\in K$ we have
    $$\int g\di \nu_{2,t}=\int g_F \di R_F(\nu_{2,t}) \le \int g_F\di R_F(\nu_{1,t}) = \int g\di \nu_{1,t},$$
    which completes the argument.

    $(3)\implies(4)$: We proceed by contraposition. Let $f\in \K$ be such that the converse inequality holds on a set of positive $\abs{\mu}$-measure. Using $\sigma$-additivity and regularity we find a compact set $L\subset K$ with $\abs{\mu}(L)>0$ and $\delta>0$ such that
    $$\int f(t,x^*)\di\nu_{1,t}(x^*)>2\delta+\int f(t,x^*)\di\nu_{2,t}(x^*)\mbox{ for }t\in L.$$
    For $t\in K$ let $f_t=f(t,\cdot)$. Then each $f_t$ is a continuous concave function on $B_{E^*}$ and, moreover, the assignment $t\mapsto f_t$ is continuous (from $K$ to $C(B_{E^*})$). Therefore,
    there is a finite set $F\subset L$ such that $\{f_t\setsep t\in F\}$ forms a $\delta$-net of $\{f_t\setsep t\in L\}$. For each $t\in F$ let
    $$L_t=\{s\in L\setsep \norm{f_s-f_t}<\delta\}.$$
    These sets form a finite cover of $L$ by relatively open sets, so at least one of them has positive measure. Hence, fix $t\in F$ with $\abs{\mu}(L_t)>0$. Let $s\in L_t$.
    Then 
    $$\int f_t\di\nu_{1,s}\ge \int  f_s\di\nu_{1,s} -\delta >  \int  f_s\di\nu_{2,s} + \delta 
    \ge \int  f_t\di\nu_{2,s}.$$
    So, the function $f_t$ witnesses that $(3)$ is violated.

    $(4)\implies(5)$: This follows from the disintegration formula (see Lemma~\ref{L:dezintegrace}).

    $(5)\implies(1)$: This is trivial (using Lemma~\ref{L:precD}$(a)$), as continuous functions from $\D$ belong to $\K$.
\end{proof}

For separable $E$ we have the following improvement.

\begin{cor}\label{cor:precD-separable}
    Assume  that $E$ is separable. Let $\nu_1,\nu_2\in N(\mu)$ be given. Let $(\nu_{1,t})_{t\in K}$ and $(\nu_{2,t})_{t\in K}$ be their disintegration kernels. Then the following assertions are equivalent.
    \begin{enumerate}[$(1)$]
        \item $\nu_1\prec_{\D}\nu_2$;
        \item $\nu_{2,t}\prec\nu_{1,t}$ (in the Choquet order on $M_1(B_{E^*})$) for $\abs{\mu}$-almost all $t\in K$.
    \end{enumerate}
\end{cor}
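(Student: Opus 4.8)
The plan is to read this off Theorem~\ref{T:precDnaN(mu)}, whose condition $(3)$ is almost exactly the assertion $\nu_{2,t}\prec\nu_{1,t}$, the only difference being the order of the two quantifiers ``for each weak$^*$ continuous convex function'' and ``for $\abs{\mu}$-almost all $t$''. Since $E$ is separable, $(B_{E^*},w^*)$ is a metrizable compact convex set, so $C(B_{E^*})$ is separable; this is precisely what will allow the reduction of the Choquet order to a countable test family and hence the swap of the quantifiers. Recall also that, as $\nu_1,\nu_2\ge0$, the kernels consist of probability measures, so the Choquet order on $M_1(B_{E^*})$ is indeed the relevant relation.

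For the implication $(2)\implies(1)$, suppose $\nu_{2,t}\prec\nu_{1,t}$ for $\abs{\mu}$-almost all $t$. Then for every weak$^*$ continuous convex function $g$ on $B_{E^*}$ we have $\int g\di\nu_{2,t}\le\int g\di\nu_{1,t}$ for $\abs{\mu}$-almost all $t$, which is precisely condition $(3)$ of Theorem~\ref{T:precDnaN(mu)}; that theorem gives $\nu_1\prec_{\D}\nu_2$. So this direction is immediate.

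For $(1)\implies(2)$ I would first fix a countable set $\{g_n\setsep n\in\en\}$ of weak$^*$ continuous convex functions that is dense, in the supremum norm, in the cone of all weak$^*$ continuous convex functions on $B_{E^*}$; such a set exists because $C(B_{E^*})$ is separable, hence so is every subset. Applying the implication $(1)\implies(3)$ of Theorem~\ref{T:precDnaN(mu)} to each $g_n$ yields, for each $n$, a set of full $\abs{\mu}$-measure on which $\int g_n\di\nu_{2,t}\le\int g_n\di\nu_{1,t}$. Intersecting these countably many full-measure sets produces a single set $M\subset K$ with $\abs{\mu}(K\setminus M)=0$ such that $\int g_n\di\nu_{2,t}\le\int g_n\di\nu_{1,t}$ holds for all $n$ and all $t\in M$ simultaneously. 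For a fixed $t\in M$, an arbitrary weak$^*$ continuous convex $g$ is a supremum-norm limit of functions from the family $\{g_n\}$, and since $\nu_{1,t},\nu_{2,t}$ are probability measures the corresponding integrals converge, so the inequality $\int g\di\nu_{2,t}\le\int g\di\nu_{1,t}$ passes to the limit. Thus $\nu_{2,t}\prec\nu_{1,t}$ for every $t\in M$, which is condition $(2)$.

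The only point requiring care, and the step I regard as the crux, is the legitimacy of the quantifier swap: it rests entirely on separability, since only a countable dense test family allows the null sets furnished by Theorem~\ref{T:precDnaN(mu)}$(3)$ for the continuum of convex functions to be amalgamated into one null set. Once the countable family is in hand, everything else is a routine density and monotone-limit argument.
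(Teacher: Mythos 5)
Your proposal is correct and follows essentially the same route as the paper: both directions are read off from Theorem~\ref{T:precDnaN(mu)} (equivalence of its conditions $(1)$ and $(3)$), and the quantifier swap in the direction $(1)\implies(2)$ is carried out exactly as in the paper, by exploiting the sup-norm separability of the cone of weak$^*$ continuous convex functions on the metrizable ball $B_{E^*}$ to amalgamate the null sets over a countable dense test family. The uniform-convergence passage to the limit for probability kernels is the right (and standard) closing step.
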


\begin{proof}
    $(2)\implies(1)$: This follows immediately from implication $(3)\implies(1)$ of Theorem~\ref{T:precDnaN(mu)} (separability is not needed). 
    
    $(1)\implies(2)$: Assume $\nu_1\prec_{\D}\nu_2$. Then condition $(3)$ if Theorem~\ref{T:precDnaN(mu)} is valid. Since $E$ is separable, $(B_{E^*},w^*)$ is metrizable and hence $C(B_{E^*})$ is also separable.
    In particular, the cone of weak$^*$ convex continuous functions on $B_{E^*}$ is separable in the sup-norm. Let $C$ be a countable dense subset of this cone. Condition $(3)$ of Theorem~\ref{T:precDnaN(mu)} then yields that for $\abs{\mu}$-almost all $t\in K$ we have
    $$\forall g\in C\colon \int g\di\nu_{2,t}\le\int g\di\nu_{1,t}.$$
    This clearly passes to the closure, hence $C$ may be replaced by the cone of all weak$^*$-continuous convex functions. But this means that condition $(2)$ is fulfilled.
    \end{proof}

Even for nonseparable $E$ we have an analogue of the previous corollary. However, the disintegration kernels have to be chosen in a proper way. It is the content of the following theorem.

\begin{thm}\label{t:precDbodove}
    Let $\mu\in M(K,E^*)\setminus\{0\}$ be given. Then there is a choice of disintegration kernels
    $$\nu\in N(\mu)\mapsto (\nu_t)_{t\in K}$$
    such that for each pair $\nu_1,\nu_2\in N(\mu)$ we have
    $$\nu_1\prec_{\D}\nu_2 \iff \forall t\in K\colon \nu_{2,t}\prec \nu_{1,t}.$$
\end{thm}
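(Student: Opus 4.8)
The plan is to derive the ``everywhere'' statement by combining the almost-everywhere characterization from Theorem~\ref{T:precDnaN(mu)} with the uniform disintegration supplied by Proposition~\ref{P:vsude}. The first step is to fix a common projection. Setting $\sigma=\abs{\mu}$, I observe that every $\nu\in N(\mu)$ satisfies $\pi_1(\nu)=\sigma$: since $\nu\ge0$ and $\norm{T^*\nu}=\norm{\nu}$, Proposition~\ref{P:vlastnostih}$(a)$ gives $\pi_1(\nu)=\pi_1(\abs{\nu})=\abs{T^*\nu}=\abs{\mu}=\sigma$. Hence every $\nu\in N(\mu)$ belongs to the class $M=\{\nu\in M_+(K\times B_{E^*})\setsep \pi_1(\nu)=\sigma\}$, and I would apply Proposition~\ref{P:vsude} with $L=B_{E^*}$ to obtain a single assignment $\nu\mapsto(\nu_t)_{t\in K}$ of disintegration kernels, defined on all of $M$ and in particular on $N(\mu)$, enjoying the a.e.-to-everywhere upgrade property. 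Since the measures in $M$ are positive, these kernels consist of probability measures, so the Choquet order on the $\nu_t$ is meaningful.

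For the implication ``$\Leftarrow$'' I would argue directly and cheaply: if $\nu_{2,t}\prec\nu_{1,t}$ for every $t\in K$, then in particular $\int g\di\nu_{2,t}\le\int g\di\nu_{1,t}$ for each weak$^*$ continuous convex $g\colon B_{E^*}\to\er$ and every $t$, so the inequality holds $\abs{\mu}$-almost everywhere, and implication $(3)\implies(1)$ of Theorem~\ref{T:precDnaN(mu)} yields $\nu_1\prec_{\D}\nu_2$. This direction uses nothing beyond Theorem~\ref{T:precDnaN(mu)} and does not even need the special choice of kernels.

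The substance is in the implication ``$\Rightarrow$''. Assuming $\nu_1\prec_{\D}\nu_2$, implication $(1)\implies(3)$ of Theorem~\ref{T:precDnaN(mu)} gives, for each \emph{fixed} weak$^*$ continuous convex $g\colon B_{E^*}\to\er$, the inequality $\int g\di\nu_{2,t}\le\int g\di\nu_{1,t}$ for $\abs{\mu}$-almost all $t$. I would then invoke Proposition~\ref{P:vsude} with $g_1=g_2=g$ and with the two measures interchanged (so that $\nu_2$ plays the role of the first measure and $\nu_1$ the role of the second) to upgrade this single-function almost-everywhere inequality to the same inequality for every $t\in K$. Letting $g$ range over all weak$^*$ continuous convex functions, and recalling that the Choquet order on $M_1(B_{E^*})$ is by definition tested against exactly these functions, I conclude $\nu_{2,t}\prec\nu_{1,t}$ for every $t\in K$.

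The main obstacle, and the reason a naive argument fails, is non-separability: when $E$ is nonseparable, $B_{E^*}$ need not be weak$^*$ metrizable and the cone of weak$^*$ continuous convex functions need not be norm-separable, so one cannot fix a countable dense family and discard a single common null set to pass from ``a.e.'' to ``everywhere'' simultaneously for all such $g$. Proposition~\ref{P:vsude} is exactly the device that circumvents this: because its disintegration kernels are built from a fixed lifting, the almost-everywhere-implies-everywhere step is valid for each individual bounded Borel function separately, with no union of null sets over an uncountable family. Recognizing that the pointwise Choquet order $\nu_{2,t}\prec\nu_{1,t}$ can be assembled function-by-function in this manner is the crux of the argument.
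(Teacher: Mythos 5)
Your proposal is correct and follows essentially the same route as the paper: the backward implication from Theorem~\ref{T:precDnaN(mu)}$(3)\implies(1)$ for an arbitrary kernel, and the forward implication by fixing the kernels via Proposition~\ref{P:vsude} (after noting $\pi_1(\nu)=\abs{\mu}$ from Proposition~\ref{P:vlastnostih}$(a)$) and upgrading the almost-everywhere inequality of Theorem~\ref{T:precDnaN(mu)}$(1)\implies(3)$ to an everywhere inequality one convex function at a time. Your write-up is in fact more explicit than the paper's terse proof about why the lifting-based kernels are what makes the function-by-function upgrade legitimate in the nonseparable case.
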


\begin{proof}
    Implication `$\impliedby$' holds for any choice of disintegration kernels due to implication $(3)\implies(1)$ from Theorem~\ref{T:precDnaN(mu)}. 
    
    For the converse observe that $\pi_1(\nu)=\abs{\mu}$ for any $\nu\in N(\mu)$ by Proposition~\ref{P:vlastnostih}$(a)$. Therefore we may apply Proposition~\ref{P:vsude} 
    to choose an assignment of disintegration kernels. If $\nu_1,\nu_2\in N(\mu)$ satisfy $\nu_1\prec_{\D}\nu_2$, we conclude by combining Theorem~\ref{T:precDnaN(mu)} (implication $(1)\implies(3)$) with Proposition~\ref{P:vsude}.
\end{proof}

\subsection{On $\prec_{\D}$-minimal measures}

In this section we characterize $\prec_{\D}$-minimal measures in $N$. We start by the separable case which follows easily from Corollary~\ref{cor:precD-separable}.

\begin{prop}\label{P:minD-separable}
Assume that $E$ is separable. Let $\mu\in M(K,E^*)$ be fixed.  Let $\nu\in N(\mu)$ be given and let $(\nu_t)_{t\in K}$ be a disintegration kernel of $\nu$. Then the following assertions are equivalent:
\begin{enumerate}[$(1)$]
    \item $\nu$ is $\prec_{\D}$-minimal in $N(\mu)$;
    \item $\nu_t$ is a maximal measure for $\abs{\mu}$-almost all $t\in K$;
    \item $\nu$ is carried by $K\times\ext B_{E^*}$.
\end{enumerate}  
\end{prop}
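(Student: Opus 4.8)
The plan is to prove the two equivalences $(2)\Leftrightarrow(3)$ and $(1)\Leftrightarrow(2)$, exploiting that separability of $E$ makes $(B_{E^*},w^*)$ a metrizable compact convex set, so that the classical Choquet theory of Section~2 applies fibrewise and the disintegration kernel is essentially unique (Lemma~\ref{L:dezintegrace metriz}). Throughout I would record that $\pi_1(\abs{\nu})=\abs{\mu}$ for every $\nu\in N(\mu)$ by Proposition~\ref{P:vlastnostih}$(a)$, and that each $\nu_t$ is a probability since $\nu\ge0$.

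For $(2)\Leftrightarrow(3)$ I would use that, $B_{E^*}$ being metrizable, $\ext B_{E^*}$ is a $G_\delta$ (hence Borel) set and a probability on $B_{E^*}$ is maximal if and only if it is carried by $\ext B_{E^*}$. By Lemma~\ref{L:dezintegrace}$(iii)$,
\[
\nu\bigl(K\times(B_{E^*}\setminus\ext B_{E^*})\bigr)=\int_K\nu_t\bigl(B_{E^*}\setminus\ext B_{E^*}\bigr)\di\abs{\mu}(t),
\]
so the left-hand side vanishes exactly when $\nu_t(B_{E^*}\setminus\ext B_{E^*})=0$ for $\abs{\mu}$-a.e. $t$, i.e. when $\nu_t$ is maximal $\abs{\mu}$-a.e. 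This is a routine step.

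The implication $(2)\Rightarrow(1)$ is short. Suppose $\nu_t$ is maximal $\abs{\mu}$-a.e. and let $\nu'\in N(\mu)$ satisfy $\nu'\prec_{\D}\nu$. By Corollary~\ref{cor:precD-separable} this means $\nu_t\prec\nu'_t$ for $\abs{\mu}$-a.e. $t$ in the Choquet order. Since in the metrizable case the Choquet order is antisymmetric and $\nu_t$ is maximal, $\nu_t\prec\nu'_t$ forces $\nu'_t=\nu_t$ $\abs{\mu}$-a.e.; the disintegration formula (Lemma~\ref{L:dezintegrace}$(ii)$) together with $\pi_1(\abs{\nu})=\pi_1(\abs{\nu'})=\abs{\mu}$ then yields $\nu=\nu'$. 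Hence $\nu$ is $\prec_{\D}$-minimal (recall $\prec_{\D}$ is a partial order on $N$ by Observation~\ref{obs:N}$(b)$).

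The implication $(1)\Rightarrow(2)$ is where the real work lies, and I would argue by contraposition. Assume the set on which $\nu_t$ fails to be maximal has positive $\abs{\mu}$-measure, and take the Borel selection $\Psi\colon M_1(B_{E^*})\to M_1(B_{E^*})$ from Lemma~\ref{L:selekce max}, so that $\rho\prec\Psi(\rho)$ with $\Psi(\rho)$ maximal. Put $\nu'_t=\Psi(\nu_t)$. Because $t\mapsto\nu_t$ is $\abs{\mu}$-measurable (Lemma~\ref{L:dezintegrace metriz}$(b)$) and $\Psi$ is Borel, $t\mapsto\nu'_t$ is $\abs{\mu}$-measurable, so
\[
f\mapsto\int_K\Bigl(\int_{B_{E^*}}f(t,z)\di\nu'_t(z)\Bigr)\di\abs{\mu}(t),\qquad f\in C(K\times B_{E^*}),
\]
is a positive bounded functional and hence represents a positive Radon measure $\nu'$. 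By construction $\pi_1(\nu')=\abs{\mu}$ and $(\nu'_t)$ is its disintegration kernel (Lemma~\ref{L:dezintegrace metriz}$(a)$), so $\norm{\nu'}=\abs{\mu}(K)=\norm{\mu}$. The crucial point is that $\nu'\in N(\mu)$: since the Choquet order preserves barycenters, $r(\nu'_t)=r(\nu_t)$, and then Proposition~\ref{P:hustotaT*nu} shows $\nu$ and $\nu'$ share the same density $t\mapsto r(\nu_t)$ against $\abs{\mu}$, whence $T^*\nu'=T^*\nu=\mu$ and $\norm{T^*\nu'}=\norm{\mu}=\norm{\nu'}$. Finally $\nu_t\prec\nu'_t$ for all $t$ gives $\nu'\prec_{\D}\nu$ by Corollary~\ref{cor:precD-separable}, while on the positive-measure set $\nu_t$ is non-maximal and $\nu'_t=\Psi(\nu_t)$ is maximal, so $\nu'_t\ne\nu_t$ there and hence $\nu'\ne\nu$. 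This contradicts minimality, completing the proof. I anticipate the main obstacle to be exactly this verification that the competitor $\nu'$ built from the maximalising selection $\Psi$ lies in $N(\mu)$ — i.e. that it keeps the barycenters $r(\nu_t)$ fixed (so $T^*\nu'=\mu$) and has minimal norm — together with the measurability of $t\mapsto\Psi(\nu_t)$ needed to make the defining integral legitimate.
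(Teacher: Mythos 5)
Your proof is correct and follows essentially the same route as the paper's: the disintegration formula for $(2)\Leftrightarrow(3)$, Corollary~\ref{cor:precD-separable} plus maximality for $(2)\Rightarrow(1)$, and the Borel maximalising selection $\Psi$ of Lemma~\ref{L:selekce max} together with barycenter preservation and essential uniqueness of disintegration kernels for $(1)\Rightarrow(2)$ (which the paper phrases directly rather than by contraposition, concluding $\nu'=\nu$ from minimality and then invoking Lemma~\ref{L:dezintegrace metriz}$(c)$). The points you flag as the main obstacles — that $\nu'$ stays in $N(\mu)$ and that $t\mapsto\Psi(\nu_t)$ is measurable — are exactly the points the paper verifies, and your verification of them is sound.
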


\begin{proof} $(2)\implies(1)$: Assume that $\nu_t$ is a maximal measure for $\abs{\mu}$-almost all $t\in K$. Let 
$\nu^\prime\in N$ be such that $\nu^\prime\prec_{\D}\nu$. Let $(\nu^\prime_t)_{t\in K}$ be a disintegration kernel of $\nu^\prime$. By Corollary~\ref{cor:precD-separable} we deduce that 
$\nu_t\prec\nu^\prime_t$ for  $\abs{\mu}$-almost all $t\in K$. By the assumption of maximality we conclude that $\nu_t=\nu^\prime_t$ for  $\abs{\mu}$-almost all $t\in K$, so $\nu^\prime=\nu$. Therefore $\nu$ is $\prec_{\D}$-minimal.

$(1)\implies(2)$: Assume that $\nu$ is $\prec_{\D}$-minimal. Since $B_{E^*}$ is metrizable, Lemma~\ref{L:selekce max} provides a Borel mapping $\Psi:M_1(B_{E^*})\to M_1(B_{E^*})$ such that, for each $\sigma\in M_1(B_{E^*})$, $\Psi(\sigma)$ is a maximal measure such that $\sigma\prec\Psi(\sigma)$. 
For $t\in K$ set $\nu^\prime_t=\Psi(\nu_t)$. Since the assignment $t\mapsto \nu_t$ is measurable (by Lemma~\ref{L:dezintegrace metriz}$(b)$ due to the metrizability of $B_{E^*}$), the assignment $t\mapsto\nu^\prime_t$ is measurable as well. Therefore we may define a measure $\nu^\prime$ on $K\times B_{E^*}$ by
$$\int_{K\times B_{E^*}}f\di\nu^\prime = \int_K\left(\int_{B_{E^*}} f(t,x^*)\di\nu^\prime_t(x^*)\right)\di\abs{\mu}(t),\quad f\in C(K\times B_{E^*}).$$
Since the barycenter of $\nu^\prime_t$ coincide with the barycenter of $\nu_t$, we get that $\nu^\prime\in N(\mu)$ (just use the above formula for $T\f$, $f\in C(K,E)$) and by Lemma~\ref{L:dezintegrace metriz}$(a)$ we deduce that the family $(\nu^\prime_t)_{t\in K}$ is a disintegration kernel of $\nu^\prime$.

Moreover, by Corollary~\ref{cor:precD-separable} we see that $\nu^\prime\prec_{\D}\nu$ and hence, by minimality we get $\nu^\prime=\nu$. Finally, Lemma~\ref{L:dezintegrace metriz}$(c)$ shows that $\nu^\prime_t=\nu_t$ for $\abs{\mu}$-almost all $t\in K$. This completes the proof. 

$(2)\iff(3)$: Assume $\nu\in N(\mu)$. Since $\ext B_{E^*}$ is a $G_\delta$-subset of $B_{E^*}$, by Lemma~\ref{L:dezintegrace} we get
$$\nu(K\times (B_{E^*}\setminus \ext B_{E^*}))=\int_K \nu_t(B_{E^*}\setminus \ext B_{E^*})\di\abs{\mu}(t).$$
So, 
$$\begin{aligned}
\nu\mbox{ is carried by }K\times \ext B_{E^*} &\iff \nu_t(B_{E^*}\setminus \ext B_{E^*})=0\ \abs{\mu}\mbox{-a.e.} \\&
\iff \nu_t\mbox{ is maximal }\abs{\mu}\mbox{-a.e.}\end{aligned}$$ 
\end{proof}

To characterize $\prec_{\D}$-minimal measures in the general case (for possibly nonseparable $E$) we will use the Choquet theory of cones. Indeed, relation $\prec_{\D}$ coincides with $\prec_{\K}$ and $\K$ is clearly a min-stable convex cone containing constants and separating points. Therefore we define (following \cite[Section I.5]{alfsen}) the upper and lower envelopes of a function $g\in C(K\times B_{E^*},\er)$ by
\[
\widehat{g}=\inf\{k\in \K\setsep  k\ge g\},\quad 
\widecheck{g}=\sup\{k\setsep -k\in\K, k\le g\}.
\]
The standard upper and lower envelopes on compact convex sets are denoted by $g^*$ and $g_*$, see~\eqref{eq:obalky}. 
The promised characterization of $\prec_{\D}$-minimal measures is contained in the following theorem.

\begin{thm}
    \label{t:charakterminim}
For $\nu\in N(\mu)$ with a disintegration kernel $(\nu_t)$ the following assertions are equivalent.
\begin{enumerate}[$(1)$]
\item $\nu$ is $\prec_{\D}$-minimal.
\item $\nu$ is $\prec_{\K}$-minimal.
\item $\int f\di\nu=\int\widehat{f}\di\nu$ for each $f\in -\K$.
\item For each $g$ convex weak$^*$ continuous on $B_{E^*}$ we have $\int g\di\nu_t=\int g^*\di\nu_t$ for $\abs{\mu}$-almost all $t$.
\end{enumerate}
\end{thm}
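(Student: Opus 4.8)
The plan is to pivot on Theorem~\ref{T:precDnaN(mu)}, which identifies $\prec_{\D}$ with $\prec_{\K}$ on $N(\mu)$ and recasts both as a fibrewise Choquet comparison of disintegration kernels; conditions $(1)$--$(4)$ then read as minimality, minimality for the same order, a global $\K$-envelope identity, and fibrewise Mokobodzki maximality. I would dispatch the cheap links $(1)\iff(2)$, $(3)\implies(2)$ and $(3)\iff(4)$, and close the circle by the one substantial implication $(2)\implies(3)$. Equivalence $(1)\iff(2)$ is immediate from Theorem~\ref{T:precDnaN(mu)}. For $(3)\implies(2)$ I would use a sandwich: since $\K$ is min-stable, for $f\in-\K$ the family $\{k\in\K\setsep k\ge f\}$ is downward directed, so the monotone convergence theorem for nets gives $\int\widehat f\di\rho=\inf\{\int k\di\rho\setsep k\in\K,\ k\ge f\}$ for every $\rho\in M_+(K\times B_{E^*})$. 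Thus, if $\nu'\in N$ and $\nu'\prec_{\K}\nu$, then $\int f\di\nu'\le\int\widehat f\di\nu'\le\int\widehat f\di\nu=\int f\di\nu$, while $-f\in\K$ yields $\int f\di\nu\le\int f\di\nu'$; hence $\int f\di\nu'=\int f\di\nu$ on $-\K$, so on $\K$, and antisymmetry of $\prec_{\K}$ on $N$ (Observation~\ref{obs:N}(b)) gives $\nu'=\nu$.

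For $(3)\iff(4)$ the key is the sectionwise envelope identity $\widehat f(t,\cdot)=(f(t,\cdot))^*$ for $f\in-\K$, where $g^*$ denotes the upper envelope from~\eqref{eq:obalky} on $B_{E^*}$. The inequality $\widehat f(t,\cdot)\ge(f(t,\cdot))^*$ holds because every $k\in\K$ with $k\ge f$ has concave continuous section $k(t,\cdot)\ge f(t,\cdot)$, hence $\ge(f(t,\cdot))^*$; the reverse follows by taking, for fixed $(t_0,x_0^*)$ and $\ep>0$, an affine $a\ge f(t_0,\cdot)$ with $a(x_0^*)\le(f(t_0,\cdot))^*(x_0^*)+\ep$ and a continuous $c:K\to\er$ with $c(t_0)=0$ and $a(\cdot)+c(t)\ge f(t,\cdot)$ everywhere (possible as $f$ is continuous on the compact $K\times B_{E^*}$), so that $k(t,x^*)=a(x^*)+c(t)\in\K$. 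Feeding this into the disintegration formula (Lemma~\ref{L:dezintegrace}, extended to the bounded upper semicontinuous $\widehat f$ by monotone convergence), condition $(3)$ becomes $\int_K(\int f(t,\cdot)\di\nu_t)\di\abs\mu=\int_K(\int(f(t,\cdot))^*\di\nu_t)\di\abs\mu$, and as $f(t,\cdot)\le(f(t,\cdot))^*$ the integrands coincide $\abs\mu$-a.e. Specialising to $t$-independent $f(t,\cdot)=g$ gives $(3)\implies(4)$; for $(4)\implies(3)$ I would use that $\{f(t,\cdot)\setsep t\in K\}$ is compact in $C(B_{E^*})$ and that $g\mapsto g^*$ is $1$-Lipschitz, covering this family by finitely many $\ep$-balls about convex sections $g_i$ and combining the per-$g_i$ a.e.\ equalities of $(4)$---over a null set that shrinks as $\ep=1/n$---exactly as in the proof of $(3)\implies(4)$ of Theorem~\ref{T:precDnaN(mu)}.

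It remains to prove $(2)\implies(3)$, which is the crux, and I would argue the contrapositive by separable reduction. If $(3)$ fails, the envelope computation above produces a convex weak$^*$-continuous $g$ on $B_{E^*}$ and a set of positive $\abs\mu$-measure on which $\int g\di\nu_t<\int g^*\di\nu_t$, i.e.\ on which the kernel fails Mokobodzki's equality. Factoring $g=g_F\circ R_F$ through a separable $F$ (Lemma~\ref{l:separ-faktor}(a)), enlarged so that Lemma~\ref{L:rovnost tezist}(b) applies, the pushed measure $\rho=(\id\times R_F)(\nu)\in N(R_F\circ\mu)$ has kernel $R_F(\nu_t)$ (Lemma~\ref{L:dezintegrace kvocient}), non-maximal on a positive-measure set of the metrizable ball $B_{F^*}$; the Borel selection of Lemma~\ref{L:selekce max} then pushes these kernels strictly up in the Choquet order while fixing barycenters, giving $\rho'\prec_{\D}\rho$ with $\rho'\ne\rho$ (Corollary~\ref{cor:precD-separable}). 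The hard part is transporting this strict decrease back from $B_{F^*}$ to $B_{E^*}$: since $R_F$ is not injective one must lift the enlarged kernels to a measurable kernel on $B_{E^*}$ with unchanged barycenter, which I expect to accomplish by choosing $F$ so large that $r(\nu_t)$ is already read off on $F$ (as in Lemma~\ref{L:rovnost tezist}) together with the separable determinacy of $\prec_{\D}$ from Proposition~\ref{P:precD-sepred} and Theorem~\ref{t:precDbodove}. Should this lift prove awkward, the alternative is to quote the abstract Choquet theory of the min-stable cone $\K$ directly: Mokobodzki's characterization of minimal measures for a cone yields $(2)\iff(3)$ at one stroke, relocating precisely this non-metrizable separation argument inside the cited machinery.
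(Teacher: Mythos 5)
Your proposal is correct in substance and its backbone coincides with the paper's: $(1)\iff(2)$ is read off from Theorem~\ref{T:precDnaN(mu)}; $(2)\iff(3)$ is settled by the abstract Choquet theory of the min-stable cone $\K$ (the paper quotes \cite[Proposition I.5.9]{alfsen} after noting that, $\K$ containing the constants, minimality within $N(\mu)$ equals minimality within $M_+(K\times B_{E^*})$); and $(3)\iff(4)$ rests on the sectionwise identity $\widehat f(t,\cdot)=(f_t)^*$ fed into the disintegration formula, with a finite $\ep$-net in the $t$-variable. Where you differ, you mostly gain: your construction $k(t,x^*)=a(x^*)+c(t)$ with $c=\bigl(\sup_{x^*}(f(\cdot,x^*)-a(x^*))\bigr)^+$ proves the sectionwise envelope identity more elementarily than the paper's Lemma~\ref{l:rezy}, which goes through the representing measures $M^+_{(t,x^*)}(\K)=\{\ep_t\times\lambda\setsep r(\lambda)=x^*\}$ and \cite[Corollary I.3.6]{alfsen}; likewise your sandwich argument for $(3)\implies(2)$ is a correct self-contained unpacking of half of the cited proposition. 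One technical point you gloss over: to write $\int\widehat f\di\nu$ as the iterated integral you need $\widehat f$ to be measurable for the $\sigma$-algebra generated by Borel rectangles (the paper's Lemma~\ref{l:meritelnost}); ``monotone convergence for nets'' does not by itself let you pass the infimum inside the outer integral, because the functions $t\mapsto\int k_t\di\nu_t$ are only $\abs{\mu}$-measurable, not upper semicontinuous. This is harmless for $(3)\implies(4)$, where only $f=1\otimes g$ is needed and $\widehat f=1\otimes g^*$ is manifestly product-measurable, and your $\ep$-net argument for $(4)\implies(3)$ in effect supplies the missing measurability. Finally, your primary plan for $(2)\implies(3)$ would founder exactly where you fear: $R_F$ is badly non-injective, so the improved kernel on $B_{F^*}$ produced by Lemma~\ref{L:selekce max} admits no canonical lift to $B_{E^*}$, and even a lift would have to be tested against all concave functions upstairs, not just those factoring through $F$. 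Your fallback --- quoting Mokobodzki's characterization of $\K$-minimal measures directly --- is precisely the paper's proof, so the argument closes as you indicated.
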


\begin{proof}[Proof of the equivalence of conditions $(1)-(3)$]
Equivalence $(1)\iff(2)$ follows from Theorem~\ref{T:precDnaN(mu)}.

$(2)\iff(3)$: Since $\K$ contains constants, $\nu$ is $\prec_{\K}$-minimal within $N(\mu)$ if and only if it is $\prec_{\K}$-minimal within $M_+(K\times B_{E^*})$. Therefore the equivalence
follows from \cite[Proposition I.5.9]{alfsen}.
\end{proof}

The remaining equivalence requires some auxiliary results contained in the following lemmata.
For $(t,x^*)\in K\times B_{E^*}$ we set (following \cite[p. 46]{alfsen})
$$M_{(t,x^*)}^+(\K)=\left\{\nu\in M_+(K\times B_{E^*})\setsep k(t,x^*)\ge \int k\di\nu\mbox{ for each }k\in \K\right\}.$$

\begin{lemma}
\label{l:repremiry} If $(t,x^*)\in K\times B_{E^*}$, then
$$M_{(t,x^*)}^+(\K)=\left\{\ep_t\times \lambda\setsep \lambda\in M_1(B_{E^*}), r(\lambda)=x^*\right\}.$$    
\end{lemma}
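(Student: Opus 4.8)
The plan is to prove both inclusions directly; the content is that membership in $M_{(t,x^*)}^+(\K)$ forces a measure to be a Dirac mass in the first coordinate and to have barycenter $x^*$ in the second, and conversely.

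For the inclusion ``$\supseteq$'' I would take $\lambda\in M_1(B_{E^*})$ with $r(\lambda)=x^*$ and an arbitrary $k\in\K$. Since $k$ is continuous, $\int k\di(\ep_t\times\lambda)=\int_{B_{E^*}}k(t,y^*)\di\lambda(y^*)$. The function $k(t,\cdot)$ is continuous and concave on the weak$^*$ compact convex set $B_{E^*}$, so I would write it as the infimum of its weak$^*$ continuous affine majorants and integrate against $\lambda$; using the barycenter property of $\lambda$ (whose barycenter is $r(\lambda)=x^*$ by Lemma~\ref{L:teziste}) for each such majorant gives $\int_{B_{E^*}}k(t,\cdot)\di\lambda\le k(t,x^*)$. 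Hence $\ep_t\times\lambda\in M_{(t,x^*)}^+(\K)$.

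For the inclusion ``$\subseteq$'' fix $\nu\in M_{(t,x^*)}^+(\K)$ and proceed in three steps, each obtained by feeding a suitable pair $k,-k\in\K$ into the defining inequality (so that $k(t,x^*)=\int k\di\nu$). First, the constants $\pm1$ lie in $\K$ and yield $\norm{\nu}=\nu(K\times B_{E^*})=1$, so $\nu$ is a probability. Second, for $g\in C(K)$ the function $(s,y^*)\mapsto g(s)$ is affine (hence concave) in the second variable, so it and its negative lie in $\K$; this gives $g(t)=\int g\di\pi_1(\nu)$ for all $g\in C(K)$, i.e.\ $\pi_1(\nu)=\ep_t$. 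Consequently $\nu$ is carried by $\{t\}\times B_{E^*}$, and therefore $\nu=\ep_t\times\lambda$ with the marginal $\lambda=\pi_2(\nu)\in M_1(B_{E^*})$.

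It remains to identify the barycenter of $\lambda$, which I regard as the only step requiring a little care (and the only place where the complex case differs). For $x\in E$ the function $(s,y^*)\mapsto\Re y^*(x)$ is weak$^*$ continuous and real-linear, hence concave, in $y^*$, so it and its negative lie in $\K$; plugging them in and using $\nu=\ep_t\times\lambda$ yields $\Re x^*(x)=\int_{B_{E^*}}\Re y^*(x)\di\lambda(y^*)=\Re r(\lambda)(x)$ for every $x\in E$, where the last equality is Lemma~\ref{L:teziste}. In the real case this already gives $r(\lambda)=x^*$; in the complex case I would apply the same identity with $ix$ in place of $x$ to recover the imaginary parts, so that $x^*(x)=r(\lambda)(x)$ for all $x$, and again $r(\lambda)=x^*$. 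This completes ``$\subseteq$''. The main (mild) obstacle is just the bookkeeping of the barycenter identity across the real/complex fields; the Jensen-type inequality in ``$\supseteq$'' and the separation of the two coordinates in ``$\subseteq$'' are routine once the right test functions in $\K$ are chosen.
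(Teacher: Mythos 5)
Your proof is correct and follows essentially the same route as the paper: the inclusion $\supseteq$ via the barycentric (Jensen) inequality for concave continuous functions (which the paper simply calls obvious), and $\subseteq$ by testing the defining inequality against functions in $\K\cap(-\K)$ depending only on the first coordinate to get $\pi_1(\nu)=\ep_t$, and then only on the second coordinate to identify the barycenter. The only cosmetic difference is that the paper tests against all weak$^*$ continuous affine functions on $B_{E^*}$, whereas you test only against $y^*\mapsto\Re y^*(x)$ and invoke the uniqueness in Lemma~\ref{L:teziste}; both suffice.
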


\begin{proof} Inclusion `$\supset$' is obvious. To prove the converse inclusion fix
 $\nu\in M_{(t,x^*)}^+(\K)$.  For any  $h\in C(K,\er)$ the function $h\otimes 1:(t,x^*)\mapsto h(t)$ belongs to $-\K\cap \K$, and thus
\[
\int h \di\pi_1(\nu)=\int (h\circ \pi_1)\di\nu=\int (h\otimes 1)\di\nu=(h\otimes 1)(t,x^*)=h(t).
\]
Therefore $\pi_1(\nu)=\ep_t$. Hence $\nu=\ep_t\times\lambda$ where $\lambda=\pi_2(\nu)$. If $f:B_{E^*}\to\er$ is a weak$^*$ continuous affine function, then the function $1\otimes f:(t,x^*)\mapsto f(x^*)$ belongs to $-\K\cap\K$, hence
$$\int f\di\lambda=\int 1\otimes f\di\nu =(1\otimes f)(t,x^*)=f(x^*).$$
It follows that $x^*$ is the barycenter of $\lambda$. This completes the proof.
\end{proof}

This lemma may be applied to get the following relationship of two versions of upper envelopes.

\begin{lemma}
\label{l:rezy}
Let $f\in C(K\times B_{E^*},\er)$. Then
\[
\widehat{f}(t,x^*)=(f_t)^*(x^*),\quad (t,x^*)\in K\times B_{E^*},
\]
where $f_t=f(t,\cdot)$.
\end{lemma}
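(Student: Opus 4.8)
The plan is to prove the two inequalities $\widehat{f}(t,x^*)\ge (f_t)^*(x^*)$ and $\widehat{f}(t,x^*)\le (f_t)^*(x^*)$ separately; both reduce to classical facts about upper envelopes on the weak$^*$ compact convex set $B_{E^*}$. Throughout I fix $(t,x^*)\in K\times B_{E^*}$ and use that, by the definition \eqref{eq:obalky}, $(f_t)^*(x^*)=\inf\{a(x^*)\setsep a\colon B_{E^*}\to\er\text{ affine continuous},\ a\ge f_t\}$, a pointwise infimum over a nonempty family (it contains the constant $\max f_t$), so the value is finite.

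For $\widehat{f}\ge (f_t)^*$ I would argue as follows. If $k\in\K$ and $k\ge f$, then for the fixed $t$ the slice $k_t=k(t,\cdot)$ is a continuous, hence upper semicontinuous, concave function on $B_{E^*}$ with $k_t\ge f_t$. A standard consequence of the Hahn-Banach separation theorem is that every upper semicontinuous concave function on a compact convex set equals its own upper envelope, i.e.\ $k_t=(k_t)^*$; since the envelope operation is monotone and $k_t\ge f_t$, this gives $k_t\ge (f_t)^*$. Evaluating at $x^*$ and taking the infimum over all admissible $k$ yields $\widehat{f}(t,x^*)\ge (f_t)^*(x^*)$.

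The reverse inequality is the main point, and here I would construct, for prescribed $\ep,\delta>0$, an explicit competitor $k\in\K$ with $k\ge f$ and a small value at $(t,x^*)$. First choose an affine continuous $a_0$ on $B_{E^*}$ with $a_0\ge f_t$ and $a_0(x^*)<(f_t)^*(x^*)+\ep$. Since $a_0+\delta>f(t,\cdot)$ strictly on the compact set $B_{E^*}$ and $f$ is jointly continuous, a tube argument (compactness of $B_{E^*}$) produces an open neighbourhood $U$ of $t$ in $K$ with $a_0(y^*)+\delta\ge f(s,y^*)$ for all $s\in U$ and $y^*\in B_{E^*}$. Put $M=1+\max_{K\times B_{E^*}}f$, pick by Urysohn a continuous $\varphi\colon K\to[0,1]$ with $\varphi(t)=1$ and $\varphi\equiv0$ off $U$, and set $k(s,y^*)=\varphi(s)\bigl(a_0(y^*)+\delta\bigr)+(1-\varphi(s))M$.

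I would then check that $k$ has the required properties: it is jointly continuous; for each fixed $s$ it is affine, hence concave, in $y^*$, so $k\in\K$; and $k\ge f$ because both $a_0+\delta$ (wherever $\varphi>0$, i.e.\ on $U$) and the constant $M$ dominate $f(s,\cdot)$, so their $\varphi$-convex combination does as well. Finally $k(t,x^*)=a_0(x^*)+\delta<(f_t)^*(x^*)+\ep+\delta$, whence $\widehat{f}(t,x^*)\le (f_t)^*(x^*)+\ep+\delta$; letting $\ep,\delta\to0$ finishes the proof. The only delicate step is the passage from the pointwise-in-$y^*$ inequality $a_0\ge f_t$ to a uniform inequality on the whole tube $U\times B_{E^*}$, which is precisely where compactness of $B_{E^*}$ together with joint continuity of $f$ are needed; the gluing by $\varphi$ then preserves concavity in the second variable automatically. (Alternatively, one may read both sides through representing measures: Lemma~\ref{l:repremiry} identifies $M_{(t,x^*)}^+(\K)$ with the measures $\ep_t\times\lambda$, $r(\lambda)=x^*$, and both $\widehat{f}(t,x^*)$ and $(f_t)^*(x^*)$ equal $\sup\{\int f_t\di\lambda\setsep \lambda\in M_1(B_{E^*}),\ r(\lambda)=x^*\}$.)
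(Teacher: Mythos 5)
Your proof is correct, but it takes a genuinely different route from the paper's for the nontrivial inequality. The paper argues entirely through representing measures: by \cite[Proposition I.5.8]{alfsen} the value $\widehat{f}(t,x^*)$ is attained as $\int f\di\nu$ for some $\nu\in M_{(t,x^*)}^+(\K)$, Lemma~\ref{l:repremiry} identifies such $\nu$ as $\ep_t\times\lambda$ with $r(\lambda)=x^*$, and then \cite[Corollary I.3.6]{alfsen} and \cite[Corollary I.5.7]{alfsen} give the two inequalities --- this is exactly the alternative you sketch in your closing parenthesis. Your main argument instead produces explicit competitors in $\K$: the easy direction via the fact that a continuous concave slice $k_t$ dominating $f_t$ dominates $(f_t)^*$, and the hard direction via an affine $a_0\ge f_t$ almost optimal at $x^*$, the tube lemma on $\{(s,y^*)\setsep a_0(y^*)+\delta>f(s,y^*)\}\supset\{t\}\times B_{E^*}$, and a Urysohn gluing $k=\varphi\cdot(a_0+\delta)+(1-\varphi)M$, which is affine (hence concave) in the second variable and dominates $f$. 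All steps check out; in particular the passage from $a_0\ge f_t$ to a uniform bound on $U\times B_{E^*}$ is correctly handled by joint continuity and compactness. What each approach buys: yours is elementary and self-contained, needing only Hahn--Banach and Urysohn rather than the Choquet theory of min-stable cones; the paper's is shorter given the cited machinery from \cite{alfsen} and reuses Lemma~\ref{l:repremiry}, which it needs anyway for Corollary~\ref{cor:pseudonesene}, so within the paper's economy the measure-theoretic route is the natural one.
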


\begin{proof}
Fix $(t,x^*)\in K\times B_{E^*}$. By \cite[Proposition I.5.8]{alfsen} there is $\nu\in M_{(t,x^*)}^+(\K)$ such that
$\widehat{f}(t,x^*)=\int f\di\nu$. By Lemma~\ref{l:repremiry}, $\nu=\ep_t\times \lambda$, where  $r(\lambda)=x^*$. Thus 
\[
\widehat{f}(t,x^*)=\int f\di\nu=\int f_t\di\lambda\le (f_t)^*(x^*),
\]
where the last inequality follows from \cite[Corollary I.3.6]{alfsen}. This completes the proof of `$\le$'.

Conversely, by \cite[Corollary I.3.6]{alfsen} there is $\lambda\in M_1(B_{E^*})$ such that $r(\lambda)=x^*$ and $(f_t)^*(x^*)=\int f_t\di\lambda$. Then $\ep_t\times\lambda\in M_{(t,x^*)}^+(\K)$, and thus 
\[
(f_t)^*(x^*)=\int f_t\di\lambda=\int f\di (\ep_t\times \lambda)\le \widehat{f}(t,x^*),
\]
where the last inequality follows from \cite[Corollary I.5.7]{alfsen}. This completes the proof.
\end{proof}

\begin{lemma}
\label{l:meritelnost}
Let $f\in C(K\times B_{E^*},\er)$ be given. Let $\Sigma$ denote the $\sigma$-algebra generated by Borel rectangles in $K\times B_{E^*}$. Then $\widehat{f}$ is $\Sigma$-measurable.    
\end{lemma}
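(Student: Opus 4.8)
The plan is to reduce the statement, via Lemma~\ref{l:rezy}, to the measurability of the function $(t,x^*)\mapsto (f_t)^*(x^*)$, where $f_t=f(t,\cdot)$ and $(f_t)^*$ is the (classical) upper envelope on $B_{E^*}$, and then to exploit that this function is continuous in $t$ (as a function with values in a space of functions of $x^*$) with \emph{compact} range, so that it can be uniformly approximated by manifestly $\Sigma$-measurable functions. I want to stress in advance that the main obstacle is precisely that $K$ need not be metrizable, so the classical Carathéodory joint-measurability theorem (continuous in one variable, measurable in the other) does not apply directly; compactness of $K$ will be used to bypass this.

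First I would record two regularity facts. For the variable $t$: since $f$ is continuous on the compact product $K\times B_{E^*}$, a routine finite-cover argument shows that $t\mapsto f_t$ is norm-continuous from $K$ into $C(B_{E^*})$. For the upper-envelope operator $g\mapsto g^*$: it is nonexpansive from $C(B_{E^*})$ into $\ell^\infty(B_{E^*})$, because the envelope is monotone and commutes with adding constants, whence $\betr{g^*(x^*)-h^*(x^*)}\le\norm{g-h}_\infty$ for every $x^*$. Composing these, $t\mapsto (f_t)^*$ is continuous from $K$ into $\ell^\infty(B_{E^*})$, and by Lemma~\ref{l:rezy} it represents $\widehat{f}$, i.e. $\widehat{f}(t,x^*)=(f_t)^*(x^*)$. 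In particular, for each fixed $t$ the section $x^*\mapsto\widehat{f}(t,x^*)=(f_t)^*(x^*)$ is an infimum of affine continuous functions, hence upper semicontinuous and thus Borel measurable on $B_{E^*}$.

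The heart of the argument is then to pass from these separate regularity properties to joint $\Sigma$-measurability. Since $t\mapsto(f_t)^*$ is continuous and $K$ is compact, its range $Q=\{(f_t)^*\setsep t\in K\}$ is a norm-compact, hence totally bounded, subset of $\ell^\infty(B_{E^*})$. I would fix $\ep>0$, choose a finite $\ep$-net $(f_{t_1})^*,\dots,(f_{t_m})^*$ of $Q$, note that the sets $\{t\setsep\norm{(f_t)^*-(f_{t_i})^*}_\infty<\ep\}$ are open (by continuity) and cover $K$, and disjointify them into a Borel partition $A_1,\dots,A_m$ of $K$ with $A_i\subseteq\{t\setsep\norm{(f_t)^*-(f_{t_i})^*}_\infty<\ep\}$. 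Then I would set
$$\phi_\ep(t,x^*)=\sum_{i=1}^m \chi_{A_i}(t)\,(f_{t_i})^*(x^*).$$
Each summand is the product of a Borel function of $t$ with a Borel (upper semicontinuous) function of $x^*$, hence $\Sigma$-measurable, so $\phi_\ep$ is $\Sigma$-measurable. Finally, for every $(t,x^*)$, if $t\in A_i$ then $\betr{\phi_\ep(t,x^*)-\widehat{f}(t,x^*)}=\betr{(f_{t_i})^*(x^*)-(f_t)^*(x^*)}\le\norm{(f_{t_i})^*-(f_t)^*}_\infty<\ep$, so $\phi_\ep\to\widehat{f}$ uniformly as $\ep\to0$. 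Taking $\ep=\frac1n$ exhibits $\widehat{f}$ as a pointwise limit of the $\Sigma$-measurable functions $\phi_{1/n}$, hence $\widehat{f}$ is $\Sigma$-measurable, completing the proof. I expect no genuinely hard computation here; the only delicate point is the observation above that compactness of $K$ upgrades the fiberwise continuity into a \emph{uniform} finite approximation, which is exactly what the non-metrizability of $K$ would otherwise obstruct.
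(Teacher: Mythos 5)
Your proof is correct and follows essentially the same route as the paper's: both reduce to Lemma~\ref{l:rezy}, use the continuity of $t\mapsto f_t$ together with compactness of $K$ to extract a finite $\ep$-net and a Borel partition of $K$, and approximate $\widehat{f}$ uniformly by the $\Sigma$-measurable step functions $\sum_i\chi_{A_i}(t)(f_{t_i})^*(x^*)$. The only cosmetic difference is that you take the $\ep$-net in the family $\{(f_t)^*\}$ after noting the envelope operator is nonexpansive, whereas the paper nets $\{f_t\}$ first and then transfers the estimate to the envelopes by the same subadditivity property.
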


\begin{proof} To prove that $\widehat{f}$ is $\Sigma$-measurable it is enough to show that it may be uniformly approximated by $\Sigma$-measurable functions. So, fix $\ep>0$. The mapping $t\mapsto f_t=f(t,\cdot)$ is continuous (from $K$ to $C(B_{E^*})$). Hence its range is compact and so there are $t_1,\dots,t_n\in K$ such that $f_{t_1},\dots,f_{t_n}$ form an $\ep$-net of $\{f_t\setsep t\in K\}$.
Therefore we may find a partition $\{A_1,\dots, A_n\}$ of $K$ into nonempty Borel sets such that $\norm{f_t-f_{t_i}}<\ep$ for $t\in A_i$ and $i=1,\dots,n$. It follows that also
$\norm{(f_t)^*-(f_{t_i})^*}_\infty<\ep$ for $t\in A_i$ and $i=1,\dots,n$ (this may be easily deduced using the subadditivity of the upper envelope, see \cite[Proposition I.1.6, (1.7)]{alfsen}).

Since the functions $(f_{t_i})^*$ are weak$^*$ upper semicontinous and thus Borel on $B_{E^*}$, the function 
\[
g(t,x^*)=\sum_{i=1}^n \chi_{A_i}(t)(f_{t_i})^*,\quad (t,x^*)\in K\times B_{E^*},
\]
is $\Sigma$-measurable. Moreover, using Lemma~\ref{l:rezy} we easily deduce that $\norm{\widehat{f}-g}_\infty\le\ep$. This completes the proof.
\end{proof}

Now we are ready to prove the remaining part of Theorem~\ref{t:charakterminim}.

\begin{proof}[Proof of the equivalence $(3)\iff(4)$ from Theorem~\ref{t:charakterminim}]  
$(3)\implies(4)$: Let $g$ be a weak$^*$ continuous convex function on $B_{E^*}$. Then $f=1\otimes g\in -\K$. Hence 
\[
\int_K \left(\int g\di \nu_t\right)\di\abs{\mu}(t)=\int f\di\nu=\int \widehat{f}\di\nu=\int_K 
\left(\int g^*\di\nu_t\right)\di\abs{\mu}(t).
\]
The first equality follows from the formula from Lemma~\ref{L:dezintegrace} together with Proposition~\ref{P:vlastnostih}$(a)$. The second equality follows from $(3)$. To prove the last equality we again use Lemma~\ref{L:dezintegrace}. It may be applied to $\widehat{f}$ due to Lemmata~\ref{l:rezy} and~\ref{l:meritelnost}. 

Taking into account that $g\le g^*$, we deduce that $\int g\di\nu_t=\int g^*\di\nu_t$ 
$\abs{\mu}$-almost everywhere.

$(4)\implies(3)$ Fix $f\in -\K$. The mapping $\f\colon K\to C(B_{E^*})$ defined by $t\mapsto f_t=f(t,\cdot)$ is continuous. Since $\f(K)$ is norm compact set in $C(B_{E^*})$, there exists a countable set $D\subset K$ with $\f(D)$ norm dense in $\f(K)$. It now follows from $(4)$ that there is a $\abs{\mu}$-null set $N\subset K$ such that
$$\forall t\in K\setminus N\; \forall d\in D\colon \int f_d\di\nu_t=\int (f_d)^*\di\nu_t.$$
Fix $t\in K\setminus N$. Then there is a sequence $\{d_n\}$ in $D$ with $\norm{f_{d_n}-f_t}\to 0$. Then also $\norm{(f_{d_n})^*-(f_t)^*}_\infty\to 0$ (this may be easily deduced using the subadditivity of the upper envelope, see \cite[Proposition I.1.6, (1.7)]{alfsen}), and thus
\[
\int f_t\di\nu_t=\lim \int f_{d_n}\di\nu_t=\lim \int(f_{d_n})^*\di\nu_t=\int(f_t)^*\di\nu_t.
\]
Hence 
\[
\int f\di\nu=\int_K \left(\int f_t\di \nu_t\right)\di\abs{\mu}(t)=\int_K \left(\int(f_t)^*\di\nu_t\right)\di\abs{\mu}(t)=\int\widehat f\di\nu.
\]
The first equality follows from Lemma~\ref{L:dezintegrace}. To verify the second one observe that the inner integrals are equal whenever $t\in K\setminus N$ and $\abs{\mu}(N)=0$. The last equality follows by combining Lemma~\ref{l:meritelnost}, Lemma~\ref{L:dezintegrace}  and Lemma~\ref{l:rezy}.

This completes the proof.
\end{proof}

\begin{cor}\label{cor:pseudonesene} Let $\mu\in M(K,E^*)$ be given and let $\nu\in N(\mu)$ be $\prec_{\D}$-minimal. Then the following assertions are valid.
\begin{enumerate}[$(a)$]
    \item $\nu$ is carried by any Baire set containing $K\times \ext B_{E^*}$.
    \item If $B\subset B_{E^*}$ is a Baire set containing $\ext B_{E^*}$ and $(\nu_t)_{t\in K}$ is a disintegration kernel of $\nu$, then $\nu_t$ is carried by $B$ for $\abs{\mu}$-almost all $t\in K$.
\end{enumerate}
\end{cor}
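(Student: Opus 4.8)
The plan is to deduce both assertions from a single stronger fact: \emph{with a suitable choice of disintegration kernel, $\nu_t$ is a maximal measure on $B_{E^*}$ for every $t\in K$} (not merely for $\abs{\mu}$-almost every $t$). Granting this, part $(a)$ and part $(b)$ follow at once from the classical Choquet--Bishop--de~Leeuw theorem applied fiber by fiber together with the disintegration formula.

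First I would fix the assignment of disintegration kernels $\nu\mapsto(\nu_t)_{t\in K}$ provided by Proposition~\ref{P:vsude}; this is legitimate because $\pi_1(\nu)=\abs{\mu}$ for every $\nu\in N(\mu)$ by Proposition~\ref{P:vlastnostih}$(a)$, so all members of $N(\mu)$ lie in the class $M$ of Proposition~\ref{P:vsude} with $\sigma=\abs{\mu}$. By minimality of $\nu$ and the implication $(1)\implies(4)$ of Theorem~\ref{t:charakterminim}, for each weak$^*$ continuous convex $g\colon B_{E^*}\to\er$ we have $\int g\di\nu_t=\int g^*\di\nu_t$ for $\abs{\mu}$-almost all $t$; in particular $\int g^*\di\nu_t\le\int g\di\nu_t$ $\abs{\mu}$-almost everywhere. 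Now $g^*$ is a bounded, weak$^*$ upper semicontinuous (hence Borel) function on $B_{E^*}$, so Proposition~\ref{P:vsude}, applied with $\nu_1=\nu_2=\nu$, $g_1=g^*$ and $g_2=g$, upgrades this almost-everywhere inequality to one valid for \emph{every} $t\in K$. Since $g\le g^*$ forces the reverse inequality everywhere, we conclude $\int g\di\nu_t=\int g^*\di\nu_t$ for every $t\in K$ and every weak$^*$ continuous convex $g$. By Mokobodzki's criterion (recalled in the preliminaries) this says precisely that $\nu_t$ is maximal for each single $t\in K$.

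With pointwise maximality in hand I would finish as follows. For part $(a)$, let $W\subseteq K\times B_{E^*}$ be a Baire set containing $K\times\ext B_{E^*}$. For fixed $t$ the slice $W_t=\{x^*\setsep (t,x^*)\in W\}$ is a Baire subset of $B_{E^*}$ containing $\ext B_{E^*}$, so the Bishop--de~Leeuw theorem (pseudo-support of maximal measures on the extreme points, \cite[Chapter~I, \S4]{alfsen}) gives $\nu_t(B_{E^*}\setminus W_t)=0$. Since every continuous function on $K\times B_{E^*}$ is measurable for the product of the Borel $\sigma$-algebras, $W$ lies in that product $\sigma$-algebra, and the positive case of Lemma~\ref{L:dezintegrace} (extended to bounded product-measurable indicators by a routine monotone-class argument) yields
$$\nu\bigl((K\times B_{E^*})\setminus W\bigr)=\int_K\nu_t(B_{E^*}\setminus W_t)\di\abs{\mu}(t)=0,$$
which is $(a)$. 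Part $(b)$ is then the special case $W=K\times B$ (a Baire set, being $\pi_2^{-1}(B)$); equivalently, for a fixed Baire set $B\supseteq\ext B_{E^*}$ the maximality of every $\nu_t$ already gives $\nu_t(B_{E^*}\setminus B)=0$ for \emph{all} $t$, which is stronger than the asserted almost-everywhere statement.

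The main obstacle is exactly the passage, in the second step, from the \emph{almost-everywhere} validity of the envelope equality $\int g\di\nu_t=\int g^*\di\nu_t$ \emph{for each fixed $g$} to the \emph{pointwise} maximality of $\nu_t$, which requires the equality simultaneously for all $g$ at a single $t$. In the nonseparable setting there is no countable determining family of convex functions, so a naive union-of-null-sets argument fails; moreover a separable reduction does not help, since maximality is \emph{not} preserved under the affine restriction maps $R_F\colon B_{E^*}\to B_{F^*}$ (an extreme point of $B_{E^*}$ may be sent to a non-extreme point of $B_{F^*}$, and correspondingly $(g\circ R_F)^*\neq g^*\circ R_F$ in general). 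Proposition~\ref{P:vsude}, the ``everywhere from almost everywhere'' device built from a lifting, is precisely what removes this obstacle cleanly; once each $\nu_t$ is maximal for genuinely every $t$, the nonseparable Bishop--de~Leeuw theorem may be invoked fiber by fiber with no further measurability bookkeeping in the $B_{E^*}$ variable.
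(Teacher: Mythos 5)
Your proof is correct, but it follows a genuinely different route from the paper's. The paper treats the corollary as a direct application of the abstract Choquet theory of the min-stable cone $\K$ on the product space: Lemma~\ref{l:repremiry} identifies $K\times\ext B_{E^*}$ as the $\K$-Choquet boundary of $K\times B_{E^*}$, so assertion $(a)$ is immediate from the cone version of the Bishop--de~Leeuw theorem \cite[Proposition I.5.22]{alfsen} (minimality in $N(\mu)$ being the same as $\prec_{\K}$-minimality in $M_+$, as noted in the proof of Theorem~\ref{t:charakterminim}); assertion $(b)$ then follows by applying Lemma~\ref{L:dezintegrace}$(iii)$ to the rectangle $K\times(B_{E^*}\setminus B)$. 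You instead first prove that the fibers $\nu_t$ of the Proposition~\ref{P:vsude} kernel are maximal for \emph{every} $t$ --- this is exactly the statement and the proof of Theorem~\ref{t:minimalbodove}, which in the paper comes right \emph{after} the corollary but depends only on Proposition~\ref{P:vsude} and Theorem~\ref{t:charakterminim}, so there is no circularity --- and then you invoke the classical Bishop--de~Leeuw theorem fiber by fiber. Your route buys a stronger intermediate fact but costs two extra verifications that the paper avoids: that slices of Baire subsets of $K\times B_{E^*}$ are Baire in $B_{E^*}$, and that the disintegration identity extends from continuous functions and rectangles to indicators of Baire subsets of the product (both are fine: the Baire $\sigma$-algebra of the product sits inside $\operatorname{Bo}(K)\otimes\operatorname{Bo}(B_{E^*})$ by Stone--Weierstrass, and the proof of Lemma~\ref{L:dezintegrace} already covers all bounded product-measurable functions). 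One small imprecision: your ``for all $t$'' strengthening of $(b)$ applies only to the particular kernel from Proposition~\ref{P:vsude}; since $(b)$ is stated for an arbitrary disintegration kernel, you must (as you indeed indicate) pass through $\nu(K\times(B_{E^*}\setminus B))=0$ and then use Lemma~\ref{L:dezintegrace}$(iii)$ to recover the almost-everywhere conclusion for the given kernel.
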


\begin{proof}
    It easily follows from Lemma~\ref{l:repremiry} that  $K\times \ext B_{E^*}$  is the $\K$-Choquet boundary of $K\times B_{E^*}$ (see the definition in \cite[p. 46]{alfsen}). Therefore assertion $(a)$ follows from \cite[Proposition I.5.22]{alfsen}.

    To prove $(b)$, fix $B\subset B_{E^*}$  a Baire set containing $\ext B_{E^*}$. Then $K\times B$ is a Baire set in $K\times B_{E^*}$ containing $K\times \ext B_{E^*}$ and hence by $(a)$ and Lemma~\ref{L:dezintegrace} we get
    $$0=\nu(K\times(B_{E^*}\setminus B))=\int_K \nu_t(B_{E^*}\setminus B)\di\abs{\mu}(t)$$
    and hence the assertion follows.  
\end{proof}

A further consequence is the following theorem.

\begin{thm}\label{t:minimalbodove}
    Let $\nu\in N(\mu)$. Then $\nu$ is $\prec_{\D}$-minimal if and only if it admits a disintegration kernel $(\nu_t)_{t\in K}$ consisting of maximal measures.
\end{thm}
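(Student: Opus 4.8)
The plan is to deduce the statement from the characterization of $\prec_{\D}$-minimality in Theorem~\ref{t:charakterminim}, in particular its condition $(4)$, combined with Mokobodzki's criterion for maximal measures (recalled in the Preliminaries, see \cite[Proposition I.4.5]{alfsen}) and the ``everywhere'' disintegration furnished by Proposition~\ref{P:vsude}.

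First I would dispose of the easy implication. Assume $\nu$ admits a disintegration kernel $(\nu_t)_{t\in K}$ all of whose members are maximal. For every weak$^*$ continuous convex function $g$ on $B_{E^*}$, Mokobodzki's criterion gives $\int g\di\nu_t=\int g^*\di\nu_t$ for \emph{every} $t\in K$, in particular for $\abs{\mu}$-almost all $t$. Thus condition $(4)$ of Theorem~\ref{t:charakterminim} holds, and that theorem yields that $\nu$ is $\prec_{\D}$-minimal.

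For the converse, suppose $\nu\in N(\mu)$ is $\prec_{\D}$-minimal. Since $\pi_1(\nu)=\abs{T^*\nu}=\abs{\mu}$ by Proposition~\ref{P:vlastnostih}$(a)$, I would set $\sigma=\abs{\mu}$ and invoke Proposition~\ref{P:vsude} to fix, once and for all, a distinguished disintegration kernel $(\nu_t)_{t\in K}$ of $\nu$ (all $\nu_t$ being probabilities as $\nu\ge0$). The goal is to show that \emph{every} $\nu_t$ is maximal. Fix an arbitrary weak$^*$ continuous convex function $g$ on $B_{E^*}$; its upper envelope $g^*$ is weak$^*$ upper semicontinuous, hence bounded and Borel, and $g\le g^*$ on $B_{E^*}$. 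Condition $(4)$ of Theorem~\ref{t:charakterminim} (which applies to our kernel, as the equivalence there holds for any disintegration kernel) gives $\int g^*\di\nu_t\le\int g\di\nu_t$ for $\abs{\mu}$-almost all $t$. Applying the implication of Proposition~\ref{P:vsude} with $\nu_1=\nu_2=\nu$, $g_1=g^*$ and $g_2=g$ upgrades this to $\int g^*\di\nu_t\le\int g\di\nu_t$ for \emph{every} $t\in K$; together with the trivial reverse inequality coming from $g\le g^*$ (and $\nu_t\ge0$) this yields $\int g\di\nu_t=\int g^*\di\nu_t$ for all $t\in K$.

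The decisive point, and the main obstacle, is the order of quantifiers. In the nonmetrizable setting the exceptional $\abs{\mu}$-null set in condition $(4)$ depends on $g$, and the cone of weak$^*$ continuous convex functions on $B_{E^*}$ need not be separable, so one cannot merely intersect countably many null sets as in the separable Proposition~\ref{P:minD-separable}. Proposition~\ref{P:vsude} is exactly the device that removes this obstacle: for the single distinguished kernel it converts each individual almost-everywhere inequality into an everywhere inequality. Consequently, for each fixed $t\in K$ the equality $\int g\di\nu_t=\int g^*\di\nu_t$ now holds simultaneously for all weak$^*$ continuous convex $g$, which is precisely Mokobodzki's criterion for $\nu_t$ to be maximal. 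Hence $(\nu_t)_{t\in K}$ is a disintegration kernel of $\nu$ consisting of maximal measures, completing the proof.
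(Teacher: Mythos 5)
Your proposal is correct and follows essentially the same route as the paper: the easy direction via Mokobodzki's criterion and condition $(4)$ of Theorem~\ref{t:charakterminim}, and the converse by fixing the distinguished disintegration kernel from Proposition~\ref{P:vsude} and upgrading the almost-everywhere equality $\int g\di\nu_t=\int g^*\di\nu_t$ to an everywhere equality. Your write-up is in fact slightly more explicit than the paper's at the upgrade step, where you note that only the inequality $\int g^*\di\nu_t\le\int g\di\nu_t$ needs Proposition~\ref{P:vsude} (applied to the bounded Borel functions $g^*$ and $g$), the reverse inequality being trivial from $g\le g^*$.
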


\begin{proof}
    Assume that $(\nu_t)_{t\in K}$ is a disintegration kernel consisting of maximal measures.
    It follows from the Mokobodzki criterion (see Section~\ref{ss:classical}) that condition $(4)$ of Theorem~\ref{t:charakterminim} is satisfied, hence $\nu$ is $\prec_{\D}$-minimal.

    Conversely, assume $\nu$ is $\prec_{\D}$-minimal. Let $(\nu_t)_{t\in K}$ be a disintegration kernel provided by Proposition~\ref{P:vsude}. Fix $g:B_{E^*}\to\er$ weak$^*$ continuous convex. By Theorem~\ref{t:charakterminim} we get that $\int g\di\nu_t=\int g^*\di\nu_t$ $\abs{\mu}$-almost everywhere. Since both $g$ and $g^*$ are bounded Borel functions, the choice of the disintegration kernel yields that the equality holds for all $t\in K$. The Mokobodzki criterion (see Section~\ref{ss:classical}) then implies that each $\nu_t$ is maximal.
\end{proof}

We finish this section by showing that $\prec_\D$-minimal measures are separably determined. To formulate the result properly we recall the notion of a rich family. A~family $\F$ of separable subspaces of a Banach space $E$ is called \emph{rich} if the following two conditions are satisfied:
\begin{itemize}
    \item $\forall F\subset E\mbox{ separable}\,\exists F^\prime\in\F\colon F\subset F^\prime$;
    \item  $\overline{\bigcup_n F_n}\in \F$ whenever $(F_n)$ is an increasing sequence in $\F$.
\end{itemize}

\begin{thm}
    \label{t:sepreduct}
Let $\nu\in N$. Then the following assertions are equivalent.
\begin{enumerate}[$(1)$]
    \item $\nu$ is $\prec_\D$-minimal.
    \item There is a rich family $\F$ of separable subspaces of $E$ such that for each $F\in\F$ the measure $(\id\times R_F)(\nu)$ belongs to $N_F$ and is $\prec_{\D}$ minimal.
    \item There is a cofinal family $\F$ of separable subspaces of $E$ such that for each $F\in\F$ the measure $(\id\times R_F)(\nu)$ belongs to $N_F$ and is $\prec_{\D}$ minimal.
\end{enumerate}    
\end{thm}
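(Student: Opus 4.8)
The plan is to prove the cycle $(2)\Rightarrow(3)\Rightarrow(1)\Rightarrow(2)$ for the family
$$\F=\{F\subset E\text{ separable}\setsep (\id\times R_F)(\nu)\in N_F\text{ and is }\prec_{\D}\text{-minimal}\}.$$
Implication $(2)\Rightarrow(3)$ is immediate, since every rich family is cofinal by its first defining property. For $(3)\Rightarrow(1)$ I would argue by contradiction, using Observation~\ref{obs:N} to restrict attention to $\nu'\in N(\mu)$ with $\nu'\prec_{\D}\nu$ and $\nu'\ne\nu$. Then Lemma~\ref{l:separ-faktor}$(c)$ produces an $f\in C(K\times B_{E^*})$ distinguishing the two measures and factoring through some $R_{F_f}$; by cofinality I pick $F\in\F$ with $F\supset F_f$, so the images $(\id\times R_F)(\nu')$ and $(\id\times R_F)(\nu)$ remain distinct. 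On the other hand Proposition~\ref{P:precD-sepred} gives $(\id\times R_F)(\nu')\prec_{\D}(\id\times R_F)(\nu)$, and since $F\in\F$ forces $\norm{R_F\circ\mu}=\norm{\mu}$ both images lie in the same $N_F(R_F\circ\mu)$. Minimality of $(\id\times R_F)(\nu)$ then yields equality, a contradiction.

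The substance is in $(1)\Rightarrow(2)$, where I would show that $\F$ is rich, i.e. closed under increasing unions and cofinal. Fix once and for all a disintegration kernel $(\nu_t)_{t\in K}$ of $\nu$ consisting of maximal measures (Theorem~\ref{t:minimalbodove}); by Lemma~\ref{L:dezintegrace kvocient} the image $(\id\times R_F)(\nu)$ has kernel $(R_F(\nu_t))_{t\in K}$, and by Proposition~\ref{P:minD-separable} (applied in the separable space $F$) membership $F\in\F$ is equivalent to $\norm{R_F\circ\mu}=\norm{\mu}$ together with $R_F(\nu_t)$ being carried by $\ext B_{F^*}$ for $\abs{\mu}$-almost all $t$. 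Closure under unions is then straightforward: for an increasing sequence $F_n\uparrow$ with union $F=\overline{\bigcup_n F_n}$ the norm condition passes to $F$ because $R_{F_n}=R_{F_n}^{F}\circ R_F$; and if $\xi\in B_{F^*}$ is not extreme, say $\xi=\tfrac12(a+b)$ with $a\ne b$, then some $x\in\bigcup_n F_n$ separates $a,b$, so $R^F_{F_n}(\xi)$ is already non-extreme in $B_{F_n^*}$ for that $n$. Hence $\bigcap_n(R^F_{F_n})^{-1}(\ext B_{F_n^*})\subset\ext B_{F^*}$, and since each $R_{F_n}(\nu_t)=R^F_{F_n}(R_F(\nu_t))$ is carried by $\ext B_{F_n^*}$ for almost all $t$, a countable intersection of full-measure sets shows $R_F(\nu_t)$ is carried by $\ext B_{F^*}$ almost everywhere, so $F\in\F$.

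Cofinality is the main obstacle. The difficulty is that $R_F$ need not map extreme points of $B_{E^*}$ into $\ext B_{F^*}$, so the pushforward of a maximal measure need not be maximal; I must therefore \emph{construct} a sufficiently large $F$. I would do this by a closing-off over $\omega$ stages: starting from the given $F_0$ (enlarged so that $\norm{R_{F_0}\circ\mu}=\norm{\mu}$, possible by Lemma~\ref{L:rovnost tezist}$(b)$), at stage $k$ take a countable sup-dense set of convex functions on $B_{F_k^*}$, and for each such $h$ adjoin to $F$ countably many vectors $w\in E$ realizing the affine majorants that compute the full upper envelope $(h\circ R_{F_k})^*$ on $B_{E^*}$ in the $L^1(\nu)$-sense; finally set $F=\overline{\bigcup_k F_k}$. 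The key point is that affine majorants of $h\circ R_{F_k}$ with $w\in F$ descend \emph{exactly} to affine majorants of $h$ on $B_{F^*}$, so stuffing these directions into $F$ drives the $F$-envelope down to the full envelope, giving $h^*_F\circ R_F=(h\circ R_F)^*$ $\nu$-almost everywhere for each test $h$. Combining this with the maximality of the fibres $\nu_t$, which gives $\int(h\circ R_F)^*\di\nu_t=\int h\circ R_F\di\nu_t$, yields $\int h^*_F\di R_F(\nu_t)=\int h\di R_F(\nu_t)$ almost everywhere, i.e. $R_F(\nu_t)$ is maximal; a sup-density and Lipschitz-continuity argument for the envelope extends this from the test functions to all convex continuous $h$ on $B_{F^*}$, so $F\in\F$. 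The delicate steps I expect to need the most care are the existence of a \emph{countable} envelope-realizing family, which I would obtain from a downward-directed convergence of integrals of finite minima of affine majorants against the Radon measure $\nu$, and the bookkeeping ensuring that the countably many almost-everywhere statements accumulated over all stages combine into a single null set.
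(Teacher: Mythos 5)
Your proposal is correct and follows the same skeleton as the paper's proof: the trivial implication $(2)\Rightarrow(3)$, the reduction of $(3)\Rightarrow(1)$ to Proposition~\ref{P:precD-sepred} together with the fact that $\prec_{\D}$ is a partial order on each $N_F$, and for $(1)\Rightarrow(2)$ the same candidate family $\F$ verified to be rich via closure under increasing unions plus a closing-off construction over $\omega$ stages for cofinality. The details differ in two places, both legitimately. For closure under unions the paper invokes the inverse-limit theorem for maximal measures (\cite[Theorem 12.31]{lmns}), whereas you give a direct elementary argument: $\bigcap_n(R^F_{F_n})^{-1}(\ext B_{F_n^*})\subset\ext B_{F^*}$ because any non-extreme $\xi=\tfrac12(a+b)$ is already detected by some $x\in\bigcup_n F_n$ separating $a$ and $b$, and then the metrizable characterization of maximality (carried by the $G_\delta$ set of extreme points) finishes; this is a genuine simplification. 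For cofinality the paper works with the $\K$-envelope $\widehat{1\otimes g}$ on $K\times B_{E^*}$, extracts a decreasing sequence $k_n\in\K$ via Theorem~\ref{t:charakterminim}$(3)$, and then has to slice each $k_n$ in the $t$-variable over a countable dense set of $K$; you instead work with affine majorants $\Re x^*(w)+c$ of $h\circ R_{F_k}$ directly on $B_{E^*}$, anchor the argument in the fibrewise maximality of the $\nu_t$ from Theorem~\ref{t:minimalbodove} plus Mokobodzki, and get the countable envelope-realizing family from monotone convergence for nets applied to finite minima of majorants. Your version avoids the $t$-slicing entirely (your majorants do not depend on $t$) at the cost of needing the exact identification of the weak$^*$ continuous affine functions on $B_{F^*}$ with the functions $\Re\,\cdot\,(w)+c$, $w\in F$, so that majorants with $w\in F$ descend precisely to majorants on $B_{F^*}$; that identification is standard (odd/even decomposition plus Krein--\v{S}mulyan) and the sandwiching $(h\circ R^F_{F_k})^*_F\circ R_F\ge (h\circ R_{F_k})^*$ with equality $\nu$-a.e.\ after adjoining the realizing vectors is exactly what the Mokobodzki test needs. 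Both routes then conclude by the same sup-density and nonexpansiveness of the envelope.
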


\begin{proof}  Implication $(2)\implies (3)$ is trivial.

$(3)\implies(1)$: Assume $(3)$ holds. Let $\nu^\prime\in N$ be such that $\nu^\prime\prec_{\D}\nu$. By Lemma~\ref{L:rovnost tezist}$(b)$ there is some $F_0\subset E$ separable such that for each $F\subset E$ separable containing $F_0$ we have $(\id\times R_F)(\nu^\prime)\in N_F$. 

Fix any $F\in\F$ containing $F_0$. By Proposition~\ref{P:precD-sepred} we deduce $(\id\times R_F)(\nu^\prime)\prec_{\D}(\id\times R_F)(\nu)$. Since both measures $(\id\times R_F)(\nu^\prime)$ and $(\id\times R_F)(\nu)$ belong to $N_F$, we deduce that $(\id\times R_F)(\nu^\prime)=(\id\times R_F)(\nu)$. Since such spaces $F$ form a cofinal family, another use of Proposition~\ref{P:precD-sepred}
shows $\nu^\prime=\nu$ (we are also using the fact that $\prec_{\D}$ is a partial order on $N$ by
Observation~\ref{obs:N}). Thus $\nu$ is $\prec_{\D}$-minimal.

$(1)\implies (2)$: Assume $\nu$ is $\prec_{\D}$-minimal. Let $\mu=T^*\nu$ and let $(\nu_t)_{t\in K}$ be a disintegration kernel for $\nu$. Set
$$\F=\{F\subset E\mbox{ separable}\setsep (\id\times R_F)(\nu)\in N_F\ \&\ (\id\times R_F)(\nu)\mbox{ is $\prec_{\D}$-minimal}\}.$$
We will show that $\F$ is a rich family of separable subspaces of $E$. We start by proving the second property. Assume that $(F_n)$ is an increasing sequence of elements of $\F$ and let $F=\overline{\bigcup_n F_n}$. Clearly $(\id\times R_F)(\nu)\in N_F$. Fix $n\in\en$. By Lemma~\ref{L:dezintegrace kvocient} $(R_{F_n}(\nu_t))_{t\in K}$ is a disintegration kernel of $(\id\times R_{F_n})(\nu)$, so by Proposition~\ref{P:minD-separable} we deduce that
$R_{F_n}(\nu_t)$ is a maximal measure on $B_{F_n^*}$ for $\abs{\mu}$-almost all $t\in K$ (note that our assumptions imply $\pi_1((\id\times R_{F_n})(\nu))=\abs{\mu}$). For each $t\in K$ the measure 
$R_F(\nu_t)$ is the inverse limit of the sequence $(R_{F_n}(\nu))_n$ and hence for $\abs{\mu}$-almost all $t\in K$ it is a maximal measure (by \cite[Theorem 12.31]{lmns}). Hence $(\id\times R_F)(\nu)$ is $\prec_{\D}$-minimal by Proposition~\ref{P:minD-separable}. Thus $F\in\F$ and the second property holds.

It remains to prove the cofinality of $\F$. To this end denote by $\Co(B_{E^*})$ the convex cone of all weak$^*$ continuous convex functions on $B_{E^*}$. The proof will proceed in several steps.
\smallskip

{\tt Step 1:} For any $g\in \Co(B_{E^*})$ there exists a countable set $C_g\subset -\Co(B_{E^*})$ and a set $K_g\subset K$ of full measure $\abs{\mu}$ such that  
\begin{itemize}
    \item for any $k\in C_g$ it holds $k\ge g$;
    \item for any $\ep>0$ and $t\in K_g$  there exists $h\in C_g$ with $\int h\di\nu_t\le\int g\di \nu_t+\ep$.
\end{itemize}
\smallskip

Indeed, given $g\in \Co(B_{E^*})$, the function $(1\otimes g)(t,x^*)=g(x^*)$ belongs to $-\K$, and hence $\int (1\otimes g)\di\nu=\int(\widehat{1\otimes g})\di\nu$ by Theorem~\ref{t:charakterminim}. By the monotone convergence theorem for nets we deduce 
$$\int  (1\otimes g)\di\nu=\inf \left\{\int k\di\nu\setsep k\in \K, k\ge 1\otimes g\right\}.$$
Hence there exists a nonincreasing sequence $\{k_n\}$ of functions from $\K$ such that $k_n\ge 1\otimes g$ for each $n$ and
$\int (1\otimes g)\di\nu=\inf_{n\in\en} \int k_n\di\nu$.
For each $n\in\en$ we consider a countable set $K_n\subset K$ such that
$\{(k_n)_t\setsep t\in K_n\}$ is norm dense in $\{(k_n)_t\setsep t\in K\}$ (note that $(k_n)_t=k_n(t,\cdot)$, as above).
Let 
\[
C_g=\bigcup_{n\in\en}\{(k_n)_t\setsep t\in K_n\}.
\]
Then $C_g\subset -\Co(B_{E^*})$ and every function from $C_g$ is greater or equal than $g$. 

Further, $k=\inf_{n\in\en} k_n$ satisfies $\int(1\otimes g)\di\nu=\int k\di\nu$, and hence $\int g\di\nu_t=\int k_t\di\nu_t$ for $\abs{\mu}$-almost all $t\in K$. Let us denote the respective set of full measure $\abs{\mu}$ by $K_g$. 

Then $K_g$ and $C_g$ satisfy the required properties. Indeed, let $\ep>0$ be given. For $t\in K_g$ we have $\int g\di\nu_t=\int k_t\di\nu_t=\inf_{n\in\en} \int (k_n)_t\di\nu_t$. Hence there exists $n\in\en$ with $\int(k_n)_t\di\nu_t\le \int g\di\nu_t+\frac{\ep}{3}$. Let $h\in C_g$ be chosen such that $\norm{h-(k_n)_t}<\frac{\ep}{3}$. Then 
\[
\int h\di\nu_t\le \int(k_n)_t\di\nu_t+\tfrac{\ep}{3}\le \int g\di\nu_t+\tfrac{2\ep}{3}.
\]
Hence the family $C_g$ along with the set $K_g$ have the desired properties.

\smallskip
{\tt Step 2:} For any norm-separable $S\subset \Co(B_{E^*})$ there exists a countable set $C_S\subset -\Co(B_{E^*})$ and a set $K_{S}\subset K$ of full measure $\abs{\mu}$ such that for 
for any $\ep>0$, $g\in S$ and $t\in K_{S}$  there exists $h\in C_S$ with $h+\ep\ge g$ and $\int h\di\nu_t\le\int g\di \nu_t+\ep$.

\smallskip

Let $A\subset S$ be a countable norm-dense set. It is enough to set 
\[
C_S=\bigcup_{g\in A} C_g\quad\text{and}\quad K_S=\bigcap_{g\in A} K_g, 
\]
where $C_g$ and $K_g$ are constructed for the function $g\in A$ as in the the first step.

\smallskip

{\tt Step 3:} Fix $F_0\subset E$ separable. We construct inductively norm-separable sets $S_0\subset S_1\subset S_2\subset\cdots\subset \Co(B_{E^*})$, sets $K=K_0\supset K_1\supset K_2\supset K_3\supset\cdots$ of full measure  $\abs{\mu}$ and separable spaces $F_0\subset F_1\subset F_2\subset\cdots E$ as follows.

In the first step of the construction, we set $K_0=K$ and 
\[
S_0=\{g\circ R_{F_0}\setsep g\in\Co(B_{F_0^*})\}.
\]

Assume that $S_{n-1}$, $K_{n-1}$ and $F_{n-1}$ have been constructed.
We apply Step 2 for $S_{n-1}$ to find a  countable set $C\subset -\Co(B_{E^*})$ along with the set $K_{n}\subset K$ (without loss of generality $K_n\subset K_{n-1}$) with the properties described in Step 2. Let $H\subset E$ be a separable space such that $C$ can be factorized via $H$ in the sense of Lemma~\ref{l:separ-faktor}$(b)$. Let $F_n=\ov{\span(F_{n-1}\cup H)}$ and 
\[
S_n=\{g\circ R_{F_n}\setsep g\in\Co(B_{F_n^*})\}\cup S_{n-1}.
\]
Then all elements from $S_{n}\cup C$ can be factorized via $F_n$. 
This finishes the inductive construction.

To conclude the proof we set $F=\ov{\bigcup_{n=0}^\infty F_n}$. 
Now $(R_F(\nu_t))_{t\in K}$ is a disintegration kernel for $(\id\times R_F)(\nu)$ (by Lemma~\ref{L:dezintegrace kvocient}). We want to check that $R_F(\nu_t)$ is maximal for each $t\in\bigcap_{n=0}^\infty K_n$. 

So, fix such $t$, $g=g_1\vee\dots\vee g_k$ where $g_1,\dots,g_k$ are real-valued weak$^*$ continuous affine functions on $B_{F^*}$ and $\ep>0$.
Then there are $x_1,\dots,x_k\in F$ and $c_1,\dots,c_k\in\er$ such that $g_j(x^*)=c_j+\Re x^*(x_j)$ for $x^*\in B_{F^*}$ and $j=1,\dots,k$. By the choice of $F$ there is $n\in \en$ and elements
$x^\prime_1,\dots,x^\prime_k\in F_n$ with $\norm{x_j-x^\prime_j}<\ep$ for $j=1,\dots,k$. Set
$$g^\prime(x^*)=\min_{1\le j\le k} (c_j+\Re x^*(x^\prime_j)),\quad x^*\in B_{F^*}.$$
Then $\norm{g-g^\prime}_\infty<\ep$ and $g^\prime\circ R_F\in S_n$.
By the inductive construction there is $h\in S_{n+1}$ such that 
 $h+\ep\ge g^\prime\circ R_F$ and $\int h\di\nu_t\le\int g^\prime\circ R_F\di \nu_t+\ep$. Clearly $h=h'\circ R_F$ for some $h'\in-\Co(B_{F^*})$.

Then $h'+\ep$ is in $-\Co(B_{F^*})$ and
\[
(h'+\ep)\circ R_F=h+\ep\ge g'\ge g\circ R_F-\ep. 
\]
Hence $g\le h'+2\ep$ on $B_{F^*}$, and thus
\[
\begin{aligned}
\int g^*\di R_F(\nu_t)&\le \int(h'+2\ep)\di R_F(\nu_t)=\int (h+2\ep)\di\nu_t\le \int g'\di\nu_t+3\ep\\
&\le \int (g\circ R_F)\di\nu_t+4\ep=\int g\di R_F(\nu_t)+4\ep.
\end{aligned}
\]
Since $\varepsilon>0$ is arbitrary, we get $\int g^*\di R_F(\nu_t)=\int g\di R_F(\nu_t)$. This implies that $\nu_t$ is maximal (by a version of Mokobodzki test, cf. \cite[Theorem 3.58$(i)\iff(ii)$]{lmns}). Hence $R_F(\nu_t)$ is maximal for $\abs{\mu}$-almost all $t$, which gives that $(\id\times R_F)(\nu)$ is minimal (by Proposition~\ref{P:minD-separable}).
\end{proof}

\subsection{Uniqueness of $\prec_{\D}$-minimal measures}

It is easy to show that for any $\mu\in M(K,E^*)$ there is a $\prec_{\D}$-minimal measure in $N(\mu)$.
In this section we address the question of uniqueness of such a measure. To this end we will use the notion of simplexoid introduced in \cite{phelps-complex}. Recall that a convex set $X$ is called \emph{simplexoid} if every  proper face of $X$ is a simplex. It is a geometrical notion, but in case of dual unit balls it may be characterized using representing mesures.

\begin{fact}
    Let $E$ be a Banach space. Then $B_{E^*}$ is a simplexoid if and only if for each $x^*\in S_{E^*}$ there is a unique maximal probability on $B_{E^*}$ with barycenter $x^*$.
\end{fact}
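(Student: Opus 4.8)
The plan is to prove the equivalence by matching the maximal representing measures of a point $x^*\in S_{E^*}$ with the maximal measures relative to the face that $x^*$ generates, and then to invoke the classical characterization of simplices via uniqueness of maximal representing measures (see e.g. \cite{phelps-choquet,alfsen}).

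First I would record two elementary facts about faces of $B_{E^*}$. If $F$ is a proper face and some $y^*\in F$ had $\norm{y^*}<1$, then for any $z^*\in B_{E^*}$ and small $t>0$ the point $w^*=\frac{y^*-tz^*}{1-t}$ still lies in $B_{E^*}$ (since $\norm{w^*}\le\frac{\norm{y^*}+t}{1-t}\to\norm{y^*}<1$), whence $y^*=tz^*+(1-t)w^*$ and the face property forces $z^*\in F$; thus $F=B_{E^*}$, a contradiction. Hence every proper face lies in $S_{E^*}$. Conversely, for $x^*\in S_{E^*}$ the face $F_{x^*}$ generated by $x^*$ satisfies $F_{x^*}\subseteq S_{E^*}$ (if $x^*=ty^*+(1-t)z^*$ with $t\in(0,1]$, then $1=\norm{x^*}\le t\norm{y^*}+(1-t)\norm{z^*}\le1$ forces $\norm{y^*}=\norm{z^*}=1$) and is proper, because the only face containing $0$ is $B_{E^*}$ itself. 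So the points of proper faces are exactly sphere points, and each sphere point sits in the algebraic relative interior of the proper face $F_{x^*}$.

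The core step is a support-and-restriction lemma: for $x^*\in S_{E^*}$, the maximal probability measures on $B_{E^*}$ representing $x^*$ are precisely those carried by $\ext B_{E^*}\cap F_{x^*}$, and they are unique exactly when $F_{x^*}$ is a simplex. For the support part, a maximal $\mu$ representing $x^*$ is carried by $S_{E^*}$ (the remark following Theorem~\ref{t:ordering-sfera}) and by $\ext B_{E^*}$; combining this with the face property of $F_{x^*}$ one shows its mass sits on $\ext B_{E^*}\cap F_{x^*}$, and that maximality in $B_{E^*}$ is equivalent to maximality relative to $F_{x^*}$ via the Mokobodzki criterion recalled in Section~2 (convex weak$^*$-continuous functions restrict to $F_{x^*}$, and the test $\int g\,d\mu=\int g^*\,d\mu$ is inherited). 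Granting this, both implications are short. If $B_{E^*}$ is a simplexoid, then $F_{x^*}$ is a simplex, so $x^*$ has a unique maximal representing measure. Conversely, if every sphere point has a unique maximal representing measure, let $F$ be any proper face; each $y^*\in F$ is a sphere point, and since $F_{y^*}\subseteq F$ its unique maximal measure is carried by $F$ and is maximal relative to $F$, so every point of $F$ has a unique maximal measure carried by $F$, i.e. $F$ is a simplex and $B_{E^*}$ is a simplexoid.

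The main obstacle is exactly this support-and-restriction lemma, because the generated face $F_{x^*}$ need not be weak$^*$-closed: for $E=c_0$ and $x^*=(2^{-n})_n\in B_{\ell^1}=B_{E^*}$ the generated face of dominated probability sequences is not weak$^*$-closed, its closure even containing $0$. Consequently the textbook fact \emph{``a representing measure of a point of a closed face is carried by that face''} does not apply verbatim, and neither does the naive reduction to a compact sub-simplex. I would circumvent this by never passing to the closure: I use that a maximal measure representing a sphere point is carried by $S_{E^*}$, together with a direct extreme-point argument placing its mass in $\ext B_{E^*}\cap F_{x^*}$, and I verify maximality solely through the Mokobodzki test, which involves only integrals of convex continuous functions and is therefore insensitive to the topological defect of $F_{x^*}$. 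Once this lemma is in place, the equivalence follows as above.
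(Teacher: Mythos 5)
The paper itself proves this Fact only by citation (``follows from the proof of \cite[Theorem 3.11]{fuhr-phelps}''), so you are attempting a genuine proof where the paper gives none; unfortunately the attempt fails, because the ``support-and-restriction lemma'' on which everything rests is false. Take $E=C([0,1])$, so that $B_{E^*}$ is the unit ball of $M([0,1])$, and let $x^*$ be Lebesgue measure $\lambda$ on $[0,1]$, a point of $S_{E^*}$. The face $F_{x^*}$ generated by $x^*$ is $\{\nu\in M_1([0,1])\setsep \nu\le C\lambda\mbox{ for some }C>0\}$, while $\ext B_{E^*}=\{\alpha\ep_t\setsep \abs{\alpha}=1,\ t\in[0,1]\}$; hence $\ext B_{E^*}\cap F_{x^*}=\emptyset$. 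Nevertheless $x^*$ has maximal representing measures: the image of $\lambda$ under $t\mapsto\ep_t$ has barycenter $\lambda$, is carried by the compact set $\{\ep_t\setsep t\in[0,1]\}\subset\ext B_{E^*}$ (on which $f=f^*$ for every convex continuous $f$, so it is maximal by the Mokobodzki criterion), and that compact set is disjoint from $F_{x^*}$. So a maximal measure representing a sphere point need not put any mass on the generated face, and your description of the maximal representing measures of $x^*$ as ``precisely those carried by $\ext B_{E^*}\cap F_{x^*}$'' cannot be repaired: in this example it would assert that no maximal representing measure exists at all. (A smaller additional issue: in the nonmetrizable case maximal measures are only pseudo-carried by $\ext B_{E^*}$, so even that part of the claim needs care.)

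The underlying point is that the textbook fact ``a representing measure of a point of a face is carried by that face'' genuinely requires the face to be weak$^*$ closed, and the face generated by a sphere point is typically very far from closed; you correctly flagged this danger, but the proposed circumvention (a ``direct extreme-point argument'' plus the Mokobodzki test) cannot succeed because the target statement itself is false. A second, independent problem is the last step of your converse direction: for a bounded convex set $F$ that is neither compact nor closed, ``every point of $F$ has a unique maximal representing measure carried by $F$'' is not the Choquet--Meyer characterization of simplices (a theorem about compact convex sets); for such $F$, ``simplex'' means that the cone generated by $F\times\{1\}$ is a lattice in its own ordering, and relating this lattice property to uniqueness of maximal measures on $B_{E^*}$ with barycenter on the sphere is exactly the nontrivial content of the cited result of Fuhr and Phelps. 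A self-contained proof would have to follow their route (via the Riesz decomposition property of the cone generated by a proper face) rather than trying to localize maximal representing measures onto $F_{x^*}$.
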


\begin{proof}
The assertion follows from the proof of \cite[Theorem 3.11]{fuhr-phelps}.    
\end{proof}

The following theorem characterizes uniqueness of $\prec_{\D}$ minimality measures. We note that implication $(2)\implies(1)$ is essentially trivial, so the key result is implication $(1)\implies(2)$. We also point out that condition $(1)$ does not depend on $K$, so the uniqueness depends just on the target space $E$.

\begin{thm}
\label{t:simplexoidminimal}
The following assertions are equivalent.
\begin{enumerate}[$(1)$]
    \item $B_{E^*}$ is a simplexoid.
    \item For each $\mu\in M(K,E^*)$  there exists a unique $\prec_\D$-minimal measure $\nu\in N(\mu)$.    
\end{enumerate}
\end{thm}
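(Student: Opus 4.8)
The plan is to prove both implications by translating the simplexoid property, via the Fact preceding the theorem, into the statement that \emph{for every $x^*\in S_{E^*}$ there is a unique maximal probability on $B_{E^*}$ with barycenter $x^*$}. Existence of a $\prec_\D$-minimal measure in each $N(\mu)$ is already noted, so everything reduces to uniqueness, which I would analyse fiberwise through disintegration.

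For $(1)\implies(2)$, fix $\mu\in M(K,E^*)\setminus\{0\}$ (the case $\mu=0$ being trivial) and let $\nu_1,\nu_2\in N(\mu)$ be $\prec_\D$-minimal. Since $\pi_1(\nu_j)=\abs{\mu}$ by Proposition~\ref{P:vlastnostih}$(a)$, both lie in the class $M=\{\nu\setsep\pi_1(\nu)=\abs\mu\}$ of Proposition~\ref{P:vsude}, and I would fix once and for all the canonical assignment of disintegration kernels $\nu\mapsto(\nu_t)_{t\in K}$ provided by that proposition. The proof of Theorem~\ref{t:minimalbodove} shows that for this particular choice each $\nu_{1,t}$ and $\nu_{2,t}$ is a maximal probability on $B_{E^*}$ for \emph{every} $t\in K$. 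The key point is that the two kernels share the same barycenters. For fixed $x\in E$ the functions $g=\Re(\cdot)(x)$ and (in the complex case) $\Re(\cdot)(ix)$ are bounded Borel on $B_{E^*}$, and by Proposition~\ref{P:hustotaT*nu} together with $T^*\nu_1=T^*\nu_2=\mu$ one gets $\int g\di\nu_{1,t}=\int g\di\nu_{2,t}$ for $\abs\mu$-almost all $t$; the ``almost everywhere implies everywhere'' clause of Proposition~\ref{P:vsude} then upgrades this to all $t\in K$. Running over all $x\in E$ yields $r(\nu_{1,t})=r(\nu_{2,t})$ for \emph{every} $t\in K$; call this common value $c(t)$. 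By Lemma~\ref{L:rovnost tezist}$(a)$ we have $c(t)\in S_{E^*}$ for $\abs\mu$-almost all $t$, so for such $t$ the measures $\nu_{1,t},\nu_{2,t}$ are maximal probabilities with the same barycenter on the sphere, and the Fact forces $\nu_{1,t}=\nu_{2,t}$. As the kernels agree $\abs\mu$-almost everywhere, the disintegration formula of Lemma~\ref{L:dezintegrace} gives $\int f\di\nu_1=\int f\di\nu_2$ for all $f\in C(K\times B_{E^*})$, i.e. $\nu_1=\nu_2$.

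For $(2)\implies(1)$ I argue by contraposition. If $B_{E^*}$ is not a simplexoid, the Fact produces $x^*\in S_{E^*}$ and two distinct maximal probabilities $\lambda_1\neq\lambda_2$ on $B_{E^*}$, each with barycenter $x^*$. Fixing any point $t_0\in K$, I set $\mu=\ep_{t_0}\otimes x^*$ and $\nu_j=\ep_{t_0}\times\lambda_j\in M_+(K\times B_{E^*})$ for $j=1,2$. A direct computation with \eqref{eq:hustad} gives $T^*\nu_j=\mu$, while $\norm{\nu_j}=\lambda_j(B_{E^*})=1=\norm\mu=\norm{T^*\nu_j}$, so $\nu_j\in N(\mu)$; moreover $\abs\mu=\ep_{t_0}$ and $\lambda_j$ is the kernel of $\nu_j$ at $t_0$, which is maximal, so $\nu_j$ is $\prec_\D$-minimal by Theorem~\ref{t:minimalbodove}. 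Since $\lambda_1\neq\lambda_2$ forces $\nu_1\neq\nu_2$, the set $N(\mu)$ contains two distinct $\prec_\D$-minimal measures and $(2)$ fails.

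The main obstacle is the barycenter identification in $(1)\implies(2)$ for nonseparable $E$: the weak$^*$ density of $\mu$, hence the barycenter of the kernel, is a priori determined only coordinatewise and only almost everywhere (Lemma~\ref{L:hustota jednoznacnost}), which is too weak to apply the Fact fiber by fiber. What makes the argument go through is precisely the \emph{everywhere} character of the canonical disintegration of Proposition~\ref{P:vsude}: it promotes the coordinatewise almost-everywhere equalities $\int g\di\nu_{1,t}=\int g\di\nu_{2,t}$ to genuine equalities holding for all $t$ simultaneously, after which the countability obstruction in combining over $x\in E$ disappears. This is also what lets me bypass the separable-reduction machinery of Theorem~\ref{t:sepreduct} and the delicate question of whether the simplexoid property passes to the quotient balls $B_{F^*}$.
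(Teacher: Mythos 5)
Your proposal is correct and follows the same skeleton as the paper's proof: the implication $(2)\implies(1)$ is verbatim the paper's counterexample $\ep_t\times\omega_1$ versus $\ep_t\times\omega_2$, and $(1)\implies(2)$ likewise proceeds fiberwise through the canonical ``everywhere'' disintegration of Proposition~\ref{P:vsude}, the everywhere-maximality of the kernels extracted from the proof of Theorem~\ref{t:minimalbodove}, and the Fact applied at $\abs{\mu}$-almost every fiber. The only genuine divergence is how you establish $r(\nu_{1,t})=r(\nu_{2,t})$: the paper compares both minimal measures with $\nu_0=K\mu$ using Lemma~\ref{L:precD}$(c)$ and the pointwise Choquet comparison of Theorem~\ref{t:precDbodove} (Choquet-comparable probabilities share a barycenter), whereas you read the common barycenter off directly from the weak$^*$ density of Proposition~\ref{P:hustotaT*nu}, using Lemma~\ref{L:hustota jednoznacnost}-type coordinatewise a.e.\ equality and then the ``almost everywhere implies everywhere'' clause of Proposition~\ref{P:vsude} to remove the uncountability obstruction over $x\in E$. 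Both routes are valid; yours is slightly more self-contained at that step, since it bypasses the transference operator $K$ and the order-theoretic comparison entirely, at the cost of redoing by hand the barycenter identification that the paper gets for free from $\nu_j\prec_{\D}K\mu$.
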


\begin{proof}
$(2)\implies(1)$: Assume that $B_{E^*}$ is not a simplexoid. Then there is some $x^*\in S_{E^*}$ and two distinct maximal measures $\omega_1,\omega_2$ with the barycenter $x^*$. Fix $t\in K$. Consider two measures
$$\nu_1=\ep_t\times\omega_1,\quad \nu_2=\ep_t\times\omega_2.$$
Then $T^*\nu_1=T^*\nu_2=\ep_t\otimes x^*$ and $\nu_1,\nu_2\in N(\ep_t\otimes x^*)$. Moreover,
both $\nu_1$ and $\nu_2$ are $\prec_{\D}$-minimal (for example) by Theorem~\ref{t:minimalbodove}.

$(1)\implies(2)$: Assume $B_{E^*}$ is a simplexoid. Fix $\mu\in M(K,E^*)$. Choose the assignment of disintegration kernels provided by Proposition~\ref{P:vsude}. Let $\nu_1,\nu_2\in N(\mu)$ be a pair of $\prec_{\D}$ minimal measures. As in Theorem~\ref{t:minimalbodove} we see that $\nu_{1,t}$ and $\nu_{2,t}$ are maximal for $t\in K$. Moreover, let $\nu_0=W\mu$. 

Fix $j\in\{1,2\}$. By Lemma~\ref{L:precD}$(c)$ we know that $\nu_j\prec_{\D}\nu_0$, hence $\nu_{0,t}\prec\nu_{1,t}$ for each $t\in K$ (as in Theorem~\ref{t:precDbodove}). Thus $r(\nu_{0,t})=r(\nu_{j,t})$ for each $t\in K$. We deduce that $r(\nu_{1,t})=r(\nu_{2,t})$  for $t\in K$. Since $r(\nu_{1,t})\in S_{E^*}$ $\abs{\mu}$-almost everywhere (by Proposition~\ref{P:vlastnostih}$(b)$), the assumption that $B_{E^*}$ is a simplexoid yields that $\nu_{1,t}=\nu_{2,t}$ $\abs{\mu}$-almost everywhere, hence $\nu_1=\nu_2$.
\end{proof}

\section{Overview of the results}

In this final section we present a brief overview of the results from this paper and of the related context. 

\begin{itemize}
    \item The continuous functionals on $C(K,E)$ are in one-to-one isometric correspondence with $E^*$-valued regular Borel measures on $K$. It is the content of Singer's representation theorem (an easy proof is given in \cite{hensgen} and recalled in Section~\ref{s:dualita}).
    \item Since the canonical inclusion $T:C(K,E)\to C(K\times B_{E^*})$ is an isometry, 
   any $\mu\in M(K,E^*)$ admits some $\nu\in M(K\times B_{E^*})$ with $\norm{\nu}=\norm{\mu}$ such that $T^*\nu=\mu$. This is just a consequence of Hahn-Banach extension theorem and Riesz representation theorem. The vector measure $\mu$ may be computed from $\nu$ by the Hustad formula \eqref{eq:hustad}.
   \item The measure $\nu$ in the previous item may be chosen positive. We denoted the set of such measures $N(\mu)$, i.e.,
   $$N(\mu)=\{\nu\in M_+(K\times B_{E^*})\setsep \norm{\nu}=\norm{\mu}\ \&\ T^*\nu=\mu\}.$$
   Moreover, there is a canonical selection operator $W$ from the assignment $\mu\mapsto N(\mu)$.
   This operator was constructed in \cite{batty-vector}, we present an alternative approach using the method of disintegration, see Proposition~\ref{P:KT*nu konstrukce} and Lemma~\ref{L:KT*nu dukaz}.
   \item If $E^*$ is strictly convex, then $N(\mu)$ is a singleton for each $\mu\in M(K,E^*)$. It is established in Theorem~\ref{t:battymain}.
   \item If $E^*$ is not strictly convex, then $N(\mu)$ is a larger set (at least for some $\mu$). There is a natural partial order $\prec_{\D}$ on $N(\mu)$. In this order $W\mu$ is the unique maximal measure (see Proposition~\ref{P:precD na sfere}). Further, minimal measures exist and are pseudosupported by $K\times \ext B_{E^*}$ (see Proposition~\ref{P:minD-separable} and Corollary~\ref{cor:pseudonesene}). Minimal measures are unique if and only if $B_{E^*}$ is simplexoid (see Theorem~\ref{t:simplexoidminimal}).
   \item The order $\prec_{\D}$ is closely related to the Choquet order on $B_{E^*}$ (see Corollary~\ref{cor:precD-separable} and Theorem~\ref{t:precDbodove}) and $\prec_{\D}$-minimal measures are closely related to maximal measures on $B_{E^*}$ (see Proposition~\ref{P:minD-separable} and Theorem~\ref{t:minimalbodove}).
\end{itemize}

\bibliographystyle{acm}
\bibliography{vector}

\end{document}